\documentclass[a4paper,11pt]{article}
\usepackage{amsmath,a4wide}
\usepackage{amssymb}
\usepackage{amscd}
\usepackage{epsfig}
\usepackage{graphics}
\usepackage{multirow}
\usepackage{tikz}
\usetikzlibrary{arrows.meta,positioning,fit}
\usepackage{graphicx}
\usepackage{mathrsfs}
\usepackage{amsfonts}
\usepackage{subfigure} 
\usepackage{epstopdf}
\usepackage{booktabs}

\usepackage{ifpdf}
\usepackage[a4paper,top=1.5in, bottom=1.5in, left=0.75in, right=0.75in]{geometry}
\usepackage{changes}
\usepackage[makeroom]{cancel}
\usepackage{bbm}

\usepackage{amsthm}
\usepackage{color}
\definecolor{amaranth}{rgb}{0.9, 0.17, 0.31}	
\definecolor{myblue}{rgb}{0.2,0,0.9}
\definecolor{auburn}{rgb}{0.43, 0.21, 0.1}
\definecolor{bittersweet}{rgb}{1.0, 0.44, 0.37}
\definecolor{blue-violet}{rgb}{0.54, 0.17, 0.89}
\usepackage[authoryear]{natbib}

\usepackage{varioref}
\usepackage{xr-hyper}
\usepackage{hyperref}% \externaldocument[A-]{online-appendix}

\hypersetup{
	colorlinks,linkcolor=myblue,citecolor=blue-violet,filecolor=magenta,urlcolor=myblue, pdfauthor=author}

\newtheorem{pro}{Proposition}[section]
\newtheorem{pr}{Problem}
\newtheorem{dfn}{Definition}[section]
\newtheorem{lem}{Lemma}[section]
\newtheorem{thm}[lem]{Theorem}
\newtheorem{cor}[lem]{Corollary}
\newtheorem{rem}[lem]{Remark}

\newtheorem{as}{Assumption}[section]

\def\a{\alpha}
\def\b{\beta}
\def\d{\delta}

\def\e{\varepsilon}
\def\g{\gamma}
\def\l{\lambda}

\def\O{\Omega}
\def\pat{\partial}
\def\r{\rho}

\def\t{\theta}

\def\z{\zeta}
\def\k{\kappa}
\def\gam{\frac{\gamma_1-\gamma}{\gamma_1}}

\newcommand{ \loc }{ \mathrm{loc} }

\newcommand{\cC}{{\mathcal C}}
\newcommand{\cD}{{\mathcal D}}

\newcommand{\cF}{{\mathcal F}}

\newcommand{\cL}{{\mathcal L}}

\newcommand{\cN}{{\mathcal N}}

\newcommand{\cP}{{\mathcal P}}

\newcommand{\cX}{{\mathcal X}}

\numberwithin{equation}{section}

\begin{document}
%	\externaldocument[A-]{Supplemental-Material}

	\title{\LARGE {\bf  Finite-Horizon Portfolio Choice, Labor Supply, and Early Retirement under Borrowing Constraints}}
	
	\author{
	Gugyum Ha \footnote{E-mail: \href{mailto:ggha@sogang.ac.kr}{\tt ggha@sogang.ac.kr}\;Department of Mathematics, Sogang University, Seoul 04107, Republic of Korea.}
	\and
	Junkee Jeon \footnote{E-mail: \href{mailto:junkeejeon@khu.ac.kr}{\tt junkeejeon@khu.ac.kr}\;Department of  Applied Mathematics, Kyung Hee University, Yongin-Si 17104, Republic of Korea.}
	\and
    Jihoon Ok\footnote{E-mail: \href{mailto:jihoonok@sogang.ac.kr}{\tt jihoonok@sogang.ac.kr}\;Department of Mathematics, Sogang University, Seoul 04107, Republic of Korea.}
}
	
	\date{\today}
	
	\maketitle \pagestyle{plain} \pagenumbering{arabic}
	
                    	\begin{abstract}
We study a finite-horizon optimal consumption and portfolio problem with labor supply flexibility and an irreversible early retirement option under a borrowing constraint. The agent chooses consumption, risky investment, and leisure before retirement, while after retirement labor income disappears and leisure is fixed at its maximal level. Preferences are described by a Cobb--Douglas utility, and wealth must remain nonnegative.

{Using a dual martingale method, we transform the primal problem into a zero-sum stopper--singular-controller game. The associated dual value is characterized by a min--max parabolic variational inequality with obstacle and gradient constraints. We show that the maximal strong solution of the resulting variational inequality is the unique admissible strong solution whose gradient-constrained free boundary, namely the binding boundary, is monotone increasing in calendar time. A verification argument then identifies this strong solution with the value of the stopper--singular-controller game, and duality recovers the optimal retirement, consumption, leisure, and portfolio policies.}

The numerical analysis recovers the value function and optimal policies, and illustrates how labor supply flexibility affects consumption, portfolio choice, and retirement timing under borrowing constraints.
\end{abstract}

\vspace{1.0cm}
{\em Keywords}: Portfolio choice; Early retirement; Labor supply flexibility; Borrowing constraints; Duality; Free-boundary problems

%{\em JEL classification} : E21, G11
%%%%%%%%%%%%%%%%%%%%%%%%%%%%%%%%%%%%%%%%%%%
\newpage

%%%%%%%%%%%%%%%%%%%%%%%%%%%%%%%%%%%%%%%%%%%%%%%%%%%%%%%%%%%%%%%%%%%%%%
\section{Introduction}

Retirement decisions are naturally intertwined with portfolio choice, consumption smoothing, and labor supply. In a finite-horizon setting with a mandatory retirement date, the option to retire early is valuable because the agent can trade off current labor income against leisure while adjusting financial investment over the remaining working life. When labor supply is flexible before retirement, this trade-off becomes richer: the agent does not simply choose between working and retiring, but also how intensively to work prior to retirement. At the same time, borrowing constraints are economically important because future labor income is not fully pledgeable. An agent may therefore have substantial future earnings and yet be unable to finance current consumption by borrowing against them. This raises a natural question: how do early retirement, labor supply flexibility, and borrowing constraints jointly shape optimal consumption, portfolio choice, leisure, and retirement timing?

Since the seminal work of \citet{YK}, a substantial literature has studied finite-horizon consumption--investment problems with an endogenous early retirement option; see, among others, \citet{YKS21}, \citet{CJW22}, \citet{JKP23}, \citet{PW23}, \citet{PWY23}, and \citet{JeonOh2023}. These papers have clarified how the retirement option interacts with partial information, ambiguity, and market environments, and they have significantly deepened our understanding of retirement timing in continuous-time portfolio problems. However, this line of research does not incorporate a borrowing constraint that prevents the agent from borrowing against future labor income. This omission is nontrivial, because such a constraint can materially affect savings, risky investment, wealth accumulation, and retirement incentives.

A recent paper by \citet{JeonKimYang2026} closes part of this gap by analyzing a finite-horizon retirement problem with a mandatory retirement date under a borrowing constraint. Nevertheless, that paper does not allow for flexible labor supply before retirement. In contrast, \citet{JeonOh2023} studies labor supply flexibility and early retirement, but without a borrowing constraint. The present paper brings these two strands together. We study a finite-horizon consumption--investment problem in which the agent chooses consumption, portfolio allocation, leisure, and an irreversible early retirement time under a nonnegative-wealth constraint. Before retirement, labor income depends on the chosen leisure level; after retirement, labor income disappears and leisure is fixed at its maximal level. Preferences are modeled by a Cobb--Douglas utility over consumption and leisure. In this way, the model captures in a unified framework the interaction between labor supply flexibility, early retirement, and borrowing constraints.

Labor supply flexibility has also been recognized more broadly as an important margin in continuous-time portfolio choice. Early contributions such as \citet{BodieMertonSamuelson1992} and \citet{BodieDetempleOtrubaWalter2004} show that the labor--leisure margin can substantially affect consumption, risky investment, and retirement planning over the life cycle. This line of research was extended by \citet{ChoiShimShin2008}, who combine flexible labor supply with an endogenous retirement decision in a continuous-time model with CES preferences. More recently, \citet{ChoiKwakLim2024} study flexible labor--leisure choice through the lens of work--life balance and recursive preferences, further highlighting the broader economic relevance of labor flexibility. Relative to these studies, our contribution is to analyze labor supply flexibility jointly with irreversible early retirement and a borrowing constraint in a finite-horizon setting.

{Our objective is to obtain a rigorous and computationally tractable solution. The borrowing constraint is handled by a non-increasing dual multiplier, while the irreversible retirement option becomes a discretionary stopping time. Thus the dual problem takes the form of a zero-sum game between a singular controller and a stopper. Its value is governed by a min--max parabolic variational inequality with a gradient constraint and an obstacle constraint, and the solution is encoded by two time-dependent free boundaries: one for retirement and one for the binding borrowing constraint.}

The dual reformulation also connects our paper to the growing literature on zero-sum games between a singular controller and a discretionary stopper. Recent studies have progressively expanded the analytical framework for such games. In particular, \citet{BDI2022} analyze finite-horizon games on multidimensional unbounded domains and characterize the value as a maximal Sobolev solution of a min--max variational inequality with gradient and obstacle constraints. \citet{BDP24} extend this framework to constrained control directions, \citet{Bovo2024} address degenerate diffusions through a perturbation argument, and \citet{BOVO2025104555} establish a saddle point and characterize the associated control and stopping boundaries in a finite-horizon setting. See also \citet{BovoDeAngelis2026} for a recent finite-horizon analysis on the half-line. Compared with this literature, our contribution is to derive such a controller--stopper game endogenously from an economically motivated consumption--investment problem with labor supply flexibility, irreversible retirement, and a borrowing constraint. We then combine this game formulation with a duality theorem that links the game value to the original primal problem and yields the optimal consumption, portfolio, leisure, and retirement policies in feedback form.

Although our overall duality strategy is close in spirit to \citet{JeonKimYang2026}, the present problem is substantially more intricate. The key difference is that our framework does not support a direct comparison principle for deriving the desired global gradient estimate. This poses a fundamental challenge in establishing the parametrization and monotonicity of the retirement and wealth-binding boundaries. To overcome this difficulty, we introduce a contradiction argument combined with the strong maximum principle. Moreover, building on the regularity results of \citet{Blanchet-et-al-2006}, we show that the gradient of the solution increases strictly from the slope of the obstacle at the retirement boundary, a property that follows from the positivity of the second-order derivative. This positivity implies the monotonicity of the gradient in the nearby non-contact region and thereby provides the key ingredient for parametrizing the gradient-constrained free boundary. It also plays an essential role in deriving an estimate that yields the local Lipschitz continuity of the free boundary. Unlike earlier approaches based on penalized problems and indirect estimates, we establish this estimate through a direct local comparison between the time derivative and the spatial derivative of the solution inside the non-contact region.

The contributions of the paper are both economic and methodological. On the economic side, we identify how labor supply flexibility reshapes retirement incentives when the agent cannot borrow against future earnings. The model shows how the labor--leisure margin serves as an intermediate adjustment mechanism before the agent exercises the irreversible retirement option, and how this channel interacts with portfolio choice under a binding wealth constraint. On the methodological side, we prove the existence and uniqueness of a strong solution to the dual variational inequality, establish a free-boundary characterization of the optimal policy, and derive a duality theorem that recovers the optimal consumption, leisure, portfolio, and retirement strategies in feedback form.

{The key verification point is the following. Starting from the singular-controller--stopper game, the dual variational inequality produces a maximal strong solution. This maximal solution is the only admissible strong solution consistent with the gradient-constrained free boundary: the binding boundary is monotone increasing in calendar time, or equivalently decreasing in the remaining-time variable used in the PDE analysis. The verification theorem then shows that this analytic object is not merely a candidate: it is exactly the value of the dual game, and hence the object that enters the primal-dual representation of the original optimization problem.}

The numerical analysis illustrates the economic effects of labor supply flexibility on consumption, portfolio choice, and retirement timing under borrowing constraints.

The rest of the paper is organized as follows. Section~\ref{sec:model} introduces the model and formulates the primal problem. Section~\ref{sec:optimization} develops the dual reformulation and derives the associated min--max type parabolic variational inequality. The subsequent sections analyze the variational inequality, characterize the two free boundaries, and establish the verification and duality results. Section~\ref{sec:numerical} presents the numerical implementation and discusses the economic implications of the model.

\section{Model}\label{sec:model}

We consider an agent who lives in a continuous-time economy.

\subsection*{Preferences}

The agent chooses consumption, leisure, portfolio holdings, and an early retirement time so as to maximize expected lifetime utility. Let $\tau_D$ denote the death time and let $\tau$ denote the early retirement time, where $0\le \tau\le T$ and $T>0$ is the mandatory retirement date. We assume that death occurs exogenously and is independent of financial risk.

Let $\widehat{\beta}>0$ be the pure subjective time-preference rate, and let $\lambda>0$ be the constant mortality hazard rate. Then the agent's objective is
\begin{equation}\label{eq:expected-utility}
\mathfrak{U}
=
\mathbb{E}\left[
\int_0^{\tau\wedge\tau_D} e^{-\widehat{\beta} t}u(c_t,l_t)\,dt
+
{\bf 1}_{\{\tau<\tau_D\}}
\int_\tau^{\tau_D} e^{-\widehat{\beta} t}u(c_t,\bar L)\,dt
\right].
\end{equation}
Here, $c_t\ge 0$ and $l_t\ge 0$ denote consumption and leisure at time $t$, respectively. The agent works before retirement and enjoys full leisure after retirement. As retirement is irreversible, once the agent retires at time $\tau$, she cannot return to work.

We let $\bar L$ denote the maximal leisure level. Before retirement, the agent must supply at least some labor, so leisure cannot exceed a fixed upper bound $L<\bar L$. Thus,
\begin{equation}\label{eq:condition-leisure}
0< l_t \le L < \bar L
\quad \text{for } 0\le t<\tau,
\qquad
l_t=\bar L
\quad \text{for } t\ge \tau.
\end{equation}
The quantity $\bar L-l_t$ therefore represents labor supply prior to retirement.

As in \citet{JeonOh2023}, we assume throughout the paper that preferences over consumption and leisure are represented by the Cobb--Douglas utility function
\begin{equation}\label{eq:utility}
u(c_t,l_t)
=
\frac{1}{\alpha}\frac{(c_t^\alpha l_t^{1-\alpha})^{1-\gamma}}{1-\gamma},
\qquad
\gamma\in(0,1)\cup(1,\infty),
\qquad
\alpha\in(0,1).
\end{equation}

Define
$$
\gamma_1:=1-\alpha(1-\gamma).
$$
Then \eqref{eq:utility} can be rewritten as
\begin{equation}\label{eq:utility2}
u(c_t,l_t)
=
\frac{c_t^{1-\gamma_1}l_t^{\gamma_1-\gamma}}{1-\gamma_1}.
\end{equation}

Because $\tau_D$ is independent of the financial market and has constant hazard rate $\lambda$, the survival probability is
$$
\mathbb{P}(\tau_D>t)=e^{-\lambda t}.
$$
Using this and Fubini's theorem, \eqref{eq:expected-utility} becomes
\begin{equation}\label{eq:expected-utility-reduced}
\mathfrak{U}
=
\mathbb{E}\left[
\int_0^\tau e^{-(\widehat{\beta}+\lambda)t}u(c_t,l_t)\,dt
+
\int_\tau^\infty e^{-(\widehat{\beta}+\lambda)t}u(c_t,\bar L)\,dt
\right].
\end{equation}
To simplify notation, we define
$$
\beta:=\widehat{\beta}+\lambda,
$$
and hereafter refer to $\beta$ simply as the subjective discount rate. Under this convention,
\begin{equation}\label{eq:expected-utility-reduced2}
\mathfrak{U}
=
\mathbb{E}\left[
\int_0^\tau e^{-\beta t}u(c_t,l_t)\,dt
+
\int_\tau^\infty e^{-\beta t}u(c_t,\bar L)\,dt
\right].
\end{equation}

\begin{rem}
Since mortality is exogenous and independent of financial shocks, absorbing the hazard rate into the discount rate is economically natural in the present setting. This reformulation is especially convenient here because the model does not include life insurance or a bequest motive.
\end{rem}

\subsection*{Financial Market}

The financial market consists of one risk-free asset and one risky asset. The risk-free rate is constant and equal to $r>0$. The risky asset price process $(S_t)_{t\ge 0}$ satisfies
\begin{equation}
\frac{dS_t}{S_t}=\mu\,dt+\sigma\,dB_t,
\end{equation}
where $\mu>r$, $\sigma>0$, and $(B_t)_{t\ge 0}$ is a standard Brownian motion on a complete filtered probability space
$$
(\Omega,\mathcal{F}_\infty,\mathbb{P};\mathbb{F}),
\qquad
\mathbb{F}=(\mathcal{F}_t)_{t\ge 0}.
$$

\subsection*{Wealth Dynamics and Admissible Strategies}

We assume that the wage rate is constant and equal to $\zeta>0$. Hence, before retirement, labor income is given by
$$
\zeta(\bar L-l_t).
$$

Let $(\pi_t)_{t\ge 0}$ denote the dollar amount invested in the risky asset. For a given strategy $(c,l,\pi,\tau)$, the corresponding wealth process $(W_t^{c,l,\pi,\tau})_{t\ge 0}$ evolves according to
\begin{equation}\label{eq:wealth-dynamics}
dW_t^{c,l,\pi,\tau}
=
\left[
rW_t^{c,l,\pi,\tau}
+(\mu-r)\pi_t
-c_t
+\zeta(\bar L-l_t){\bf 1}_{\{t<\tau\}}
\right]dt
+\sigma \pi_t\,dB_t,
\qquad
W_0^{c,l,\pi,\tau}=w.
\end{equation}

Given an initial wealth level $w\ge 0$, we call a quadruple $(c,l,\pi,\tau)$ admissible if the following conditions hold:
\begin{itemize}
\item[(a)]
$c_t>0$, $l_t>0$, and $\pi_t$ are $\mathbb{F}$-progressively measurable processes such that, for every $t\ge 0$,
\begin{align*}
\int_0^t c_s\,ds<\infty,
\qquad
\int_0^t l_s\,ds<\infty,
\qquad
\int_0^t \pi_s^2\,ds<\infty
\quad \text{a.s.};
\end{align*}

\item[(b)]
$\tau\in\mathcal{S}(0,T)$, where $\mathcal{S}(t,T)$ denotes the set of all $\mathbb{F}$-stopping times taking values in $[t,T]$;

\item[(c)]
$l_t$ satisfies \eqref{eq:condition-leisure};

\item[(d)]
the wealth process in \eqref{eq:wealth-dynamics} satisfies the borrowing constraint 
\begin{equation}\label{eq:wealth-constraint}
W_t^{c,l,\pi,\tau}\ge 0,
\qquad \forall t\ge 0
\quad \text{a.s.}
\end{equation}
\end{itemize}
We denote the set of all admissible strategies by $\mathcal{A}(w)$.

\begin{rem}
Condition \eqref{eq:wealth-constraint} imposes a non-negative wealth constraint, which can be interpreted as a no-borrowing condition. Since the model does not allow labor income to be collateralized through a natural borrowing limit, this formulation provides a transparent and economically standard solvency restriction.
\end{rem}

We may now state the agent's optimization problem.

\begin{pr}[Primal Problem]\label{pr:main}
Given $w\ge 0$, define
\begin{equation}
V(w)
=
\sup_{(c,l,\pi,\tau)\in\mathcal{A}(w)}
\mathbb{E}\left[
\int_0^\tau e^{-\beta t}u(c_t,l_t)\,dt
+
\int_\tau^\infty e^{-\beta t}u(c_t,\bar L)\,dt
\right].
\end{equation}
\end{pr}

To ensure that the post-retirement value is well defined, we impose the following assumption.

\begin{as}\label{as:K1-positive}
Let
\begin{equation}\label{eq:theta-def}
\theta:=\frac{\mu-r}{\sigma}
\end{equation}
denote the market price of risk, and define
\begin{equation}\label{eq:K1-def}
K_1
:=
r+\frac{\beta-r}{\gamma_1}
+\frac{\gamma_1-1}{2\gamma_1^2}\theta^2.
\end{equation}
We assume that
\[
K_1>0.
\]
\end{as}

\section{Optimization Problem}\label{sec:optimization}

We now reformulate the agent's problem in a way that is convenient for the subsequent dual analysis. Since retirement is irreversible and post-retirement leisure is fixed at the maximal level $\bar L$, it is natural to separate the post-retirement continuation problem from the pre-retirement optimization problem.

Once the agent retires, labor income disappears and leisure is permanently fixed at $\bar L$. The continuation problem after retirement is therefore the standard infinite-horizon consumption--investment problem with felicity $u(c,\bar L)$:
\begin{equation}\label{eq:retired-problem}
V_R(w)
:=
\sup_{(c,\pi)}
\mathbb{E}\left[
\int_0^\infty e^{-\beta t}u(c_t,\bar L)\,dt
\right],
\end{equation}
subject to
\begin{equation}\label{eq:retired-wealth}
dW_t
=
\bigl(rW_t+(\mu-r)\pi_t-c_t\bigr)\,dt
+\sigma \pi_t\,dB_t,
\qquad
W_0=w,
\end{equation}
and the non-negativity constraint
\begin{equation}\label{eq:retired-constraint}
W_t\ge 0
\qquad \text{for all } t\ge 0.
\end{equation}

Recall from Section~\ref{sec:model} that
\[
\gamma_1:=1-\alpha(1-\gamma),
\qquad
u(c,l)=\frac{c^{1-\gamma_1}l^{\gamma_1-\gamma}}{1-\gamma_1}.
\]
Hence
\[
u(c,\bar L)=\frac{c^{1-\gamma_1}\bar L^{\gamma_1-\gamma}}{1-\gamma_1}.
\]

By Assumption~\ref{as:K1-positive}, the post-retirement value function is
\begin{equation}\label{eq:VR}
V_R(w)
=
\frac{1}{1-\gamma_1}
K_1^{-\gamma_1}
\bar L^{\gamma_1-\gamma}
w^{1-\gamma_1}.
\end{equation}

Its convex dual is given by
\begin{equation}\label{eq:JR-def}
J_R(\xi)
:=
\sup_{w\ge 0}\bigl(V_R(w)-w\xi\bigr),
\qquad \xi>0,
\end{equation}
and a direct calculation yields
\begin{equation}\label{eq:JR}
J_R(\xi)
=
\frac{\bar L^{\frac{\gamma_1-\gamma}{\gamma_1}}}{K_1}
\frac{\gamma_1}{1-\gamma_1}
\xi^{-\frac{1-\gamma_1}{\gamma_1}}.
\end{equation}

The dual minimizer associated with wealth level \(w\) is characterized by
\[
V_R'(w)=\xi
\quad\Longleftrightarrow\quad
\xi
=
K_1^{-\gamma_1}\bar L^{\gamma_1-\gamma}w^{-\gamma_1},
\]
or equivalently,
\begin{equation}\label{eq:retired-dual-primal-map}
w
=
-J_R'(\xi)
=
\frac{\bar L^{\frac{\gamma_1-\gamma}{\gamma_1}}}{K_1}
\xi^{-1/\gamma_1}.
\end{equation}
Accordingly, the optimal post-retirement consumption and portfolio rules take the familiar Merton form
\begin{equation}\label{eq:retired-controls}
c_t^R = K_1 W_t,
\qquad
\pi_t^R=\frac{\theta}{\sigma\gamma_1}W_t.
\end{equation}

Before retirement, the agent chooses both consumption and leisure. Since labor income is equal to \(\zeta(\bar L-l)\), it is convenient to define the effective offset term
\begin{equation}\label{eq:eps-def}
\epsilon(l):=\zeta(\bar L-l),
\qquad 0<l\le L<\bar L.
\end{equation}
Equivalently,
\[
c+\epsilon(l)=c+\zeta(\bar L-l)=c+\zeta\bar L-\zeta l.
\]

In the dual formulation, this leads to the static maximization problem
\begin{equation}\label{eq:utilde-def}
\tilde{u}(\xi)
:=
\sup_{c\ge 0,\;0<l\le L}
\Bigl(
u(c,l)-\xi(c+\zeta l)+\zeta\bar L \xi
\Bigr),
\qquad \xi>0.
\end{equation}
Using the first-order conditions together with the upper constraint \(l\le L\), we obtain the threshold
\begin{equation}\label{eq:xitilde-def}
\tilde{\xi}
:=
\left(
\frac{\gamma_1-\gamma}{\zeta(1-\gamma_1)}
\right)^{\gamma_1}
L^{-\gamma},
\end{equation}
and the maximizers
\begin{equation}\label{eq:chat}
\hat{c}(\xi)
=
\begin{cases}
\left(\dfrac{\gamma_1-\gamma}{\zeta(1-\gamma_1)}\right)^{\frac{\gamma_1-\gamma}{\gamma}}
\xi^{-1/\gamma},
& \text{if } \xi>\tilde \xi, \\[3mm]
L^{\frac{\gamma_1-\gamma}{\gamma_1}}
\xi^{-1/\gamma_1},
& \text{if } 0<\xi\le \tilde \xi,
\end{cases}
\end{equation}
and
\begin{equation}\label{eq:lhat}
\hat{l}(\xi)
=
\begin{cases}
\left(\dfrac{\gamma_1-\gamma}{\zeta(1-\gamma_1)}\right)^{\frac{\gamma_1}{\gamma}}
\xi^{-1/\gamma},
& \text{if } \xi>\tilde \xi, \\[3mm]
L,
& \text{if } 0<\xi\le \tilde \xi.
\end{cases}
\end{equation}
Substituting these expressions into \eqref{eq:utilde-def}, we obtain
\begin{equation}\label{eq:utilde}
\begin{aligned}
\tilde{u}(\xi)
&=
\left[
\frac{\zeta\gamma}{\gamma_1-\gamma}
\left(
\frac{\gamma_1-\gamma}{\zeta(1-\gamma_1)}
\right)^{\frac{\gamma_1}{\gamma}}
\xi^{-\frac{1-\gamma}{\gamma}}
+\zeta\bar L \xi
\right]
{\bf 1}_{\{\xi>\tilde \xi\}}
\\
&\quad
+
\left[
\frac{\gamma_1}{1-\gamma_1}
L^{\frac{\gamma_1-\gamma}{\gamma_1}}
\xi^{-\frac{1-\gamma_1}{\gamma_1}}
-\zeta L \xi+\zeta\bar L \xi
\right]
{\bf 1}_{\{0<\xi\le \tilde \xi\}}.
\end{aligned}
\end{equation}

For later dynamic arguments, it is convenient to introduce the continuation value from an arbitrary date \(t\in[0,T]\). Let \(\mathcal A(t,w)\) denote the natural continuation analogue of \(\mathcal A(w)\), namely the set of admissible controls starting from time \(t\) with current wealth \(w\). Then the continuation value is
\begin{equation}\label{eq:continuation-value}
V(t,w)
=
\sup_{(c,l,\pi,\tau)\in\mathcal A(t,w)}
\mathbb{E}_t\left[
\int_t^\tau e^{-\beta(s-t)}u(c_s,l_s)\,ds
+
e^{-\beta(\tau-t)}V_R\bigl(W_\tau^{c,l,\pi,\tau}\bigr)
\right].
\end{equation}
In particular, the original primal problem in Section~\ref{sec:model} is recovered as \(V(w)=V(0,w)\).

We now develop a dual formulation for the continuation problem \eqref{eq:continuation-value}. The dual martingale method is particularly convenient here because it allows us to replace the pathwise wealth constraint by a single static budget constraint involving a non-increasing multiplier process.

Let \(H\) denote the state-price density process,
\begin{equation}\label{eq:spd}
H_t
:=
\exp\left(
-r t-\frac{1}{2}\theta^2 t-\theta B_t
\right),
\qquad t\ge 0,
\end{equation}
and, for \(0\le s_1\le s_2\le T\), let \(\mathcal{NI}(s_1,s_2)\) be the set of all \(\mathbb{F}\)-adapted, non-negative, non-increasing, right-continuous processes with left limits on \([s_1,s_2]\) satisfying \(D_{s_1-}=1\).

The following proposition converts the dynamic non-negativity constraint on wealth into a static budget inequality.

\begin{pro}\label{pro:static-budget}
Fix \(t\in[0,T]\) and \(w\ge 0\).

\begin{itemize}
\item[(a)]
For any admissible strategy \((c,l,\pi,\tau)\in\mathcal A(t,w)\),
\begin{equation}\label{eq:static-budget-a}
\sup_{D\in\mathcal{NI}(t,T)}
\mathbb{E}_t\left[
\int_t^\tau H_s D_s\bigl(c_s-\zeta(\bar L-l_s)\bigr)\,ds
+
H_\tau W_\tau^{c,l,\pi,\tau}D_{\tau-}
\right]
\le wH_t.
\end{equation}

\item[(b)]
Conversely, let \(\tau\in\mathcal S(t,T)\), let \(c\) and \(l\) be non-negative progressively measurable processes satisfying the admissibility conditions on \([t,\tau)\), and let \(\mathfrak B\) be a non-negative \(\mathcal F_{\tau-}\)-measurable random variable such that
\begin{equation}\label{eq:static-budget-b}
\sup_{D\in\mathcal{NI}(t,T)}
\mathbb{E}_t\left[
\int_t^\tau H_s D_s\bigl(c_s-\zeta(\bar L-l_s)\bigr)\,ds
+
H_\tau \mathfrak B D_{\tau-}
\right]
=
wH_t.
\end{equation}
Then there exists a portfolio process \(\pi\) such that \((c,l,\pi,\tau)\in\mathcal A(t,w)\) and
\[
W_\tau^{c,l,\pi,\tau}\ge \mathfrak B.
\]
\end{itemize}
\end{pro}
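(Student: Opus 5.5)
For part (a), the plan is an integration-by-parts argument on $H_sD_sW_s$ combined with a localization. Fix an admissible $(c,l,\pi,\tau)\in\mathcal A(t,w)$ and any $D\in\mathcal{NI}(t,T)$. Write $W=W^{c,l,\pi,\tau}$. Itô's formula gives
\[
d(H_sW_s)=-H_s\bigl(c_s-\zeta(\bar L-l_s)\bigr)ds+H_s(\sigma\pi_s-\theta W_s)\,dB_s \quad\text{on }[t,\tau).
\]
Since $D$ is of finite variation and non-increasing, the product rule yields, for $s\in[t,\tau]$,
\[
H_sW_sD_s+\int_t^s H_uD_u\bigl(c_u-\zeta(\bar L-l_u)\bigr)du
= wH_t+\int_t^s H_{u}W_{u}\,dD_u+\int_t^s D_uH_u(\sigma\pi_u-\theta W_u)\,dB_u .
\]
Because $W\ge0$, $H>0$ and $dD\le0$, the $dD$ term is nonpositive. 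Handling the jump of $D$ at $\tau$ with left limits gives the version with $D_{\tau-}$ on the terminal term. I will then localize the stochastic integral by stopping times $\rho_n\uparrow\tau$, take $\mathbb E_t$, and obtain
\[
\mathbb E_t\Bigl[H_{\rho_n}W_{\rho_n}D_{\rho_n}+\int_t^{\rho_n}H_uD_u(c_u-\zeta(\bar L-l_u))du\Bigr]\le wH_t .
\]

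The main obstacle in (a) is passing $n\to\infty$, because the integrand $c-\zeta(\bar L-l)$ has no sign. I would split it as $H D c$, which is nonnegative and handled by monotone convergence, and $HD\zeta(\bar L-l)$. For the latter, $0\le \zeta(\bar L-l)\le\zeta\bar L$, $0\le D\le1$, $\mathbb E_t\int_t^T H_u\,du<\infty$ and $\tau\le T$, so dominated convergence applies. For the terminal term $H_{\rho_n}W_{\rho_n}D_{\rho_n}\ge0$, I would use Fatou together with path continuity of $W$ and left limits of $D$; this gives $\liminf \ge H_\tau W_\tau D_{\tau-}$. Taking the supremum over $D$ then proves \eqref{eq:static-budget-a}.

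For part (b), I will follow the standard constrained-replication construction, as in Cvitanić–Karatzas or El Karoui–Jeanblanc. Define the upper-hedging (Snell-envelope-type) process
\[
X_s:=\frac{1}{H_s}\operatorname*{ess\,sup}_{D\in\mathcal{NI}(s,T)}\mathbb E_s\Bigl[\int_s^\tau H_uD_u\bigl(c_u-\zeta(\bar L-l_u)\bigr)du+H_\tau\mathfrak B D_{\tau-}\Bigr],\qquad s\in[t,\tau].
\]
Choosing $D$ that jumps to $0$ at a stopping time $\rho$ shows that this equals the optimal-stopping value $\operatorname{ess\,sup}_{\rho}\mathbb E_s[\int_s^{\rho}H_u(c_u-\zeta(\bar L-l_u))du+H_\tau\mathfrak B\mathbf 1_{\{\rho=\tau\}}]/H_s$, up to the normalization $D_{s-}=1$. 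Taking $\rho=s$ gives $X_s\ge0$, and $X_t=w$ by \eqref{eq:static-budget-b}.

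Next, the process $H_sX_s+\int_t^sH_u(c_u-\zeta(\bar L-l_u))du$ is a supermartingale, namely the Snell envelope. Its Doob–Meyer decomposition reads $M-A$ with $A$ nondecreasing, and martingale representation in the Brownian filtration gives $M_s=wH_t+\int_t^s\psi_u\,dB_u$. I then set $\sigma\pi_u:=\psi_u/H_u+\theta X_u$. The wealth with this $\pi$ and the given $(c,l)$ satisfies $H_sW_s=H_sX_s+A_s\ge H_sX_s\ge0$, so $W$ is nonnegative, hence admissible, and $W_\tau\ge X_\tau=\mathfrak B$.

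The delicate points in (b) are measurability and integrability, namely that $\int\pi^2<\infty$, which holds because $\psi$ is locally square integrable. Another is that $X_\tau=\mathfrak B$ at the terminal time, where I need the $\mathcal F_{\tau-}$-measurability of $\mathfrak B$ to match the $D_{\tau-}$ convention. A further point is that the $c\ge0$ and bounded-labor-income terms make the supremum finite, which follows from \eqref{eq:static-budget-b} itself. The post-$\tau$ wealth can be continued by, e.g., the retired Merton policy \eqref{eq:retired-controls}, keeping $W\ge0$.
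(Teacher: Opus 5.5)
Your strategy is the standard one and works here. The paper gives no proof of its own and defers to \citet[Proposition~1]{JeonKimYang2026}. Integration by parts with localization for (a), and a superhedging construction via the Snell envelope, Doob--Meyer and Brownian martingale representation for (b), is the usual way to prove such a static budget characterization. As written, however, three steps do not go through and need repair.

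\textbf{The portfolio in (b).} With $\sigma\pi_u=\psi_u/H_u+\theta X_u$, subtract the dynamics of $HX$ from those of $HW$. For $Y:=H(W-X)$ this gives $dY_s=dA_s-\theta Y_s\,dB_s$ with $Y_t=0$, so the identity $H_sW_s=H_sX_s+A_s$ is false. The correct choice is $\sigma\pi_u:=\psi_u/H_u+\theta W_u$. Here you first define the continuous process $W$ by $H_sW_s:=M_s-\int_t^sH_u\bigl(c_u-\zeta(\bar L-l_u)\bigr)\,du$, and then check with It\^o's formula that it is the wealth generated by $(c,l,\pi)$; your identity then holds. Alternatively, keep your $\pi$ and note that $Y_s=\mathcal E_s\int_{(t,s]}\mathcal E_u^{-1}\,dA_u\ge 0$, where $\mathcal E_s=\exp\bigl(-\theta(B_s-B_t)-\tfrac12\theta^2(s-t)\bigr)$; so $W\ge X$ still survives.

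\textbf{The Snell-envelope identification in (b).} Set $f:=c-\zeta(\bar L-l)$. To call $HX+\int Hf$ a Snell envelope you need both inequalities between the $D$-problem and the stopping problem. Choosing $D=\mathbf 1_{[s,\rho)}$ only shows that the $D$-value dominates the stopping value. For the reverse inequality, write $D_u=\int_0^1\mathbf 1_{\{u<\rho_a\}}\,da$ with $\rho_a:=\inf\{u\ge s: D_u\le a\}$. Then $\int_s^\tau H_uD_uf_u\,du=\int_0^1\int_s^{\rho_a\wedge\tau}H_uf_u\,du\,da$ and $D_{\tau-}=\int_0^1\mathbf 1_{\{\rho_a\ge\tau\}}\,da$. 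Hence every $D$-payoff is an average of stopping payoffs at $\rho_a\wedge\tau$.

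\textbf{The localization in (a).} Your localized inequality carries $D_{\rho_n}$. A reducing sequence typically has $\rho_n=\tau$ for large $n$, and then Fatou only yields the term $H_\tau W_\tau D_\tau$. Since $D_\tau\le D_{\tau-}$, this is weaker than the claim. The fix is to replace $D$ by $\hat D:=D\mathbf 1_{[t,\tau)}+D_{\tau-}\mathbf 1_{[\tau,T]}$. This process belongs to $\mathcal{NI}(t,T)$, leaves the functional unchanged, and has no jump at $\tau$.

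Incidentally, $X_\tau=\mathfrak B$ follows immediately from $D_{\tau-}=1$ for $D\in\mathcal{NI}(\tau,T)$. The $\mathcal F_{\tau-}$-measurability of $\mathfrak B$ is not what makes it work. In the Brownian filtration that your representation step presupposes, $\mathcal F_{\tau-}=\mathcal F_\tau$ anyway.
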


\begin{proof}
See \citet[Proposition~1]{JeonKimYang2026}.
\end{proof}
Fix \(t\in[0,T]\), \(w\ge 0\), and \(\tau\in\mathcal S(t,T)\). For a Lagrange multiplier \(\xi>0\), define
\begin{equation}\label{eq:Xi-process}
\Xi_s^{t,\xi}
:=
\xi e^{\beta(s-t)}\frac{H_s}{H_t},
\qquad s\ge t.
\end{equation}
Then the Lagrangian associated with \eqref{eq:static-budget-a} is
\begin{equation}\label{eq:Lagrangian}
\begin{aligned}
\mathfrak L
:=
&\;
\mathbb{E}_t\left[
\int_t^\tau e^{-\beta(s-t)}u(c_s,l_s)\,ds
+
e^{-\beta(\tau-t)}V_R\bigl(W_\tau^{c,l,\pi,\tau}\bigr)
\right]
\\
&\;
+\xi\left(
w-
\sup_{D\in\mathcal{NI}(t,T)}
\frac{1}{H_t}
\mathbb{E}_t\left[
\int_t^\tau H_s D_s\bigl(c_s-\zeta(\bar L-l_s)\bigr)\,ds
+
H_\tau W_\tau^{c,l,\pi,\tau}D_{\tau-}
\right]
\right).
\end{aligned}
\end{equation}
Using the definition of \(\tilde u\) in \eqref{eq:utilde-def} and the post-retirement dual transform \(J_R\), we obtain
\begin{equation}\label{eq:Lagrangian-bound}
\mathfrak L
\le
\inf_{D\in\mathcal{NI}(t,T)}
\mathbb{E}_t\left[
\int_t^\tau e^{-\beta(s-t)}
\tilde u\bigl(\mathcal Z_s^{t,\xi,D}\bigr)\,ds
+
e^{-\beta(\tau-t)}J_R\bigl(\mathcal Z_{\tau-}^{t,\xi,D}\bigr)
\right]
+\xi w,
\end{equation}
where
\begin{equation}\label{eq:Z-def}
\mathcal Z_s^{t,\xi,D}
:=
\Xi_s^{t,\xi}D_s,
\qquad
\mathcal Z_t^{t,\xi,D}=\xi.
\end{equation}

Here the running term in \eqref{eq:Lagrangian-bound} is simply
\(\tilde u(\mathcal Z)\), because the static maximization defining
\(\tilde u\) already incorporates both leisure choice and labor income.

For fixed \(\xi>0\) and \(D\in\mathcal{NI}(t,T)\), the corresponding candidate controls are
\begin{equation}\label{eq:candidate-controls-pre}
c_s=\hat c\bigl(\mathcal Z_s^{t,\xi,D}\bigr),
\qquad
l_s=\hat l\bigl(\mathcal Z_s^{t,\xi,D}\bigr),
\qquad s\in[t,\tau),
\end{equation}
and the candidate terminal wealth at retirement is
\begin{equation}\label{eq:candidate-terminal-wealth}
W_\tau^{c,l,\pi,\tau}
=
-J_R'\bigl(\mathcal Z_{\tau-}^{t,\xi,D}\bigr)
=
\frac{\bar L^{\frac{\gamma_1-\gamma}{\gamma_1}}}{K_1}
\bigl(\mathcal Z_{\tau-}^{t,\xi,D}\bigr)^{-1/\gamma_1}.
\end{equation}

Since \(D\) is non-increasing, the process \(\mathcal Z^{t,\xi,D}\) evolves as a singularly controlled diffusion:
\begin{equation}\label{eq:Z-dynamics}
d\mathcal Z_s^{t,\xi,D}
=
(\beta-r)\mathcal Z_s^{t,\xi,D}\,ds
-
\theta \mathcal Z_s^{t,\xi,D}\,dB_s
+
\Xi_s^{t,\xi}\,dD_s,
\qquad s\ge t.
\end{equation}

Motivated by \eqref{eq:Lagrangian-bound}, define
\begin{equation}\label{eq:game-value}
\mathcal J(t,z;D,\tau)
:=
\mathbb{E}_t\left[
\int_t^\tau e^{-\beta(s-t)}
\tilde u\bigl(\mathcal Z_s^{t,z,D}\bigr)\,ds
+
e^{-\beta(\tau-t)}J_R\bigl(\mathcal Z_{\tau-}^{t,z,D}\bigr)
\right],
\end{equation}
where \(\mathcal Z_t^{t,z,D}=z\). Then, for any admissible strategy,
\begin{equation}\label{eq:weak-duality-raw}
V(t,w)
\le
\inf_{\xi>0}
\left(
\sup_{\tau\in\mathcal S(t,T)}
\inf_{D\in\mathcal{NI}(t,T)}
\mathcal J(t,\xi;D,\tau)
+
\xi w
\right).
\end{equation}
This is the weak duality relation underlying our approach.

The right-hand side of \eqref{eq:weak-duality-raw} leads naturally to a two-person zero-sum game. The controller, acting as the minimizer, chooses a process \(D\in\mathcal{NI}(t,T)\), while the stopper, acting as the maximizer, chooses a stopping time \(\tau\in\mathcal S(t,T)\). Their common payoff functional is \(\mathcal J(t,z;D,\tau)\).

Define the lower and upper values
\begin{equation}\label{eq:lower-upper-values}
\underline J(t,z)
:=
\sup_{\tau\in\mathcal S(t,T)}
\inf_{D\in\mathcal{NI}(t,T)}
\mathcal J(t,z;D,\tau),
\qquad
\overline J(t,z)
:=
\inf_{D\in\mathcal{NI}(t,T)}
\sup_{\tau\in\mathcal S(t,T)}
\mathcal J(t,z;D,\tau).
\end{equation}
If \(\underline J(t,z)=\overline J(t,z)\), we say that the game has a value and denote the common value by \(J(t,z)\).

\begin{pr}[Dual Problem]\label{pr:dual}
If the stopping--control game admits a value, then the dual value function is defined by
\begin{equation}\label{eq:dual-problem}
J(t,z)
=
\sup_{\tau\in\mathcal S(t,T)}
\inf_{D\in\mathcal{NI}(t,T)}
\mathcal J(t,z;D,\tau)
=
\inf_{D\in\mathcal{NI}(t,T)}
\sup_{\tau\in\mathcal S(t,T)}
\mathcal J(t,z;D,\tau).
\end{equation}
\end{pr}

With this notation, weak duality takes the compact form
\begin{equation}\label{eq:weak-duality}
V(t,w)\le \inf_{\xi>0}\bigl(J(t,\xi)+\xi w\bigr).
\end{equation}

By the dynamic programming principle, one expects that the dual value function satisfies both the max--min and min--max forms of the Hamilton--Jacobi--Bellman quasi-variational inequality. Writing
\begin{equation}\label{eq:dual-operator}
\mathcal L
:=
\frac{\theta^2}{2}z^2\partial_{zz}
+
(\beta-r)z\partial_z
-
\beta,
\end{equation}
on the domain
\[
\mathcal R_T:=\{(t,z)\mid 0\le t<T,\; z>0\},
\]
the candidate HJBQVI is
\begin{align}
\begin{cases}\label{eq:HJBQVI1}
\max\left\{
\min\left\{
\partial_t J+\mathcal L J+\tilde u(z),
\;
-\partial_z J
\right\},
\;
J_R(z)-J(t,z)
\right\}=0,
& \text{in } \mathcal R_T,
\\[2mm]
J(T,z)=J_R(z),
& z>0,
\end{cases}
\end{align}
and equivalently
\begin{align}
\begin{cases}\label{eq:HJBQVI2}
\min\left\{
\max\left\{
\partial_t J+\mathcal L J+\tilde u(z),
\;
J_R(z)-J(t,z)
\right\},
\;
-\partial_z J
\right\}=0,
& \text{in } \mathcal R_T,
\\[2mm]
J(T,z)=J_R(z),
& z>0.
\end{cases}
\end{align}

The term \(J_R(z)-J(t,z)\) corresponds to the stopper's retirement decision, while the gradient constraint \(-\partial_z J\) reflects the minimizer's singular control through the non-increasing process \(D\). In the next section, we show that this HJBQVI can be recast as a double obstacle problem and solved via the associated free-boundary characterization.

For convenience, the definitions of the function spaces used throughout the paper are collected in Appendix~\ref{sec:appendix-function-spaces}.

To obtain a formulation that captures both parabolic HJBQVIs \eqref{eq:HJBQVI1} and \eqref{eq:HJBQVI2} simultaneously, we introduce the following parabolic variational inequality with an obstacle constraint and a gradient constraint.

\begin{pr}\label{eq:main3}
	Find a unique solution
	\[
	\mathcal{Q}\in W^{1,2}_{p,\mathrm{loc}}(\mathcal{D}_T)\cap C^{1,1}(\mathcal{D}_T)\cap C^{1,2}(\mathcal{D}_T\backslash\{\mathcal{Q}=J_R\})
	\]
	for some \(p\ge 3\), to the following parabolic variational inequality:
	\begin{equation}\label{obs:Q}
		\left\{\begin{aligned}
			&\partial_t \mathcal{Q}+\mathscr{L}\mathcal{Q}+\tilde{u}(z) =0,
			\quad && \text{if }\partial_z\mathcal{Q}< 0\text{ and }\mathcal{Q}>J_R,\\
			&\partial_t \mathcal{Q}+\mathscr{L}\mathcal{Q}+\tilde{u}(z)  \geq0,
			\quad && \text{if }\partial_z\mathcal{Q}= 0,\\
			&\partial_t \mathcal{Q}+\mathscr{L}\mathcal{Q}+\tilde{u}(z) \leq0,
			\quad && \text{if }\mathcal{Q}=J_R ,\\
			&\mathcal{Q}\geq J_R\ \text{ and }\ \partial_z\mathcal{Q}\leq 0,
			\quad&&\text{in }\mathcal{D}_T,\\
			&\mathcal{Q}(T, z) = J_R(z),
			\quad && \text{for }z>0,
		\end{aligned}\right.
	\end{equation}
\end{pr}
where \(\mathcal{D}_T:=\{(t,z)\mid 0\le t<T,\;0<z<\infty\}\), and the differential operator \(\mathscr{L}\) is given by
\[
\mathscr{L}:= \frac{\theta^2}{2}z^2\partial_{zz}+(\beta-r)z\partial_z - \beta.
\]

\section{Rigorous Analysis of the Variational Inequality \eqref{obs:Q}}

We begin by introducing the change of variables
\[
\tau=T-t,\qquad z=e^x,\qquad \text{and} \qquad Y(\tau,x)=\mathcal{Q}(T-\tau,e^x).
\]
Under this transformation, problem \eqref{obs:Q} becomes
\begin{equation}\label{obs1}
\begin{cases}
\partial_\tau Y(\tau,x)-\mathcal{L}Y(\tau,x)=f(x)
\quad \text{for } (\tau,x)\in\Omega_T \text{ such that } \partial_x Y(\tau,x)<0 \text{ and } Y(\tau,x)>\psi(x),
\\
\partial_\tau Y(\tau,x)-\mathcal{L}Y(\tau,x)\le f(x)
\quad \text{for } (\tau,x)\in\Omega_T \text{ such that } \partial_x Y(\tau,x)=0,
\\
\partial_\tau Y(\tau,x)-\mathcal{L}Y(\tau,x)\ge f(x)
\quad \text{for } (\tau,x)\in\Omega_T \text{ such that } Y(\tau,x)=\psi(x),
\\
Y(0,x)=\psi(x),
\quad \text{for } x\in\mathbb{R},
\end{cases}
\end{equation}
where
\[
\mathcal{L}:=\frac{\theta^2}{2}\partial_{xx}
+\left(\beta-r-\frac{\theta^2}{2}\right)\partial_x
-\beta,
\qquad
\Omega_T:=(0,T)\times\mathbb{R},
\]
\begin{align*}
\psi(x)
=J_R(e^x)
=&\frac{\bar{L}^{\frac{\gamma_1-\gamma}{\gamma_1}}}{K_1}\frac{\gamma_1}{1-\gamma_1}e^{\left(1-\frac{1}{\gamma_1}\right)x},
\\
f(x)
=\tilde{u}(e^x)=&
\left[
\frac{\gamma_1}{1-\gamma_1}L^{\frac{\gamma_1-\gamma}{\gamma_1}}e^{\left(1-\frac{1}{\gamma_1}\right)x}
+\zeta(\bar{L}-L)e^x
\right]
{\bf 1}_{\{x\le \tilde{x}\}}
\\
&\quad
+
\left[
\frac{\zeta\gamma}{\gamma_1-\gamma}
\left(
\frac{\gamma_1-\gamma}{\zeta(1-\gamma_1)}
\right)^{\frac{\gamma_1}{\gamma}}
e^{\left(1-\frac{1}{\gamma}\right)x}
+\zeta\bar{L}e^x
\right]
{\bf 1}_{\{x>\tilde{x}\}},
\end{align*}
and
\begin{equation*}
\tilde{x}=\ln(\tilde{\xi})
:=
\ln\left[\left(
\frac{\gamma_1-\gamma}{\zeta(1-\gamma_1)}
\right)^{\gamma_1}
L^{-\gamma}\right].
\end{equation*}
\begin{rem}\label{f_1 f_2}
Write
\[
f(x)=f_1(x){\bf 1}_{\{x\le \tilde{x}\}}+f_2(x){\bf 1}_{\{x>\tilde{x}\}},
\]
where
\[
f_1(x)=\frac{\gamma_1}{1-\gamma_1}L^{\frac{\gamma_1-\gamma}{\gamma_1}}e^{\left(1-\frac{1}{\gamma_1}\right)x}
+\zeta(\bar{L}-L)e^x
\]
and
\[
f_2(x)=
\frac{\zeta\gamma}{\gamma_1-\gamma}
\left(
\frac{\gamma_1-\gamma}{\zeta(1-\gamma_1)}
\right)^{\frac{\gamma_1}{\gamma}}
e^{\left(1-\frac{1}{\gamma}\right)x}
+\zeta\bar{L}e^x.
\]
A direct calculation shows that \(f_1(\tilde{x})=f_2(\tilde{x})\), that
\(f_1'(x)\ge f_2'(x)\) for all \(x\le \tilde{x}\), and that
\(f_1'(x)\le f_2'(x)\) for all \(x\ge \tilde{x}\). Hence,
\[
f=\max\{f_1,f_2\}.
\]
Moreover, since \(f_1'(\tilde{x})=f_2'(\tilde{x})\), it follows that
\(f\in C^{1+\alpha}_{\mathrm{loc}}(\mathbb{R})\) for all \(\alpha\in(0,1)\).
\end{rem}

\subsection{Existence of a strong solution}
To find the solution to \eqref{obs1}, we first consider a sequence of functions $\{Y_n\}_{n=1}^\infty$ in $\O_T^n:=(0,T)\times(-n,n)$, where $Y_n$ is defined to be the solution to the following variational inequality.
    \begin{equation}\label{obs1:bdd}
        \begin{cases}
            \partial_\tau Y_n(\tau,x) -\mathcal{L}Y_n(\tau,x)= f(x)
            \quad \text{for }\ (\tau,x)\in\O_T^n
            \ \text{ with }\ Y_n(\tau,x)>\psi(x)
            \ \text{ and }\ \pat_xY_n(\tau,x)<0,
            \\
            \partial_\tau Y_n(\tau,x) -\mathcal{L}Y_n(\tau,x)\ge f(x)
            \quad \text{for }\ (\tau,x)\in\O_T^n
            \ \text{ with }\ Y_n(\tau,x)=\psi(x)
            \\
            \partial_\tau Y_n(\tau,x) -\mathcal{L}Y_n(\tau,x)\le f(x)
            \quad \text{for }\ (\tau,x)\in\O_T^n
            \ \text{ with }\ \pat_xY_n(\tau,x)=0,
            \\
            \pat_x Y_n(\tau,-n)= \psi'(-n)
            \quad \text{and} \quad 
            \pat_x Y_n(\tau,n)= \psi'(n)
            \quad \text{for all }\ \tau\in(0,T),
            \\
            Y_n(0,x)=\psi(x) \quad \text{for }\ x\in[-n,n],
        \end{cases}
    \end{equation}
We then approximate the above problem by the following penalized problem:
    \begin{equation}\label{obs1:pen}
        \begin{cases}
            \partial_\tau Y_{n,\e} -\mathcal{L}Y_{n,\e}
            =f+\b_{l,\e}(Y_{n,\e}-\psi)+\b_{u,\e}(\pat_x Y_{n,\e}-\varphi_\e)
            \quad \text{in }\ \Omega_T^n,
            \\
            \pat_xY_{n,\e}(\tau,-n)= \psi'(-n)
            \quad \text{and}\quad 
            \pat_x Y_{n,\e}(\tau,n)= \psi'(n)
            \quad \text{for \ all }\ \tau\in(0,T),
            \\
            Y_{n,\e}(0,x)=\psi(x) \quad \text{for all }\ x\in[-n,n],
        \end{cases}
    \end{equation}
    where $\b_{l,\e}(\cdot)\in C^\infty(\mathbb{R})$ and
    $\b_{u,\e}(\cdot)\in C^\infty(\mathbb{R})$
    are functions such that
    \begin{equation*}
        \begin{cases}
            \b_{l,\e}(s)\ge 0 \quad \text{for \ all }\ s\in\mathbb{R},
            \\
            \b_{l,\e}(s)=0 \quad \text{for \ all }\ s\ge \e,
            \\
            \b_{l,\e}'(s)\le 0
            \quad \text{for \ all }\ s\in\mathbb{R},
            \\
            \b_{l,\e}(0)=\frac{\g_1}{1-\g_1}(\bar{L}^\gam-L^\gam)e^{(1+\frac{1}{\g_1})n}-\zeta(\bar{L}-L)e^n,
        \end{cases}
        \begin{cases}
            \b_{u,\e}(s)\le 0 \quad \text{for \ all }\ s\in\mathbb{R},
            \\
            \b_{u,\e}(s)=-\frac{2}{\e}s-1
            \quad \text{for all }\ s\ge 0,
            \\
            \b_{u,\e}(s)=0 \quad \text{for \ all }\ s\le -\e,
            \\
            \b_{u,\e}'(s),\ \b_{u,\e}''(s)\le 0
            \quad \text{for \ all }\ s\in\mathbb{R},
        \end{cases}
    \end{equation*}
    % \begin{equation*}
    % \b_{u,\e}(\l)=
    %     \begin{cases}
    %         0, \quad &\text{if }\ \l\le -\e,
    %         \\
    %         -(\l+\e)^2, \quad &\text{if }\ \l>-\e.
    %     \end{cases}
    % \end{equation*}
and \[\varphi_\e(x)=-\zeta\bar{L}\e e^x
\quad \text{for all }\ x\in\mathbb{R}.\]
% In particular, since the function $f$ is not sufficiently regular, we have introduced the smoothened function $f$
% which is defined by
% \[f(x)=m_\e(f_2(x)-f_1(x))+f_1(x)
% \quad \text{for all }\ x\in \mathbb{R}.\]
% Here, $m_\e(\cdot)\in C^\infty(\mathbb{R})$ is the function which satisfies
% \begin{equation*}
%     \begin{cases}
%         m_\e(s)\to s^+ \quad \text{as }\ \e\to 0^+,
%         \\
%         m_\e(s)=0 \quad \text{if } \ s\le -\e,
%         \quad
%         m_\e(s)=s \quad \text{if } \ s\ge \e,
%         \\
%         s^+\le m_\e(s)\le (s+\e)^+
%         \quad \forall \ s\in\mathbb{R},
%         \\
%         0\le m'(s)\le 1
%         \quad \forall \ s\in\mathbb{R}.
%     \end{cases}
% \end{equation*}
% Thus, $f(x)\le f(x)$
% for all $x\in\mathbb{R}$.
% Moreover, since $f'_i(x)\le \z\bar{L}e^x$ for $i=1,2$,
% we also deduce from $0\le m'_\e(\cdot)\le 1$ that $f'(x)\le \z\bar{L}e^x$ for all $x\in\mathbb{R}$.
\begin{lem}\label{Y n,e}
    Let $n\in\mathbb{N}$. Then for each $\e>0$ any $\a\in(0,1)$, there exists a unique solution $Y_{n,\e}\in C^{\frac{3+\a}{2},3+\a}({\Omega_T^n})$ to the problem \eqref{obs1:pen}.
    Furthermore, $Y_{n,\e}$ satisfies the following properties:
\begin{itemize}
    \item [(1)] \textbf{Constraints:} $Y_{n,\e}(\tau, x) \ge \psi(x)$ and $\partial_x Y_{n,\e}(\tau, x) \le 0$ for all $(\tau, x) \in \Omega_T^n$.
    \item [(2)] \textbf{Monotonicity:} $Y_{n,\e}$ is non-decreasing in time, i.e., $\partial_\tau Y_{n,\e}(\tau, x) \ge 0$ for all $(\tau, x) \in \Omega_T^n$.
    \item [(3)] \textbf{Growth Condition:} There exist constants $C > 0$ and $d > 0$ such that 
    \[
    |Y_{n,\e}(\tau, x)| \le Ce^{d|x|} \quad \text{for all } (\tau, x) \in \Omega_T^n,
    \]
    where the constants $C$ and $d$ are independent of $\e>0$ and $n\in \mathbb{N}$.
\end{itemize}
\end{lem}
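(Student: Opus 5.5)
The plan is to first solve \eqref{obs1:pen} as a semilinear parabolic Neumann problem on the bounded cylinder $\Omega_T^n$. I will use the Leray--Schauder fixed point theorem (equivalently, the standard existence theory for quasilinear equations in Ladyzhenskaya--Solonnikov--Ural'tseva), and then derive (1)--(3) from the maximum principle applied to $Y_{n,\e}$ and to its derivatives.

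\textbf{Step 1: existence, uniqueness and regularity.} For $v\in C^{\frac{1+\a}{2},1+\a}(\overline{\Omega_T^n})$, I will define $\mathcal T v$ as the solution of the linear Neumann problem
\[
\partial_\tau Y-\mathcal L Y=f+\b_{l,\e}(v-\psi)+\b_{u,\e}(\pat_x v-\varphi_\e),
\]
with the same boundary and initial data. The zeroth-order compatibility condition holds because $\pat_x\psi(\pm n)=\psi'(\pm n)$. Schauder theory then gives $\mathcal T v\in C^{1+\frac{\a}{2},2+\a}$, and $\mathcal T$ is compact on $C^{\frac{1+\a}{2},1+\a}$.

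For Leray--Schauder I need a priori bounds on $Y$, $\pat_xY$ and their H\"older norms for solutions of $Y=s\mathcal T Y$, uniformly in $s\in[0,1]$.
\begin{itemize}
\item The sup bound comes from the maximum principle, since the nonlinear terms are bounded below and $\b_{u,\e}$ grows only linearly.
\item The gradient bound comes from the Bernstein-type argument for one-dimensional equations with Lipschitz-in-gradient nonlinearity. The slopes of $\b_{u,\e}$ and $\b_{l,\e}$ are bounded for fixed $\e$.
\end{itemize}
Uniqueness follows from the comparison principle, using that $\b_{l,\e}$ is non-increasing and $\b_{u,\e}$ is Lipschitz.

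Higher regularity $C^{\frac{3+\a}{2},3+\a}$ follows by bootstrapping: differentiate the equation in $x$ and use $f\in C^{1+\a}_{\loc}$ from Remark~\ref{f_1 f_2}. The first-order compatibility condition at the corners will be handled by the choice of penalty normalizations, and if necessary by a standard mollification near $\tau=0$.

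\textbf{Step 2: the constraints (1).} For $Y_{n,\e}\ge\psi$, suppose $Y_{n,\e}-\psi$ has a negative minimum at an interior point $(\tau_0,x_0)$; the Neumann data make $\pat_x(Y-\psi)=0$ on $x=\pm n$, so boundary minima are treated the same way via Hopf. At that point $\b_{l,\e}(Y-\psi)\ge\b_{l,\e}(0)$, so
\[
\partial_\tau(Y-\psi)-\mathcal L(Y-\psi)\ \ge\ f+\mathcal L\psi+\b_{l,\e}(0)+\b_{u,\e}(\cdot).
\]
The explicit constant $\b_{l,\e}(0)$ is designed precisely to dominate $-(f+\mathcal L\psi)$ on $[-n,n]$. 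I will compute $\mathcal L\psi=-\frac{\bar L^{\gam}\g_1}{1-\g_1}e^{(1-1/\g_1)x}$ using the definition of $K_1$, and compare with $f_1$ and $f_2$. The $\b_{u,\e}$ term must be controlled separately: at a minimum $\pat_x Y=\psi'<0$, and I need $\psi'-\varphi_\e\le-\e$ for it to vanish, otherwise the sign bookkeeping must absorb it. This yields a contradiction, so $Y_{n,\e}\ge\psi$.

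For $\pat_xY_{n,\e}\le0$, set $V=\pat_xY_{n,\e}$ and differentiate the equation:
\[
\partial_\tau V-\mathcal L V-\b_{u,\e}'(\cdot)\pat_x V-\b_{l,\e}'(\cdot)(V-\psi')=f'+\b_{u,\e}'(\cdot)\varphi_\e'.
\]
Here $V=\psi'<0$ on the lateral boundary and at $\tau=0$. At a positive interior maximum of $V$, the term $\b_{u,\e}(V-\varphi_\e)=-\frac2\e(V-\varphi_\e)-1$ is very negative. Working directly on the original equation, this forces a contradiction with $f'\le\zeta\bar L e^x=-\varphi_\e'/\e$. I would try comparing $V$ with $0$ first, and if that fails, with $\varphi_\e$.

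\textbf{Step 3: monotonicity (2) and growth (3).} Put $W=\partial_\tau Y_{n,\e}$. It solves the linear equation
\[
\partial_\tau W-\mathcal L W-\b_{l,\e}'W-\b_{u,\e}'\pat_xW=0,
\]
with $\pat_xW=0$ at $x=\pm n$, and $-\b_{l,\e}'\ge0$. At $\tau=0$,
\[
W=\mathcal L\psi+f+\b_{l,\e}(0)+\b_{u,\e}(\psi'-\varphi_\e)\ge0,
\]
by the same computation as in Step 2. The maximum principle with Neumann data then gives $W\ge0$.

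For (3), the lower bound is $Y\ge\psi\ge-Ce^{d|x|}$. For the upper bound I will use the supersolution $\Phi=Ae^{K\tau}\cosh(dx)$, with $d$ larger than the exponents appearing in $\psi$ and $f$, so that $\pat_x\Phi(\pm n)$ dominates the Neumann data. The obstacle penalty vanishes where $\Phi\ge\psi+\e$, and $\b_{u,\e}\le0$, so comparison gives $Y\le\Phi$ with constants independent of $n$ and $\e$.

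\textbf{Main obstacle.} The delicate points are the sign verifications at $\tau=0$ and on $x=\pm n$: the exact form of $\b_{l,\e}(0)$, the interplay of $\varphi_\e$ with $f'$, and whether $\b_{l,\e}(0)$ grows in $n$ in a way that threatens the $n$-independence of the growth bound. I expect the gradient constraint $\pat_xY_{n,\e}\le0$ in Step 2 to be the hardest step, since $\b_{u,\e}$ is only one-sided linear.
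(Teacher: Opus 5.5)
Your architecture matches the paper's: Leray--Schauder plus Schauder bootstrapping, maximum/comparison principles for the constraints, and a supersolution for growth. However, there is a genuine gap in the step you yourself flag as the crux. The bound $\partial_x Y_{n,\varepsilon}\le 0$ is not actually proved, and the differentiated equation you wrote would make it fail.

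\textbf{The sign error.} Differentiating $\beta_{u,\varepsilon}(\partial_xY-\varphi_\varepsilon)$ in $x$ produces $\beta'_{u,\varepsilon}(\cdot)\,(\partial_xV-\varphi'_\varepsilon)$. Once $\beta'_{u,\varepsilon}\partial_xV$ and $\beta'_{l,\varepsilon}(V-\psi')$ are moved to the left, the source term is $f'-\beta'_{u,\varepsilon}(\cdot)\varphi'_\varepsilon$, not $f'+\beta'_{u,\varepsilon}(\cdot)\varphi'_\varepsilon$. With your sign, the right-hand side at a positive maximum of $V$ is $f'+2\zeta\bar L e^{x}$, which has no definite sign, so nothing is contradicted.

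\textbf{The correct argument.} You already have the right ingredient, $f'\le\zeta\bar Le^{x}=-\varphi'_\varepsilon/\varepsilon$; this holds because the remaining terms of $f_1'$ and $f_2'$ are negative. With the correct sign the argument closes at once. Since $V=\psi'<0$ on $\{\tau=0\}$ and on $x=\pm n$, a positive maximum of $V$ occurs at some $(\tau_0,x_0)\in(0,T]\times(-n,n)$. There $\partial_\tau V\ge0$, $\partial_xV=0$, $\partial_{xx}V\le0$ and $-\beta'_{l,\varepsilon}(\cdot)(V-\psi')\ge0$, so the left-hand side is at least $\beta V(\tau_0,x_0)>0$. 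Meanwhile $V-\varphi_\varepsilon>0$ at that point, so $\beta'_{u,\varepsilon}=-2/\varepsilon$ and the right-hand side equals
\[
f'(x_0)+\tfrac{2}{\varepsilon}\varphi'_\varepsilon(x_0)=f'(x_0)-2\zeta\bar L e^{x_0}\le-\zeta\bar Le^{x_0}<0 .
\]
The mechanism is the slope $-2/\varepsilon$ of $\beta_{u,\varepsilon}$ acting on $\varphi'_\varepsilon=-\zeta\bar L\varepsilon e^{x}$, which is why $\varphi_\varepsilon$ carries the factor $\varepsilon$. It is not the size of $\beta_{u,\varepsilon}$. At a maximum of $\partial_xY$ the undifferentiated equation gives no sign information on $\partial_\tau Y-\mathcal{L}Y$, so ``working directly on the original equation'' cannot yield a contradiction. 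No comparison of $V$ with $\varphi_\varepsilon$ is needed.

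\textbf{The open condition $\psi'-\varphi_\varepsilon\le-\varepsilon$.} You left this unresolved; the paper settles it by taking $\varepsilon<\varepsilon_0(n)$. On $[-n,n]$ one has $\psi'\le-2\delta_n<0$ and $|\varphi_\varepsilon|\le\zeta\bar L\varepsilon e^{n}$. Hence $\psi'-\varphi_\varepsilon\le-\delta_n<-\varepsilon$ and $\beta_{u,\varepsilon}(\psi'-\varphi_\varepsilon)\equiv0$, so the lemma is really proved only for small $\varepsilon$ depending on $n$. The paper also gets $\beta_{l,\varepsilon}(0)\ge-(f+\mathcal{L}\psi)$ on $[-n,n]$ from the monotonicity of $e^{-x}(f+\mathcal{L}\psi)$. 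These two facts together give both $Y_{n,\varepsilon}\ge\psi$ and the initial sign $\partial_\tau Y_{n,\varepsilon}(0,\cdot)\ge0$ that your Step~3 needs.

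\textbf{Where you take a different route.} For monotonicity (2), the paper compares $Y_{n,\varepsilon}(\cdot+\delta,\cdot)$ with $Y_{n,\varepsilon}$, using $Y_{n,\varepsilon}(\delta,\cdot)\ge\psi=Y_{n,\varepsilon}(0,\cdot)$ and the Neumann comparison principle. This avoids needing $\partial_\tau Y_{n,\varepsilon}$ and $\partial_{x\tau}Y_{n,\varepsilon}$ continuous up to the parabolic boundary, which your linearized argument requires. For growth (3), your supersolution $Ae^{K\tau}\cosh(dx)$ is a legitimate replacement for the paper's time-independent $\overline{Y}$. The Neumann inequalities at $x=\pm n$ and the condition $\Phi-\psi\ge\varepsilon$ are easy to check independently of $n$ and $\varepsilon$.
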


\begin{proof}
{ See Appendix~\ref{app:proof:Y-n-e}.}
\end{proof}
\begin{lem}\label{Yn}
   For each $n \in \mathbb{N}$, there exists a solution $Y_n \in W^{1,2}_p(\Omega_T^n) \cap C(\overline{\Omega_T^n})$ for $1 < p < \infty$ to the problem \eqref{obs1:bdd}. 
\end{lem}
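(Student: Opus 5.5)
The plan is to obtain $Y_n$ as the limit of the penalized solutions $Y_{n,\e}$ from Lemma~\ref{Y n,e} as $\e\to0^+$. The key step is a bound on the penalty terms
\[
\b_{l,\e}(Y_{n,\e}-\psi)\quad\text{and}\quad \b_{u,\e}(\pat_x Y_{n,\e}-\varphi_\e)
\]
in $L^\infty(\Omega_T^n)$, uniform in $\e$. With such a bound, the right-hand side of \eqref{obs1:pen} is uniformly bounded. Since $f$ is locally bounded on $[-n,n]$, the $L^p$ theory for the Neumann problem on the bounded cylinder $\Omega_T^n$ then gives $\|Y_{n,\e}\|_{W^{1,2}_p(\Omega_T^n)}\le C_n$ for every $1<p<\infty$, with $C_n$ independent of $\e$. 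The compatibility conditions hold because the boundary data $\psi'(\pm n)$ agree with the initial datum $\psi$ at the corners.

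\textbf{Bounding the obstacle penalty.} By property (1) of Lemma~\ref{Y n,e}, $Y_{n,\e}\ge\psi$, and $\b_{l,\e}$ is non-increasing. Hence
\[
0\le \b_{l,\e}(Y_{n,\e}-\psi)\le \b_{l,\e}(0),
\]
and $\b_{l,\e}(0)$ was chosen as an explicit constant depending only on $n$.

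\textbf{Bounding the gradient penalty.} This is the main obstacle. We have $\b_{u,\e}\le0$, and $\b_{u,\e}(s)=0$ for $s\le-\e$. Property (1) gives $\pat_xY_{n,\e}\le0$, but this only yields $\pat_xY_{n,\e}-\varphi_\e\le \zeta\bar L\e e^x$. On the region where this quantity lies in $[0,\zeta\bar L\e e^n]$, the linear form $\b_{u,\e}(s)=-\frac{2}{\e}s-1$ gives
\[
|\b_{u,\e}|\le 2\zeta\bar L e^n+1,
\]
which is bounded uniformly in $\e$. On $[-\e,0]$, concavity and monotonicity of $\b_{u,\e}$ together with $\b_{u,\e}(0)=-1$ give $|\b_{u,\e}|\le1$. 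So $\b_{u,\e}(\pat_xY_{n,\e}-\varphi_\e)$ is bounded by a constant $C_n$.

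This argument requires that the upper gradient bound be consistent with the shift $\varphi_\e$. If property (1) were not available in that form, I would instead differentiate \eqref{obs1:pen} in $x$ and apply a maximum principle to $\pat_xY_{n,\e}-\varphi_\e$ at an interior maximum. There the penalty would have to be controlled by $f'\le\zeta\bar Le^x$, which follows from Remark~\ref{f_1 f_2}, using $\mathcal L\varphi_\e$ computed explicitly, together with the Neumann data at $x=\pm n$.

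\textbf{Passing to the limit.} With the uniform $W^{1,2}_p$ bound, I take $p$ large. Embedding gives $Y_{n,\e}$ and $\pat_xY_{n,\e}$ bounded in $C^{\alpha,\alpha/2}(\overline{\Omega_T^n})$. By Arzelà--Ascoli and weak compactness, a subsequence satisfies $Y_{n,\e}\to Y_n$ weakly in $W^{1,2}_p$ and $Y_{n,\e}\to Y_n$, $\pat_xY_{n,\e}\to\pat_xY_n$ uniformly. Consequently $Y_n\ge\psi$, $\pat_xY_n\le0$, the initial condition and Neumann conditions pass to the limit, and $Y_n\in C(\overline{\Omega_T^n})$.

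\textbf{Recovering the variational inequality.} On the open set $\{Y_n>\psi,\ \pat_xY_n<0\}$, uniform convergence makes both penalties vanish for small $\e$ locally, so the equation holds a.e. there. On $\{\pat_xY_n=0\}$ we have $\b_{l,\e}$ vanishing locally where $Y_n>\psi$ and $\b_{u,\e}\le0$, so weak limits give $\partial_\tau Y_n-\mathcal LY_n\le f$. At a point where both $Y_n=\psi$ and $\pat_xY_n=0$ we would have $\psi'=0$, which cannot happen, so these cases do not overlap. On $\{Y_n=\psi\}$ we have $\pat_xY_n=\psi'<0$, so $\b_{u,\e}$ vanishes locally, and since $\b_{l,\e}\ge0$ we get $\ge f$. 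These inequalities are first obtained a.e. on the open sets where the relevant penalty vanishes, and then, using $W^{1,2}_p$ regularity, a.e. on the contact sets, which suffices for a strong solution.
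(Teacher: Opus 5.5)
Your proposal is correct and follows essentially the same route as the paper. The penalty terms are bounded uniformly in $\e$ via $Y_{n,\e}\ge\psi$, $\pat_x Y_{n,\e}\le 0$ and the monotonicity of $\b_{l,\e},\b_{u,\e}$; your case split reproduces the paper's bound $\b_{u,\e}(-\varphi_\e)=-2\zeta\bar L e^x-1\ge -2\zeta\bar L e^n-1$. This is followed by the $W^{1,2}_p$ estimate, subsequential weak and uniform convergence of $Y_{n,\e}$ and $\pat_x Y_{n,\e}$, and recovery of the two inequalities by testing on the open sets $\{Y_n>\psi\}$ and $\{\pat_x Y_n<0\}$, which contain $\{\pat_x Y_n=0\}$ and $\{Y_n=\psi\}$ respectively; your fallback maximum-principle argument is not needed, since property (1) of Lemma~\ref{Y n,e} supplies $\pat_x Y_{n,\e}\le 0$ exactly as you use it.
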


\begin{proof}
{ See Appendix~\ref{app:proof:Yn}.}
\end{proof}
\begin{lem}\label{Y}
    There exists a solution $Y\in W^{1,2}_{p,{\rm loc}}(\Omega_T)\cap C(\overline{\Omega_T})$, $1<p<\infty$ to the problem \eqref{obs1}.
\end{lem}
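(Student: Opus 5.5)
We obtain $Y$ as a limit of the bounded-domain solutions $Y_n$ from Lemma~\ref{Yn}, letting $n\to\infty$ via uniform local estimates and a diagonal argument.

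\textbf{Step 1: bounds independent of $n$.} The properties of Lemma~\ref{Y n,e} pass to the limit $\e\to0$ in the construction of $Y_n$ (Appendix~\ref{app:proof:Yn}), since $Y_{n,\e}\to Y_n$ locally uniformly. Hence $Y_n\ge\psi$, $\pat_xY_n\le0$, $\pat_\tau Y_n\ge0$ and $|Y_n(\tau,x)|\le Ce^{d|x|}$ on $\O_T^n$, with $C,d$ independent of $n$.

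\textbf{Step 2: local $W^{1,2}_p$ bounds.} Fix $R>0$ and take $n\ge R+1$. On $\O_T^{R+1}$ we need a bound for $\pat_\tau Y_n-\mathcal L Y_n$ in $L^\infty$, uniform in $n$. At the $\e$-level this quantity equals $f+\b_{l,\e}(Y_{n,\e}-\psi)+\b_{u,\e}(\pat_xY_{n,\e}-\varphi_\e)$, so two penalty bounds are required.
\begin{itemize}
\item The lower penalty is bounded by $\b_{l,\e}(0)$ through its monotonicity and $Y_{n,\e}\ge\psi$. However, $\b_{l,\e}(0)$ depends on $n$, so on compact sets we should instead use the local value: on the contact set, $f+\b_{l}\le -(\pat_\tau-\mathcal L)\psi$ up to bounded terms. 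This quantity is locally bounded on $[-R-1,R+1]$ independently of $n$, because $\psi$ is smooth.
\item The gradient penalty satisfies $\b_{u,\e}\le0$. A lower bound follows from the inequality $\pat_\tau Y_{n,\e}\ge0$ together with the local growth bound. Concretely, $\pat_\tau Y-\mathcal L Y\ge0$ fails only where $-\mathcal LY$ is very negative, and interior Schauder/$L^p$ estimates control $\mathcal L Y$ in terms of $\sup|Y|$ on the larger cylinder plus the right-hand side.
\end{itemize}
Once we have $\|\pat_\tau Y_n-\mathcal L Y_n\|_{L^\infty(\O_T^{R+1})}\le C_R$, the interior (up to $\tau=0$, with smooth initial datum $\psi$) $L^p$ estimates give $\|Y_n\|_{W^{1,2}_p(\O_T^R)}\le C_{R,p}$ for every $p<\infty$.

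I expect this step — the $n$-independent local bound on the penalty terms, especially the gradient penalty — to be the main obstacle. The first thing I would try is the maximum principle applied to $\pat_xY_{n,\e}$, which satisfies a linear equation with an extra $\b_{u,\e}'$ term, combined with a cutoff function.

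\textbf{Step 3: passage to the limit.} By Sobolev embedding, $W^{1,2}_p\hookrightarrow C^{\a/2,1+\a}$ for $p>3$. Using a diagonal subsequence, $Y_n\to Y$ weakly in $W^{1,2}_p(\O_T^R)$ for every $R$, and $Y_n\to Y$, $\pat_xY_n\to\pat_xY$ locally uniformly. Therefore $Y\in W^{1,2}_{p,\loc}(\O_T)\cap C(\overline{\O_T})$, and $Y$ inherits $Y\ge\psi$, $\pat_xY\le0$, $Y(0,\cdot)=\psi$ and the growth bound.

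\textbf{Step 4: the variational inequality.} Write $g_n:=\pat_\tau Y_n-\mathcal LY_n-f$, which converges weakly in $L^p_{\loc}$ to $g:=\pat_\tau Y-\mathcal LY-f$. We check each case of \eqref{obs1}.
\begin{itemize}
\item On the open set $\{Y>\psi,\ \pat_xY<0\}$, uniform convergence gives $Y_n>\psi$ and $\pat_xY_n<0$ on compacts for large $n$. Hence $g_n=0$ there, and so $g=0$.
\item The inequalities $g\ge0$ on $\{Y=\psi\}$ and $g\le0$ on $\{\pat_xY=0\}$ follow because each $Y_n$ satisfies the complementary form, namely $g_n\le0$ a.e.\ where $Y_n>\psi$ and $g_n\ge0$ where $\pat_xY_n<0$.
\end{itemize}
Since these sign conditions are closed under weak limits on open neighborhoods, and the relevant contact sets are reached from their complements, the limit satisfies \eqref{obs1}. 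Combined with the a.e.\ facts that $\pat_\tau Y-\mathcal LY=f$ a.e.\ on $\{Y=\psi\}$ only where permitted, this identifies $Y$ as a strong solution.
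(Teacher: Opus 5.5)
\textbf{Gap in Step 2.} Steps 1, 3 and 4 follow the paper's outline: uniform local $W^{1,2}_p$ bounds, a diagonal subsequence, and passing the complementary inequalities to the limit. But Step 2, the only nontrivial step, is not carried out, and the route you sketch does not work.

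At the $\varepsilon$-level, the only available bound on the obstacle penalty is $0\le\beta_{l,\varepsilon}(Y_{n,\varepsilon}-\psi)\le\beta_{l,\varepsilon}(0)$, and $\beta_{l,\varepsilon}(0)$ grows exponentially in $n$. There is no contact set for $Y_{n,\varepsilon}$, so the claim ``on the contact set, $f+\beta_l\le-(\partial_\tau-\mathcal L)\psi$ up to bounded terms'' has no content there.

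For the gradient penalty, you can write $\beta_{u,\varepsilon}=\partial_\tau Y_{n,\varepsilon}-\mathcal L Y_{n,\varepsilon}-f-\beta_{l,\varepsilon}$ and use $\partial_\tau Y_{n,\varepsilon}\ge0$. This only bounds $\beta_{u,\varepsilon}$ from below in terms of $-\mathcal L Y_{n,\varepsilon}$, that is, in terms of $\partial_{xx}Y_{n,\varepsilon}$. Controlling that by interior $L^p$ or Schauder estimates is circular: those estimates need exactly the right-hand-side bound (including $\beta_{u,\varepsilon}$) that you are trying to prove. You flag this step yourself and leave it open.

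\textbf{The missing idea.} Let $\varepsilon\to0$ first. Then estimate $\partial_\tau Y_n-\mathcal L Y_n$ directly for the $W^{1,2}_p$ function $Y_n$, using that Sobolev derivatives vanish a.e.\ on level sets.
\begin{itemize}
\item On $\{Y_n>\psi,\ \partial_xY_n<0\}$ the quantity equals $f$.
\item On $\{Y_n=\psi\}$ all derivatives of $Y_n-\psi$ vanish a.e., so it equals $-\mathcal L\psi$.
\item On $\{\partial_xY_n=0\}$ one has $\partial_{xx}Y_n=0$ a.e., so it equals $\partial_\tau Y_n+\beta Y_n$. This is squeezed: $\beta\psi\le\partial_\tau Y_n+\beta Y_n\le f$, using $\partial_\tau Y_n\ge0$, $Y_n\ge\psi$, and the inequality $\partial_\tau Y_n-\mathcal L Y_n\le f$ that the variational inequality gives on that set.
\end{itemize}
This yields a bound on $\Omega_T^{2R}$ depending only on $R$, with no penalty terms and no second-derivative control. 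The interior $W^{1,2}_p$ estimate together with $\psi\le Y_n\le\overline{Y}$ then closes Step 2. Your ingredient $\partial_\tau\ge0$ is the right one; it is just applied at the wrong level.

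\textbf{A smaller point in Step 4.} The clean way to get the required signs on $\{Y=\psi\}$ and $\{\partial_xY=0\}$ is through two inclusions, both valid because $\psi'<0$:
\[
\{Y=\psi\}\subset\{\partial_xY=\psi'<0\},\qquad \{\partial_xY=0\}\subset\{Y>\psi\}.
\]
The weak limits of $g_n\ge0$ on $\{\partial_xY_n<0\}$ and $g_n\le0$ on $\{Y_n>\psi\}$, taken on compact subsets of these open sets, then cover both contact sets.
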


\begin{proof}
{ See Appendix~\ref{app:proof:Y}.}
\end{proof}
\subsection{Free boundary analysis}
{
Before entering the technical details, we summarize the logic of this subsection. The obstacle contact set first determines the retirement boundary, while the gradient constraint is analyzed through an auxiliary obstacle problem for the spatial derivative. The central boundary for the verification/uniqueness step is the binding boundary associated with the gradient constraint: in the transformed variables it is \(\mathcal X^B(\tau)\), and in the original variables it becomes \(z_B(t)=\exp(\mathcal X^B(T-t))\), which is monotone increasing in calendar time \(t\).

{
\[
\resizebox{0.93\textwidth}{!}{%
\begin{tikzpicture}[
    box/.style={
        rectangle,
        rounded corners,
        draw=black,
        thick,
        align=center,
        inner sep=7pt,
        minimum width=3.45cm,
        minimum height=1.15cm,
        text=black,
        font=\small
    },
    widebox/.style={
        rectangle,
        rounded corners,
        draw=black,
        thick,
        align=center,
        inner sep=8pt,
        minimum width=10.9cm,
        minimum height=0.95cm,
        text=black,
        font=\small
    },
    arrow/.style={
        -{Latex[length=3.5mm,width=2.5mm]},
        very thick,
        draw=black
    },
    title/.style={
        text=black,
        align=center,
        font=\small\bfseries
    }
]

% ==========================================================
% Step 1: stopping/free-retirement structure
% ==========================================================
\node[title] at (0,2.20) {Step 1: stopping/free-retirement structure};

\node[box] (Y) at (-4.8,0.75) {
Dual value\\[3pt]
$Y$
};

\node[box] (Y0) at (0,0.75) {
Normalized value\\[3pt]
$Y_0=e^{-x}Y$
};

\node[box, minimum width=4.25cm] (R) at (4.8,0.75) {
Stopping contact set\\[3pt]
$\{Y_0=\psi_0\}$\\[3pt]
retirement boundary $\mathcal X^R$
};

\draw[arrow] (Y) -- (Y0);
\draw[arrow] (Y0) -- (R);

% ==========================================================
% Transition: differentiation on the continuation region
% ==========================================================
\node[widebox] (T) at (0,-1.35) {
Differentiate the VI in $x$ on the continuation region
$\mathcal C_T=\{Y>\psi\}$
};

\draw[arrow] (Y0.south) -- (T.north);
\draw[arrow] (T.south) -- ++(0,-0.65);

% ==========================================================
% Step 2: gradient constraint and binding boundary
% ==========================================================
\node[title] at (0,-2.70) {Step 2: gradient constraint and binding boundary};

\node[box] (V) at (-4.8,-4.25) {
Gradient of the VI\\[3pt]
$\partial_xY=V$
};

\node[box] (V0) at (0,-4.25) {
Normalized gradient\\[3pt]
$V_0=e^{-x}V$
};

\node[box, minimum width=4.25cm] (B) at (4.8,-4.25) {
Binding contact set\\[3pt]
$\{V_0=0\}$\\[3pt]
binding boundary $\mathcal X^B$
};

\node[box, minimum width=5.0cm] (zB) at (4.8,-6.80) {
Original time scale\\[3pt]
$z_B(t)=\exp(\mathcal X^B(T-t))$\\[3pt]
is increasing in $t$
};

\draw[arrow] (V) -- (V0);
\draw[arrow] (V0) -- (B);
\draw[arrow] (B) -- (zB);

\end{tikzpicture}%
}
\]
}

Thus, the retirement boundary organizes the obstacle-contact region, whereas the binding boundary organizes the gradient-constrained region. The maximal strong solution is the one compatible with this monotone binding-boundary structure, and the verification theorem later identifies it with the value of the dual stopper--singular-controller game.
}
\subsubsection{Investigation of the contact region}
In this section, we analyze the location of the contact region $\{Y=\psi\}$ and its boundary. 
In order to verify the region where such boundary emerges, we first construct suitable auxiliary functions and apply the comparison principle for variational inequalities \cite[Theorem~1.2]{YangYan2008}.
Since the function $f$ appearing on the right-hand side consists of several terms with different exponential rates, we introduce the following transformed problem:
\begin{equation}\label{obs:Y_0}
        \begin{cases}
            \partial_\tau Y_0(\tau,x) -\widetilde{\cL}Y_0(\tau,x)=f_0(x)
            \quad \text{if }\ Y_0(\tau,x)>\psi_0(x),\text{ and }\ \pat_x Y_0(\tau,x)+Y_0(\tau,x)<0
            \quad  \forall (\tau,x)\in\O_T,
            \\
            \partial_\tau Y_0(\tau,x) -\widetilde{\cL}Y_0(\tau,x)\le f_0(x)
            \quad \text{if }\ \pat_x Y_0(\tau,x)+Y_0(\tau,x)=0
            \quad  \forall (\tau,x)\in\O_T,
            \\
            \partial_\tau Y_0(\tau,x) -\widetilde{\cL}Y_0(\tau,x)\ge f_0(x)
            \quad \text{if }\ Y_0(\tau,x)=\psi_0(x)
            \quad  \forall (\tau,x)\in\O_T,
            \\
            Y_0(0,x)=\psi_0(x) \quad \text{for \ all }\ x\in \mathbb{R},
        \end{cases}
    \end{equation}
where $\widetilde{\cL}:=\frac{\t^2}{2}\pat_{xx}+(\b-r+\frac{\t^2}{2})\pat_x-r$, and
\[Y_0(\tau,x)=e^{-x}Y(\tau,x),\quad  f_0(x)=e^{-x}f(x), \quad \psi_0(x)=e^{-x}\psi(x)
\quad \text{for \ all }\ (\tau,x)\in \O_T.\]
Since the contact set $\{Y=\psi\}$ is preserved under this transformation, that is, $\{Y=\psi\}=\{Y_0=\psi_0\}$,
the free boundaries also coincide.
Consequently, every result obtained for the transformed pair $(Y_0,\psi_0)$ carries over directly to the original functions
$(Y,\psi)$.

% \begin{rem}
%     It is important to verify the points where $\pat_\tau \psi_0 -\mathcal{L}\psi_0 \ge u_0$ holds.
%     Note that 
% \end{rem}

\begin{lem}\label{Y>psi}
    Let $Y_0$ be the solution to the problem \eqref{obs:Y_0}. Then,
    $Y_0>\psi_0$ in $(0,T)\times(x^*,\infty)$
    where
    \begin{equation}\label{x^*}
        x^*=\g_1\ln\left( \frac{\g_1}{1-\g_1}\frac{\bar{L}^{\frac{\g_1-\g}{\g_1}}-L^{\frac{\g_1-\g}{\g_1}}}{\zeta (\bar{L}-L)} \right).
    \end{equation}
\end{lem}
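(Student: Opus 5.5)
The plan is to show that the obstacle $\psi$ cannot satisfy the supersolution inequality required on the contact set when $x>x^*$. Once that is established, a local obstacle-problem argument combined with the strong maximum principle rules out contact points there. I will work with $(Y,\psi)$ directly. Since $\{Y=\psi\}=\{Y_0=\psi_0\}$, the conclusion transfers verbatim to $(Y_0,\psi_0)$.

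\emph{Step 1 (the sign of $f+\mathcal L\psi$).} Since $\psi(x)=J_R(e^x)$ and $J_R$ is the dual of the infinite-horizon retiree problem, a direct computation using the definition of $K_1$ gives
\[
-\mathcal L\psi(x)=f_R(x):=\frac{\gamma_1}{1-\gamma_1}\bar L^{\frac{\gamma_1-\gamma}{\gamma_1}}e^{\left(1-\frac1{\gamma_1}\right)x}.
\]
This is the dual of $c\mapsto u(c,\bar L)$, and the identity should be checked explicitly. By Remark~\ref{f_1 f_2} we have $f=\max\{f_1,f_2\}\ge f_1$. Moreover,
\[
f_1(x)-f_R(x)=e^x\Bigl[\zeta(\bar L-L)-\frac{\gamma_1}{1-\gamma_1}\bigl(\bar L^{\frac{\gamma_1-\gamma}{\gamma_1}}-L^{\frac{\gamma_1-\gamma}{\gamma_1}}\bigr)e^{-x/\gamma_1}\Bigr].
\]
The coefficient $\frac{\gamma_1}{1-\gamma_1}\bigl(\bar L^{\frac{\gamma_1-\gamma}{\gamma_1}}-L^{\frac{\gamma_1-\gamma}{\gamma_1}}\bigr)$ is positive in both cases $\gamma<1$ and $\gamma>1$. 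Hence the bracket is positive exactly when $e^{-x/\gamma_1}$ lies below the threshold, that is, when $x>x^*$ with $x^*$ as in \eqref{x^*}. Therefore
\[
g:=f+\mathcal L\psi=f-f_R\ge f_1-f_R>0 \quad\text{on } (x^*,\infty).
\]

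\emph{Step 2 (localization to a pure obstacle problem).} Suppose, for contradiction, that $Y(\tau_0,x_0)=\psi(x_0)$ for some $\tau_0\in(0,T)$ and $x_0>x^*$. Because $Y\in W^{1,2}_{p,\mathrm{loc}}$ with $p>3$, the derivative $\partial_xY$ is continuous. At $(\tau_0,x_0)$ the function $Y-\psi\ge0$ attains its minimum $0$, so $\partial_xY(\tau_0,x_0)=\psi'(x_0)<0$. Hence there is a parabolic neighbourhood $N\subset(0,T)\times(x^*,\infty)$ of $(\tau_0,x_0)$ on which $\partial_xY<0$. On $N$ the gradient-constraint branch of \eqref{obs1} is inactive, and $w:=Y-\psi$ satisfies
\[
\min\{\partial_\tau w-\mathcal L w-g,\;w\}=0 \quad\text{a.e. in } N.
\]
On $\{w=0\}\cap N$ we have $\partial_\tau w=\partial_x w=\partial_{xx}w=0$ a.e. by Stampacchia's lemma for Sobolev functions. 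There $\partial_\tau w-\mathcal Lw=0<g$, which contradicts $\partial_\tau w-\mathcal Lw\ge g$ unless this set is null. Consequently $\partial_\tau w-\mathcal L w=g>0$ a.e. in $N$.

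\emph{Step 3 (strong maximum principle).} The function $w\ge0$ is therefore a strong $W^{1,2}_p$ supersolution of $\partial_\tau w-\mathcal L w\ge0$ in $N$, where $-\mathcal L$ contains the zeroth-order term $+\beta w$ with $\beta>0$. It attains the interior minimum $0$ at $(\tau_0,x_0)$ with $\tau_0>0$. The strong maximum principle for strong solutions (Nirenberg/Bony type, valid for $W^{1,2}_p$) then forces $w\equiv0$ in the part of $N$ lying before $\tau_0$. This gives $\partial_\tau w-\mathcal Lw=0$ on a set of positive measure, contradicting $\partial_\tau w-\mathcal L w=g>0$ a.e. in $N$. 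Hence $Y>\psi$, equivalently $Y_0>\psi_0$, on $(0,T)\times(x^*,\infty)$.

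\emph{Main obstacle.} I expect the delicate point to be Step 2, namely justifying that the gradient constraint is inactive near a contact point. This relies on the continuity of $\partial_xY$ from the $W^{1,2}_p$ embedding with $p>3$ and on the strict negativity $\psi'<0$.
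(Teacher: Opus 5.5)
Your proof is correct and rests on the same key observation as the paper's proof. On the contact set the supersolution inequality forces $f+\mathcal{L}\psi\le 0$ (equivalently $\mathcal{F}_0=f_0+\widetilde{\mathcal{L}}\psi_0\le 0$), and this fails for $x>x^*$. The differences are in how you obtain the two ingredients.

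For the sign, the paper shows that $\mathcal{F}_0$ is strictly increasing with a unique zero and cites \citet{JeonOh2023} for $x^*<\tilde x$. Your identity $-\mathcal{L}\psi=K_1\psi=f_R$, combined with $f\ge f_1$, gives positivity on $(x^*,\infty)$ without locating $x^*$ relative to $\tilde x$. That is exactly what this lemma needs. The paper's route buys more: the two-sided sign information coming from the monotonicity of $\mathcal{F}_0$ is reused later, e.g.\ in Lemma~\ref{contact_2} and for $\mathcal{F}_0(\mathcal{X}^R(\tau))<0$.

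For the conclusion, the paper's a.e.\ argument, read literally, only shows that $\{Y_0=\psi_0\}\cap\{x>x^*\}$ is Lebesgue-null. Your Steps 2--3 supply the upgrade to the pointwise statement, which the paper leaves implicit.

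You can shorten Step 3. On $N$ you have $\partial_\tau w-\mathcal{L}w=g$ a.e.\ with $g$ H\"older continuous, so $w$ is classical there by interior Schauder theory. At the interior zero minimum $(\tau_0,x_0)$ you have $w=0$, $\partial_x w=0$, $\partial_\tau w=0$ and $\partial_{xx}w\ge 0$. Hence $\partial_\tau w-\mathcal{L}w\le 0<g(x_0)$ at that point, without invoking the strong maximum principle.

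One minor point concerns the justification of $f\ge f_1$. Justify it by noting that $l=L$ is admissible in the static problem defining $\tilde u$, not via Remark~\ref{f_1 f_2}. The derivative comparisons in that remark actually give $f_2\ge f_1$ everywhere. So $\max\{f_1,f_2\}=f_2$, which differs from $f=f_1$ on $\{x<\tilde x\}$.
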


\begin{proof}
{ See Appendix~\ref{app:proof:Y-psi}.}
\end{proof}
\begin{lem}\label{contact_1}
    Let $Y_0$ be the solution to the problem \eqref{obs:Y_0}. Then, there exists $\underline{x}<x^*$ such that
    \[Y_0=\psi_0 \quad \text{in }\ [0,T]\times (-\infty,\underline{x}],\]
    where $x^*$ is the point stated in Lemma~\ref{Y>psi}.
\end{lem}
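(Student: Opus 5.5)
The plan is to trap $Y_0$ from above by an explicit supersolution $W$ of \eqref{obs:Y_0} that coincides with $\psi_0$ on $(-\infty,\underline x]$. Since $Y_0\ge\psi_0$ always holds, $\psi_0\le Y_0\le W=\psi_0$ will then give the contact set there. The comparison tool is the comparison principle for variational inequalities \cite[Theorem~1.2]{YangYan2008}, applied on a half-line domain $(0,T)\times(-\infty,x_1)$ with some fixed $x_1<\min\{x^*,\tilde x\}$.

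\textbf{Step 1: the obstacle is a strict supersolution for small $x$.} On $x\le\tilde x$ we have $f=f_1$. Write $\psi(x)=Ae^{kx}$ with $k=1-1/\g_1$. Since $J_R$ is the stationary dual value with running reward $\frac{\g_1}{1-\g_1}\bar L^{\frac{\g_1-\g}{\g_1}}\xi^{-\frac{1-\g_1}{\g_1}}$, the definition of $K_1$ gives
\[
-\mathcal L\psi=\frac{\g_1}{1-\g_1}\bar L^{\frac{\g_1-\g}{\g_1}}e^{kx}.
\]
This identity is a direct computation that I will verify. It yields
\[
g(x):=-\mathcal L\psi-f_1=\frac{\g_1}{1-\g_1}\Bigl(\bar L^{\frac{\g_1-\g}{\g_1}}-L^{\frac{\g_1-\g}{\g_1}}\Bigr)e^{kx}-\zeta(\bar L-L)e^{x}.
\]
Because $e^{(k-1)x}=e^{-x/\g_1}$, this is positive exactly when $x<x^*$; this is precisely how $x^*$ in \eqref{x^*} arises. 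After multiplying by $e^{-x}$, the function $g_0:=-\widetilde{\cL}\psi_0-f_0=e^{-x}g$ is bounded below by a constant $\delta>0$ on $(-\infty,x_1]$, and in fact $g_0(x)\to\infty$ as $x\to-\infty$. I also note that $\psi'<0$, so $\psi_0'+\psi_0<0$ and the gradient constraint is inactive for $\psi_0$.

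\textbf{Step 2: construct the barrier.} For $\underline x<x_1$ and constants $M,\rho>0$ to be chosen, I will take
\[
W(\tau,x)=\psi_0(x)+Me^{\rho\tau}\bigl((x-\underline x)^+\bigr)^2 .
\]
This $W$ lies in $W^{1,2}_{p,\mathrm{loc}}$, is $C^1$ in $x$, and satisfies $W\ge\psi_0$. For $x>\underline x$, using $(x-\underline x)^2\ge 0$ and $\rho\ge 0$,
\[
\partial_\tau W-\widetilde{\cL}W-f_0\ \ge\ \delta-Me^{\rho T}\Bigl(\theta^2+2\bigl|\b-r+\tfrac{\theta^2}{2}\bigr|(x_1-\underline x)\Bigr)+(M\rho+Mr)e^{\rho\tau}(x-\underline x)^2 .
\]
Choosing $\rho$ large handles the linear term by absorbing it into the quadratic term away from a neighborhood of $\underline x$, where it is small anyway. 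It then suffices to have $M\theta^2e^{\rho T}\lesssim\delta$, so $W$ is a supersolution of the operator part of the VI. The gradient side requires $\partial_xW+W\le 0$. This holds near $\underline x$ because $\psi_0'+\psi_0<0$ strictly, and the correction's contribution $M e^{\rho T}\bigl(2(x-\underline x)+(x-\underline x)^2\bigr)$ is controlled by the smallness of $M$ once $x_1$ is fixed. On the boundary $x=x_1$ I need $W(\tau,x_1)\ge Y_0(\tau,x_1)$. The growth bound in Lemma~\ref{Y n,e}(3), which passes to the limit $Y$, gives $\sup_{\tau}(Y_0-\psi_0)(\tau,x_1)\le C_1$. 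It therefore suffices that $M(x_1-\underline x)^2\ge C_1$.

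\textbf{Step 3: tuning and comparison.} With $M$ fixed small enough for Step 2, I will choose $\underline x$ sufficiently negative that $M(x_1-\underline x)^2\ge C_1$. The linear term and the $(\rho,e^{\rho T})$ interplay are the bookkeeping I expect to be the main obstacle. If the quadratic profile fails to balance them, my fallback is to replace it by a profile with exponential growth, such as $M\bigl(e^{\kappa(x-\underline x)}-1-\kappa(x-\underline x)\bigr)^+$, or to exploit that $g_0$ itself blows up as $x\to-\infty$. With the parameters fixed, I will apply \cite[Theorem~1.2]{YangYan2008} on $(0,T)\times(-\infty,x_1)$, using the exponential growth of both functions at $-\infty$, the initial data $W(0,\cdot)\ge\psi_0=Y_0(0,\cdot)$, and the boundary inequality at $x_1$. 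This gives $Y_0\le W$, hence $Y_0=\psi_0$ on $[0,T]\times(-\infty,\underline x]$, and clearly $\underline x<x_1<x^*$.
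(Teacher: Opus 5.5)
Your approach is the paper's. The paper uses the barrier $\Pi_\varepsilon=\psi_0+\varepsilon^3\bigl((x+\varepsilon^{-2})^+\bigr)^2$ on $(-\infty,x^*-1]$ and compares it with $Y_0$ through \cite[Theorem~1.2]{YangYan2008}. Only the scaling differs. In the paper the drift contribution is $O(\varepsilon)$, the boundary value at $x^*-1$ is of order $1/\varepsilon$, and the zero-order term is simply discarded. With $M$ fixed and $\underline x\to-\infty$, you need the zero-order term instead. The bookkeeping you flag closes by completing the square: with $s=x-\underline x$ and $b=\beta-r+\theta^2/2$, one has $(\rho+r)s^2-2bs-\theta^2\ge-\theta^2-b^2/(\rho+r)$. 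So $M$ can be fixed independently of $\underline x$, and already $\rho=0$ works because $r>0$.

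One step should be deleted rather than proved: the ``gradient side'' requirement $\partial_xW+W\le 0$.

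\textbf{It is not needed.} A.e.\ on all of $\{Y_0>\psi_0\}$ one has $\partial_\tau Y_0-\widetilde{\mathcal L}Y_0\le f_0$, with equality off the gradient-constrained set and inequality on it. So $Y_0$ is a subsolution of the plain obstacle problem with source $f_0$. For the comparison, $W\ge\psi_0$ together with $\partial_\tau W-\widetilde{\mathcal L}W\ge f_0$ is all you need, and this is exactly how the paper uses $\Pi_\varepsilon$. Even in the min--max formulation, $\partial_xu+u\le 0$ is part of the subsolution condition, not the supersolution one.

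\textbf{Your justification of it is also inconsistent with your tuning.} By construction $Me^{\rho\tau}(x_1-\underline x)^2\ge C_1$, so at $x=x_1$ you get $\partial_xW+W\ge\psi_0'(x_1)+\psi_0(x_1)+C_1$. This is positive whenever $C_1>|\psi_0'(x_1)+\psi_0(x_1)|$. Smallness of $M$ cannot prevent this, because $\underline x$ is sent to $-\infty$ after $M$ is fixed.

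With that step removed, your proof is complete.
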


\begin{proof}
{ See Appendix~\ref{app:proof:contact-1}.}
\end{proof}
\begin{cor}\label{X^R region}
    The free boundary $\partial\{Y_0>\psi_0\}$ of the problem \eqref{obs:Y_0} is contained in the region $(0,T)\times(\underline{x},x^*)$, where $x^*$ and $\underline{x}$ are the constants given in Lemma~\ref{Y>psi} and Lemma~\ref{contact_1}, respectively.
\end{cor}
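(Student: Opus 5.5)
The corollary follows directly from Lemma~\ref{Y>psi} and Lemma~\ref{contact_1}; the plan is a short point-set argument on the two regions they describe. The free boundary $\partial\{Y_0>\psi_0\}$ is taken relative to $\Omega_T=(0,T)\times\mathbb{R}$. By continuity of $Y_0$ and $\psi_0$ (Lemma~\ref{Y} gives $Y\in C(\overline{\Omega_T})$, and multiplying by $e^{-x}$ preserves continuity), the set $\{Y_0>\psi_0\}$ is relatively open in $\Omega_T$. Hence any boundary point $(\tau_0,x_0)\in\Omega_T$ of this set satisfies
\[
Y_0(\tau_0,x_0)=\psi_0(\tau_0,x_0)
\]
and is a limit of points $(\tau_k,x_k)$ with $Y_0(\tau_k,x_k)>\psi_0(x_k)$.

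\emph{Excluding $x_0\ge x^*$.} If $x_0>x^*$, Lemma~\ref{Y>psi} gives $Y_0>\psi_0$ at $(\tau_0,x_0)$, which contradicts the contact equality above. The endpoint $x_0=x^*$ needs care, since Lemma~\ref{Y>psi} is stated only on the open set $x>x^*$. To exclude it, I would check that the proof in Appendix~\ref{app:proof:Y-psi} gives strict positivity also at $x=x^*$. That proof presumably uses a subsolution comparison in which $\partial_\tau\psi_0-\widetilde{\mathcal L}\psi_0-f_0<0$ on $[x^*,\infty)$ up to a null set, with $x^*$ the root of $f_1(x)=e^{x}\times(\text{obstacle generator})$, where the generator $-(\partial_\tau-\mathcal L)\psi$ equals the $\bar L$ analogue of $f_1$. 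Alternatively, a strong maximum principle argument gives $Y_0>\psi_0$ at $x^*$: on a neighbourhood where $\partial_\tau\psi_0-\widetilde{\mathcal L}\psi_0\le f_0$, $Y_0-\psi_0$ is a nonnegative supersolution that is not identically zero.

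If this endpoint step turns out to be delicate, the fallback is to interpret the interval $(\underline{x},x^*)$ as allowing closure at $x^*$, which is harmless for all subsequent uses. I expect this endpoint point to be the only genuine subtlety.

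\emph{Excluding $x_0\le\underline{x}$.} By Lemma~\ref{contact_1}, $Y_0=\psi_0$ on $[0,T]\times(-\infty,\underline{x}]$. If $x_0<\underline{x}$, then $(\tau_0,x_0)$ has a neighbourhood contained in the contact set, so it cannot be a limit of points of $\{Y_0>\psi_0\}$. For $x_0=\underline{x}$, the lemma asserts $\underline{x}<x^*$ with contact on the closed half-line. Since $\underline{x}$ is only an upper bound chosen from a comparison, I would simply replace it by a slightly larger value $\underline{x}'<x^*$ that is still covered by the construction in Appendix~\ref{app:proof:contact-1}. Taking the comparison barrier with strict inequality achieves this. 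Equivalently, I would note that the lemma's construction actually yields contact on a neighbourhood of $(-\infty,\underline{x}]$.

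Combining the two cases gives $x_0\in(\underline{x},x^*)$, and $\tau_0\in(0,T)$ by definition of $\Omega_T$. Because $\{Y=\psi\}=\{Y_0=\psi_0\}$, the same localization holds for the retirement boundary of $Y$.
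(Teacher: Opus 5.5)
Your route is the paper's. The corollary is stated there as an immediate consequence of Lemma~\ref{Y>psi} and Lemma~\ref{contact_1}, and openness of $\{Y_0>\psi_0\}$ together with these two lemmas gives containment in the closed strip $(0,T)\times[\underline{x},x^*]$. The lower endpoint is harmless. The simplest fix is to note that $\underline{x}$ is only an existential constant: replace it by any smaller number, and then $\{x\le\underline{x}\}$ lies in the interior of the contact set.

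\textbf{The gap is at $x=x^*$, where neither of your arguments works.}
\begin{itemize}
\item[(i)] The proof of Lemma~\ref{Y>psi} is not a barrier comparison. It only uses that, a.e.\ on the contact set, $\partial_\tau\psi_0-\widetilde{\mathcal L}\psi_0-f_0=-\mathcal F_0\ge 0$. Since $\mathcal F_0(x^*)=0$ and the line $\{x=x^*\}$ is a null set, nothing about that line can be read off from it.
\item[(ii)] Your strong-maximum-principle variant fails because $Y_0-\psi_0$ is a supersolution only on the side $x\ge x^*$. For $x<x^*$ one has $\mathcal F_0<0$, so in the working region it is a subsolution there. Thus $(\tau_0,x^*)$ is a boundary point of the only region where the supersolution property holds, and the strong maximum principle, being an interior statement, gives nothing at that point.
\end{itemize}
Your fallback of closing the interval at $x^*$ proves a weaker statement. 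It is harmless only because strictness is re-established later, and strictness is genuinely used: $\mathcal F_0(\mathcal X^R(\tau))<0$ in Lemma~\ref{ineq 2}, and $Q_r\subset(0,T)\times(-\infty,x^*)$ in Corollary~\ref{X^R:smooth}.

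\textbf{The missing ingredient is the parabolic Hopf lemma combined with the $C^1$ contact condition.}
\begin{itemize}
\item[(1)] Suppose $Y_0(\tau_0,x^*)=\psi_0(x^*)$ for some $\tau_0\in(0,T)$. Then $u:=Y_0-\psi_0\ge 0$ is $C^1$ in $x$ (Sobolev embedding) and attains its minimum at $(\tau_0,x^*)$, so $\partial_x u(\tau_0,x^*)=0$.
\item[(2)] Hence $\partial_xY_0+Y_0=e^{-x}\psi'<0$ at that point. By continuity, a cylinder $(\tau_0-h,\tau_0+h)\times(x^*,x^*+\delta)$ lies in $\{Y_0>\psi_0,\ \partial_xY_0+Y_0<0\}$.
\item[(3)] In this cylinder, $u>0$ by Lemma~\ref{Y>psi}, and $(\partial_\tau-\widetilde{\mathcal L})u=\mathcal F_0>0$ holds classically by Schauder theory.
\item[(4)] The Hopf lemma at the lateral point $(\tau_0,x^*)$, using an interior ball centred at $(\tau_0,x^*+\rho)$, gives $\partial_x u(\tau_0,x^*)>0$. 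This contradicts (1).
\end{itemize}
The paper itself obtains $\mathcal X^R(\tau_0)<x^*$ only later, in the proof of Lemma~\ref{X^R parametrization}, by a Hopf argument applied to $\partial_\tau Y_0$. This is consistent with the strict decrease and the limit $x^*$ in Theorem~\ref{X^R:properties}. So to prove the corollary as literally stated, you need to supply this step or forward-reference it.
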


\begin{lem}\label{contact_2}
    Let $Y_0$ be the solution to the problem \eqref{obs:Y_0}. Then for each $\e>0$, there exists a constant $c_\e>0$ such that 
    \[Y_0=\psi_0 \quad \text{in }\ [0,c_\e]\times(-\infty,x^*-2\e],\]
    where $x^*$ is the point stated in Lemma~\ref{Y>psi}.
\end{lem}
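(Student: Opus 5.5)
The plan is a local barrier argument near the line $x=x^*$, combined with the a priori contact region from Lemma~\ref{contact_1}. It rests on the following sign computation. Since $J_R$ is the dual of the post-retirement Merton problem, $\psi$ satisfies
\[
-\mathcal L\psi=\frac{\gamma_1}{1-\gamma_1}\bar L^{\frac{\gamma_1-\gamma}{\gamma_1}}e^{(1-\frac1{\gamma_1})x}.
\]
Hence, for $x\le\tilde x$,
\[
-\mathcal L\psi-f=\frac{\gamma_1}{1-\gamma_1}\Bigl(\bar L^{\frac{\gamma_1-\gamma}{\gamma_1}}-L^{\frac{\gamma_1-\gamma}{\gamma_1}}\Bigr)e^{(1-\frac1{\gamma_1})x}-\zeta(\bar L-L)e^x .
\]
This quantity is strictly positive exactly when $x<x^*$, which is where the definition \eqref{x^*} comes from. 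The same sign should persist for $x\in(\tilde x,x^*)$ if that interval is nonempty, since $f=\max\{f_1,f_2\}$; this has to be checked. After multiplying by $e^{-x}$ we obtain $-\widetilde{\mathcal L}\psi_0-f_0>0$ on $(-\infty,x^*)$. In particular, on the compact interval $I_\varepsilon:=[\underline x,x^*-\varepsilon]$ there is $\delta_\varepsilon>0$ with $-\widetilde{\mathcal L}\psi_0-f_0\ge\delta_\varepsilon$. We also record that $\partial_x\psi_0+\psi_0=e^{-x}\psi'<0$ is bounded away from $0$ on $I_\varepsilon$.

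Next we build the barrier. By Lemma~\ref{Y} the function $Y_0$ is continuous up to $\tau=0$ and equals $\psi_0$ there. So for $\eta>0$ there is $c=c(\eta)>0$ with $Y_0-\psi_0\le\eta$ on $[0,c]\times I_\varepsilon$. Set
\[
W(\tau,x):=\psi_0(x)+\frac{\eta}{\varepsilon^4}\bigl((x-x^*+2\varepsilon)^+\bigr)^4 .
\]
This function is $C^2$ in $x$ and independent of $\tau$. It satisfies $W\ge Y_0$ on the parabolic boundary of $Q:=(0,c)\times(\underline x,x^*-\varepsilon)$. At $\tau=0$ both functions equal $\psi_0$ up to the nonnegative bump. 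At $x=\underline x$ we have $Y_0=\psi_0=W$ by Lemma~\ref{contact_1}. At $x=x^*-\varepsilon$ the bump equals $\eta$. Moreover the second derivatives of the bump are $O(\eta/\varepsilon^2)$, so
\[
\partial_\tau W-\widetilde{\mathcal L}W-f_0\ge\delta_\varepsilon-C\eta\varepsilon^{-2}>0
\]
once $\eta$ is chosen small relative to $\delta_\varepsilon\varepsilon^2$. This choice fixes $c_\varepsilon:=c(\eta)$.

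Finally we compare. Consider the open set $U:=\{(\tau,x)\in Q:\ Y_0>W\}$. On $U$ we have $Y_0>W\ge\psi_0$, so the obstacle is inactive. By the first two lines of \eqref{obs:Y_0}, whether or not the gradient constraint $\partial_xY_0+Y_0=0$ binds, $Y_0$ satisfies $\partial_\tau Y_0-\widetilde{\mathcal L}Y_0\le f_0$ a.e.\ on $U$. Hence $w:=Y_0-W\in W^{1,2}_{p,\mathrm{loc}}$ satisfies $\partial_\tau w-\widetilde{\mathcal L}w<0$ a.e.\ in $U$ and $w\le0$ on $\partial_pU$. The $W^{1,2}_p$ (Aleksandrov–Bony) maximum principle then gives $w\le0$, so $U=\emptyset$. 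The zeroth-order coefficient of $\widetilde{\mathcal L}$ is $-r<0$, so no exponential reweighting is needed. It follows that $\psi_0\le Y_0\le W=\psi_0$ on $[0,c_\varepsilon]\times[\underline x,x^*-2\varepsilon]$. Together with Lemma~\ref{contact_1} on $(-\infty,\underline x]$, this gives the claim.

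The main obstacle I expect is the sign of $-\widetilde{\mathcal L}\psi_0-f_0$ on all of $[\underline x,x^*-\varepsilon]$. This requires comparing $x^*$ with $\tilde x$ and handling the $f_2$ branch. A second delicate point is making the maximum principle rigorous on the possibly irregular open set $U$. This can be done using the continuity of $Y_0$ and working with $\max(w,0)$ on $Q$. The advantage of this route is that the gradient constraint never needs a comparison theorem: it only enters through the one-sided inequality it shares with the continuation region.
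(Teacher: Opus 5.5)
Your proposal is correct and follows essentially the paper's argument. Both use the barrier $\psi_0$ plus a bump supported in $[x^*-2\varepsilon,\infty)$. It is a strict supersolution because $-\mathcal F_0=-(f_0+\widetilde{\mathcal L}\psi_0)>0$ on $(-\infty,x^*-\varepsilon]$, and it dominates $Y_0$ at $x=x^*-\varepsilon$ for small $\tau$ by continuity of $Y_0$ at $\tau=0$. It is then compared with $Y_0$ using only $\partial_\tau Y_0-\widetilde{\mathcal L}Y_0\le f_0$ on $\{Y_0>\psi_0\}$.

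The differences are minor. You truncate the domain at $\underline x$ via Lemma~\ref{contact_1}, so the comparison runs on a bounded cylinder instead of through the unbounded-domain comparison theorem \cite[Theorem~1.2]{YangYan2008}, and you use a quartic rather than a quadratic bump. The sign issue you flag is already settled in the proof of Lemma~\ref{Y>psi}: there $\mathcal F_0$ is shown to be strictly increasing with unique zero $x^*<\tilde x$, so the $f_2$ branch never enters on $(-\infty,x^*-\varepsilon]$.
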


\begin{proof}
{ See Appendix~\ref{app:proof:contact-2}.}
\end{proof}
\subsubsection{Parametrization of the retirement boundary}
In this section, we parametrize the free boundary $\pat\{Y_0>\psi_0\}$ by proving the contact set is monotone in the negative $x$-direction. Moreover, to prepare for the subsequent proof of local Lipschitz regularity of $\pat\{Y_0>\psi_0\}$, we establish two key ingredients. First, we derive a uniform upper bound for $\pat_\tau Y_0$. Second, utilizing the continuity of $\pat_\tau Y_0$ \cite{Blanchet-et-al-2006}, we prove $\pat_{xx}Y_0 > \psi''_0$ at the boundary, which guarantees $\pat_x Y_0 > \psi'_0$ in the adjacent non-contact region.

\begin{lem}\label{Y_tau, Y_x}
    Let $Y_0$ be the solution to the problem \eqref{obs:Y_0}. Then,
    \[0\le \pat_\tau Y_0 \le \zeta\bar{L}
    \quad \text{a.e. in }\ \O_T.\]
\end{lem}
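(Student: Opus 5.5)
The plan is to prove the two bounds at the level of the penalized approximations $Y_{n,\e}$ from Lemma~\ref{Y n,e}, show they are uniform in $n$ and $\e$, and pass to the limit. Write $Y_{0}^{n,\e}:=e^{-x}Y_{n,\e}$. The lower bound $\pat_\tau Y_0\ge 0$ follows directly from Lemma~\ref{Y n,e}(2): $\pat_\tau Y_{n,\e}\ge 0$, hence $\pat_\tau Y_{0}^{n,\e}\ge 0$. Since $Y_{n,\e}\to Y_n\to Y$ weakly in $W^{1,2}_{p,\mathrm{loc}}$ (and locally uniformly, by the constructions in Lemmas~\ref{Yn} and \ref{Y}), the a.e. inequality $\pat_\tau Y\ge 0$ is preserved under weak limits. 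The same holds for the upper bound once it is established uniformly.

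For the upper bound, I set $w:=\pat_\tau Y_{n,\e}$, which is smooth enough by the $C^{\frac{3+\a}{2},3+\a}$ regularity, and aim to show $w\le \zeta\bar L e^{x}$; dividing by $e^x$ then gives $\pat_\tau Y_0\le\zeta\bar L$. Differentiating \eqref{obs1:pen} in $\tau$ yields a linear equation
\[
\pat_\tau w-\mathcal L w-\b_{l,\e}'(Y_{n,\e}-\psi)\,w-\b_{u,\e}'(\pat_xY_{n,\e}-\varphi_\e)\,\pat_x w=0 .
\]
Here $-\b_{l,\e}'\ge0$, so the zeroth-order coefficient has the right sign for the maximum principle, and the first-order coefficient is merely bounded. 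The Neumann data are $\tau$-independent, so $\pat_x w=0$ at $x=\pm n$.

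The main obstacle is the initial layer. At $\tau=0$ we have $w(0,x)=\mathcal L\psi+f+\b_{l,\e}(0)+\b_{u,\e}(\psi'-\varphi_\e)$. The penalization is designed so that these terms combine nicely: $\b_{l,\e}(0)$ is chosen (with the $n$-dependent constant) to compensate $\mathcal L\psi+f$, and $\psi'<0$ with $\varphi_\e=-\zeta\bar L\e e^x$ makes $\b_{u,\e}$ small. I will first try to verify $w(0,x)\le\zeta\bar Le^x$ by a direct computation. The key identity is that $\mathcal L(e^x)=-r e^x$ and $\mathcal L\psi=-K_1\cdots$, which gives $\mathcal L\psi+f_2\le \zeta\bar L e^x$-type estimates via $f\le \zeta\bar L e^x+(\text{power term})$. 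If the direct bound is awkward, I would instead compare $w$ against the barrier $\zeta\bar L e^x$, which satisfies $\pat_\tau(\zeta\bar Le^x)-\mathcal L(\zeta\bar Le^x)=r\zeta\bar Le^x\ge0$. The drift term $\b_{u,\e}'\,\pat_x(\zeta\bar L e^x)$ carries a favorable sign because $\b_{u,\e}'\le0$, so the barrier is a supersolution of the linearized equation. At the Neumann ends, $\pat_x$ of the barrier is positive while $\pat_x w=0$, which excludes a boundary maximum of $w-\zeta\bar Le^x$ by Hopf's lemma.

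Alternatively, a cleaner route avoids the initial layer altogether. I compare $Y_{n,\e}(\tau+h,\cdot)$ with $Y_{n,\e}(\tau,\cdot)+h\zeta\bar L e^x$, observing that the latter is a supersolution of \eqref{obs1:pen}: the obstacle penalty is monotone, and the gradient penalty sees $\pat_x$ increased, which only decreases $\b_{u,\e}$, with the remaining bound coming from the $h$-shifted initial data. Using $f\le$ the source plus $\zeta\bar L e^x$ is the point where the term $\zeta\bar L\xi$ in $\tilde u$ enters. I expect the delicate step to be checking the initial comparison $Y_{n,\e}(h,x)\le\psi(x)+h\zeta\bar Le^x$, i.e.\ that $\pat_\tau\psi-\mathcal L\psi-f\le \zeta\bar L e^x$ near contact, which again reduces to the explicit formulas for $f_1,f_2,\psi$. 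Once the bound $0\le\pat_\tau Y_{n,\e}\le \zeta\bar Le^x$ holds uniformly, I pass to the limit $\e\to0$ and then $n\to\infty$, and multiply by $e^{-x}$ to obtain the statement a.e. in $\O_T$.
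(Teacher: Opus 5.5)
Your lower bound is fine and matches the paper. The upper bound, however, has a genuine gap at exactly the initial layer you flagged. For the penalization \eqref{obs1:pen} the inequality you need there is in fact false.

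Since $Y_{n,\varepsilon}(0,\cdot)=\psi$, the obstacle penalty at $\tau=0$ is the constant $\beta_{l,\varepsilon}(0)=-e^{2n}\{f(-n)+\mathcal{L}\psi(-n)\}$. This constant is positive and grows like $e^{(1+1/\gamma_1)n}$. For small $\varepsilon$ (where $\beta_{u,\varepsilon}(\psi'-\varphi_\varepsilon)=0$) you get $\partial_\tau Y_{n,\varepsilon}(0,x)=f(x)+\mathcal{L}\psi(x)+\beta_{l,\varepsilon}(0)$.

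The constant $\beta_{l,\varepsilon}(0)$ is designed to make this initial time derivative nonnegative, which is what gives $Y_{n,\varepsilon}\ge\psi$. It does not ``combine nicely'' with $f+\mathcal{L}\psi$ for an upper bound. For large $n$ it puts $\partial_\tau Y_{n,\varepsilon}(0,\cdot)$ far above $\zeta\bar L e^{x}$, e.g.\ at $x=x^*$, where $f+\mathcal{L}\psi=0$, or at $x=-n$.

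Because $\partial_\tau Y_{n,\varepsilon}$ is continuous up to $\tau=0$, the bound $\partial_\tau Y_{n,\varepsilon}\le\zeta\bar L e^{x}$ then fails for small $\tau$. So no barrier comparison for $w=\partial_\tau Y_{n,\varepsilon}$ can start. In your second route, the initial comparison $Y_{n,\varepsilon}(h,\cdot)\le\psi+h\zeta\bar L e^{x}$ fails for small $h$ for the same reason. The reduction you wrote for it, $\partial_\tau\psi-\mathcal{L}\psi-f\le\zeta\bar L e^{x}$, also has the wrong sign. What is needed is the upper bound $f+\mathcal{L}\psi=e^{x}\mathcal{F}_0\le\zeta\bar L e^{x}$, which holds because $\mathcal{F}_0$ increases to $\zeta\bar L$.

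The missing idea is to change the approximating problem so that the obstacle penalty is inactive at $\tau=0$. The paper solves the penalized problem with initial datum $\psi+\varepsilon$, so that $\beta_{l,\varepsilon}(Y_{n,\varepsilon}-\psi)=\beta_{l,\varepsilon}(\varepsilon)=0$ initially. It also uses a smoothed source $f_\varepsilon\le f+\varepsilon$. Then $\partial_\tau Y_{n,\varepsilon}(0,x)=f_\varepsilon+\mathcal{L}\psi-\beta\varepsilon\le\zeta\bar L e^{x}+\varepsilon$. The bound then passes to the limit, once one checks that this modified scheme has the same limit $Y_n$.

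There is a second error at the left Neumann end. Suppose $w-\zeta\bar L e^{x}$ had a positive maximum at $x=-n$. Hopf's lemma would then give $\partial_x(w-\zeta\bar L e^{x})<0$ there. That is perfectly consistent with $\partial_x w=0$ and $\partial_x(\zeta\bar L e^{x})>0$, so only the right end is excluded. In comparison-principle terms, at $x=-n$ the supersolution must satisfy $\partial_x W\le\partial_x w=0$.

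Your time-shift route has the same defect, since $\partial_x(Y_{n,\varepsilon}+h\zeta\bar L e^{x})(\tau,-n)=\psi'(-n)+h\zeta\bar L e^{-n}>\psi'(-n)$. The paper repairs this with $W_n=\zeta\bar L e^{x}+\varepsilon+C_n e^{\lambda_2 x}$:
\begin{itemize}
\item $\lambda_2<0$ is the negative root of the characteristic equation, so $\mathcal{L}e^{\lambda_2 x}=0$.
\item $C_n=\frac{\zeta\bar L}{-\lambda_2}e^{(\lambda_2-1)n}$ gives $\partial_x W_n(-n)=0$ while keeping $\partial_x W_n\ge 0$ on $[-n,n]$, so the $\beta_{u,\varepsilon}'$ drift term keeps its favorable sign.
\item The correction $C_n e^{\lambda_2 x}$ tends to $0$ pointwise as $n\to\infty$, which recovers $\partial_\tau Y\le\zeta\bar L e^{x}$.
\end{itemize}
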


\begin{proof}
{ See Appendix~\ref{app:proof:Y-tau-Y-x}.}
\end{proof}
\begin{lem}\label{X^R parametrization}
     Let $Y_0$ be the solution to the problem \eqref{obs:Y_0}. Then, for each $(\tau_0,x_0)\in \O_T$ satisfying $Y_0(\tau_0,x_0)=\psi_0(x_0)$, we have
     \begin{equation}\label{mnt1}
        Y_0(\tau_0,x)=\psi_0(x)
     \quad \text{for all }\ x\le x_0. 
     \end{equation}
     Thus, we may represent the free boundary $\partial\{Y_0>\psi_0\}$ by a curve $\cX^R:(0,T)\to \mathbb{R}$, defined as
    \begin{equation}\label{mnt2}
        \cX^R(\tau):=\sup\{x:Y_0(\tau,x)=\psi_0(x)\}, \quad \tau\in(0,T).
    \end{equation}
     Furthermore, for each $\tau_0\in (0,T)$,
     there exists $\d_0>0$ such that
     \begin{equation}\label{mnt3}
         \pat_x Y_0(\tau_0,x)-\psi_0'(x)> 0
    \quad \text{for all }\ \cX^R(\tau_0)< x\le \cX^R(\tau_0)+\d_0.
     \end{equation}
\end{lem}

\begin{proof}
{ See Appendix~\ref{app:proof:X-R-parametrization}.}
\end{proof}
From the monotonicity of $\cX^R$,
its continuity follows from a standard argument as in \cite[Theorem 3.1]{F2} and the Hopf lemma.
In particular, continuity of $\cX^R$ implies that 
\[\displaystyle\bigcup_{\tau\in(0,T)}(\tau,\cX^R(\tau))
=\pat\{Y_0>\psi_0\}.\] In other words, $(\tau,\cX^R(\tau))$ constitutes the entire free boundary $\pat\{Y_0>\psi_0\}$. 
Furthermore, combining the monotonicity and continuity of $\cX^R$ with Corollary~\ref{X^R region}, we also deduce that $\cX^R$ is uniformly continuous on $[0,T]$.
Using this information, we proceed to improve the regularity of $\cX^R$ by proving its local Lipschitz property in the following subsubsection.

\subsubsection{Properties and regularity of the retirement boundary}

To establish the local Lipschitz continuity of the free boundary $\mathcal{X}^R$, we derive an estimate showing that $\partial_\tau Y_0$ and $\partial_x Y_0-\psi_0'$ are comparable near it. The strict positivity of $\partial_{xx}Y_0-\psi''_0$ provides a linear lower bound for $\partial_x Y_0-\psi_0'$. Using an auxiliary function and the comparison principle, we obtain a desired estimate and applying this estimate to level curves converging to $\cX^R$ proves the local Lipschitz continuity of $\cX^R$. Finally, the boundary Harnack inequality establishes smoothness.

\begin{lem}\label{ineq 2}
    Let $\cX^R:(0,T)\to \mathbb{R}$ be defined as in \eqref{mnt2}.
    Then for each small $\k>0$ and $\tau\in[2\k,T]$, there exists $\d>0$ and $C=C(\k,\d)>0$ such that 
    \begin{equation}\label{Y0:estimate}
        0 \le \pat_\tau Y_0(\tau,x) \le C\{\pat_x Y_0(\tau,x)-\psi_0'(x)\}
    \quad \text{for all }\ \cX^R(\tau)<x\le \cX^R(\tau)+\d.
    \end{equation}
\end{lem}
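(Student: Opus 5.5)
The plan is to compare the two functions $W:=\pat_\tau Y_0$ and $U:=\pat_x Y_0-\psi_0'$ on a thin strip to the right of the retirement boundary. Near $\cX^R$ both solve the same linear parabolic equation, and a comparison (maximum) principle then gives $W\le CU$.

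\textbf{Setting up the strip.} Fix $\k>0$. By Corollary~\ref{X^R region}, $\cX^R$ lies in $(\underline{x},x^*)$, and it is uniformly continuous. By \eqref{mnt3} of Lemma~\ref{X^R parametrization}, together with a compactness argument in $\tau$ that uses the continuity of $\cX^R$ and the positivity $\pat_{xx}Y_0-\psi_0''>0$ at the boundary, we may choose $\d>0$ uniform for $\tau\in[\k,T]$ such that the following holds. On the strip
\[
S:=\{(\tau,x):\k<\tau<T,\ \cX^R(\tau)<x<\cX^R(\tau)+\d\},
\]
we have $Y_0>\psi_0$ and $\pat_xY_0+Y_0<0$, i.e.\ we are strictly inside the non-contact region. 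The second condition is available because the binding region lies to the right, near $x^*$ or beyond, and $\d$ can be shrunk if necessary. In $S$ the equation $\pat_\tau Y_0-\widetilde{\cL}Y_0=f_0$ holds classically.

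\textbf{The equations for $W$ and $U$.} Since $f_0$ is independent of $\tau$, differentiating in $\tau$ gives $\pat_\tau W-\widetilde{\cL}W=0$ in $S$. Differentiating in $x$ gives
\[
\pat_\tau U-\widetilde{\cL}U=f_0'+\widetilde{\cL}\psi_0' .
\]
The right-hand side equals $-\frac{d}{dx}\bigl(\widetilde{\cL}\psi_0 + f_0\bigr)$ up to sign conventions. On $(\underline{x},x^*)$ the inequality $-\widetilde{\cL}\psi_0\ge f_0$ defines the region where contact is allowed (this is precisely how $x^*$ arises in Lemma~\ref{Y>psi}), and its derivative has a definite sign there. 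The computation I would carry out first is to check that $g:=f_0'+\widetilde{\cL}\psi_0'\ge0$ near the boundary, which makes $U$ a supersolution.

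\textbf{Boundary data and comparison.}
\begin{itemize}
\item On the free boundary $x=\cX^R(\tau)$: $W=0$, since $Y_0=\psi_0$ is constant in $\tau$ along the contact set and $\pat_\tau Y_0$ is continuous there \cite{Blanchet-et-al-2006}; also $U=0$.
\item On the right edge $x=\cX^R(\tau)+\d$: by \eqref{mnt3} and continuity, $U\ge m>0$ uniformly for $\tau\in[\k,T]$, while $W\le\zeta\bar L$ by Lemma~\ref{Y_tau, Y_x}.
\item On the bottom $\tau=\k$: we still need $W\le CU$. Here I would use the linear lower bound $U\ge c_0(x-\cX^R(\k))$ from $\pat_{xx}Y_0-\psi_0''>0$, and the fact that $W$ vanishes on the boundary with Lipschitz growth in $x$. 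The latter follows from $\pat_\tau Y_0\in C^{0}$ and the $W^{1,2}_p$ bounds, obtained by working on $[\k/2,T]$ first.
\end{itemize}
Taking $C\ge \zeta\bar L/m$ and large enough for the bottom edge, the function $CU-W$ is a supersolution that is nonnegative on the parabolic boundary of $S$. The maximum principle on this non-cylindrical domain, whose lateral boundary is continuous, then gives $W\le CU$ in $S$, restricted to $\tau\ge2\k$. The lower bound $0\le W$ comes from Lemma~\ref{Y_tau, Y_x}.

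\textbf{Main obstacles.} The main obstacle is the bottom edge $\tau=\k$, where we need $W=O(x-\cX^R)$. If Lipschitz control of $W$ near the boundary is not directly available, I would instead use an auxiliary barrier $CU+\eta(\tau-\k)^{-1}\phi$ with a localized cutoff, or shift the estimate window to $[2\k,T]$ using interior parabolic estimates. This is the reason the statement uses $\tau\in[2\k,T]$. A secondary concern is the sign of $g$. If $g$ is not nonnegative, I would absorb it by replacing $U$ with $U+A(x-\cX^R(\tau))$-type corrections, or use $\pat_{xx}Y_0-\psi_0''>0$ to dominate $g$ on a small strip.
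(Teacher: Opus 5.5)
Your comparison of $CU-W$ on the strip works everywhere except the bottom edge $\tau=\kappa$. That edge is not a technicality; it carries the whole content of the lemma.

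\textbf{The gap.} The linear lower bound $U=\partial_xY_0-\psi_0'\ge c\,(x-\mathcal{X}^R(\tau))$ is the easy half, coming from $\partial_{xx}Y_0-\psi_0''\to-\frac{2}{\theta^2}\mathcal{F}_0(\mathcal{X}^R(\tau))>0$. What has to be proved is that $W=\partial_\tau Y_0$ vanishes \emph{linearly} at the free boundary. Your bottom-edge requirement $W(\kappa,x)=O(x-\mathcal{X}^R(\kappa))$ is exactly that statement at time $\kappa$, so the argument is circular.

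The justification you offer does not supply it:
\begin{itemize}
\item $Y_0\in W^{1,2}_{p,\mathrm{loc}}$ only places $\partial_\tau Y_0$ in $L^p$. It gives no control of $\partial_x\partial_\tau Y_0$.
\item The continuity result of \citet{Blanchet-et-al-2006} gives continuity of $\partial_\tau Y_0$ at free-boundary points, but no rate.
\item Interior estimates on $[\kappa/2,T]$ do not help, because the bottom edge runs into the point $(\kappa,\mathcal{X}^R(\kappa))$, where they degenerate. Getting linear growth there from the earlier time strip needs a barrier argument, which is the proof itself.
\item The alternative barrier $CU+\eta(\tau-\kappa)^{-1}\phi$ is left unspecified. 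For a $\tau$-independent $\phi>0$, the term $-\eta(\tau-\kappa)^{-2}\phi$ produced by $\partial_\tau$ ruins the supersolution inequality near $\tau=\kappa$.
\end{itemize}
Your secondary worry, by contrast, is unfounded. The source term in the $U$-equation is exactly $\mathcal{F}_0'=f_0'+\widetilde{\mathcal{L}}\psi_0'$, which is strictly positive. The paper already uses this in the parametrization of $\mathcal{X}^R$.

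\textbf{The missing idea: a time weight.} A weight that vanishes at the bottom edge makes that boundary data trivial, at the price of a bounded source term. In your own framework, set $P:=(\tau-\kappa)W-CU$ on $S$.
\begin{itemize}
\item Since $0\le W\le\zeta\bar L$ and $\mathcal{F}_0'$ is bounded below on the compact $\overline S$, we get $(\partial_\tau-\widetilde{\mathcal{L}})P=W-C\mathcal{F}_0'\le\zeta\bar L-C\min_{\overline S}\mathcal{F}_0'\le0$ for $C$ large.
\item On the free boundary $P=0$. At $\tau=\kappa$, $P=-CU\le0$. On the right edge, $P\le T\zeta\bar L-Cm\le0$.
\end{itemize}
The maximum principle then gives $(\tau-\kappa)W\le CU$ on $S$, hence $W\le (C/\kappa)U$ for $\tau\ge2\kappa$. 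This, not interior regularity, is the real reason for the $2\kappa$ in the statement.

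\textbf{Comparison with the paper.} The paper uses the same device in a different arrangement. For fixed $\tau_0$ it compares $(\tau-\tau_0+\eta)\,\partial_\tau Y_0$ with $M[1-e^{-k(x-\mathcal{X}^R(\tau_0))}]$ on $\{\tau_0-\eta<\tau<\tau_0,\ \mathcal{X}^R(\tau)<x<\mathcal{X}^R(\tau_0)+\delta\}$. The monotonicity of $\mathcal{X}^R$ makes that barrier nonnegative on the lateral boundary. This yields $\partial_\tau Y_0(\tau_0,x)\le C(x-\mathcal{X}^R(\tau_0))$, and the paper then divides by the linear lower bound on $\partial_xY_0-\psi_0'$. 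With the weight inserted, your direct comparison against $U$ is slightly more economical: it avoids the explicit exponential barrier and the final division step.
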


\begin{proof}
{ See Appendix~\ref{app:proof:ineq-2}.}
\end{proof}
\begin{thm}[Properties of the retirement boundary]
\label{X^R:properties}
    Let $\cX^R:(0,T)\to \mathbb{R}$ be defined as in \eqref{mnt2}.
    Then, 
    \begin{itemize}
        \item [(1)] $\cX^R$ is locally Lipschitz continuous,
        \item [(2)] $\cX^R$ is strictly decreasing,
        \item [(3)] $\displaystyle\lim_{\tau\to 0^+}\cX^R(\tau)=x^*$.
    \end{itemize}
\end{thm}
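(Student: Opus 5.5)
We prove the three assertions in the order (1), (2), (3). Each rests on the estimate of Lemma~\ref{ineq 2}, the monotonicity $\partial_\tau Y_0\ge 0$ from Lemma~\ref{Y_tau, Y_x}, and the localization of Corollary~\ref{X^R region} and Lemma~\ref{contact_2}.

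\emph{Step 1 (Lipschitz continuity).} Fix $\kappa>0$ and work on $[2\kappa,T]$. Since $\partial_\tau Y_0\ge0$ and $Y_0\ge\psi_0$, the contact set at later times is contained in the contact set at earlier times. Hence $\mathcal X^R$ is non-increasing, and Step~1 needs only a one-sided bound. For small $\eta>0$ consider the level set
\[
\{x>\mathcal X^R(\tau):\ Y_0(\tau,x)-\psi_0(x)=\eta\}.
\]
By \eqref{mnt3}, $x\mapsto Y_0-\psi_0$ is strictly increasing just to the right of $\mathcal X^R(\tau)$. So near the boundary this level set is the graph of a curve $x=\mathcal X_\eta(\tau)$, and it is $C^1$ by the implicit function theorem, since $Y_0$ is $C^{1,2}$ in the non-contact region. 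Differentiating $Y_0(\tau,\mathcal X_\eta(\tau))-\psi_0(\mathcal X_\eta(\tau))=\eta$ gives
\[
\mathcal X_\eta'(\tau)=-\frac{\partial_\tau Y_0}{\partial_xY_0-\psi_0'}\Big|_{(\tau,\mathcal X_\eta(\tau))}.
\]
By \eqref{Y0:estimate} this lies in $[-C,0]$, uniformly in small $\eta$, once $\mathcal X_\eta(\tau)$ is within $\delta$ of $\mathcal X^R(\tau)$. The constants $C,\delta$ of Lemma~\ref{ineq 2} are uniform on compact $\tau$-intervals; this is the point to be careful about, and it uses the uniform continuity of $\mathcal X^R$. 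Since $\mathcal X_\eta\to\mathcal X^R$ locally uniformly as $\eta\to0$, by continuity of $Y_0$ and the strict increase of $Y_0-\psi_0$ near the boundary, the Lipschitz bound $C$ passes to the limit.

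\emph{Step 2 (strict decrease).} We know $\mathcal X^R$ is non-increasing, and argue by contradiction. Suppose $\mathcal X^R\equiv a$ on some interval $[\tau_1,\tau_2]$. In the rectangle $(\tau_1,\tau_2)\times(a,a+\delta)$ the function $w=\partial_\tau Y_0\ge0$ solves the homogeneous equation $\partial_\tau w-\widetilde{\mathcal L}w=0$, obtained by differentiating the equation in $\tau$ (for small $\delta$ the gradient constraint is inactive there, which should follow from \eqref{mnt3}). Moreover $w=0$ on the vertical side $x=a$. By the strong maximum principle either $w\equiv0$ or $w>0$.

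If $w\equiv0$, then $Y_0$ is stationary there. The boundary conditions $Y_0=\psi_0$, $\partial_xY_0=\psi_0'$ at $x=a$ together with the ODE $-\widetilde{\mathcal L}Y_0=f_0$ then force $\widetilde{\mathcal L}\psi_0+f_0=0$ near $a$. This is impossible: $-\widetilde{\mathcal L}\psi_0-f_0\ge0$ on the contact set and is strictly monotone in $x$ on $(\underline x,x^*)$, which is exactly how $x^*$ was defined in Lemma~\ref{Y>psi}.

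If $w>0$, the Hopf lemma gives $\partial_x w(\tau,a+)>0$. On the other hand, differentiating the identity $Y_0(\tau,a)=\psi_0(a)$ along the flat boundary, together with the $C^1$ fit, gives $\partial_x\partial_\tau Y_0=\partial_\tau\partial_xY_0=0$ at $x=a$. This is a contradiction.

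\emph{Step 3 (limit at $\tau\to0^+$).} Since $\mathcal X^R$ is non-increasing, the limit $\lim_{\tau\to0^+}\mathcal X^R(\tau)$ exists. Lemma~\ref{Y>psi} shows it is at most $x^*$. Lemma~\ref{contact_2} gives $\mathcal X^R(\tau)\ge x^*-2\varepsilon$ for $\tau\le c_\varepsilon$, and letting $\varepsilon\to0$ gives the matching lower bound.

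The main obstacle is Step~2. The maximum-principle argument there needs enough regularity of $\partial_\tau Y_0$ up to the boundary to apply Hopf, presumably via the Blanchet et al.\ continuity result, and it also needs to exclude the gradient-constraint region from a neighbourhood of the retirement boundary.
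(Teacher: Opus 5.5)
Your Steps 1 and 3, and the non-strict monotonicity in Step 2, follow the paper's proof: level curves $Y_0-\psi_0=1/n$, the implicit function theorem combined with \eqref{Y0:estimate}, and Lemmas~\ref{Y>psi} and~\ref{contact_2}.

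The gap is in the strict monotonicity, in the alternative $w=\partial_\tau Y_0\equiv 0$ on the rectangle $(\tau_1,\tau_2)\times(a,a+\delta)$. You claim that stationarity, together with $Y_0=\psi_0$ and $\partial_xY_0=\psi_0'$ at $x=a$, forces $\widetilde{\mathcal L}\psi_0+f_0=0$ near $a$. This is false. Put $u:=Y_0(\tau_1,\cdot)-\psi_0$. Then $u$ solves $-\widetilde{\mathcal L}u=\mathcal F_0$ on $(a,a+\delta)$ with Cauchy data $u(a)=u'(a)=0$. This initial value problem has a unique solution for every right-hand side, here with $u''(a^+)=-2\mathcal F_0(a)/\theta^2>0$, which is fully consistent with \eqref{mnt3}. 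More fundamentally, no argument confined to a neighbourhood of the flat piece can work. A time-independent solution with a vertical free boundary, which is the picture of the stationary infinite-horizon problem, satisfies every local condition you invoke. The contradiction has to come from the initial condition $Y_0(0,\cdot)=\psi_0$.

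The missing step is to propagate the vanishing of $w$ backward in time, outside the rectangle.
\begin{itemize}
\item Take $a+\delta<x^*$. For $x\in(a,a+\delta)$, Lemma~\ref{contact_2} and the time-monotonicity of $Y_0$ show that the contact times at $x$ form an interval $(0,\tau_x]$ with $\tau_x<\tau_1$. By \eqref{mnt1}, $\tau_x$ is non-increasing in $x$.
\item Hence the set $\{(\tau,x):a<x<a+\delta,\ \tau_x<\tau<\tau_2\}$ lies in the non-contact region, and for small $\delta$ it stays away from the gradient constraint. Every point of it with $\tau\le t_0$ can be joined to a given $(t_0,x_0)$ of the rectangle by a path in the set along which $\tau$ is non-decreasing.
\item Apply the strong maximum principle there to $w\ge0$ with $w(t_0,x_0)=0$. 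This gives $w\equiv0$ on $(\tau_x,t_0]\times\{x\}$, so $Y_0(t_0,x)=Y_0(\tau_x,x)=\psi_0(x)$, a contradiction.
\end{itemize}
This is the paper's argument, carried out with the time difference $Y_0^\delta(\tau,x)=Y_0(\tau+\delta,x)-Y_0(\tau,x)$ in place of $\partial_\tau Y_0$.

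That substitution also resolves the regularity issue you flagged in the Hopf branch. With $w=\partial_\tau Y_0$ you need $\partial_x\partial_\tau Y_0(\tau,a^+)=\partial_\tau\partial_xY_0(\tau,a)=0$. That means commuting mixed derivatives up to the free boundary, which requires boundary Schauder estimates beyond the continuity of $\partial_\tau Y_0$ from Blanchet et al. For $Y_0^\delta$, the boundary conditions $Y_0^\delta=\partial_xY_0^\delta=0$ on $x=a$ follow directly from the $C^1$ fit $\partial_xY_0=\psi_0'$, which you already have from $Y_0\in W^{1,2}_{p,\mathrm{loc}}$.
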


\begin{proof}
{ See Appendix~\ref{app:proof:X-R-properties}.}
\end{proof}
Furthermore, by utilizing the boundary Harnack inequality \cite{TorresLatorre2024}, \cite{Kukuljan2022} inductively, we may obtain the smoothness of the free boundary $\cX^R$.

\begin{cor}[Smoothness]\label{X^R:smooth}
    Let $\cX^R:(0,T)\to \mathbb{R}$ be defined as in \eqref{mnt2}.
    Then, $\cX^R\in C^{\infty}((0,T))$.
\end{cor}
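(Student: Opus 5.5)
The plan is to bootstrap regularity of $\cX^R$ from Lipschitz to $C^\infty$ by a higher-order boundary Harnack argument. The quotients used are of derivatives of $w:=Y_0-\psi_0$ that vanish on the free boundary.

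\textbf{Step 1: localize.} Fix $\tau_0\in(0,T)$ and set $x_0:=\cX^R(\tau_0)$. By Corollary~\ref{X^R region}, $x_0\in(\underline{x},x^*)$. Since $Y>\psi$ forces $\partial_x Y<0$ near the retirement boundary, I will choose a small parabolic cylinder $Q_r$ around $(\tau_0,x_0)$ on which the gradient constraint is inactive. Then $w$ solves a pure obstacle problem on $Q_r$:
\[
\partial_\tau w-\widetilde{\cL}w=g\,\mathbf 1_{\{w>0\}},\qquad w\ge 0,
\]
with $g:=f_0-\partial_\tau\psi_0+\widetilde{\cL}\psi_0$.

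Two facts are needed here. First, Lemma~\ref{Y_tau, Y_x} and Lemma~\ref{X^R parametrization} give $\partial_\tau w\ge0$ and $\partial_x w>0$ in the adjacent non-contact region. Second, the complementarity conditions give $g<0$ near the contact set. These make the free boundary the graph $\{x=\cX^R(\tau)\}$. By Theorem~\ref{X^R:properties}(1) this graph is Lipschitz, and the positive phase $\Omega^+:=\{x>\cX^R(\tau)\}$ is a Lipschitz domain in $Q_r$.

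\textbf{Step 2: first improvement.} In $\Omega^+$, the functions $u_1:=\partial_\tau w\ge 0$ and $u_2:=\partial_x w>0$ both solve the homogeneous equation $(\partial_\tau-\widetilde{\cL})u=0$. The source term drops out because $g$ is independent of $\tau$ and $\widetilde{\cL}$ has constant coefficients; for $\partial_x$ I use that $g$ is smooth away from $\tilde x$, which I shrink $r$ to avoid.

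Both $u_1$ and $u_2$ vanish continuously on the free boundary, by the $C^{1,1}$ regularity of the obstacle solution. The parabolic boundary Harnack inequality in Lipschitz domains (\cite{TorresLatorre2024}, \cite{Kukuljan2022}) then shows that $u_1/u_2$ is H\"older continuous up to the boundary. On the level sets of $w$ we have
\[
\frac{d}{d\tau}\cX=-\frac{\partial_\tau w}{\partial_x w}.
\]
Passing to the limit, as in the proof of Lemma~\ref{ineq 2}, gives $\cX^R{}'=-u_1/u_2$ on the boundary, so $\cX^R\in C^{1,\alpha}$.

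\textbf{Step 3: induction.} Suppose $\cX^R\in C^{k,\alpha}$. Then $\Omega^+$ is a $C^{k,\alpha}$ parabolic domain, and the higher-order boundary Harnack estimate applies: the quotient of two solutions vanishing on the boundary is $C^{k,\alpha}$ up to the boundary. Applied to $u_1/u_2$, this gives $\cX^R{}'\in C^{k,\alpha}$, that is, $\cX^R\in C^{k+1,\alpha}$. Induction yields $C^\infty$ locally near every $\tau_0$, and hence on $(0,T)$.

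\textbf{Main obstacle.} The delicate point is verifying the hypotheses of the boundary Harnack results in the parabolic setting. These require $u_2>0$ strictly in $\Omega^+$ near the boundary, which is supplied by \eqref{mnt3} and the strong maximum principle, and they require the correct non-degenerate time-slope condition on the Lipschitz graph. The higher-order step also needs the quotient estimate with the right parabolic scaling in $\tau$. I would first handle this by flattening the boundary with a $C^{k,\alpha}$ change of variables and citing the domain-regularity version of the estimate.
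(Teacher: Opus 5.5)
Your route matches the paper's: level curves of $w=Y_0-\psi_0$, the quotient $\partial_\tau w/\partial_x w$, a boundary Harnack inequality in the Lipschitz domain to reach $C^{1,\delta/2}$, then higher-order boundary Harnack to bootstrap. However, Step~2 relies on a false claim: $u_2=\partial_x w$ does \emph{not} solve the homogeneous equation. Differentiating $\partial_\tau w-\widetilde{\cL}w=\cF_0$ in $x$ gives
\[
(\partial_\tau-\widetilde{\cL})\,\partial_x w=\cF_0'\qquad\text{in }\{w>0\}\cap Q_r .
\]
Moreover, $\cF_0'>0$ strictly by \eqref{H'}; this strict positivity is exactly what drives the maximum-principle argument in Lemma~\ref{X^R parametrization}. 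Smoothness of $\cF_0$ away from $\tilde x$ only makes the differentiation legitimate and keeps the source smooth; it does not make the source vanish. So the statement you invoke does not apply to the pair $(u_1,u_2)$, either in Step~2 or in the induction of Step~3. That statement says the quotient of two solutions of the homogeneous equation vanishing on the boundary is H\"older (resp.\ $C^{k,\alpha}$).

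The repair is what the paper does. In the Lipschitz step, use the boundary Harnack inequality \emph{with right-hand side} \cite[Theorem~1.2]{TorresLatorre2024}. In the $C^{k,\alpha}$ steps, use the higher-order estimate of \cite{Kukuljan2022} with a smooth source. Avoiding $\tilde x$ matters precisely here: it guarantees $\cF_0'\in C^\infty$ near the boundary. For these versions, mere positivity of $u_2$ is not enough. You need quantitative nondegeneracy of the denominator so the source can be absorbed. This is the linear lower bound $\partial_x w(\tau,x)\ge \tfrac12 c_\kappa\,(x-\cX^R(\tau))$ from \eqref{dx Y_0}, which comes from $\partial_{xx}w\to -\tfrac{2}{\theta^2}\cF_0(\cX^R(\tau))>0$ at the boundary. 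Under the rescaling $u\mapsto \rho^{-1}u(\tau_0+\rho^2\,\cdot\,,x_0+\rho\,\cdot\,)$, the denominator stays of order one while the source becomes $O(\rho)$.

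Two further points need the paper's justification rather than yours:
\begin{itemize}
\item The strict inequality $\cF_0(\cX^R(\tau))<0$ requires $\cX^R(\tau)<x^*$, which the paper proves by a Hopf argument in Lemma~\ref{X^R parametrization}. Complementarity alone only gives $\cF_0\le 0$ on the contact set.
\item The vanishing of $u_1=\partial_\tau w$ on the free boundary comes from the continuity result of \cite{Blanchet-et-al-2006}, which uses $\partial_\tau Y_0\ge 0$. Boundedness of $\partial_\tau w$ alone does not give it.
\end{itemize}
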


\begin{proof}
{ See Appendix~\ref{app:proof:X-R-smooth}.}
\end{proof}
\subsubsection{Auxiliary obstacle problem and its contact region}

To investigate the region $\{\partial_x Y = 0\}$ and its boundary, we consider an auxiliary single obstacle problem which corresponds to the variational inequality that $Y$ satisfies in $\{Y>\psi\}$.
Our main strategy is to identify $\pat_x Y$ with the solution of such auxiliary single obstacle problem.
With such identity, analyzing the free boundary $\pat\{\pat_x Y<0\}$ reduces to investigating the free boundary of the corresponding single obstacle problem.
This allows us to adopt the classical methods to verify the properties of the free boundary as usual.
Our desired auxiliary obstacle problem would be as follows:
 \begin{equation}\label{obs_V}
        \begin{cases}
            \partial_\tau V(\tau,x) -\cL V(\tau,x)=f'(x)
            \quad \text{for }\ (\tau,x)\in\cC_T \ \text{ with }\ V(\tau,x)<0,
            \\
            \partial_\tau V(\tau,x) -\cL V(\tau,x)\le f'(x)
            \quad \text{for }\ (\tau,x)\in\cC_T \ \text{ with }\ V(\tau,x)=0,
            \\
            V(\tau,\cX^R(\tau))=\psi'(\cX^R(\tau))
            \quad \text{for \ all }\ \tau\in(0,T),
            \\
            V(0,x)=\psi'(x) \quad \text{for }\ x\in (\cX^R(0),\infty)
        \end{cases}
    \end{equation}
where $\cC_T:=\{Y>\psi\}$.
Since we will subsequently show that $V=\pat_x Y$ holds in $\cC_T$,
the results obtained by investigating the free boundary $\pat\{V<0\}$ can be viewed as the results for $\pat\{\pat_x Y<0\}$.
For convenience, we consider the following transformed problem: 
\begin{equation}\label{obs:V_0}
        \begin{cases}
            \partial_\tau V_0(\tau,x) -\widetilde{\cL}V_0(\tau,x)=g(x)
            \quad \text{for }\ (\tau,x)\in\cC_T \ \text{ with }\ V_0(\tau,x)<0,
            \\
            \partial_\tau V_0(\tau,x) -\widetilde{\cL}V_0(\tau,x)\le g(x)
            \quad \text{for }\ (\tau,x)\in\cC_T \ \text{ with }\ V_0(\tau,x)=0,
            \\
            V_0(\tau,\cX^R(\tau))=\phi(\cX^R(\tau))
            \quad \text{for \ all }\ \tau\in(0,T),
            \\
            V_0(0,x)=\phi(x) \quad \text{for }\ x\in (\cX^R(0),\infty),
        \end{cases}
    \end{equation}
where $\widetilde{\cL}:=\frac{\t^2}{2}\pat_{xx}+(\b-r+\frac{\t^2}{2})\pat_x-r$,
\[V_0(\tau,x)=e^{-x}V(\tau,x),\quad  g(x)=e^{-x}f'(x), \quad \phi(x)=e^{-x}\psi'(x)
\quad \text{for }\ \tau\in (0,T),\ x\in \mathbb{R}.\]
This transformation terminates the exponential term in $f'$, thereby simplifying the subsequent analysis.
Moreover, it is reasonable to focus on $V_0$, as the free boundary $\pat\{V_0<0\}$ coincides with $\pat\{V<0\}$.

To obtain the existence and uniqueness of $V_0$, we introduce a penalized problem on the bounded domain $\cC_T^n=\{(\tau,x)\in\cC_T:\cX^R(\tau)\le x \le n\}$:
\begin{equation}\label{obsV:pen}
    \begin{cases}
        \pat_\tau V^0_{n,\e} -\widetilde{\cL} V^0_{n,\e}
        =g+\b_{2,\e}(V^0_{n,\e})
        \quad \text{in }\ \cC_T^n,
        \\
        V^0_{n,\e}(\tau,\cX^R(\tau))= \phi(\cX^R(\tau))
        \quad \text{and} \quad 
        V^0_{n,\e}(\tau,n)=\r(\tau)
        \quad \text{for }\ \tau\in(0,T),
        \\
        V^0_{n,\e}(0,x)=\phi(x)
        \quad \text{for }\  x\in(\cX^R(0),n),
    \end{cases}
\end{equation}
where $\r(\tau)=\min\{2\zeta\bar{L}\tau+\phi(n),0\}$ is imposed for compatibility condition. The penalty function $\b_{2,\e}\in C^\infty(\mathbb{R})$ satisfies $\b_{2,\e}\le 0$, $\b'_{2,\e}\le 0$, with $\b_{2,\e}(s)=0$ for $s\le -\e$ and $\b_{2,\e}(0)=-2\zeta\bar{L}$.

Since the left lateral boundary is smooth, the Schauder fixed-point theorem \cite[Theorem 8.3]{Lieberman} guarantees the existence of $V^0_{n,\e}$. The structure of $\b_{2,\e}$ yields uniform $W^{1,2}_p(\cC_T^n)$ bounds for $1<p<\infty$, allowing us to extract a weak limit $V^0_n\in W^{1,2}_p(\cC_T^n)\cap C(\overline{\cC_T^n})$ as $\e \to 0^+$. This limit $V^0_n$ satisfies the corresponding obstacle problem on $\cC_T^n$ with the same boundary conditions.

Next, applying the interior $W^{1,2}_p$-estimate \cite[Theorem 7.30]{Lieberman} on $\cC_T^R:=\{(\tau,x)\in\cC_T:x<R\}$, we deduce that for any $n>2R$, there exists a constant $C>0$ such that
\[\Vert V^0_n\Vert_{W^{1,2}_p(\cC_T^R)}\le C\left(\Vert V^0_n\Vert_{L^p(\cC_T^{2R})}+\Vert g \Vert_{L^p(\cC_T^{2R})}+\Vert \phi \Vert_{W^{1,2}_p(\cC_T^{2R})}\right).\]
By the comparison principle \cite[Theorem 1.1]{YangYan2008}, $V^0_{n,\e}$ (and thus $V^0_n$) is uniformly bounded from below by an $n$-independent auxiliary function, making the right-hand side above independent of $n$. We can therefore extract a weak limit $V_{\rm lim} \in W^{1,2}_p$ as $n\to\infty$ on compact subsets of $\cC_T$, which satisfies \eqref{obs:V_0}. The comparison principle on the unbounded domain \cite[Theorem 1.2]{YangYan2008} yields $V_{\rm lim}=V_0$. Finally, the Sobolev embedding theorem ensures the local uniform convergence $V^0_n \to V_0$ and $\pat_x V^0_n \to \pat_x V_0$, guaranteeing that $V_0$ inherits the boundary conditions properly.

In the following lemma, we aim to verify the region in which the contact between $V$ and its obstacle $0$ occurs.

\begin{lem}\label{contact:3}
    Let $V$ be the solution to \eqref{obs_V}. Then, for each $\e>0$, there exists $c_\e>0$ such that
    \[V=0 \quad \text{in }\ [\e,T]\times[c_\e,\infty).\]
\end{lem}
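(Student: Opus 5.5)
The plan is to work with the normalized problem \eqref{obs:V_0}. Since $V=e^{x}V_0$, the sets $\{V=0\}$ and $\{V_0=0\}$ coincide. Problem \eqref{obs:V_0} is an obstacle problem from above, which can be written compactly as
\[
\max\{\partial_\tau V_0-\widetilde{\cL}V_0-g,\;V_0\}=0 \quad\text{in }\cC_T .
\]
Because $V_0\le 0$ always holds, the claim follows once I show $V_0\ge 0$ on $[\e,T]\times[c_\e,\infty)$. So everything reduces to building a suitable lower barrier.

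\textbf{Step 1: signs of the data for large $x$.} By Theorem~\ref{X^R:properties} the curve $\cX^R$ is bounded, so for $x$ large the point $(\tau,x)$ lies in $\cC_T$. For $x>\tilde x$ we have $f=f_2$ (Remark~\ref{f_1 f_2}). Hence
\[
g(x)=e^{-x}f_2'(x)=\zeta\bar L+O\big(e^{-x/\gamma}\big)\to\zeta\bar L>0 .
\]
Likewise $\phi(x)=e^{-x}\psi'(x)=-c_0e^{-x/\gamma_1}$ for some $c_0>0$, so $\phi<0$ and $\phi,\phi',\phi''\to 0$ as $x\to\infty$, and therefore $\widetilde{\cL}\phi\to 0$. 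Fix $R>\max\{x^*,\tilde x\}$ large enough that
\[
g\ge \tfrac34\zeta\bar L \quad\text{and}\quad |\widetilde{\cL}\phi|\le\tfrac18\zeta\bar L \quad\text{on }[R,\infty).
\]

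\textbf{Step 2: the global lower bound $V_0\ge\phi$.} First check by direct computation that $\phi$ is a subsolution, i.e.
\[
\partial_\tau\phi-\widetilde{\cL}\phi\le g \quad\text{in }\cC_T .
\]
This is essentially the statement $\partial_x\big(-\cL\psi-f\big)\le 0$ to the right of $\cX^R$, and it is consistent with $\psi$ being the retirement obstacle. We also have $\phi\le 0$, and $\phi$ agrees with $V_0$ on the lateral boundary $x=\cX^R(\tau)$ and at $\tau=0$. The comparison principle \cite[Theorem~1.2]{YangYan2008} then gives $V_0\ge\phi$ in $\cC_T$. If this computation fails somewhere, I would instead take this lower bound on $[R,\infty)$ from the uniform-in-$n$ lower barrier already used for $V^0_{n,\e}$.

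\textbf{Step 3: a lifted barrier and the maximum-principle argument (the main step).} Take a smooth cutoff $\eta$ with $\eta=0$ on $(-\infty,R]$, $\eta=1$ on $[R+1,\infty)$, and $0\le\eta\le1$. Set
\[
W(\tau,x):=\phi(x)+\kappa\tau\eta(x), \qquad m:=\min\{W,0\}.
\]
Choose $\kappa>0$ small, depending on $\|\eta\|_{C^2}$ and $T$, so that
\[
\partial_\tau W-\widetilde{\cL}W-g=\kappa\eta-\widetilde{\cL}\phi-\kappa\tau\widetilde{\cL}\eta-g\le-\tfrac14\zeta\bar L<0 \quad\text{on }[R,\infty).
\]
On the parabolic boundary of $(0,T)\times(R,\infty)$ we have $W(0,\cdot)=\phi=V_0(0,\cdot)$ and $W(\tau,R)=\phi(R)\le V_0(\tau,R)$ by Step~2, so $m\le V_0$ there.

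I then claim $V_0\ge m$ on $(0,T)\times(R,\infty)$. Suppose the set $E=\{V_0<m\}$ is nonempty. On $E$ we have $V_0<0$, so the equation $\partial_\tau V_0-\widetilde{\cL}V_0=g$ holds there. Consequently both $W-V_0$ and $-V_0$ are strict subsolutions of $\partial_\tau-\widetilde{\cL}$ on $E$; the latter because $\partial_\tau(-V_0)-\widetilde{\cL}(-V_0)=-g<0$. The function $m-V_0=\min\{W-V_0,\,-V_0\}$ is positive in $E$ and vanishes on $\partial E$. At a positive maximum point (localized with a weight $e^{-\delta x^2}$ to handle the unbounded domain, using the exponential growth of $V_0$), one of the two branches attains a local interior maximum. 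That contradicts its strict subsolution property, using $V_0\in W^{1,2}_{p,\loc}$ and Bony's maximum principle.

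\textbf{Step 4: conclusion.} Given $\e>0$, pick $c_\e\ge R+1$ such that $c_0e^{-c_\e/\gamma_1}\le\kappa\e$. For $\tau\ge\e$ and $x\ge c_\e$ this gives $W\ge 0$, hence $m=0$, so $V_0\ge 0$. Together with $V_0\le 0$ this yields $V_0=0$, and therefore $V=0$, on $[\e,T]\times[c_\e,\infty)$.

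I expect the main obstacle to be the rigorous handling of the kink in $m$ inside the maximum-principle argument of Step~3, together with the lateral lower bound of Step~2. If the direct subsolution computation for $\phi$ in Step~2 fails, I would run the whole comparison at the level of the penalized approximations $V^0_{n,\e}$ and pass to the limit.
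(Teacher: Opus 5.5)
Your outline matches the paper's: build a lower barrier that is $\le 0$, is a subsolution of the upper-obstacle problem \eqref{obs:V_0}, and vanishes on $[\e,T]\times[c_\e,\infty)$, then conclude by comparison and $V_0\le 0$. However, the barrier you build is not a subsolution, and the argument fails at exactly the corner you flagged.

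\textbf{The corner in Step 3.} Along the curve $\{W=0\}$ you have $\partial_xW=\phi'+\kappa\tau\eta'>0$; for $x\ge R+1$ this equals $\phi'(x)=\frac{c_0}{\gamma_1}e^{-x/\gamma_1}$. So $m=\min\{W,0\}$ has a \emph{concave} corner in $x$ there. That corner gives $\partial_\tau m-\widetilde{\cL}m$ a positive singular part, of density $\frac{\theta^2}{2}\partial_xW$ on the curve $\{W=0\}$. A minimum of two subsolutions is therefore not a subsolution.

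Your claim that, at a positive maximum of $\min\{W-V_0,-V_0\}$, one of the branches has a local maximum is false for a minimum. It holds for a maximum; for a minimum, think of $\min\{x,-x\}$ at $x=0$. Concretely, take a corner point $P$ with $V_0(P)<0$. Hopf's lemma applied to $-V_0$ on $\{W>0\}$ and to $W-V_0$ on $\{W<0\}$ gives at best $0<\partial_xV_0(P)<\partial_xW(P)$, which is no contradiction.

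\textbf{What is missing.} The barrier must meet the obstacle $0$ with zero spatial slope, and the concavity this forces must be paid for cheaply. The paper does this with
\[
\Lambda_\e(\tau,x)=\e(\tau-\e)\mathbf{1}_{\{\tau<\e\}}-\e^3\Bigl(x-\tfrac{2}{\e^2}\Bigr)^2\mathbf{1}_{\{x<\frac{2}{\e^2}\}},\qquad x\ge \tfrac{1}{\e^2}.
\]
This is a long, shallow parabola with $\partial_{xx}\Lambda_\e=-2\e^3$ and $|\partial_x\Lambda_\e|\le 2\e$. Hence $\partial_\tau\Lambda_\e-\widetilde{\cL}\Lambda_\e=O(\e)\le \zeta\bar L/2\le g$. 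Since $\Lambda_\e\in W^{1,2}_{p,\mathrm{loc}}$ and $\Lambda_\e\le 0$, \cite[Theorem~1.2]{YangYan2008} applies directly. Its corner in $\tau$ at $\tau=\e$ is harmless, because the operator is only first order in $\tau$. Smoothing your corner is possible in principle, but it requires controlling the concavity cost, which is precisely what this construction accomplishes.

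\textbf{A second, smaller gap in Step 2.} Because $W(\tau,R)=\phi(R)$, you need the sharp bound $V_0(\tau,R)\ge\phi(R)$. Checking that $\phi$ is a subsolution is equivalent to $\cF_0+\cF_0'\ge 0$ on $\cC_T$. For $x\le\tilde x$ one finds
\[
\cF_0+\cF_0'=\zeta(\bar L-L)+\bigl(\bar L^{\frac{\gamma_1-\gamma}{\gamma_1}}-L^{\frac{\gamma_1-\gamma}{\gamma_1}}\bigr)e^{-x/\gamma_1}.
\]
When $\gamma>1$ the exponent $\frac{\gamma_1-\gamma}{\gamma_1}$ is negative, so this expression is negative for $x$ far enough left. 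The check can therefore fail near $\cX^R(\tau)$.

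The paper proves the global inequality $V_0\ge\phi$ only later (Lemma~\ref{Vd:dtau,dx}), through $V=\partial_xY$, which itself relies on the present lemma. Your fallback, an unspecified $n$-independent lower barrier, gives only a crude bound, not $V_0(\tau,R)\ge\phi(R)$.

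The paper sidesteps this by making the barrier very negative at its left end: $\Lambda_\e(\tau,1/\e^2)\le -1/\e+\e T$. Then only a crude lower bound $V_0\ge -C$ is needed. Such a bound follows, for example, from comparison with the constant subsolution $-C$, since $g$ and $\phi$ are bounded below on $\cC_T$.
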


\begin{proof}
{ See Appendix~\ref{app:proof:contact-3}.}
\end{proof}

\subsubsection{Parametrization of the wealth-binding boundary}
Since $g$ is strictly increasing, we note from Lemma~\ref{contact:3} that
if $V_0(\tau_0,x_0)=0$ for some $(\tau_0,x_0)\in \cC_T$,
then $V_0(\tau_0,x)=0$ for all $x\ge x_0$.
This can be obtained by following the same procedure as in Lemma~\ref{X^R parametrization}.
Thus, it is natural to consider the following parametrization:
\begin{equation}\label{X^W}
    \cX^B(\tau):=\inf\{x:V_0(\tau,x)=0\}
\quad \text{for \ all }\ \tau\in(0,T).  
\end{equation}
Note that the set $\{x<\cX^B(\tau)\}=\{V_0<0\}$ is open and hence using the parametrization $\cX^B$,
we aim to verify that the solution $V$ to the auxiliary problem \eqref{obs_V} coincides with the spatial derivative of the solution to \eqref{obs1}.
That is, $V=\partial_x Y$ in $\cC_T$
and this justifies that $\cX^B$ is a suitable parametrization of the free boundary $\pat\{\pat_x Y<0\}$.

\begin{lem}\label{V,dY}
    Let $Y$ and $V$ be the solution to \eqref{obs1} and \eqref{obs_V} respectively.
    Then, $\pat_x Y = V$ in $\cC_T$.
\end{lem}

\begin{proof}
{ See Appendix~\ref{app:proof:V-dY}.}
\end{proof}
Note that $V\in W^{1,2}_{p,\loc}(\cC_T)$ and thus $\partial_{xx}Y=\partial_x V\in C^{\frac{\a}{2},\a}(\cC_T)$ for some $\a\in(0,1)$, induced by the embedding theorem.
Moreover, the continuity of $\pat_\tau Y$ on $\pat\{Y>\psi\}$ induced from the result that $\pat_\tau Y\ge 0$ almost everywhere (see \cite{Blanchet-et-al-2006}) yields the following improvement in regularity of $Y$.
\begin{cor}
    Let $Y\in W^{1,2}_{p,\loc}(\O_T)\cap C(\overline{\O_T})$ be the solution to \eqref{obs1}.
    Then, $Y\in C^{1,1}(\Omega_T)\cap C^{1,2}(\cC_T)$.
\end{cor}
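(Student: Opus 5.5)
The claim is that $Y\in C^{1,1}(\Omega_T)\cap C^{1,2}(\mathcal C_T)$. The plan is to prove the two inclusions separately: first $C^{1,2}$ on the continuation set $\mathcal C_T=\{Y>\psi\}$, then $C^{1,1}$ on all of $\Omega_T$ by patching across the retirement boundary $\mathcal X^R$ and the contact set.

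\emph{Regularity on $\mathcal C_T$.} By Lemma~\ref{V,dY}, $\partial_x Y=V$ in $\mathcal C_T$. The solution $V$ of \eqref{obs_V} lies in $W^{1,2}_{p,\mathrm{loc}}(\mathcal C_T)$ for every $p<\infty$. Taking $p\ge3$, the parabolic Sobolev embedding gives $V,\partial_xV\in C^{\frac{\alpha}{2},\alpha}_{\mathrm{loc}}(\mathcal C_T)$. Hence $\partial_{xx}Y=\partial_xV$ is locally H\"older continuous in $\mathcal C_T$.

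To handle $\partial_\tau Y$, I would use the equation for $Y$ on $\mathcal C_T$. Since $Y>\psi$ there, the only constraints that can be active are $\partial_xY<0$ and $\partial_xY=0$.
\begin{itemize}
\item On $\{\partial_xY<0\}\cap\mathcal C_T$ we have $\partial_\tau Y=\mathcal L Y+f$, which is continuous because $Y$, $\partial_xY$, $\partial_{xx}Y$ and $f$ are.
\item On the set $\{x\ge \mathcal X^B(\tau)\}$, where $V\equiv0$, $Y(\tau,\cdot)$ is constant in $x$, so $Y(\tau,x)=Y(\tau,\mathcal X^B(\tau))$. Moreover $\partial_xY=\partial_{xx}Y=0$ in the interior of this set.
\end{itemize}
Continuity of $\partial_\tau Y$ across $x=\mathcal X^B(\tau)$ is the delicate point. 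I would obtain it as follows:
\begin{itemize}
\item Write $\partial_\tau Y(\tau,x)-\partial_\tau Y(\tau,x')=\int_{x'}^{x}\partial_\tau V\,dy$, using $\partial_\tau V\in L^p_{\mathrm{loc}}$. This shows that $\partial_\tau Y$ is the $x$-primitive of an $L^p$ function, up to a function of $\tau$ alone.
\item Alternatively, note that $\partial_\tau Y-\mathcal LY-f$ is a nonnegative measure supported on $\{V=0\}$, and is in fact $L^\infty_{\mathrm{loc}}$ by the $W^{1,2}_p$ bound. Since $\mathcal L Y=-\beta Y$ on $\{V=0\}$ and $\partial_\tau Y=\partial_\tau Y(\tau,\mathcal X^B(\tau))$ there, continuity reduces to continuity of $\tau\mapsto Y(\tau,\mathcal X^B(\tau))$ together with the fact that $\partial_\tau Y\in W^{1,p}_x$ locally.
\end{itemize}
With these, $\partial_\tau Y$ is continuous on $\mathcal C_T$, and $Y\in C^{1,2}(\mathcal C_T)$.

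\emph{Global $C^{1,1}$.} On the interior of the contact set $\{Y=\psi\}$, $Y=\psi$ is smooth in $x$ and $\partial_\tau Y=0$. Since $Y\in W^{1,2}_{p,\mathrm{loc}}$ with $p>3$, the embedding gives $\partial_xY\in C^{\frac{\alpha}{2},\alpha}$ globally, and $\partial_xY=\psi'$ on $\{Y=\psi\}$. The remaining issue is $\partial_\tau Y$ across $\partial\{Y>\psi\}$. Here I would invoke \cite{Blanchet-et-al-2006}: because $\partial_\tau Y\ge0$ a.e. (Lemma~\ref{Y_tau, Y_x}), the time derivative of an obstacle-problem solution is continuous. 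Combined with $\partial_\tau Y=0$ on the contact set, this gives $\partial_\tau Y\to0$ as one approaches $\mathcal X^R$ from $\mathcal C_T$. Together with continuity of $\partial_xY$, this yields $Y\in C^1(\Omega_T)$ (time and space). Finally, $\partial_{xx}Y$ is bounded locally: on $\mathcal C_T$ it equals $\partial_\tau Y-(\text{lower order})-f$ plus a bounded measure, and on the contact set it equals $\psi''$. So $\partial_xY$ is locally Lipschitz in $x$, and $\partial_\tau Y$ is bounded by Lemma~\ref{Y_tau, Y_x}. Reading $C^{1,1}$ in the paper's sense, this gives $Y\in C^{1,1}(\Omega_T)$.

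I expect the main obstacle to be continuity of $\partial_\tau Y$ across the binding boundary $\mathcal X^B$ inside $\mathcal C_T$. The Blanchet et al. result covers the obstacle-contact interface, but $\{V=0\}$ is a gradient-constraint region. The fallback is to apply Blanchet's argument to $V_0$ in \eqref{obs:V_0}, which is itself a standard single obstacle problem. That gives continuity of $\partial_\tau V$, hence of $\partial_{\tau x}Y$, and then continuity of $\partial_\tau Y$ by integrating in $x$ from a point in $\{V<0\}$.
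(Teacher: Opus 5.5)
Your outline of what the paper actually argues matches the paper. Specifically: $\partial_{xx}Y=\partial_xV$ is H\"older continuous in $\mathcal{C}_T$ by Lemma~\ref{V,dY} and the embedding; $\partial_xY$ is continuous on $\Omega_T$; $\partial_\tau Y=\mathcal{L}Y+f$ is continuous on $\{V<0\}$; and continuity of $\partial_\tau Y$ across $\mathcal{X}^R$ follows from \cite{Blanchet-et-al-2006} via $\partial_\tau Y\ge0$. You are also right that continuity of $\partial_\tau Y$ across the binding boundary $\mathcal{X}^B$ needs its own argument, which the paper's brief justification does not treat. However, none of your three arguments works, so the claim ``with these, $\partial_\tau Y$ is continuous on $\mathcal{C}_T$'' is not established.
\begin{enumerate}
\item Your first argument uses only $\partial_\tau V\in L^p_{\mathrm{loc}}$. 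That gives continuity of $x\mapsto\partial_\tau Y(\tau,x)$ for a.e.\ $\tau$. It gives no control of $\tau\mapsto\int_{x'}^{x}\partial_\tau V(\tau,y)\,dy$, so joint continuity does not follow.
\item In your second argument the sign is reversed: on $\{\partial_xY=0\}$ the inequality is $\partial_\tau Y-\mathcal{L}Y-f\le0$. The reduction is also wrong. For $x>\mathcal{X}^B(\tau)$ one has $Y(\tau,x)=m(\tau):=Y(\tau,\mathcal{X}^B(\tau))$, so $\partial_\tau Y=m'(\tau)$ there. What must be shown is that $m\in C^1$ with $m'(\tau)=f(\mathcal{X}^B(\tau))-\beta m(\tau)$, which is the limit of $\mathcal{L}Y+f$ from $\{V<0\}$ (using $V=\partial_xV=0$ on $\mathcal{X}^B$). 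Continuity of $m$ itself is automatic and says nothing about this.
\item Your fallback, applying \cite{Blanchet-et-al-2006} to \eqref{obs:V_0}, is exactly what the paper says cannot be done directly. The paper uses that result for a lower obstacle problem under $\partial_\tau Y\ge0$. For the upper obstacle problem \eqref{obs:V_0}, the nonnegative function $-V_0$ is non-increasing in $\tau$ by Lemma~\ref{Vd:dtau,dx}.
\end{enumerate}

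The missing ingredient is a pointwise bound on $\partial_\tau V$ up to $\mathcal{X}^B$. The paper builds precisely this as a substitute in Lemma~\ref{V_0:dtau,dx}, whose proof does not use this corollary: $0\le\tau\,\partial_\tau V_0\le Ce^{x/\gamma}\partial_xV_0$ on $\{V_0<0\}\cap\{\tau\ge2\kappa\}$. Since $\partial_xV_0$ is continuous on $\mathcal{C}_T$ and vanishes on $\mathcal{X}^B$ (where $V_0\le0$ attains its maximum $0$), this forces $\partial_\tau V_0\to0$ at $\mathcal{X}^B$. Together with $\partial_\tau V_0=0$ on $\{V_0=0\}$ and Schauder regularity in $\{V_0<0\}$, it follows that $\partial_\tau V=e^{x}\partial_\tau V_0$ is continuous on $\mathcal{C}_T$. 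Your integration step then closes the argument. Pick $x_1\in(\mathcal{X}^R(\tau_0),\mathcal{X}^B(\tau_0))$, so that $(\tau,x_1)\in\{V<0\}$ for $\tau$ near $\tau_0$, and write $\partial_\tau Y(\tau,x)=\partial_\tau Y(\tau,x_1)+\int_{x_1}^{x}\partial_\tau V(\tau,y)\,dy$; this is jointly continuous.

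Alternatively, you can get $m'=f(\mathcal{X}^B)-\beta m$ directly. Compare $m(\tau_0\pm h)-m(\tau_0)$ with increments of $Y$ along vertical segments in $\{V<0\}$ ending on $\mathcal{X}^B$. The horizontal correction is $O(h^2)$, because $V$ vanishes to first order at $\mathcal{X}^B$ and $\mathcal{X}^B$ is Lipschitz by Theorem~\ref{X^W:properties}.

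This step is needed for both inclusions, since $C^{1,1}(\Omega_T)$ in the paper's sense (Appendix~\ref{sec:appendix-function-spaces}) only requires $\partial_\tau Y$ and $\partial_xY$ to be continuous. Your closing discussion of bounds on $\partial_{xx}Y$ is therefore unnecessary. On $\mathcal{C}_T$, $\partial_{xx}Y$ is simply the continuous function $\partial_xV$, with no measure part.
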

Utilizing the identity $V = \pat_x Y$ in $\cC_T$, 
we next investigate the increasing property of $V_0$ with respect to $\tau$ and $x$ on the parabolic boundary of $\cC_T$ and hence obtain the monotonicity of $\cX^B$.
\begin{lem}\label{Vd:dtau,dx}
    Let $V_0$ be the solution to \eqref{obs:V_0}. Then, we have
    $\pat_\tau V_0 \ge 0$ a.e. 
    and 
    $\pat_x V_0 \ge 0$ in $\cC_T$.
    Consequently, $V_0 \ge \phi$ holds globally in $\cC_T$.
\end{lem}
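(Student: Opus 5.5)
We prove the two monotonicity statements separately. Each is obtained by differentiating (or shifting) the obstacle problem \eqref{obs:V_0} and applying the comparison principle \cite[Theorem~1.2]{YangYan2008} on the non-cylindrical domain $\cC_T=\{x>\cX^R(\tau)\}$. The final claim $V_0\ge\phi$ then follows quickly from $\pat_\tau V_0\ge 0$.

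\emph{Time monotonicity.} We use the time-shift argument rather than differentiating directly. For $h>0$ set $V_0^h(\tau,x):=V_0(\tau+h,x)$. This function solves the same obstacle problem (obstacle $0$, source $g$) on the shifted domain $\{x>\cX^R(\tau+h)\}$. By Theorem~\ref{X^R:properties}(2), $\cX^R$ is strictly decreasing, so this domain contains $\cC_T\cap\{\tau<T-h\}$. We compare $V_0^h$ and $V_0$ on the parabolic boundary of $\cC_T$.
\begin{itemize}
\item At $\tau=0$ we need $V_0(h,x)\ge\phi(x)$. Here $V_0(h,x)=e^{-x}\pat_xY(h,x)$ by Lemma~\ref{V,dY}. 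On the contact set $Y=\psi$ the right side equals $\phi$.
\item At the lateral boundary $x=\cX^R(\tau)$ we need $V_0(\tau+h,\cX^R(\tau))\ge\phi(\cX^R(\tau))$. Since $(\tau+h,\cX^R(\tau))$ lies in $\cC_T$ (strict decrease), this is the inequality $\pat_xY\ge\psi'$, i.e.\ $\pat_xY_0\ge\psi_0'$, near the retirement boundary. That inequality is given by \eqref{mnt3} of Lemma~\ref{X^R parametrization}, and we would extend it to all of $\cC_T$.
\end{itemize}
This extension is the crux. We first prove it by noting that $W:=\pat_x Y_0-\psi_0'$ satisfies, in $\{Y_0>\psi_0,\ \pat_xY_0+Y_0<0\}$, a linear equation $\pat_\tau W-\widetilde{\cL}W=f_0'-(\ldots)\psi_0$-terms. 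The sign of these terms is checked exactly as in Lemma~\ref{Y>psi} for $x<x^*$. Moreover $W\ge 0$ on the boundary of that region: at $\cX^R$ we have $W=0$, where $\pat_x V_0$-type positivity holds, and at the binding set $W=-Y_0-\psi_0'$, which is positive since $Y_0\ge\psi_0$ and $\psi_0'+\psi_0=e^{-x}\psi'<0$. The maximum principle then gives $W\ge0$. With boundary data ordered, comparison yields $V_0^h\ge V_0$, and hence $\pat_\tau V_0\ge0$ a.e.

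\emph{Space monotonicity.} We use the analogous argument with spatial shifts, $V_0(\tau,x+h)$ versus $V_0(\tau,x)$. The key structural input is that $g=e^{-x}f'$ is strictly increasing, as noted before \eqref{X^W}; this handles the source term. At $\tau=0$ the data are $\phi(x+h)\ge\phi(x)$, which holds because $\phi=c\,e^{-x/\g_1}\cdot(1-1/\g_1)$ has the right monotonicity once the sign of $1-1/\g_1$ is accounted for; we check this directly. At the lateral boundary we need $V_0(\tau,\cX^R(\tau)+h)\ge\phi(\cX^R(\tau))$, which follows from $V_0\ge\phi$ (proved above via $\pat_\tau V_0\ge0$ and the initial data) together with the monotone behaviour of $\phi$ on the relevant range. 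Alternatively, differentiate: $U:=\pat_xV_0$ solves $\pat_\tau U-\widetilde{\cL}U=g'\ge0$ in $\{V_0<0\}$ and $U=0$ on $\{V_0=0\}$. On $x=\cX^R(\tau)$ we use the relation $\pat_\tau V_0=\frac{\t^2}{2}\pat_xU+\dots$ along the moving boundary. More simply, we use the fact that $V_0-\phi\ge0$ vanishes there, so $\pat_x(V_0-\phi)\ge0$, which gives $U\ge\phi'$. The strong maximum principle then gives $U\ge0$ provided $\phi'\ge0$ on $[\underline{x},x^*]$; otherwise we add the $\pat_\tau V_0\ge0$ information through the equation.

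\emph{Conclusion and main obstacle.} Finally, $V_0(\tau,x)\ge V_0(0,x)=\phi(x)$ follows from $\pat_\tau V_0\ge0$. We expect the main obstacle to be the lateral boundary ordering in both comparisons: the boundary $\cX^R$ moves, so one must use its strict monotonicity and the sign information \eqref{mnt3} together with the Hopf-type positivity $\pat_{xx}Y_0>\psi_0''$ established in Lemma~\ref{X^R parametrization}.
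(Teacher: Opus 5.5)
\paragraph{Gap in the time-monotonicity step.}
Your time-monotonicity step has a genuine gap. Comparing $V_0(\cdot+h,\cdot)$ with $V_0$ on $\mathcal{C}_T$ requires two zeroth-order orderings on the parabolic boundary: $V_0(\tau+h,\mathcal{X}^R(\tau))\ge\phi(\mathcal{X}^R(\tau))$, and $V_0(h,x)\ge\phi(x)$ for \emph{all} $x>x^*$. At the latter points $Y>\psi$ by Lemma~\ref{Y>psi}, so your remark about the contact set does not apply. Both orderings are instances of the conclusion $V_0\ge\phi$, i.e.\ $\partial_xY\ge\psi'$, at points away from $\mathcal{X}^R$. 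But \eqref{mnt3} only gives this in a small right-neighbourhood of $\mathcal{X}^R$. So everything rests on your independent ``crux'' argument, and that argument fails.

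First, $\partial_xY\ge\psi'$ is equivalent to $\partial_x(Y_0-\psi_0)+(Y_0-\psi_0)\ge0$, not to $\partial_xY_0\ge\psi_0'$; the latter is only sufficient. Second, on the binding set you get $W=-(Y_0-\psi_0)-e^{-x}\psi'$, and $Y_0\ge\psi_0$ bounds this from \emph{above}, not below.

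The claimed positivity is in fact false in general. If $\gamma>1$, then $\gamma_1>1$, $\psi_0$ is a negative multiple of $e^{-x/\gamma_1}$, and $-\psi_0'=\psi_0/\gamma_1$. By Lemma~\ref{contact:3}, $\{\partial_xY=0\}$ contains $\{x\ge c_\varepsilon\}$ for $\tau\ge\varepsilon$, and there $Y_0=C(\tau)e^{-x}$. Hence $W=-C(\tau)e^{-x}+\psi_0(x)/\gamma_1<0$ for large $x$, because $e^{-x/\gamma_1}$ decays more slowly than $e^{-x}$. So the minimum principle for $W$ has no usable boundary data on $\mathcal{X}^B$.

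Switching to the correct quantity $V_0-\phi=e^{-x}(\partial_xY-\psi')$ does not rescue the argument. It is trivially positive on the binding set, but in $\{V_0<0\}$ it solves $(\partial_\tau-\widetilde{\mathcal{L}})(V_0-\phi)=\mathcal{F}_0+\mathcal{F}_0'$. For $\gamma_1>1$ this source is negative for $x<x^*-\gamma_1\ln\frac{\gamma_1}{\gamma_1-1}$.

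\paragraph{The missing idea, and the status of your other steps.}
The missing idea, which is what the paper uses, is to work with $\partial_\tau V_0$ itself rather than with time-shifted copies of $V_0$. It solves a homogeneous equation in $\{V_0<0\}$ and vanishes on $\{V_0=0\}$. So the interior sign of $\mathcal{F}_0+\mathcal{F}_0'$ is irrelevant, and only \emph{local} boundary signs are needed:
\begin{itemize}
\item On $\mathcal{X}^R$, differentiating $V(\tau,\mathcal{X}^R(\tau))=\psi'(\mathcal{X}^R(\tau))$ gives $\partial_\tau V=-(\partial_xV-\psi'')\frac{d}{d\tau}\mathcal{X}^R\ge0$. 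This uses only $\partial_x(V-\psi')\ge0$ at the boundary, which follows from \eqref{mnt3}, and the decrease of $\mathcal{X}^R$.
\item At $\tau=0$, $\partial_\tau V_0(0,x)=\mathcal{F}_0(x)+\mathcal{F}_0'(x)>0$ for $x\ge x^*$, because $\mathcal{F}_0\ge0$ there.
\end{itemize}
Only after this does $V_0\ge\phi$ follow, by integrating in $\tau$ from the time the vertical line enters $\mathcal{C}_T$. That is from $\tau=0$ if $x>x^*$, and from the $\tau$ with $\mathcal{X}^R(\tau)=x$ otherwise. Your ``$V_0(\tau,x)\ge V_0(0,x)$'' covers only the first case.

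Your differentiated ``alternative'' for $\partial_xV_0$ is essentially the paper's argument and is fine. It only needs $V_0\ge\phi$ in a right-neighbourhood of $\mathcal{X}^R$ to get $\partial_xV_0\ge\phi'$ there. Since $\phi=-\frac{\bar L^{(\gamma_1-\gamma)/\gamma_1}}{K_1}e^{-x/\gamma_1}$, one has $\phi'>0$ everywhere, so no fallback is needed. Your primary spatial-shift version, by contrast, inherits the dependence on the unproved global inequality $V_0\ge\phi$.
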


\begin{proof}
{ See Appendix~\ref{app:proof:Vd-dtau-dx}.}
\end{proof}
\subsubsection{Properties and regularity of the wealth-binding boundary}
In this section, we begin by obtaining the local Lipschitz regularity of the wealth-binding boundary.
To this end, it suffices to show that $\partial_\tau V_0$ and $\partial_x V_0$ remain comparable as they reach the boundary. It is important to note that the standard regularity result in \cite{Blanchet-et-al-2006} cannot be directly applied to the solution $V_0$ of our upper obstacle problem. To overcome this difficulty, we establish the desired comparability estimate through an alternative approach. We achieve this by deriving the corresponding bound for the solution of an approximating penalized problem, and then pass to the limit for our original solution. In this procedure, suitable barrier functions are introduced to fulfill the conditions necessary for the comparison principle.

\begin{lem}\label{V_0:dtau,dx}
    Let $V_0$ be the solution to \eqref{obs:V_0}. Then, there exists some small constant $\k_0>0$ such that for all $\k\in(0,\k_0)$, there exists a constant $C=C(\k)>0$ such that
    \begin{equation}\label{ineq(3)}
        0\le \tau\pat_\tau V_0(\tau,x)\le Ce^{\frac{1}{\g}x}\pat_x V_0(\tau,x)
        \quad \text{for all }\ (\tau,x)\in \{V_0<0\}\cap\cC_T \text{ satisfying }\ 2\k\le\tau\le T.
    \end{equation}
\end{lem}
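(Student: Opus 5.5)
The lower bound $0\le\tau\partial_\tau V_0$ is immediate from Lemma~\ref{Vd:dtau,dx}. For the upper bound we follow the strategy announced before the statement: we work with the penalized approximations $V^0_{n,\e}$ of \eqref{obsV:pen}, which are classical solutions. For these we prove a comparison estimate of the form $\tau\partial_\tau V^0_{n,\e}\le Ce^{x/\g}\partial_x V^0_{n,\e}+(\text{error}\to0)$ on the relevant region, and then pass to the limit $\e\to0$, $n\to\infty$. The local uniform convergence of $V^0_n\to V_0$ and $\partial_xV^0_n\to\partial_xV_0$ (Sobolev embedding) handles $\partial_x$. For $\partial_\tau$ we use weak-$W^{1,2}_p$ convergence, which preserves a.e.\ inequalities between $\tau\partial_\tau$ and a continuous multiple of $\partial_x$.

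\textbf{Step 1 (differentiated equations).} Set $A:=\tau\partial_\tau V^0_{n,\e}$ and $B:=\partial_x V^0_{n,\e}$. Differentiating the penalized equation gives
\[
\partial_\tau A-\widetilde{\cL}A-\b_{2,\e}'(V^0_{n,\e})A=\partial_\tau V^0_{n,\e}
\]
and
\[
\partial_\tau B-\widetilde{\cL}B-\b_{2,\e}'B=g'(x)-(\b-r+\tfrac{\t^2}{2})\cdot 0+\dots,
\]
that is, $B$ satisfies the linearized equation with source $g'>0$. The key facts used here are: $g$ is strictly increasing, so $g'>0$; and $e^{-x}$ times the exponential terms of $f'$ yields $g'(x)\ge c\,e^{-x/\g}$ for large $x$, with $g'$ bounded below by a positive constant on compacts. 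This is what produces the weight $e^{x/\g}$.

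\textbf{Step 2 (auxiliary function).} Consider
\[
W:=Ce^{x/\g}B-A+\eta(\tau,x)
\]
on $\cC^n_T\cap\{\tau\ge\k\}$, where $\eta$ is a small barrier. Apply $\partial_\tau-\widetilde{\cL}-\b'_{2,\e}$ to $W$. The commutator coming from the factor $e^{x/\g}$ generates first- and zero-order terms in $B$ with coefficients $O(C)$. These are controlled because $B\ge0$, which holds for the penalized problem by the same argument as Lemma~\ref{Vd:dtau,dx}. The source $Ce^{x/\g}g'\ge cC$ dominates $\partial_\tau V^0_{n,\e}$, which is bounded by the analogue of Lemma~\ref{Y_tau, Y_x}. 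With $C$ large, $W$ is therefore a supersolution.

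\textbf{Step 3 (boundary data).} We check $W\ge0$ on the parabolic boundary of the region:
\begin{itemize}
\item At $\tau=\k$ (or using a cutoff $(\tau-\k)^+$ in place of $\tau$, with $\tau\ge 2\k$ at the end), $A$ vanishes or is controlled.
\item On the retirement boundary $x=\cX^R(\tau)$, the boundary value is $\phi(\cX^R(\tau))$. Hence $\partial_\tau V_0=\phi'(\cX^R)\dot\cX^R$, which is bounded by the local Lipschitz bound of Theorem~\ref{X^R:properties}. Meanwhile $B$ is bounded below by a positive constant there, using $\partial_{xx}Y_0>\psi_0''$ (Lemma~\ref{X^R parametrization}) together with Hopf.
\item At $x=n$, $A=\tau\r'$ and $B$ are explicit, and we choose $\r$ or the barrier $\eta$ accordingly.
\end{itemize}
The comparison principle \cite[Theorem 1.1]{YangYan2008} then gives $W\ge0$.

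\textbf{Main obstacle.} The delicate point is the lower bound for $B$ near the retirement boundary and near $\tau=\k$, uniformly in $(n,\e)$. We would first try to get it from Lemma~\ref{ineq 2} and the strict convexity gap, and adjust $\eta$ so that it vanishes in the limit.
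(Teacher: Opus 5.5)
Your overall scheme (penalize, differentiate, compare, pass to the limit) is the paper's. However, Step~2 fails as written, and that step carries the whole content of the lemma. With the non-constant weight $e^{x/\g}$, a direct computation gives
\[
\bigl(\pat_\tau-\widetilde{\cL}-\b_{2,\e}'\bigr)\bigl(e^{x/\g}B\bigr)
=e^{x/\g}g'-\frac{\t^2}{\g}\,e^{x/\g}\,\pat_xB-\Bigl(\frac{\t^2}{2\g^2}+\frac{1}{\g}\bigl(\b-r+\tfrac{\t^2}{2}\bigr)\Bigr)e^{x/\g}B .
\]
The first-order commutator involves $\pat_xB=\pat_{xx}V^0_{n,\e}$. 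This term has no sign and no bound uniform in $(n,\e)$, and knowing $B\ge 0$ does nothing for it. In fact $B\ge0$ gives the zero-order term the wrong sign for a supersolution whenever the bracket is positive, e.g.\ for $\b\ge r$. Nor can the commutator be absorbed into the operator. Writing $e^{x/\g}\pat_xB=\pat_x(e^{x/\g}B)-\frac1\g e^{x/\g}B$ and moving $\pat_xW$ into the drift only trades it for $\frac{\t^2}{\g}\pat_xA=\frac{\t^2}{\g}\tau\,\pat_{x\tau}V^0_{n,\e}$, which is just as uncontrolled. So no choice of a large $C$ and a small $\eta$ makes $W$ a supersolution. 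The maximum-principle comparison between $\pat_\tau V^0_{n,\e}$ and $\pat_x V^0_{n,\e}$ only closes when the spatial weight is constant.

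The missing idea is to freeze the weights at the evaluation point and localize. The paper fixes $(\tau_0,x_0)\in\{V_0<0\}$ with $\tau_0\ge 2\k$ and compares on $\cC^n_T\cap\{\tau>\k\}$ the function
\[
\Psi=\tau_0\pat_\tau V^0_{n,\e}-C_1e^{x_0/\g}\pat_xV^0_{n,\e}-C_2\zeta\bar L\bigl\{e^{|k|T}\Phi(\tau,x;x_0)+\Upsilon(\tau;\tau_0)\bigr\},
\]
where $\Phi=e^{k\tau}\bigl(e^{\frac2\g|x-x_0|}-\frac2\g|x-x_0|-1\bigr)$ and $\Upsilon=-(\tau-\tau_0)\mathbf{1}_{\{\tau\le\tau_0\}}$ both vanish at $(\tau_0,x_0)$.

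\begin{itemize}
\item The first two terms of $\Psi$ solve the same linearized equation, with sources $0$ and $-C_1e^{x_0/\g}g'$, so no commutator appears. As a side benefit, no interior bound on $\pat_\tau V^0_{n,\e}$ is needed.
\item The bound $g'(x)\ge c\,e^{-x/\g}$ is only needed where $|x-x_0|\le\g$. There $C_1e^{x_0/\g}g'\ge C_1c\,e^{-1}$ covers the possibly negative part of $(\pat_\tau-\widetilde{\cL})\Phi$.
\item The $e^{\frac2\g|x-x_0|}$ part of $(\pat_\tau-\widetilde{\cL})\Phi$ pays for the $-1$ coming from $\Upsilon$.
\item $\Upsilon$ handles $\tau=\k$, using $\tau_0\ge2\k$ and $\pat_\tau V^0_{n,\e}(\k,\cdot)\le 2\zeta\bar L$ for $\k<\k_0$; this is where $\k_0$ comes from.
\item The growth of $\Phi$ handles $x=n$.
\item Evaluating $\Psi\le 0$ at $(\tau_0,x_0)$ removes both barriers. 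The weight $e^{x/\g}$ in \eqref{ineq(3)} is just the frozen constant $e^{x_0/\g}$.
\end{itemize}

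Your boundary identity on $\cX^R$ is also wrong. Differentiating $V^0_{n,\e}(\tau,\cX^R(\tau))=\phi(\cX^R(\tau))$ gives $\pat_\tau V^0_{n,\e}=(\phi'-\pat_xV^0_{n,\e})\frac{d}{d\tau}\cX^R$, not $\phi'\frac{d}{d\tau}\cX^R$. Since $\phi'>0$ and $\frac{d}{d\tau}\cX^R<0$, this directly yields $\pat_\tau V^0_{n,\e}\le M\,\pat_xV^0_{n,\e}$ with $M=\sup_{[\k,T]}|\frac{d}{d\tau}\cX^R|$. Hence the positive lower bound on $B$ that you single out as the main obstacle is not needed at all.
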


\begin{proof}
{ See Appendix~\ref{app:proof:V-0-dtau-dx}.}
\end{proof}
Using the estimate \eqref{ineq(3)} and proceeding as in the proof of Theorem~\ref{X^R:properties}, we obtain the following result. 
\begin{thm}[Properties of the Wealth-binding boundary]
\label{X^W:properties}
    Let $\cX^B:(0,T)\to \mathbb{R}$ be defined as in \eqref{X^W}.
    Then, 
    \begin{itemize}
        \item [(1)] $\cX^B$ is locally Lipschitz continuous.
        Thus, $\cX^B$ constitutes the entire free boundary $\pat\{V_0<0\}$.
        \item [(2)] $\cX^B$ is strictly decreasing.
        \item [(3)] $\displaystyle\lim_{\tau\to 0^+}\cX^B(\tau)=\infty$.
    \end{itemize}
\end{thm}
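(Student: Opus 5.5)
The three assertions follow from Lemma~\ref{Vd:dtau,dx} and the estimate \eqref{ineq(3)} of Lemma~\ref{V_0:dtau,dx}, and I would prove them in the order (2), (1), (3), mirroring Theorem~\ref{X^R:properties}.

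\emph{Monotonicity.} By Lemma~\ref{Vd:dtau,dx}, $\pat_\tau V_0\ge0$, so once $V_0(\tau_0,x)=0$ we have $V_0(\tau,x)=0$ for all $\tau\ge\tau_0$ with $(\tau,x)\in\cC_T$. Since the contact set is a right half-line in $x$, $\cX^B$ is non-increasing. For strictness, suppose $\cX^B\equiv x_1$ on some interval $[\tau_1,\tau_2]$. On the segment $\{x_1\}\times(\tau_1,\tau_2)$ the function $V_0$ vanishes and is $C^1$ in $x$ across the free boundary, because $V_0\in W^{1,2}_{p,\loc}$. Hence $\pat_\tau V_0=0$ there, and $\pat_\tau V_0\ge0$ in $\{V_0<0\}$. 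Formally differentiating the equation in $\tau$ (via difference quotients) shows that $W:=\pat_\tau V_0$ is a nonnegative solution of $\pat_\tau W-\widetilde{\cL}W=0$ in the strip to the left of $x_1$. It vanishes on the lateral segment, so Hopf's lemma gives either $W\equiv0$ or $\pat_xW(\tau,x_1^-)<0$. We then compare with the equation evaluated at the boundary: as $x\to x_1^-$, $\pat_\tau V_0-\widetilde{\cL}V_0\to g(x_1)$, whereas in the contact set $\pat_\tau V_0-\widetilde{\cL}V_0=0\le g$. Using $V_0=\pat_xV_0=\pat_\tau V_0=0$ on the segment, this forces $\frac{\t^2}{2}\pat_{xx}V_0(x_1^-)=-g(x_1)$. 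We need $g(x_1)>0$ on the relevant region; this should follow from $g$ being strictly increasing together with the contact region lying where $g>0$, as in Lemma~\ref{contact:3}. The sign of $\pat_{xx}V_0$ then contradicts $\pat_xV_0\ge0$ and $V_0<0$ to the left. The case $W\equiv0$ makes $V_0$ stationary on the strip; we rule it out by propagating back to the initial or left boundary data, where $V_0=\phi$ and $\pat_\tau V_0>0$ somewhere.

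\emph{Lipschitz.} Fix $\k\in(0,\k_0)$ and consider the level sets $\{V_0=-\eta\}$ for small $\eta>0$, near a boundary point with $\tau\ge2\k$. In $\{V_0<0\}$ we have $\pat_xV_0>0$ near the boundary. This follows from $\pat_xV_0\ge0$ and the strong maximum principle applied to $\pat_xV_0$, which solves $\pat_\tau U-\widetilde{\cL}U=g'>0$, so it cannot vanish at an interior point. Therefore each level set is a graph $x=X_\eta(\tau)$ with
\[
X_\eta'(\tau)=-\frac{\pat_\tau V_0}{\pat_x V_0},\qquad
|X_\eta'(\tau)|\le \frac{C}{\tau}e^{x/\g}\le C(\k)\,e^{R/\g}
\]
by \eqref{ineq(3)}, locally uniformly in $\eta$ on bounded $x$-ranges. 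Letting $\eta\to0$, $X_\eta\to\cX^B$ locally uniformly, using continuity of $V_0$ and the strict monotonicity in $x$. The Lipschitz bound passes to the limit. Continuity then implies that the graph of $\cX^B$ is the whole free boundary $\pat\{V_0<0\}$.

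\emph{Limit and main obstacle.} For $\lim_{\tau\to0^+}\cX^B(\tau)=\infty$, we use the initial data: at $\tau=0$ we have $V_0=\phi<0$ everywhere, since $\psi'<0$. Given $M$, compactness and continuity of $V_0$ up to $\tau=0$ on $[\cX^R(0),M]$ give $V_0<0$ on $[0,\tau_M]\times[\cX^R,M]$, so $\cX^B(\tau)>M$ for $\tau<\tau_M$. The bounded-$x$ restriction is handled by monotonicity. The main obstacle is strictness in (2), specifically pinning down the sign of $g$ on the segment and excluding the degenerate case $W\equiv0$.
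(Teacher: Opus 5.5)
Your proofs of (1) and (3), and of non-strict monotonicity in (2), follow the paper's argument step for step. That means level curves with the implicit function theorem, the bound \eqref{ineq(3)} with the level curves kept below $\mathcal X^B(2\kappa)$, and continuity of $V_0$ up to $\tau=0$ together with $\phi<0$. One caveat in (1): $g'$ jumps at $\tilde x$. Your strong-maximum-principle argument for $\partial_xV_0>0$ is therefore valid only in its $W^{2,1}_p$ form. The paper instead handles $\tilde x$ by a two-sided Hopf argument.

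The genuine gap is in the strictness part of (2). The final contradiction you propose does not exist. From $\frac{\theta^2}{2}\partial_{xx}V_0(\tau,x_1^-)=-g(x_1)$ and $g(x_1)>0$ you get $\partial_{xx}V_0(\tau,x_1^-)<0$. Combined with $\partial_xV_0(\tau,x_1)=0$, this yields $\partial_xV_0>0$ and $V_0<0$ just to the left of $x_1$. That is exactly the normal profile of an upper-obstacle solution at a free boundary point, and it is fully consistent with Lemma~\ref{Vd:dtau,dx}. A contradiction would require $\partial_{xx}V_0(x_1^-)>0$, i.e.\ $g(x_1)<0$, which is impossible because $g\ge 0$ on the contact set. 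So the sign of $g$ is a red herring. Meanwhile, the Hopf conclusion $\partial_xW(\tau,x_1^-)<0$ that you derived is never actually contradicted.

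What excludes that Hopf conclusion is the Cauchy data on the vertical segment: $\partial_xV_0(\tau,x_1)=0$ for all $\tau\in(\tau_1,\tau_2)$, so its variation in $\tau$ vanishes. The paper's route ("the same procedure as for $\mathcal X^R$") uses $V_0^\delta(\tau,x):=V_0(\tau+\delta,x)-V_0(\tau,x)\ge 0$ on $(\tau_1,\tau_2-\delta)\times(x_1-r,x_1)$. This function solves the homogeneous equation. Because $V_0$ is $C^1$ in $x$, both $V_0^\delta$ and $\partial_xV_0^\delta$ vanish on $x=x_1$. There are then two cases:
\begin{itemize}
\item If $V_0^\delta$ has no interior zero, Hopf gives $\partial_xV_0^\delta(\tau,x_1^-)<0$, a contradiction.
\item Otherwise, the strong maximum principle propagates $V_0^\delta\equiv 0$ backward in time, and this is ruled out using the initial data.
\end{itemize}
This difference-quotient form never needs $\partial_\tau V_0$ or $\partial_\tau\partial_xV_0$ to be continuous up to the free boundary. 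That matters, because the paper notes the Blanchet et al.\ regularity result does not apply to $V_0$.

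If you want to keep $W=\partial_\tau V_0$, you must first justify that $W$ is $C^1$ up to $x=x_1$, for instance by boundary regularity on the flat segment. You would also need $W(\tau,x_1)=0$ there and $\partial_xW(\tau,x_1)=\frac{d}{d\tau}\bigl(\partial_xV_0(\tau,x_1)\bigr)=0$. It is this identity, not the sign of $\partial_{xx}V_0$, that contradicts Hopf.
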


\begin{proof}
{ See Appendix~\ref{app:proof:X-W-properties}.}
\end{proof}
To establish the smoothness of $\cX^B$, we inductively apply the boundary Harnack inequality as in Corollary~\ref{X^R:smooth}. Note, however that the right-hand side $g$ is non-differentiable at $x=\tilde{x}$. Since $\cX^B$ is strictly decreasing, it can intersect this line at most once, say at a unique $\tilde{\tau} \in (0,T)$. If the free boundary passes through this point, its infinite differentiability may fail, which yields the following restrictive result.

\begin{cor}[Smoothness]\label{XW smooth}
    Let $\cX^B:(0,T)\to \mathbb{R}$ be defined as in \eqref{X^W}.
    If $\cX^B$ does not intersect $x=\tilde{x}$, then $\cX^B \in C^\infty((0,T))$.
    Otherwise, if $\cX^B$ intersects $x=\tilde{x}$, then
    $\cX^B \in C^{0,1}_{\rm loc}((0,T)) \cap C^{\infty}((0,T)\setminus\{\tilde{\tau}\})$,
    where $\tilde{\tau}$ is the unique point such that
    $\cX^B(\tilde{\tau})=\tilde{x}$.
\end{cor}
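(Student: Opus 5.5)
We would mirror the proof of Corollary~\ref{X^R:smooth}, separating the analysis away from the line $x=\tilde{x}$, where $g$ is smooth, from the single point where the boundary may cross it. The key structural input is Theorem~\ref{X^W:properties}. Since $\cX^B$ is locally Lipschitz and constitutes the whole free boundary $\pat\{V_0<0\}$, near any point $(\tau_0,\cX^B(\tau_0))$ the non-contact set $\{V_0<0\}$ is a Lipschitz domain in $(\tau,x)$. Strict monotonicity of $\cX^B$ gives that $\{\tau:\cX^B(\tau)=\tilde{x}\}$ has at most one element $\tilde{\tau}$. If such $\tilde{\tau}$ exists, it is a local Lipschitz property only, and $C^{0,1}_{\rm loc}((0,T))$ is already given. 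So the whole task is to prove $C^\infty$ regularity at every $\tau_0$ with $\cX^B(\tau_0)\neq\tilde{x}$.

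Fix such a $\tau_0$ and choose a small parabolic cylinder $Q$ around $(\tau_0,\cX^B(\tau_0))$ that avoids $\{x=\tilde{x}\}$. We also want $Q$ to stay inside $\cC_T$. This holds because $\cX^B(\tau)>\cX^R(\tau)$: on $\cX^R$ we have $V_0=\phi<0$, and both boundaries are continuous. Inside $Q$, $g$ is $C^\infty$ (it is a combination of exponentials), so $w:=-V_0\ge 0$ solves the one-sided obstacle problem $\pat_\tau w-\widetilde{\cL}w=-g$ in $\{w>0\}$, with $w=0$ in the complement.

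Next we differentiate. For a tangential direction $e$ (a combination of $\pat_\tau$ and $\pat_x$) lying in the cone of directions allowed by the Lipschitz graph, Lemma~\ref{Vd:dtau,dx} and estimate \eqref{ineq(3)} give $\pat_e w\ge0$ near the boundary. The quotients $\pat_\tau w/\pat_x w$ are then bounded ratios of two nonnegative solutions of the same linear equation $(\pat_\tau-\widetilde{\cL})u=0$ in $\{w>0\}\cap Q$, both vanishing on the free boundary. The parabolic boundary Harnack inequality of \cite{TorresLatorre2024,Kukuljan2022} in Lipschitz domains gives $C^{\alpha}$ regularity of $\pat_\tau w/\pat_x w$ up to the boundary. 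Since the level-set normal is $-\pat_\tau w/\pat_x w=(\cX^B)'$, this yields $\cX^B\in C^{1,\alpha}$.

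We then iterate. Once the boundary is $C^{k,\alpha}$, the higher-order boundary Harnack estimate in $C^{k,\alpha}$ domains upgrades the quotient to $C^{k,\alpha}$, so $\cX^B\in C^{k+1,\alpha}$. Induction gives $C^\infty$ on $(0,T)\setminus\{\tilde{\tau}\}$. When no crossing occurs, this already covers all of $(0,T)$.

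The main obstacle is the first step. We have to check that the ratio $\pat_\tau V_0/\pat_x V_0$ is genuinely controlled, namely that $\pat_xV_0>0$ near the free boundary inside $\{V_0<0\}$ rather than merely $\ge0$. The plan is to apply the strong maximum principle to $\pat_xV_0$, which satisfies the differentiated equation with nonnegative source $g'>0$ away from $\tilde{x}$. This rules out interior zeros. Combined with \eqref{ineq(3)}, it gives the two-sided comparability that the boundary Harnack inequality needs.
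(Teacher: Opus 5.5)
Your proposal follows the paper's route: localize away from $x=\tilde{x}$ (strict monotonicity of $\cX^B$ allows at most one crossing), use the Lipschitz domain $\{V_0<0\}$ from Theorem~\ref{X^W:properties}, get $C^{1,\alpha}$ from the boundary Harnack inequality applied to the ratio $\pat_\tau V_0/\pat_x V_0$ (which equals $-(\cX^B_n)'$ along the level curves), and bootstrap with the higher-order boundary Harnack inequality, exactly as in Corollary~\ref{X^R:smooth}. One correction: only $\pat_\tau V_0$ solves the homogeneous equation, while $(\pat_\tau-\widetilde{\cL})\pat_x V_0=g'$, so you need the boundary Harnack inequality with right-hand side of \cite{TorresLatorre2024}, just as the paper uses it for $\pat_x Y_0-\psi_0'$, whose equation carries the source $\cF_0'$.
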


\begin{rem}
The contact set $\{V_0=0\}$ requires $g(x) \ge 0$. Since $g$ is strictly increasing and $g(\tilde{x}) = \zeta \left( \bar{L} - \frac{L}{1-\a} \right)$, the interaction between the free boundary $\cX^B$ and $\tilde{x}$ depends on $\a$. If $\a \ge 1 - L/\bar{L}$, then $g(\tilde{x}) \le 0$, which implies that the contact occurs only for $x \ge \tilde{x}$. Thus, $\cX^B$ does not intersect $x=\tilde{x}$ and remains globally smooth on $(0,T)$. Conversely, if $\a < 1 - L/\bar{L}$, $\cX^B$ may cross $\tilde{x}$, reducing its regularity to $C^{0,1}$ exactly at the crossing point.
\end{rem}

In the following theorem, relying on the structure of the free boundary $\cX^B$, we establish the uniqueness of the strong solution $Y$. We prove that $Y$ coincides with any strong solution whose gradient-constrained region is similarly bounded from the left by a monotone decreasing curve. This specific structure is essential: it allows the maximum to propagate horizontally to the free boundary, while the monotonicity of the free boundary guarantees the interior cylinder condition required for the parabolic Hopf lemma.

\subsection{Uniqueness of the solution \texorpdfstring{$Y$}{Y}}
\begin{thm}\label{thm:maximal-strong-solution}

    Let $Y\in W^{1,2}_{p,{\rm loc}}(\O_T)\cap C(\overline{\Omega_T})$ be the function obtained in Lemma~\ref{Y}, satisfying \eqref{obs1}. Then, $Y$ is the maximal strong solution of \eqref{obs1}. Furthermore, $Y$ coincides with any strong solution of \eqref{obs1} whose gradient-constrained region is of the form $\{(\tau, x): x \ge \cX(\tau)\}$ for some monotone decreasing free boundary curve $\cX$.
\end{thm}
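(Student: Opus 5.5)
The plan has two parts. First I show that $Y$ dominates every strong solution, using the penalized approximations $Y_{n,\e}$ of Lemma~\ref{Y n,e}. Then I show that any strong solution $Z$ whose gradient-constrained region is $\{x\ge\cX(\tau)\}$, with $\cX$ monotone decreasing, cannot lie strictly below $Y$. For this second part I use a maximum-principle argument that pushes a positive maximum of $Y-Z$ horizontally onto the curve $\cX$ and then contradicts the parabolic Hopf lemma there. Throughout I work in the class of strong solutions with the exponential growth bound $|Z|\le Ce^{d|x|}$ of Lemma~\ref{Y n,e}(3). This class is needed for comparison on the unbounded domain; if necessary I multiply by a weight $e^{-\lambda\tau-d'\sqrt{1+x^2}}$ with $d'>d$ so that maxima are attained.

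\textbf{Maximality.} Let $Z$ be a strong solution of \eqref{obs1}. I would check that, on $\O^n_T$, $Z-C\e$ (or $Z$ minus a small barrier vanishing as $\e\to0$) is a subsolution of the penalized problem \eqref{obs1:pen}.
\begin{itemize}
\item Where $Z>\psi$, the VI gives $\partial_\tau Z-\cL Z\le f$. Moreover $\partial_xZ\le 0$ implies $\b_{u,\e}(\partial_xZ-\varphi_\e)$ is nonpositive and bounded below by the $O(\e)$ correction built into $\varphi_\e$; I absorb this into the shift.
\item Where $Z=\psi$, we have $\partial_\tau Z-\cL Z=-\cL\psi$ a.e. This is at most $f+\b_{l,\e}(0)$ on $[-n,n]$, precisely by the choice of $\b_{l,\e}(0)$, and monotonicity of $\b_{l,\e}$ handles the shifted argument.
\item The Neumann data at $\pm n$ do not match. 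I handle this with an exponential barrier supported near $|x|=n$ whose contribution vanishes on compacts as $n\to\infty$, using the growth bound.
\end{itemize}
Comparison for the semilinear penalized equation, which holds since $\b_{l,\e}'\le0$ and $\b_{u,\e}$ is monotone, then gives $Z\le Y_{n,\e}+o(1)$ locally. Letting $\e\to0$ and then $n\to\infty$ along the subsequences of Lemmas~\ref{Yn} and \ref{Y} yields $Z\le Y$.

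\textbf{Uniqueness in the monotone class.} Let $Z$ be as in the second assertion and set $w:=Y-Z\ge0$. Suppose $w$ has a positive supremum, and take a maximum point $(\tau_0,x_0)$ of (weighted) $w$ with $\tau_0>0$, using $w(0,\cdot)=0$. I would exclude each possible location of $(\tau_0,x_0)$.
\begin{itemize}
\item In the contact set of $Z$ we have $Y\ge\psi=Z$, but at a positive maximum $Y>Z=\psi$; it is easy to exclude this location, or to move within it to the contact boundary.
\item In $\{x\ge\cX(\tau)\}$ we have $\partial_xw=\partial_xY\le0$, so $w$ is nonincreasing in $x$. Hence the maximum is also attained at $(\tau_0,\cX(\tau_0))$.
\item In the remaining region $\mathcal N:=\{Z>\psi,\ \partial_xZ<0\}$, the function $Z$ solves the equation with equality. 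Since $Y>Z\ge\psi$ there, $Y$ satisfies $\partial_\tau Y-\cL Y\le f$ (either the equation or the gradient inequality). Thus $\partial_\tau w-\cL w\le0$ in $\mathcal N$, and the zero-order term $+\beta w$ forbids an interior positive maximum unless $w$ is constant, which is impossible because then $\beta w\le0$.
\end{itemize}
So the maximum sits at the lateral boundary point $(\tau_0,\cX(\tau_0))$ of $\mathcal N$. Because $\cX$ is decreasing in $\tau$, the set $\{(\tau,x):\tau_0-\delta<\tau\le\tau_0,\ \cX(\tau_0)-\delta<x<\cX(\tau_0)\}$ is a parabolic cylinder inside $\mathcal N$ touching that point; points with $x<\cX(\tau_0)\le\cX(\tau)$ lie to the left of the curve. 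The parabolic Hopf lemma then gives $\partial_xw(\tau_0,\cX(\tau_0))>0$. On the other hand $\partial_xw=\partial_xY-\partial_xZ=\partial_xY\le0$ there, since $Y\in C^{1,1}$ and $\partial_xZ=0$ on the curve. This contradiction yields $w\equiv0$.

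\textbf{Main obstacle.} The delicate step is justifying Hopf's lemma for $W^{1,2}_p$ strong solutions at a boundary that is merely monotone, hence only continuous. I would first invoke the strong maximum principle to show $w<\max w$ inside the cylinder, so that strict interior inequality holds. I would then use a standard Hopf barrier $e^{-k((x-x_1)^2+(\tau_0-\tau))}-e^{-k\rho^2}$ on a small paraboloid region inside the cylinder. The $C^1$ regularity of both $Y$ and $Z$ gives pointwise meaning to $\partial_xw$ at the boundary.
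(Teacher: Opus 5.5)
Your Part~2 is essentially the paper's argument. The maximality step, however, fails at a specific point.

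You claim that $Z-C\e$, or $Z$ minus a small barrier, is a subsolution of \eqref{obs1:pen}. This breaks down on the gradient-constrained set of $Z$. There $\pat_x Z=0$, so the argument of the upper penalty is $\pat_x Z-\varphi_\e=\zeta\bar L\e e^x>0$, and by the definition of $\b_{u,\e}$,
\[
\b_{u,\e}\bigl(\pat_x Z-\varphi_\e\bigr)=-\tfrac{2}{\e}\,\zeta\bar L\e e^x-1=-2\zeta\bar L e^x-1 .
\]
This term is of order one, not $O(\e)$, and it sits on the right-hand side with the unfavourable sign. A subsolution would need $\pat_\tau Z+\b Z\le f-2\zeta\bar L e^x-1$ there, while the VI only gives $\le f$. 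Even $Y$ itself violates this for large $x$: on its binding set $\pat_\tau Y+\b Y$ does not depend on $x$, whereas $f-2\zeta\bar L e^x\to-\infty$.

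Because the term depends only on the gradient, no constant shift can absorb it. You would have to subtract a correction with slope below $-\e(\zeta\bar L e^x+1)$, so that the argument of $\b_{u,\e}$ drops below $-\e$. That makes the Neumann data harder. At $x=-n$ a subsolution needs $\pat_x\widetilde Z(\tau,-n)\ge\psi'(-n)$. Any barrier restoring this adds positive slope, which re-activates $\b_{u,\e}$ with a penalty of size $\e^{-1}$ times that slope. To avoid this you would need to know, for instance, that $Z=\psi$ near $x=-n$, which requires a separate Lemma~\ref{contact_1}-type comparison for $Z$. The growth bound controls $Z$, not $\pat_x Z$, so your boundary-layer barrier is not justified as written.

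The detour is unnecessary: run your Part~2 argument with the two solutions' roles exchanged.
\begin{enumerate}
\item[(a)] Take a positive maximum $M$ of $Z-Y$. If $\pat_x Y=\pat_x Z<0$ there, then $Z-Y$ is a local subsolution, and $\b M>0$ gives a contradiction.
\item[(b)] If $\pat_x Y=\pat_x Z=0$, slide left along the flat segment to $(\tau_0,\cX^B(\tau_0))$, where $\cX^B$ is the binding boundary of $Y$ itself. Apply the Hopf lemma from the region $\{x<\cX^B(\tau)\}$.
\item[(c)] The interior parabolic ball comes from the monotonicity of $\cX^B$, already proved in Theorem~\ref{X^W:properties}(2). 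The identification $\{\pat_x Y=0\}=\{x\ge\cX^B(\tau)\}$ comes from Lemma~\ref{V,dY} and \eqref{X^W}.
\end{enumerate}
This gives $Z\le Y$ for every strong solution, and it is exactly how the paper proves maximality. The uniqueness statement is then your Part~2, which the paper obtains by ``applying the same argument to $\{Y>\widehat Y\}$''.

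One minor point in Part~2. The weight $e^{-d'\sqrt{1+x^2}}$ is increasing for $x<0$, so it spoils the horizontal sliding when $\cX(\tau_0)<0$. A weight monotone in $x$, such as $e^{-\lambda\tau-d'x}$, works instead. This is legitimate because $\psi\le Z\le Y$ and Lemma~\ref{contact_1} force $Y-Z=0$ for $x\le\underline x$.
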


\begin{proof}
{ See Appendix~\ref{app:proof:thm-maximal-strong-solution}.}
\end{proof}
\subsubsection{Strict convexity of the solution \texorpdfstring{$Q$}{Q}}
For the duality theorem, let us establish the strict convexity of $Q$ in $\mathcal{D}_T\setminus \{\partial_z Q =0\}$.
\begin{lem}[Strict convexity]\label{lem:strict-convexity-Q}

    Let $Q\in W^{1,2}_{p,{\rm loc}}(\mathcal{D}_T)\cap C(\overline{\cD_T})$, $1<p<\infty$ be the solution to \eqref{obs:Q}.
    Then, $\partial_{zz}Q>0$ a.e. in $\{\partial_z Q <0\}$. Consequently, $Q$ is strictly convex with respect to $z$ in $\{\partial_z Q <0\}$.
\end{lem}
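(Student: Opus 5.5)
The plan is to pass to the transformed variables, where the statement becomes a sign condition on $\partial_x V_0$, and then obtain strictness from a strong maximum principle. With $z=e^x$, $\tau=T-t$ and $Q(t,z)=Y(\tau,x)$, the chain rule gives
\[
\partial_z Q=e^{-x}\partial_x Y,\qquad \partial_{zz}Q=e^{-2x}\bigl(\partial_{xx}Y-\partial_x Y\bigr).
\]
So $\{\partial_z Q<0\}$ corresponds to $\{\partial_x Y<0\}$, and we must show $\partial_{xx}Y-\partial_xY>0$ a.e. there. We split this set into three parts: the contact set $\{Y=\psi\}$, the curve $\{(\tau,\cX^R(\tau))\}$, and $\{\partial_xY<0\}\cap\cC_T$.

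First, the contact set. On $\{Y=\psi\}$, Sobolev functions agree with their obstacle together with their second derivatives a.e. on the coincidence set. Hence $\partial_{zz}Q=J_R''$ a.e. there. From \eqref{eq:JR}, $J_R''(\xi)$ is a positive multiple of $\xi^{-(1-\gamma_1)/\gamma_1-2}$ for both $\gamma_1<1$ and $\gamma_1>1$, so $J_R''>0$.

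Second, the curve $\{(\tau,\cX^R(\tau))\}$ has Lebesgue measure zero by Theorem~\ref{X^R:properties}, so it can be ignored.

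Third, the continuation region. On $\cC_T$, Lemma~\ref{V,dY} gives $\partial_xY=V=e^xV_0$. Then
\[
\partial_{xx}Y-\partial_xY=e^x\bigl(V_0+\partial_xV_0\bigr)-e^xV_0=e^x\,\partial_xV_0 .
\]
So it suffices to show $W:=\partial_xV_0>0$ a.e. in the open set $\mathcal N:=\{V_0<0\}\cap\cC_T=\{(\tau,x):\cX^R(\tau)<x<\cX^B(\tau)\}$. Lemma~\ref{Vd:dtau,dx} already gives $W\ge0$, so only strictness remains.

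In $\mathcal N$ the function $V_0$ solves the linear equation $\partial_\tau V_0-\widetilde{\cL}V_0=g$. By Remark~\ref{f_1 f_2}, $g=e^{-x}f'$ is locally Lipschitz, since $f\in C^{1+\alpha}$ and its piecewise expressions are smooth away from $\tilde x$. Differentiating in $x$ by difference quotients, using interior $W^{1,2}_p$ estimates, gives $W\in W^{1,2}_{p,\rm loc}(\mathcal N)$ with
\[
\partial_\tau W-\widetilde{\cL}W=g'\ \ \text{a.e. in }\mathcal N .
\]
Since $g$ is strictly increasing, and a direct computation on $f_1$ and $f_2$ separately gives $g'>0$ away from $\tilde x$, we get $g'>0$ a.e. Hence $W\ge0$ is a strong supersolution of $\partial_\tau-\widetilde{\cL}$; the zeroth-order term $+rW$ has the correct sign.

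Suppose $W(\tau_0,x_0)=0$ at some interior point of $\mathcal N$. The strong maximum principle for $W^{1,2}_p$ supersolutions (Bony's version) then forces $W\equiv0$ on the part of the connected component of $\mathcal N$ that lies at times $\tau\le\tau_0$ and is connected to $(\tau_0,x_0)$. That part contains an open set $U$. On $U$ we have $\partial_\tau W=\partial_xW=\partial_{xx}W=0$ a.e., so $g'=0$ a.e. on $U$, contradicting $g'>0$ a.e. Hence $W>0$ throughout $\mathcal N$. Combined with the contact set and the null curve, this gives $\partial_{zz}Q>0$ a.e. in $\{\partial_zQ<0\}$.

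Strict convexity in $z$ then follows on each interval of the $t$-slice contained in $\{\partial_zQ<0\}$. Because $\cX^R$ and $\cX^B$ are continuous (Theorems~\ref{X^R:properties} and~\ref{X^W:properties}), for every $t$ outside a null set the slice derivative $\partial_zQ(t,\cdot)$ is absolutely continuous with strictly positive a.e. derivative there, hence strictly increasing. This extends to every $t$ by continuity of $\partial_zQ$, which holds since $Q\in C^{1,1}$, because strict monotonicity is preserved under locally uniform limits of functions with a uniform positive lower bound on compacts. That uniform lower bound comes from $W>0$ being continuous on $\mathcal N$ and from $J_R''>0$ being continuous.

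The main obstacle is justifying the differentiated equation for $W$ given that $g'$ jumps at $\tilde x$, and applying the strong maximum principle in the Sobolev class. If interior regularity near $x=\tilde x$ proves troublesome, I would run the argument separately on $\mathcal N\cap\{x<\tilde x\}$ and $\mathcal N\cap\{x>\tilde x\}$, where $g$ is smooth and $W$ is classical. The line $\{x=\tilde x\}$ has measure zero, so the a.e. conclusion is unaffected.
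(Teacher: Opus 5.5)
Your proof is correct and follows the paper's route: the identity $\partial_{zz}Q=e^{-x}\partial_xV_0$ on $\mathcal{C}_T$, $J_R''>0$ on the contact set, a strong maximum principle for $W=\partial_xV_0$ driven by $g'>0$, and a null retirement boundary. The only variation is at the kink line $x=\tilde x$: you cross it with a Sobolev-class strong maximum principle (or simply discard it as a null set), whereas the paper uses the two-sided Hopf argument from part (1) of the proof of Theorem~\ref{X^W:properties}. Your closing limit argument in $t$ is unnecessary, and the ``uniform lower bound on compacts'' it invokes is not justified near $z_R$ by continuity of $W$ on the open set $\mathcal N$. It is not needed because $W>0$ pointwise in $\mathcal N$ and $J_R''>0$ on the open contact set already make $\partial_zQ(t,\cdot)$ strictly increasing on $(0,z_B(t))$ for every fixed $t$.
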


\begin{proof}
{ See Appendix~\ref{app:proof:lem-strict-convexity-Q}.}
\end{proof}
\subsection{Characterization of \(\mathcal{Q}\) and Its Free Boundaries}

We now translate the results obtained for the transformed variational inequality in the \((\tau,x)\)-variables back to the original formulation in the \((t,z)\)-variables. Since the change of variables
\[
\tau=T-t,\qquad z=e^x,\qquad Y(\tau,x)=\mathcal{Q}(T-\tau,e^x)
\]
is one-to-one, the regularity and free-boundary properties established for \(Y\) immediately yield the corresponding structural properties of \(\mathcal{Q}\). In particular, the retirement region, the working region, and the binding region can be characterized in terms of two free boundaries in the original variables.

\begin{thm}[Structure of the dual variational inequality]\label{thm:Q-structure}
Let \(\mathcal Q\) be the unique solution to Problem~\eqref{eq:main3}. Then there exist two functions
\[
z_R,z_B:(0,T)\to(0,\infty),
\qquad
0<z_R(t)<z_B(t)\quad \text{for all }t\in(0,T),
\]
such that the domain
\[
\mathcal D_T =\{(t,z)\mid 0\le t<T,\; z>0\}
\]
is decomposed into the following three regions:
\begin{align*}
\mathcal{RR}_{\rm dual}
&:=
\{(t,z)\in\mathcal D_T: 0<z\le z_R(t)\},
\\
\mathcal{WR}_{\rm dual}
&:=
\{(t,z)\in\mathcal D_T: z_R(t)<z<z_B(t)\},
\\
\mathcal{BR}_{\rm dual}
&:=
\{(t,z)\in\mathcal D_T: z\ge z_B(t)\}.
\end{align*}
More precisely, the following assertions hold.

\begin{enumerate}
\item[\rm (i)]
\textbf{Characterization of the three regions.}
\begin{align*}
\mathcal{RR}_{\rm dual}
&=
\{(t,z)\in\mathcal D_T:\mathcal Q(t,z)=J_R(z)\},
\\
\mathcal{WR}_{\rm dual}
&=
\{(t,z)\in\mathcal D_T:\mathcal Q(t,z)>J_R(z),\ \partial_z\mathcal Q(t,z)<0\},
\\
\mathcal{BR}_{\rm dual}
&=
\{(t,z)\in\mathcal D_T:\mathcal Q(t,z)>J_R(z),\ \partial_z\mathcal Q(t,z)=0\}.
\end{align*}
Hence,
\[
\mathcal D_T
=
\mathcal{RR}_{\rm dual}\cup \mathcal{WR}_{\rm dual}\cup \mathcal{BR}_{\rm dual},
\]
with the union being disjoint up to the free boundaries.

\item[\rm (ii)]
\textbf{Free boundaries.}
The two free boundaries are given by
\[
\partial\mathcal{RR}_{\rm dual}
=
\{(t,z_R(t)):0<t<T\},
\qquad
\partial\mathcal{BR}_{\rm dual}
=
\{(t,z_B(t)):0<t<T\}.
\]
Equivalently, if \(\mathcal X^R\) and \(\mathcal X^B\) denote the free boundaries in the transformed \((\tau,x)\)-variables with \(\tau=T-t\) and \(z=e^x\), then
\[
z_R(t)=\exp\!\big(\mathcal X^R(T-t)\big),
\qquad
z_B(t)=\exp\!\big(\mathcal X^B(T-t)\big).
\]

\item[\rm (iii)]
\textbf{Convexity and derivative bounds.}
For each \(t\in[0,T)\), the map \(z\mapsto \mathcal Q(t,z)\) is convex on \((0,\infty)\) and is strictly convex on \((0,z_B(t))\).
Moreover,
\[
J_R'(z)\le \partial_z \mathcal Q(t,z)\le 0,
\qquad (t,z)\in \mathcal D_T,
\]
or equivalently,
\[
0\le -\partial_z\mathcal Q(t,z)\le -J_R'(z),
\qquad (t,z)\in \mathcal D_T.
\]
In particular,
\[
\partial_z\mathcal Q(t,z)=0
\quad \text{for } z\ge z_B(t),
\]
and
\[
\partial_z\mathcal Q(t,z)<0
\quad \text{for } 0<z<z_B(t).
\]

\item[\rm (iv)]
\textbf{Regularity and monotonicity of the free boundaries.}
Both \(z_R\) and \(z_B\) are continuous on \((0,T)\) and strictly increasing in \(t\).
More precisely, \(z_R\) is \(C^\infty\) on \((0,T)\).
For \(z_B\), let \(\tilde z:=e^{\tilde x}\). If \(z_B\) does not intersect \(\tilde z\), then \(z_B\in C^\infty((0,T))\). Otherwise, there exists a unique \(\tilde t\in(0,T)\) such that \(z_B(\tilde t)=\tilde z\), and
\[
z_B\in C^{0,1}_{\rm loc}((0,T))\cap C^\infty((0,T)\setminus\{\tilde t\}).
\]

\item[\rm (v)]
\textbf{Terminal-time limits.}
As \(t\to T-\),
\[
z_R(t)\longrightarrow z_R^\ast,
\qquad
z_B(t)\longrightarrow \infty,
\]
where
\[
z_R^\ast:=e^{x^\ast}
=
\left(
\frac{\gamma_1}{1-\gamma_1}
\frac{\bar L^{\frac{\gamma_1-\gamma}{\gamma_1}}-L^{\frac{\gamma_1-\gamma}{\gamma_1}}}
{\zeta(\bar L-L)}
\right)^{\gamma_1}.
\]
\end{enumerate}
\end{thm}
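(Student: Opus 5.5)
The plan is to translate the results already proved for $Y$, $Y_0$, $V$ and $V_0$ in the $(\tau,x)$ variables back through the bijection $\tau=T-t$, $z=e^x$, $\mathcal Q(t,z)=Y(T-t,\ln z)$, and to check each assertion against the corresponding lemma. First I will set $z_R(t):=\exp(\mathcal X^R(T-t))$ and $z_B(t):=\exp(\mathcal X^B(T-t))$.

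\textbf{The three regions and the free boundaries.} By Lemma~\ref{X^R parametrization} and the continuity of $\mathcal X^R$, the contact set $\{Y=\psi\}=\{Y_0=\psi_0\}$ is exactly $\{x\le \mathcal X^R(\tau)\}$. This gives $\mathcal{RR}_{\rm dual}=\{\mathcal Q=J_R\}$ and $\partial\mathcal{RR}_{\rm dual}=\{(t,z_R(t))\}$. On $\mathcal C_T$, Lemma~\ref{V,dY} gives $\partial_xY=V=e^xV_0$. Theorem~\ref{X^W:properties}(1) and the definition \eqref{X^W} then show that, inside $\mathcal C_T$, $\{\partial_xY=0\}=\{x\ge\mathcal X^B(\tau)\}$ and $\{\partial_xY<0\}=\{\mathcal X^R(\tau)<x<\mathcal X^B(\tau)\}$. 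Since $\partial_z\mathcal Q=z^{-1}\partial_xY$, this yields the characterizations in (i) and the identity for $\partial\mathcal{BR}_{\rm dual}$ in (ii).

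The strict inequality $z_R<z_B$ needs its own argument. At $x=\mathcal X^R(\tau)$ we have $V_0=\phi<0$, because $\psi'<0$. By continuity of $V_0$ up to the lateral boundary, $V_0<0$ in a neighbourhood of that point, so $\mathcal X^B(\tau)>\mathcal X^R(\tau)$.

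\textbf{Derivative bounds and convexity.} Lemma~\ref{Vd:dtau,dx} gives $\phi\le V_0\le 0$ in $\mathcal C_T$. Multiplying by $e^x$ gives $\psi'\le\partial_xY\le0$ there. In the contact region, $\partial_xY=\psi'$ holds because $Y\in C^{1,1}$ and $Y=\psi$ on an $x$-half-line. Converting to $z$ gives $J_R'\le\partial_z\mathcal Q\le0$.

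For convexity, I will argue separately on each region. In $\mathcal{WR}_{\rm dual}$, Lemma~\ref{lem:strict-convexity-Q} gives $\partial_{zz}\mathcal Q>0$ a.e. In $\mathcal{RR}_{\rm dual}$, $\mathcal Q=J_R$, which is strictly convex. In $\mathcal{BR}_{\rm dual}$, $\mathcal Q$ is constant in $z$. Because $\partial_z\mathcal Q$ is continuous (by the $C^{1,1}$ regularity) and nondecreasing on each piece, it is nondecreasing on all of $(0,\infty)$. Hence $\mathcal Q$ is convex, and strictly convex on $(0,z_B(t))$.

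\textbf{Monotonicity, regularity and terminal limits.} Since $\mathcal X^R$ and $\mathcal X^B$ are strictly decreasing in $\tau$ (Theorems~\ref{X^R:properties} and \ref{X^W:properties}) and $t\mapsto T-t$ reverses order, $z_R$ and $z_B$ are strictly increasing in $t$. Smoothness transfers directly from Corollaries~\ref{X^R:smooth} and \ref{XW smooth}, with $\tilde t:=T-\tilde\tau$. The limits $\tau\to0^+$ in Theorems~\ref{X^R:properties}(3) and \ref{X^W:properties}(3) correspond to $t\to T-$, giving $z_R\to e^{x^*}$ and $z_B\to\infty$.

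\textbf{Expected obstacle.} The main difficulty is consistency of the identification $\partial_xY=V$ at the boundary between regions. Specifically, I need $\partial_xY<0$ on all of $\{\mathcal X^R<x<\mathcal X^B\}$, and the level sets must not create additional components. This follows from $V_0<0$ exactly to the left of $\mathcal X^B$ (definition \eqref{X^W} together with the propagation property that $V_0(\tau,\cdot)=0$ to the right of any contact point). I also need the uniqueness phrase "the unique solution to Problem~\eqref{eq:main3}", which I will justify via Theorem~\ref{thm:maximal-strong-solution}.
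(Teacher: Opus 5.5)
Your proposal is correct and follows the paper's own route: the paper obtains Theorem~\ref{thm:Q-structure} by transporting the $(\tau,x)$-results for $Y$, $Y_0$, $V$, $V_0$ through $\tau=T-t$, $z=e^x$. Your extra details are exactly what that translation needs: $z_R<z_B$ from $V_0=\phi<0$ on $\mathcal X^R$, the bounds $\phi\le V_0\le 0$, and the gluing of the continuous, piecewise nondecreasing $\partial_z\mathcal Q$ across the two boundaries to get convexity. One caveat: Theorem~\ref{thm:maximal-strong-solution} gives uniqueness only among strong solutions whose gradient-constrained region is bounded by a monotone curve, so ``the unique solution'' should be read, as the paper does, as the maximal solution from Lemma~\ref{Y}, not as unconditional uniqueness for Problem~\eqref{eq:main3}.
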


{Theorem~\ref{thm:Q-structure} should be read together with Theorem~\ref{thm:maximal-strong-solution}: the solution \(\mathcal Q\) is obtained from the maximal strong solution of the transformed variational inequality, and this solution is unique among strong solutions whose gradient-constrained region is generated by a monotone binding boundary. In the original \((t,z)\)-variables this means that the binding boundary \(z_B(t)\), associated with the gradient constraint \(\partial_z\mathcal Q=0\), is monotone increasing in time. This boundary is not imposed exogenously; it is generated by the maximal strong solution and is later verified to implement the dual game.}

Theorem~\ref{thm:Q-structure} shows that the dual domain \(\mathcal D_T\) is partitioned by the two free boundaries \(z_R(t)\) and \(z_B(t)\) into three qualitatively different regions. Figure~\ref{fig:dual_free_boundaries} visualizes this decomposition in the original \((t,z)\)-variables. The lower region,
\(\mathcal{RR}_{\rm dual}\), is the retirement region, where the solution coincides with the retirement payoff, that is,
\(\mathcal Q(t,z)=J_R(z)\). The middle region,
\(\mathcal{WR}_{\rm dual}\), is the working region, where
\(\mathcal Q(t,z)>J_R(z)\) and \(\partial_z\mathcal Q(t,z)<0\). The upper region,
\(\mathcal{BR}_{\rm dual}\), is the binding region, where
\(\mathcal Q(t,z)>J_R(z)\) and \(\partial_z\mathcal Q(t,z)=0\).
Accordingly, the lower free boundary \(z_R(t)\) separates the retirement and working regions, whereas the upper free boundary \(z_B(t)\) separates the working and binding regions.

The figure also illustrates the monotonicity properties stated in Theorem~\ref{thm:Q-structure}. Both boundaries are strictly increasing in time, so the retirement and binding thresholds move upward as \(t\) approaches the terminal date \(T\). In particular, the numerical plot shows that \(z_R(t)\) increases relatively slowly, while \(z_B(t)\) rises much more rapidly near maturity. This is consistent with the terminal-time behavior in Theorem~\ref{thm:Q-structure}(v), namely,
\[
z_R(t)\to z_R^\ast
\qquad\text{and}\qquad
z_B(t)\to\infty
\quad\text{as } t\to T-.
\]
Thus, near the terminal time, the retirement boundary remains bounded, while the binding boundary is pushed upward without bound.

From the viewpoint of the geometry of the solution, the figure should be read together with the derivative and convexity properties in Theorem~\ref{thm:Q-structure}(iii). For each fixed \(t\), the interval \(0<z<z_B(t)\) corresponds to the range in which the solution remains strictly decreasing and, in fact, strictly convex in \(z\), while the region \(z\ge z_B(t)\) corresponds to the flat part where the gradient constraint binds and \(\partial_z\mathcal Q(t,z)=0\). Hence, the two free boundaries in Figure~\ref{fig:dual_free_boundaries} do not merely separate colors in the diagram; they encode the transitions between the three distinct regimes of the dual variational inequality.

\begin{figure}[!htb]
    \centering
    \includegraphics[width=0.65\textwidth]{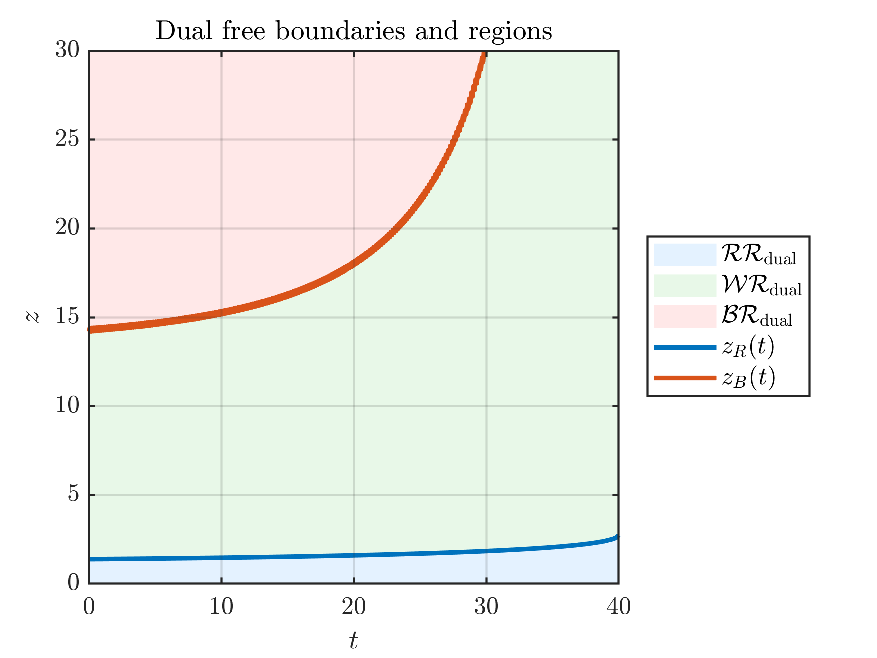}
    \caption{Dual free boundaries \(z_R(t)\) and \(z_B(t)\), and the regions they induce in the dual domain.}
    \label{fig:dual_free_boundaries}
\end{figure}

\section{Verification and Optimal Strategies}

{In this section, we verify the bridge from analysis back to control. The maximal strong solution \(\mathcal Q\) of the dual variational inequality, characterized in particular by the gradient-constrained binding boundary \(z_B(t)\) that is monotone increasing in time, is shown to coincide with the value of the stopper--singular-controller game. This identification then yields the primal-dual relation and the optimal primal strategy.}

For a given initial state \((t,z)\in\mathcal D_T\), let \(z_R\) and \(z_B\) denote the retirement and binding free boundaries obtained in the previous section. For each \(D\in\mathcal{NI}(t,T)\), define the retirement hitting time by
\begin{equation}\label{eq:tauD-def}
\tau_t^{D}(z)
:=
\inf\{s\in[t,T)\mid \mathcal Z_s^{t,z,D}\le z_R(s)\}\wedge T.
\end{equation}
Next, define the barrier strategy \(D^B(t,z)=\{D_s^B(t,z)\}_{s\in[t,T]}\) by
\begin{equation}\label{eq:DB-dual}
D_s^B(t,z)
:=
\min\left\{
1,\,
\inf_{t\le \eta\le s}\frac{z_B(\eta)}{\Xi_\eta^{t,z}}
\right\},
\qquad s\in[t,T],
\end{equation}
where
\begin{equation}\label{eq:Xi-uncontrolled}
\Xi_s^{t,z}
=
z e^{\beta(s-t)}\frac{H_s}{H_t},
\qquad s\ge t.
\end{equation}
For notational convenience, we write
\begin{equation}\label{eq:tauR-def}
\tau_R(t,z):=\tau_t^{D^B(t,z)}(z).
\end{equation}
Since \(z_B\) is continuous and strictly positive, it follows that
$$
D^B(t,z)\in\mathcal{NI}(t,T),
\qquad
\tau_R(t,z)\in\mathcal S(t,T).
$$

The next theorem shows that the pair \(\bigl(D^B(t,z),\tau_R(t,z)\bigr)\) is optimal for the min--max problem and that the game value coincides with \(\mathcal Q\).

\begin{thm}\label{thm:min-max}
For every \((t,z)\in\mathcal D_T\), the pair
$$
\bigl(D^B(t,z),\tau_R(t,z)\bigr)\in\mathcal{NI}(t,T)\times\mathcal S(t,T)
$$
solves the min--max problem
$$
\overline J(t,z)
=
\inf_{D\in\mathcal{NI}(t,T)}
\sup_{\tau\in\mathcal S(t,T)}
\mathcal J(t,z;D,\tau)
=
\mathcal J\bigl(t,z;D^B(t,z),\tau_R(t,z)\bigr).
$$
Moreover,
$$
\overline J(t,z)=\mathcal Q(t,z),
\qquad (t,z)\in\mathcal D_T.
$$
Consequently, the stopping--control game admits a value and
$$
J(t,z)=\underline J(t,z)=\overline J(t,z)=\mathcal Q(t,z),
\qquad (t,z)\in\mathcal D_T.
$$
\end{thm}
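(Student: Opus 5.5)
The proof is a two-sided verification argument built on the generalized Itô formula for $\mathcal Q\in W^{1,2}_{p,\loc}(\mathcal D_T)$ with $p\ge 3$. This version of Itô's formula holds because the diffusion in $z$ is nondegenerate on compacts; we localize with exit times of $[1/k,k]$. It is applied to $e^{-\beta(s-t)}\mathcal Q(s,\mathcal Z_s^{t,z,D})$ for an arbitrary $D\in\mathcal{NI}(t,T)$. Writing $dD=dD^c+\Delta D$, we get
\begin{align*}
e^{-\beta(\rho-t)}\mathcal Q(\rho,\mathcal Z_{\rho-})
={}&\mathcal Q(t,z)+\int_t^\rho e^{-\beta(s-t)}(\partial_t+\mathscr L)\mathcal Q\,ds+\int_t^\rho e^{-\beta(s-t)}\partial_z\mathcal Q\,\Xi_s\,dD_s^c\\
&+\sum_{t\le s<\rho}e^{-\beta(s-t)}\bigl[\mathcal Q(s,\mathcal Z_s)-\mathcal Q(s,\mathcal Z_{s-})\bigr]+\text{(local martingale)}.
\end{align*}
Since $\partial_z\mathcal Q\le0$ and $dD\le 0$, both the continuous singular term and the jump terms are nonnegative; the jumps are handled by the mean value theorem along the segment, as $\mathcal Z$ jumps downward. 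The stochastic integral is a true martingale after localization. Here Theorem~\ref{thm:Q-structure}(iii) gives $|\partial_z\mathcal Q|\le|J_R'|$, and the growth bound of Lemma~\ref{Y n,e}(3) is inherited by $\mathcal Q$. Combined with integrability of $\tilde u(\mathcal Z)$ and $J_R(\mathcal Z)$ (power functions of a geometric Brownian motion times a bounded $D\le1$; negative powers are the delicate case), this lets us pass to the limit by dominated or monotone convergence and Fatou.

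\textbf{Upper bound $\overline J\le\mathcal Q$.} Fix $D=D^B(t,z)$ and an arbitrary stopping time $\tau\in\mathcal S(t,T)$. By construction, $\mathcal Z^{t,z,D^B}_s=\Xi_s\min\{1,\inf_{\eta\le s}z_B(\eta)/\Xi_\eta\}\le z_B(s)$, so the process stays in $\overline{\mathcal{RR}_{\rm dual}\cup\mathcal{WR}_{\rm dual}}$. Moreover $D^B$ decreases only when $\mathcal Z_s=z_B(s)$, where $\partial_z\mathcal Q=0$; hence the singular term vanishes. Jumps of $D^B$ occur only possibly at time $t$, when $z>z_B(t)$; since $\partial_z\mathcal Q=0$ on $[z_B(t),z]$, $\mathcal Q$ is unchanged there. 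Continuity of $z_B$ (Theorem~\ref{thm:Q-structure}(iv)) makes $D^B$ continuous for $s>t$. Off the binding region, \eqref{obs:Q} gives $\partial_t\mathcal Q+\mathscr L\mathcal Q+\tilde u\le0$. The set $\{\mathcal Z_s=z_B(s)\}$ has zero $ds$-Lebesgue measure almost surely because of the nondegenerate noise. Also $\mathcal Q\ge J_R$ everywhere and $\mathcal Q(T,\cdot)=J_R$. Together these give $\mathcal J(t,z;D^B,\tau)\le\mathcal Q(t,z)$ for every $\tau$, and hence $\overline J\le\sup_\tau\mathcal J(t,z;D^B,\tau)\le\mathcal Q$.

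\textbf{Lower bound.} Fix $D$ arbitrary and take $\tau=\tau_t^D(z)$. On $[t,\tau)$ the process lies in $\{\mathcal Q>J_R\}$, where $\partial_t\mathcal Q+\mathscr L\mathcal Q+\tilde u\ge0$ (equality in the working region, $\ge$ in the binding region). The singular and jump terms are $\ge0$. At $\tau$ we have $\mathcal Q(\tau,\mathcal Z_{\tau-})=J_R(\mathcal Z_{\tau-})$: either $\mathcal Z$ hits $z_R$ continuously, or $\tau=T$ and the terminal condition applies. If a downward jump crosses into the retirement region, the hitting time occurs at that jump time, and we must evaluate with $\mathcal Z_{\tau-}$ as in \eqref{eq:game-value}. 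This is subtle: the payoff uses the left limit, but $\mathcal Z_{\tau-}$ may lie above $z_R$, so we only know $\mathcal Q(\tau,\mathcal Z_{\tau-})\ge J_R(\mathcal Z_{\tau-})$, which gives the inequality in the wrong direction. To handle this I would compare instead at the post-jump point, $\mathcal Q(\tau,\mathcal Z_\tau)=J_R(\mathcal Z_\tau)$, and include the jump at $\tau$ in the Itô expansion. Then $\mathcal Q(\tau,\mathcal Z_{\tau-})-[\mathcal Q(\tau,\mathcal Z_{\tau-})-\mathcal Q(\tau,\mathcal Z_\tau)]=J_R(\mathcal Z_\tau)\ge J_R(\mathcal Z_{\tau-})$, using that $J_R$ is decreasing. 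This yields $\mathcal J(t,z;D,\tau_t^D)\ge\mathcal Q(t,z)$. Hence $\sup_\tau\mathcal J(t,z;D,\tau)\ge\mathcal Q$ for every $D$, so $\overline J\ge\mathcal Q$. The same computation with $D=D^B$ gives equality at $(D^B,\tau_R)$, so this pair attains the min--max.

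\textbf{Equality of values.} The upper bound shows $\inf_D\mathcal J(t,z;D,\tau)\le\mathcal Q$ for every $\tau$, so $\underline J\le\mathcal Q$. For the reverse inequality, note that the lower bound gives, for each $D$, $\mathcal J(t,z;D,\tau_t^D)\ge\mathcal Q$, but with a $D$-dependent stopping time. To conclude $\underline J\ge\mathcal Q$, I would take $\tau^*=\tau_R(t,z)$ and try to show $\mathcal J(t,z;D,\tau^*)\ge\mathcal Q$ for all $D$. The difficulty is that $\mathcal Z^D\le\mathcal Z^{D^B}$ may enter the retirement region before $\tau^*$, where the running inequality has the wrong sign. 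The fallback is the standard relation $\underline J\le\overline J$ together with a nonanticipative-strategy version of the lower bound: the stopper uses the feedback rule $\tau_t^D$, and $\underline J$ is interpreted, as in \citet{BDI2022}, with the stopper responding to $D$. I expect this step, reconciling the order of sup and inf with the feedback stopping time, together with the uniform integrability justifications near $z\to0$, to be the main obstacle.
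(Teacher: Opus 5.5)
Your architecture (generalized It\^o formula, $D^B$ against an arbitrary $\tau$ for $\overline J\le\mathcal Q$, an arbitrary $D$ against $\tau_t^D(z)$ for $\overline J\ge\mathcal Q$, equality at $(D^B,\tau_R)$) is the paper's Steps~1--3. The upper-bound part is fine, but two steps fail as written.

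\textbf{The jump repair in the lower bound has the wrong sign.} Including the jump at $\tau=\tau_t^D(z)$ in the It\^o expansion yields $\mathbb E_t\bigl[\int_t^{\tau}e^{-\beta(s-t)}\tilde u(\mathcal Z_s)\,ds+e^{-\beta(\tau-t)}J_R(\mathcal Z_{\tau})\bigr]\ge\mathcal Q(t,z)$. The payoff, however, contains $J_R(\mathcal Z_{\tau-})\le J_R(\mathcal Z_\tau)$, so the inequality $J_R(\mathcal Z_\tau)\ge J_R(\mathcal Z_{\tau-})$ again points the useless way. In fact $\mathcal J(t,z;D,\tau_t^D(z))\ge\mathcal Q(t,z)$ is false in general. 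If $z>z_R(t)$ and $D$ jumps at time $t$ so that $\mathcal Z_t\le z_R(t)$, then $\tau_t^D(z)=t$ and the payoff is $J_R(z)<\mathcal Q(t,z)$. The paper's Step~1 simply asserts $\mathcal Q(\tau_t^D,\mathcal Z_{\tau_t^D-})=J_R(\mathcal Z_{\tau_t^D-})$ and has the same defect, so you were right to flag it. Since only $\sup_\tau\mathcal J(t,z;D,\tau)\ge\mathcal Q(t,z)$ is needed, stop slightly later. With $\tau_h:=(\tau_t^D(z)+h)\wedge T$, It\^o up to $\tau_h$ (excluding the jump at $\tau_h$) gives
\[
\mathcal J(t,z;D,\tau_h)\ge\mathcal Q(t,z)-\mathbb E_t\int_{\tau_t^D}^{\tau_h}e^{-\beta(s-t)}\bigl(\partial_t\mathcal Q+\mathscr L\mathcal Q+\tilde u\bigr)^-(s,\mathcal Z_s)\,ds-\mathbb E_t\bigl[e^{-\beta(\tau_h-t)}(\mathcal Q-J_R)(\tau_h,\mathcal Z_{\tau_h-})\bigr].
\]
Both error terms vanish as $h\downarrow0$, because $\mathcal Z_{\tau_h-}\to\mathcal Z_{\tau_t^D}\le z_R(\tau_t^D)$ by right-continuity.

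\textbf{The value equality is not proved.} Your fallback reinterprets $\underline J$ through stopper strategies, but \eqref{eq:lower-upper-values} fixes $\tau\in\mathcal S(t,T)$ before $D$. The paper's closing sentence does not fill this either: $\underline J\le\overline J=\mathcal Q$ only reproduces $\underline J\le\mathcal Q$. What is needed is the saddle inequality $\mathcal J(t,z;D,\tau_R)\ge\mathcal J(t,z;D^B,\tau_R)$ for all $D\in\mathcal{NI}(t,T)$. The missing idea is convexity in $D$, not It\^o on $\mathcal Q$, which fails for the reason you give.

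Write $\mathcal Z^B:=\mathcal Z^{t,z,D^B(t,z)}$, $c^*=\hat c(\mathcal Z^B)$, $l^*=\hat l(\mathcal Z^B)$, $W^*_s=-\partial_z\mathcal Q(s,\mathcal Z^B_s)$, and $W^*_{\tau_R}=-J_R'(\mathcal Z^B_{\tau_R-})$. Three facts combine:
\begin{itemize}
\item $\mathcal Z^{t,z,D}=\Xi^{t,z}D$ is linear in $D$;
\item $\tilde u$ and $J_R$ are convex, with $\tilde u'(x)=\zeta\bar L-\hat c(x)-\zeta\hat l(x)$;
\item $e^{-\beta(s-t)}\Xi^{t,z}_s=zH_s/H_t$.
\end{itemize}
The tangent-line inequalities for $\tilde u$ and $J_R$ then give
\[
\mathcal J(t,z;D,\tau_R)-\mathcal J(t,z;D^B,\tau_R)\ge\frac{z}{H_t}\bigl(\mathcal B(D^B)-\mathcal B(D)\bigr),
\]
where $\mathcal B(D):=\mathbb E_t\bigl[\int_t^{\tau_R}H_sD_s\bigl(c_s^*-\zeta(\bar L-l_s^*)\bigr)ds+H_{\tau_R}D_{\tau_R-}W^*_{\tau_R}\bigr]$.

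Since $W^*\ge0$ is the wealth of an admissible plan (Theorem~\ref{thm:main}), Proposition~\ref{pro:static-budget}(a) gives $\mathcal B(D)\le W^*_tH_t$ for every $D$. On the other hand, $\mathcal B(D^B)=W^*_tH_t$ is the budget identity \eqref{eq:budget-identity}, reflecting that $D^B$ decreases only where $W^*=0$. Equivalently, apply weak duality at the fixed stopping time $\tau_R$ to the primal strategy of Theorem~\ref{thm:main}. This uses only $\mathcal Q=\mathcal J(t,z;D^B,\tau_R)$, $\underline J\le\mathcal Q$ and the budget identity, so it is not circular. It yields $\underline J\ge\inf_D\mathcal J(t,z;D,\tau_R)=\mathcal Q$, so $(D^B,\tau_R)$ is a genuine saddle point.
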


\begin{proof}
{ See Appendix~\ref{app:proof:thm-min-max}.}
\end{proof}
Having identified the value of the stopping--control game with \(\mathcal Q\), we now return to the primal problem. The next theorem establishes the duality relation between the primal and dual value functions and recovers the optimal primal strategy from the optimal dual state process.

\begin{thm}[Duality theorem and optimal strategies]\label{thm:main}
Fix \(t\in[0,T]\) and \(w\ge 0\).

\begin{itemize}
\item[(a)]
The primal value function \(V(t,w)\) and the dual value function \(J(t,\xi)\) satisfy
\begin{equation}\label{eq:duality}
V(t,w)=\inf_{\xi>0}\bigl(J(t,\xi)+\xi w\bigr)
=\inf_{\xi>0}\bigl(\mathcal Q(t,\xi)+\xi w\bigr).
\end{equation}
Moreover, there exists a unique \(\xi^*=\xi^*(t,w)\in(0,z_B(t)]\) such that
\begin{equation}\label{eq:xi-star}
w=-\partial_zJ(t,\xi^*)=-\partial_z\mathcal Q(t,\xi^*).
\end{equation}

\item[(b)]
For \(s\in[t,T]\), define
$$
D_s^*
:=
D_s^B(t,\xi^*)
=
\min\left\{
1,\,
\inf_{t\le \eta\le s}\frac{z_B(\eta)}{\Xi_\eta^{t,\xi^*}}
\right\},
\qquad
\mathcal Z_s^*
:=
\mathcal Z_s^{t,\xi^*,D^*}
=
\Xi_s^{t,\xi^*}D_s^*,
$$
and let
\begin{equation}\label{eq:tau-star}
\tau^*
:=
\tau_R(t,\xi^*)
=
\inf\{s\in[t,T)\mid \mathcal Z_s^*\le z_R(s)\}\wedge T.
\end{equation}
Then the optimal pre-retirement controls are given by
\begin{equation}\label{eq:optimal-controls-pre}
c_s^*=\hat c(\mathcal Z_s^*),
\qquad
l_s^*=\hat l(\mathcal Z_s^*),
\qquad
W_s^*=-\partial_z\mathcal Q(s,\mathcal Z_s^*),
\qquad
\pi_s^*=\frac{\theta}{\sigma}\mathcal Z_s^*\,\partial_{zz}\mathcal Q(s,\mathcal Z_s^*),
\quad s\in[t,\tau^*),
\end{equation}
where \(W^*\) denotes the associated wealth process. At retirement,
\begin{equation}\label{eq:optimal-terminal-wealth}
W_{\tau^*}^*
=
-J_R'(\mathcal Z_{\tau^*-}^*)
=
\frac{\bar L^{\frac{\gamma_1-\gamma}{\gamma_1}}}{K_1}
(\mathcal Z_{\tau^*-}^*)^{-1/\gamma_1}.
\end{equation}
After retirement, the optimal continuation controls are
\begin{equation}\label{eq:optimal-controls-post}
c_s^*=K_1W_s^*,
\qquad
l_s^*=\bar L,
\qquad
\pi_s^*=\frac{\theta}{\sigma\gamma_1}W_s^*,
\quad s\ge \tau^*.
\end{equation}
Consequently, the strategy \((c^*,l^*,\pi^*,\tau^*)\in\mathcal A(t,w)\) is optimal for the primal problem.
\end{itemize}
\end{thm}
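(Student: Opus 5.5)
We prove (a) and (b) by combining the weak duality \eqref{eq:weak-duality} with the identification $J=\mathcal Q$ from Theorem~\ref{thm:min-max}. We then show that the candidate strategy built from $\xi^*$ attains the dual bound. The first equality in \eqref{eq:duality} then reduces to the reverse inequality $V(t,w)\ge \inf_{\xi>0}(\mathcal Q(t,\xi)+\xi w)$.

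\textbf{Step 1: existence and uniqueness of $\xi^*$.} By Theorem~\ref{thm:Q-structure}(iii), $z\mapsto-\partial_z\mathcal Q(t,z)$ is continuous and nonincreasing. It is strictly decreasing on $(0,z_B(t))$ by strict convexity (Lemma~\ref{lem:strict-convexity-Q}), and it vanishes on $[z_B(t),\infty)$. As $z\to0^+$ it tends to $+\infty$: near $0$ we are in the retirement region by Lemma~\ref{contact_1}, so $\mathcal Q=J_R$ there and $-\partial_z\mathcal Q=-J_R'(z)\to\infty$. Hence for every $w>0$ there is a unique $\xi^*\in(0,z_B(t))$ with $w=-\partial_z\mathcal Q(t,\xi^*)$. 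For $w=0$ we take $\xi^*=z_B(t)$, the smallest root. By convexity, $\xi^*$ minimizes $\xi\mapsto\mathcal Q(t,\xi)+\xi w$.

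\textbf{Step 2: the candidate controls and the budget constraint.} Set $\mathcal Z^*=\Xi^{t,\xi^*}D^*$ and $\tau^*$ as in the statement, with $c^*,l^*$ given by $\hat c,\hat l$. Define $W^*_s:=-\partial_z\mathcal Q(s,\mathcal Z^*_s)$ for $s<\tau^*$.

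\begin{itemize}
\item By the generalized It\^o formula, valid since $\mathcal Q\in C^{1,1}\cap C^{1,2}$ off the contact set, applied to $H_s\,\partial_z\mathcal Q(s,\mathcal Z^*_s)$, and by differentiating the equation $\partial_t\mathcal Q+\mathscr L\mathcal Q+\tilde u=0$ in $z$ on $\mathcal{WR}_{\rm dual}$, we obtain $\tilde u'(z)=-(\hat c(z)-\zeta(\bar L-\hat l(z)))$.
\item It follows that $H_sW^*_s+\int_t^sH_u(c^*_u-\zeta(\bar L-l^*_u))du$ is a local martingale with diffusion coefficient identifying $\pi^*_s=\frac{\theta}{\sigma}\mathcal Z_s^*\partial_{zz}\mathcal Q$.
\item The singular term $\partial_{zz}\mathcal Q\,\Xi\,dD^*$ vanishes, because $D^*$ only decreases on $\{\mathcal Z^*=z_B\}$, where $\partial_z\mathcal Q=0$ and the $C^{1,1}$ smoothness makes the contribution zero.
\item At $\tau^*$, continuity of $\partial_z\mathcal Q$ across $z_R$ (where $\mathcal Q=J_R$) gives $W^*_{\tau^*}=-J_R'(\mathcal Z^*_{\tau^*-})$.
\end{itemize}

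With the bounds $0\le W^*\le -J_R'(\mathcal Z^*)$ and the growth estimates of Lemma~\ref{Y n,e}(3), we upgrade the local martingale to a true martingale via a localization/uniform integrability argument. This yields equality in the budget \eqref{eq:static-budget-b} with $D=D^*$, where the supremum over $D$ is attained at $D^*$ by complementary slackness: $D^*$ moves only when $W^*=0$. Proposition~\ref{pro:static-budget}(b) then gives admissibility, and after $\tau^*$ we paste in the Merton policy \eqref{eq:retired-controls}, which realizes $V_R(W^*_{\tau^*})$.

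\textbf{Step 3: attaining the dual bound.} Pointwise Fenchel equalities give $u(c^*,l^*)=\tilde u(\mathcal Z^*)+\mathcal Z^*(c^*-\zeta(\bar L-l^*))$ and $V_R(W^*_{\tau^*})=J_R(\mathcal Z^*_{\tau^*-})+\mathcal Z^*_{\tau^*-}W^*_{\tau^*}$. Taking expectations and using the budget equality from Step 2, we obtain that the primal utility equals $\mathcal J(t,\xi^*;D^*,\tau^*)+\xi^*w$. The cross terms $\int \Xi\,W^*\,dD^*$ vanish because $W^*=0$ whenever $dD^*\neq0$. By Theorem~\ref{thm:min-max}, $\mathcal J(t,\xi^*;D^*,\tau^*)=\mathcal Q(t,\xi^*)$. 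Hence $V(t,w)\ge\mathcal Q(t,\xi^*)+\xi^*w=\inf_\xi(\mathcal Q+\xi w)$, which closes \eqref{eq:duality} and proves optimality.

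The main obstacle is Step 2: justifying It\^o's formula for $\partial_z\mathcal Q$ (only $W^{1,2}_{p}$-type regularity of $\partial_z\mathcal Q$ near $z_R$) and the true martingale property. I would first handle the It\^o step via mollification on the working region, stopping before hitting the boundaries, and then pass to the limit using the local Lipschitz regularity of $z_R,z_B$ from Theorem~\ref{thm:Q-structure}(iv).
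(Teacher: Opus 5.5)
Your argument is correct, and its skeleton matches the paper's: existence and uniqueness of \(\xi^*\) from the convexity and boundary behaviour of \(\partial_z\mathcal Q\), the Fenchel identities for \(\tilde u\) and \(J_R\), the identity \(\mathcal Q(t,\xi^*)=\mathcal J(t,\xi^*;D^*,\tau^*)\) from Theorem~\ref{thm:min-max}, and closing with weak duality.

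The genuine difference is how you obtain the budget identity. The paper imports it from \citet[Lemma~14]{JeonKimYang2026}, by differentiating \(J(t,\xi)=\mathcal J(t,\xi;D^B(t,\xi),\tau_R(t,\xi))\) in \(\xi\) along the optimal barrier strategy. It then identifies \(W^*=-\partial_z\mathcal Q(s,\mathcal Z^*_s)\) by an envelope relation and obtains \(\pi^*\) from a separate generalized It\^o computation. You argue forward instead, with one It\^o computation on \(H_s\,\partial_z\mathcal Q(s,\mathcal Z^*_s)\) using the \(z\)-differentiated equation on \(\mathcal{WR}_{\rm dual}\) and the envelope identity \(\tilde u'=-(\hat c-\zeta(\bar L-\hat l))\). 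That single computation shows:
\begin{itemize}
\item \(-\partial_z\mathcal Q(s,\mathcal Z^*_s)\) is the nonnegative wealth of \((c^*,l^*,\pi^*)\) started at \(w\);
\item the budget equality holds with \(D\equiv 1\);
\item via the slackness term \(\int H W^*\,dD\le 0\), which vanishes for \(D^*\), you get both the \(D^*\)-weighted identity needed in the Fenchel step and attainment of the supremum in \eqref{eq:static-budget-b}.
\end{itemize}
Your route is more self-contained, since the paper needs It\^o for \(\partial_z\mathcal Q\) anyway to get \(\pi^*\). It also avoids differentiating through the \(\xi\)-dependence of \(D^B\) and \(\tau_R\). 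The price is that you must handle the reflection term yourself, and there your justification is slightly off. The facts \(\partial_z\mathcal Q=0\) and \(C^{1,1}\) regularity only make \(\partial_{zz}\mathcal Q\) bounded on \(z_B\). What kills \(\partial_{zz}\mathcal Q\,\Xi\,dD^*\) is \(\partial_{zz}\mathcal Q(s,z_B(s))=0\). This follows from continuity of \(\partial_{zz}\mathcal Q\) across \(z_B\), since \(\mathcal Q\in C^{1,2}\) off \(\{\mathcal Q=J_R\}\) (equivalently \(\partial_xV_0=0\) on the free boundary of the \(V_0\) obstacle problem), together with \(\partial_{zz}\mathcal Q\equiv 0\) in \(\mathcal{BR}_{\rm dual}\).

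Two smaller points. Proposition~\ref{pro:static-budget}(b) is redundant once you have built \(W^*\) explicitly. The martingale upgrade is also simpler than you suggest and does not need Lemma~\ref{Y n,e}(3): before \(\tau^*\) one has \(\mathcal Z^*_s\in(z_R(s),z_B(s)]\) with \(z_R> e^{\underline x}>0\), so \(W^*\) and \(\hat c(\mathcal Z^*)\) are bounded and \(H\) has all moments.
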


\begin{proof}
{ See Appendix~\ref{app:proof:thm-main}.}
\end{proof}
As an immediate consequence of Theorem~\ref{thm:main}, the optimal retirement rule can be reformulated directly in the primal \((t,w)\)-variables.

\begin{cor}[Optimal retirement threshold in the primal domain]\label{cor:primal-retirement-threshold}
For each \(t\in[0,T)\), define the wealth-domain retirement threshold by
\begin{equation}\label{eq:WR-def}
\mathcal W_R(t)
:=
-\partial_z\mathcal Q\bigl(t,z_R(t)\bigr)
=
-J_R'\bigl(z_R(t)\bigr)
=
\frac{\bar L^{\frac{\gamma_1-\gamma}{\gamma_1}}}{K_1}\,z_R(t)^{-1/\gamma_1}.
\end{equation}
Then \(\mathcal W_R\) is continuous on \([0,T)\), and the primal retirement and working regions are given by
\[
\mathcal{RR}_{\rm primal}
:=
\{(t,w)\in[0,T)\times[0,\infty)\mid w\ge \mathcal W_R(t)\},
\]
and
\[
\mathcal{WR}_{\rm primal}
:=
\{(t,w)\in[0,T)\times(0,\infty)\mid 0<w<\mathcal W_R(t)\}.
\]
Moreover, under the optimal strategy of Theorem~\ref{thm:main}, the optimal retirement time admits the equivalent representation
\begin{equation}\label{eq:tau-star-primal}
\tau^*
=
\inf\{s\in[t,T)\mid W_s^*\ge \mathcal W_R(s)\}\wedge T.
\end{equation}
\end{cor}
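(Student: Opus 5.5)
The plan is to derive every claim of Corollary~\ref{cor:primal-retirement-threshold} from Theorem~\ref{thm:Q-structure}, using the fact that for each fixed $s$ the map $z\mapsto -\partial_z\mathcal Q(s,z)$ is a strictly decreasing bijection from $(0,z_B(s)]$ onto $[0,\infty)$. Under this map the dual threshold $z_R(s)$ corresponds to the primal threshold $\mathcal W_R(s)$.

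\textbf{Step 1: the formula for $\mathcal W_R$.} By Theorem~\ref{thm:Q-structure}(i), $\mathcal Q(t,\cdot)=J_R$ on $(0,z_R(t)]$. Since $\mathcal Q\in C^{1,1}(\mathcal D_T)$, $\partial_z\mathcal Q$ is continuous in $z$. Approaching $z_R(t)$ from the contact side, where $\partial_z\mathcal Q=J_R'$, gives the smooth-fit identity $\partial_z\mathcal Q(t,z_R(t))=J_R'(z_R(t))$. The explicit expression then follows from \eqref{eq:retired-dual-primal-map}.

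\textbf{Step 2: continuity.} On $(0,T)$, continuity of $\mathcal W_R$ follows from continuity of $z_R$ (Theorem~\ref{thm:Q-structure}(iv)) and continuity of $J_R'$ on $(0,\infty)$. At $t=0$ (that is, $\tau=T$), $z_R$ is monotone and takes values in the bounded interval $(e^{\underline x},e^{x^*})$ by Corollary~\ref{X^R region}, so $\lim_{t\to0^+}z_R(t)$ exists and is positive. I would define $z_R(0)$ as this limit, which is consistent with the contact set at $t=0$ because $\mathcal Q$ is continuous. Since $z_R(0)>0$, $\mathcal W_R$ is continuous on $[0,T)$. This is the only slightly delicate point, since $z_R$ was only defined on $(0,T)$.

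\textbf{Step 3: the primal regions.} Fix $t$ and set $g_t(z):=-\partial_z\mathcal Q(t,z)$. By Theorem~\ref{thm:Q-structure}(iii), $g_t$ is continuous and vanishes exactly on $[z_B(t),\infty)$.
\begin{itemize}
\item On $(0,z_B(t))$, $g_t$ is strictly decreasing, by strict convexity of $\mathcal Q(t,\cdot)$ there. In particular, on $(0,z_R(t)]$ we have $g_t=-J_R'$, which decreases to $+\infty$ as $z\to0$.
\item Hence $g_t:(0,z_B(t)]\to[0,\infty)$ is a decreasing bijection, and $\xi^*(t,w)=g_t^{-1}(w)$, consistent with Theorem~\ref{thm:main}(a).
\item It follows that $\xi^*\le z_R(t)$ if and only if $w\ge g_t(z_R(t))=\mathcal W_R(t)$.
\end{itemize}
So the dual region $\mathcal{RR}_{\rm dual}$ corresponds to $\mathcal{RR}_{\rm primal}$, and $\mathcal{WR}_{\rm dual}\cup\mathcal{BR}_{\rm dual}$ corresponds to $\{w<\mathcal W_R(t)\}$. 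The boundary $z_B(t)$ maps to $w=0$, so the working region in the primal domain is $0<w<\mathcal W_R(t)$, and $w=0$ belongs to the binding boundary.

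\textbf{Step 4: the representation of $\tau^*$.} By the barrier construction \eqref{eq:DB-dual}, $\mathcal Z^*_s\le z_B(s)$ for all $s$. By Theorem~\ref{thm:main}(b), $W^*_s=g_s(\mathcal Z^*_s)$ for $s<\tau^*$. Applying Step~3 at each time $s$ gives
\[
\mathcal Z^*_s\le z_R(s)\iff W^*_s\ge \mathcal W_R(s),
\]
and substituting this into \eqref{eq:tau-star} yields \eqref{eq:tau-star-primal}.

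At the hitting time itself, $W^*_{\tau^*}=-J_R'(\mathcal Z^*_{\tau^*-})$ by \eqref{eq:optimal-terminal-wealth}. By smooth fit this equals $g_{\tau^*}(\mathcal Z^*_{\tau^*-})$, so the two descriptions of the first entrance time agree, including the possible jump of $D^*$ at $\tau^*$.

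The main obstacle is ensuring strict monotonicity of $g_s$ holds all the way up to $z_R(s)$ and across it, so that the equivalence in Step~3 is exact rather than one-sided. On the contact side this comes from strict convexity of $J_R$. On the working side it is Lemma~\ref{lem:strict-convexity-Q}, glued across the boundary via continuity of $\partial_z\mathcal Q$.
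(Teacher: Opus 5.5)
Your proposal is correct and follows the same route as the paper. The paper's proof also uses the strict monotonicity of $z\mapsto-\partial_z\mathcal Q(s,z)$ on $(0,z_B(s))$ from Theorem~\ref{thm:Q-structure}(iii) to get $\mathcal Z_s^*\le z_R(s)\iff W_s^*\ge\mathcal W_R(s)$, substitutes this into \eqref{eq:tau-star}, and reads off the primal regions from the same monotone correspondence. Your extra care fills in points the paper leaves implicit: smooth fit for the formula of $\mathcal W_R$, the extension of $z_R$ to $t=0$, and the check at $s=\tau^*$, where $W^*_s=-\partial_z\mathcal Q(s,\mathcal Z^*_s)$ is only asserted on $[t,\tau^*)$. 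One small correction: $D^*$ is in fact continuous, because $\xi^*\le z_B(t)$ and both $z_B$ and $\Xi^{t,\xi^*}$ are continuous, so on $\{\tau^*<T\}$ one has $\mathcal Z^*_{\tau^*-}=\mathcal Z^*_{\tau^*}\le z_R(\tau^*)$ and there is no jump to account for.
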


\begin{proof}
{ See Appendix~\ref{app:proof:cor-primal-retirement-threshold}.}
\end{proof}
\begin{figure}[!htb]
    \centering
    \includegraphics[width=0.8\textwidth]{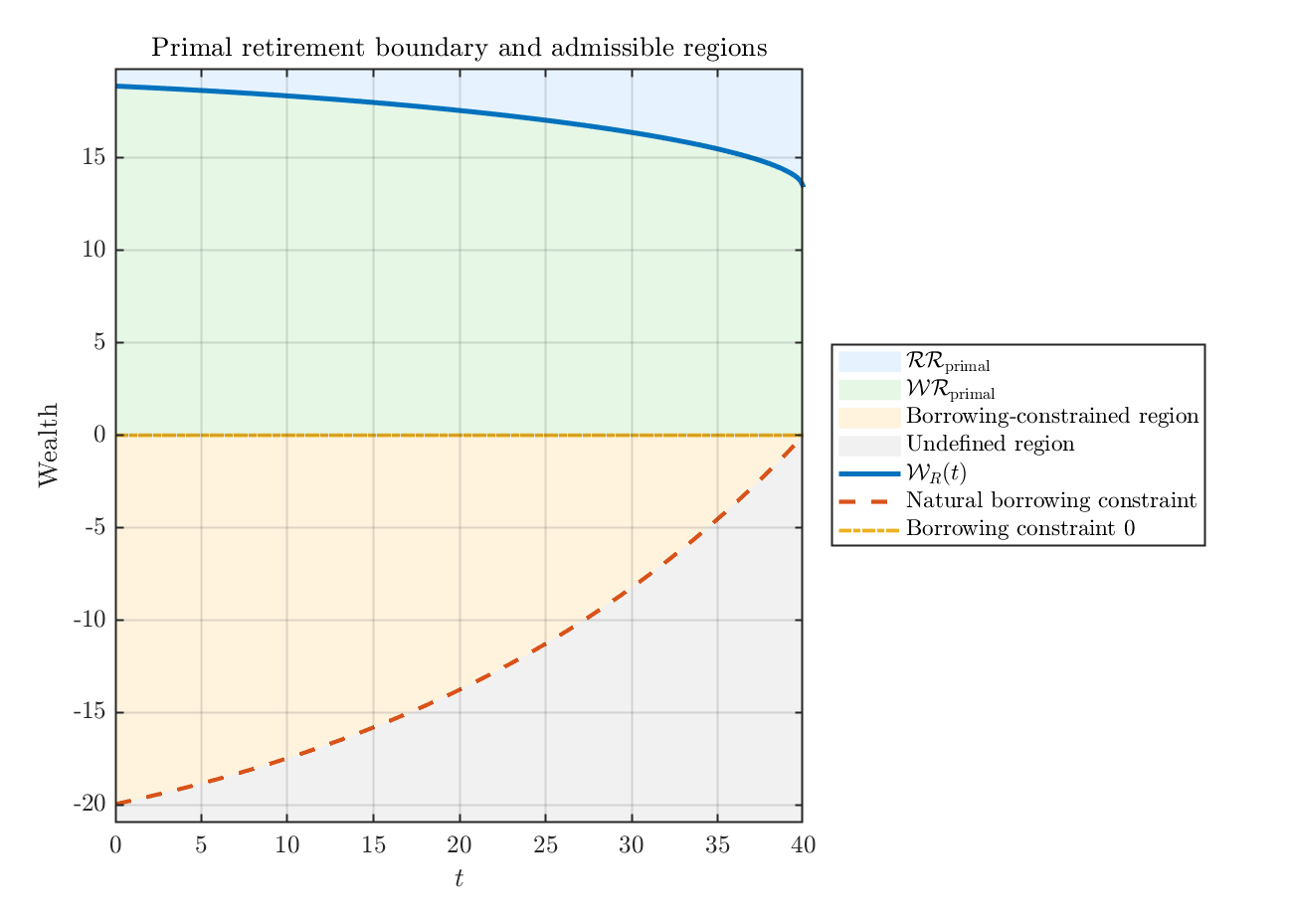}
    \caption{Primal wealth domain, the optimal retirement threshold \(\mathcal W_R(t)\), and the regions induced by the borrowing constraints.}
    \label{fig:wealth-domain}
\end{figure}

Figure~\ref{fig:wealth-domain} illustrates the geometry of the primal wealth domain implied by Corollary~\ref{cor:primal-retirement-threshold}. The solid curve represents the optimal retirement threshold \(\mathcal W_R(t)\). As established in \eqref{eq:tau-star-primal}, retirement is optimally triggered when the wealth process first reaches this boundary from below. Hence, the region above \(\mathcal W_R(t)\) corresponds to the primal retirement region \(\mathcal{RR}_{\rm primal}\), while the strip between \(0\) and \(\mathcal W_R(t)\) corresponds to the working region \(\mathcal{WR}_{\rm primal}\).

The figure also compares the imposed borrowing constraint \(w\ge 0\) with the natural borrowing constraint
\[
-\frac{1-e^{-r(T-t)}}{r}\bar L.
\]
Since the imposed borrowing constraint is stricter than the natural one, the interval between the natural borrowing constraint and \(0\) is excluded from the admissible wealth domain, even though it would be feasible under the natural debt limit alone. For this reason, we label this strip as the borrowing-constrained region. The area below the natural borrowing constraint is marked as undefined, since wealth levels in that region are not economically meaningful for the optimization problem.

The figure therefore provides a direct visualization of the primal state-space decomposition: the agent continues working while wealth remains in \(\mathcal{WR}_{\rm primal}\), retires upon reaching \(\mathcal W_R(t)\), and is prevented by the borrowing constraint from entering the region below zero wealth.

\section{Numerical Results}\label{sec:numerical}

We now present numerical results for the model. The computations are based on the numerical implementation developed in \citet{JKY_int}. Since the purpose of the present paper is not to develop a new numerical method, we use this implementation as a computational tool and focus on the economic implications of labor supply flexibility under borrowing constraints.

All numerical results below report the computed free boundaries and the corresponding optimal consumption, leisure, portfolio, and retirement policies. In particular, we illustrate the shapes of the two free boundaries and examine their dependence on the model parameters.

\subsection{Implications}

We now examine the implications of the model under a benchmark parameter specification. Unless otherwise stated, the baseline parameters are given by
\[
T=40,\qquad
\beta=0.04,\qquad
r=0.04,\qquad
\mu=0.08,\qquad
\sigma=0.2,\qquad
\gamma=3,
\]
\[
\alpha=0.5,\qquad
\zeta=1,\qquad
L=0.5,\qquad
\bar L=1.
\]

Figure~\ref{fig:alpha-effect} reports the comparative statics with respect to the consumption weight parameter \(\alpha\) in the working region. Since a larger \(\alpha\) places relatively more weight on consumption and less weight on leisure in the Cobb--Douglas preference specification, it is natural to expect a reallocation from leisure toward current consumption. Consistent with this intuition, the figure shows that, for a given wealth level, a higher \(\alpha\) is associated with higher consumption and lower leisure. In the benchmark calibration, it is also associated with a smaller risky position. Taken together, these patterns suggest that a stronger preference for consumption tends to make the agent consume more, enjoy less leisure while working, and hold a more conservative risky position.

\begin{figure}[ht]
	\centering
	\subfigure[Consumption as a function of wealth for different values of \(\alpha\).]{
		\includegraphics[scale=0.4]{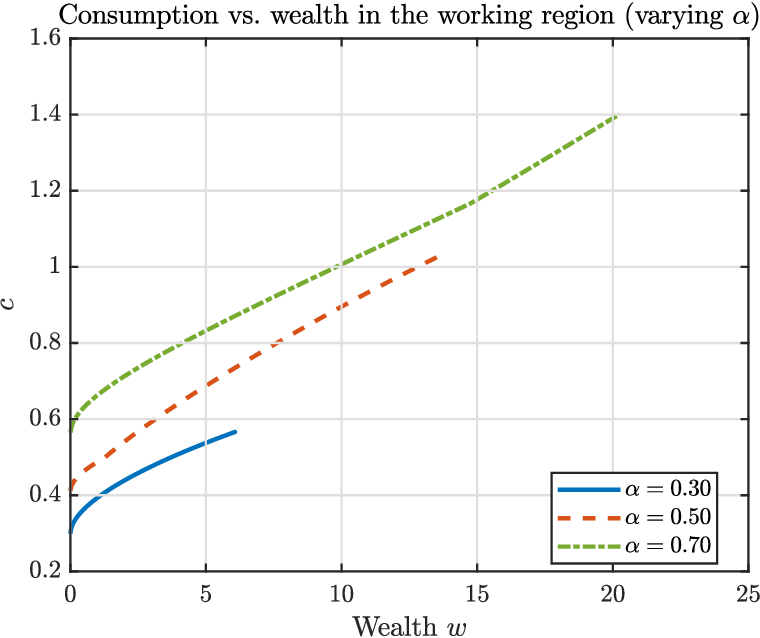}
	}
	\subfigure[Leisure as a function of wealth for different values of \(\alpha\).]{
		\includegraphics[scale=0.4]{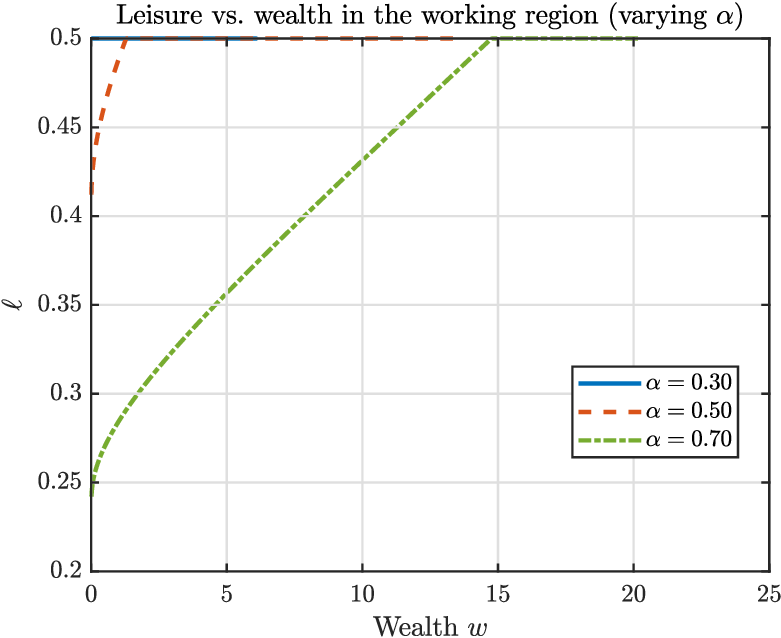}
	}
    \subfigure[Portfolio position as a function of wealth for different values of \(\alpha\).]{
		\includegraphics[scale=0.4]{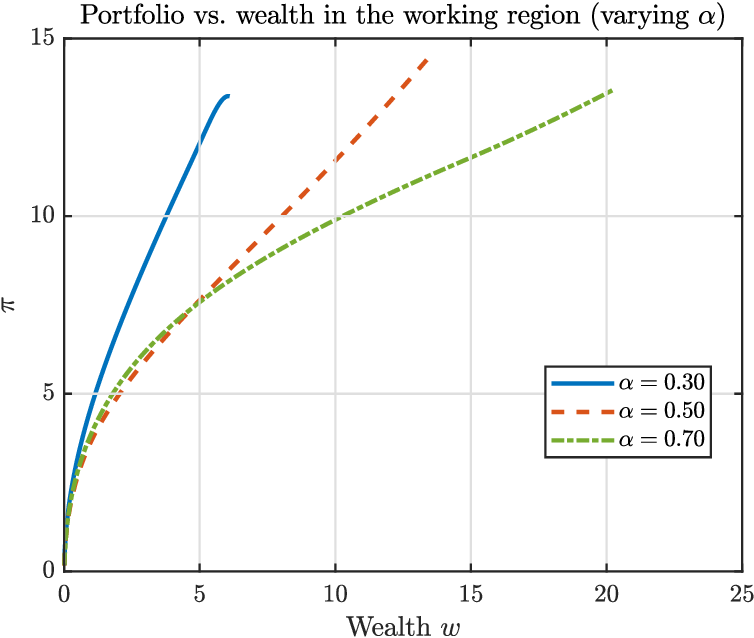}
	}
	\caption{Comparative statics with respect to \(\alpha\) in the working region.}
    \label{fig:alpha-effect}
\end{figure}

Figure~\ref{fig:L-effect} reports the comparative statics with respect to the upper leisure bound \(L\) in the working region. A larger \(L\) relaxes the upper bound on leisure before retirement and therefore gives the agent greater flexibility in labor supply while continuing to work. In the benchmark calibration, a higher \(L\) is associated with lower consumption, higher leisure, and a smaller risky position for a given wealth level. These patterns suggest that greater leisure flexibility may reduce labor effort and, through this channel, lower both current spending and the demand for risky investment.

\begin{figure}[ht]
	\centering
	\subfigure[Consumption as a function of wealth for different values of \(L\).]{
		\includegraphics[scale=0.4]{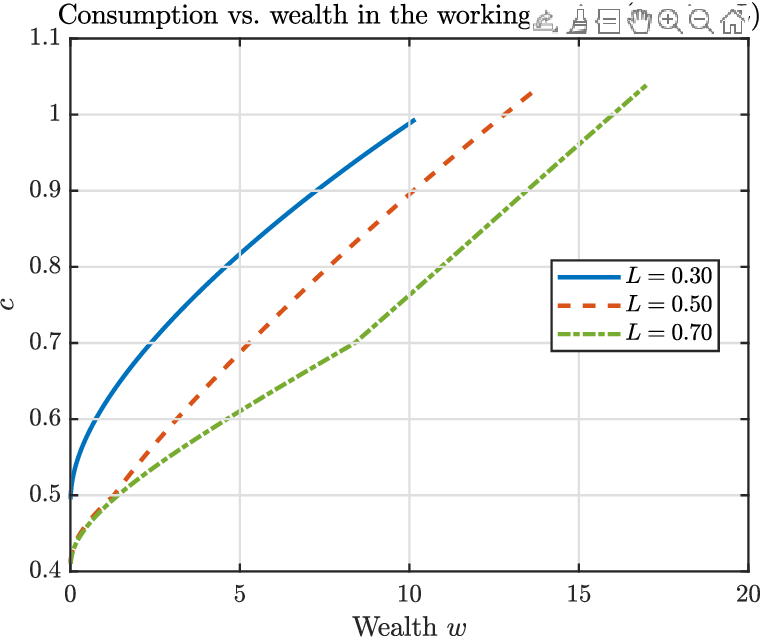}
	}
	\subfigure[Leisure as a function of wealth for different values of \(L\).]{
		\includegraphics[scale=0.4]{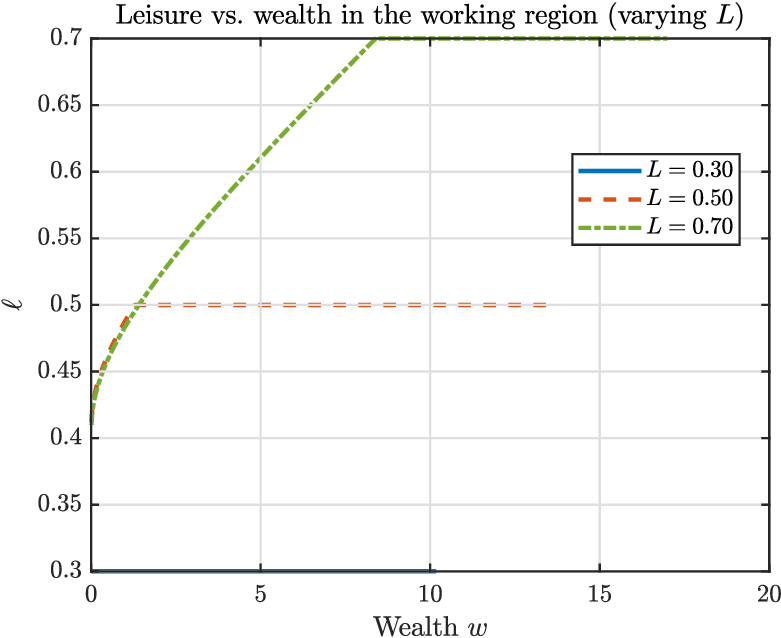}
	}
    \subfigure[Portfolio position as a function of wealth for different values of \(L\).]{
		\includegraphics[scale=0.4]{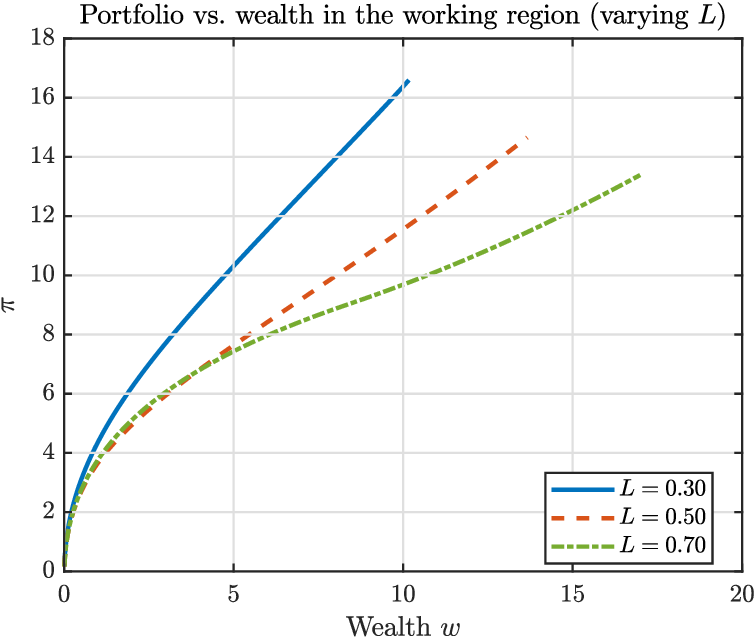}
	}
	\caption{Comparative statics with respect to \(L\) in the working region.}
    \label{fig:L-effect}
\end{figure}

Figure~\ref{fig:Lbar-effect} reports the comparative statics with respect to the full-leisure level \(\bar L\) in the working region. The parameter \(\bar L\) determines the post-retirement leisure level and, through the labor-income term \(\zeta(\bar L-l)\), also affects the effective scale of labor income while the agent is still working. In the benchmark calibration, a higher \(\bar L\) is associated with higher consumption, higher leisure, and a larger risky position for a given wealth level. A possible interpretation is that a larger \(\bar L\) raises the value of future retirement and improves effective lifetime resources, thereby shifting the working-region policies upward.

\begin{figure}[ht]
	\centering
	\subfigure[Consumption as a function of wealth for different values of \(\bar L\).]{
		\includegraphics[scale=0.4]{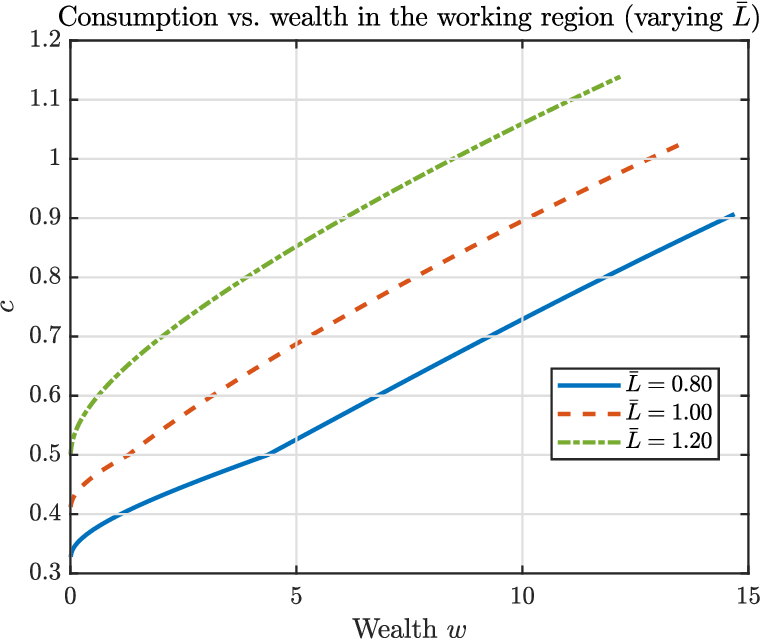}
	}
	\subfigure[Leisure as a function of wealth for different values of \(\bar L\).]{
		\includegraphics[scale=0.4]{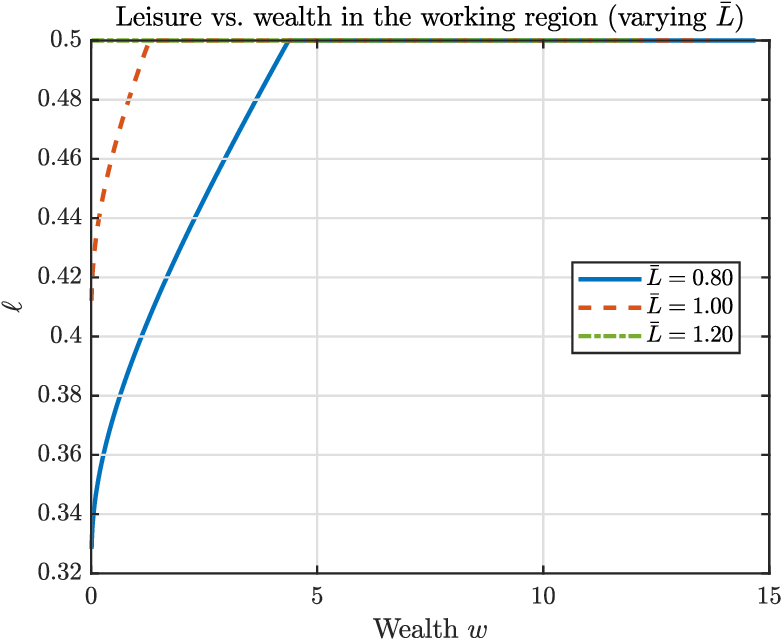}
	}
    \subfigure[Portfolio position as a function of wealth for different values of \(\bar L\).]{
		\includegraphics[scale=0.4]{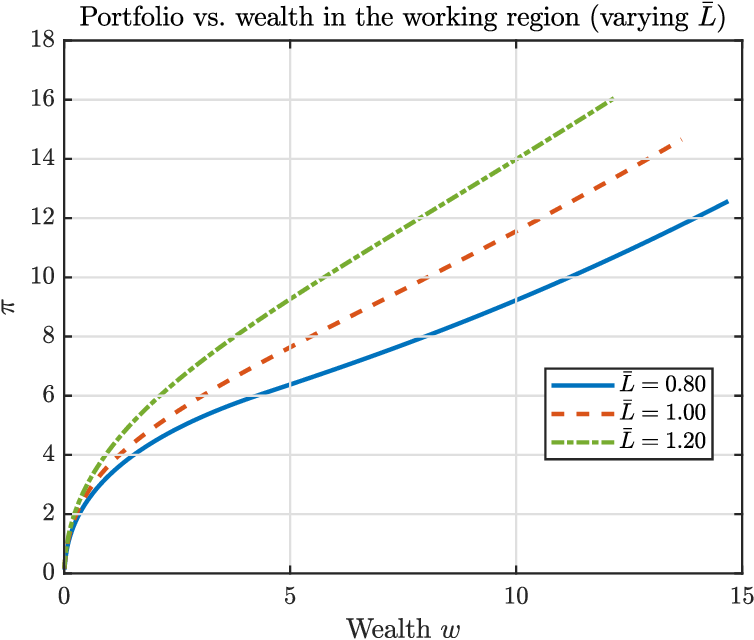}
	}
	\caption{Comparative statics with respect to \(\bar L\) in the working region.}
    \label{fig:Lbar-effect}
\end{figure}

Figure~\ref{fig:T-effect} reports the horizon effect in the working region. In the benchmark calibration, a longer horizon is associated with higher consumption, higher leisure, and a larger risky position for a given wealth level. One possible interpretation is that a longer horizon increases the value of future labor-supply flexibility and makes continued work more attractive, thereby relaxing the effective intertemporal resource constraint while the agent remains in the working region. Under this interpretation, the upward shift in consumption, leisure, and risky investment can be viewed as a consequence of the greater value of future adjustment opportunities.

\begin{figure}[ht]
	\centering
	\subfigure[Consumption as a function of wealth for different values of \(T\).]{
		\includegraphics[scale=0.4]{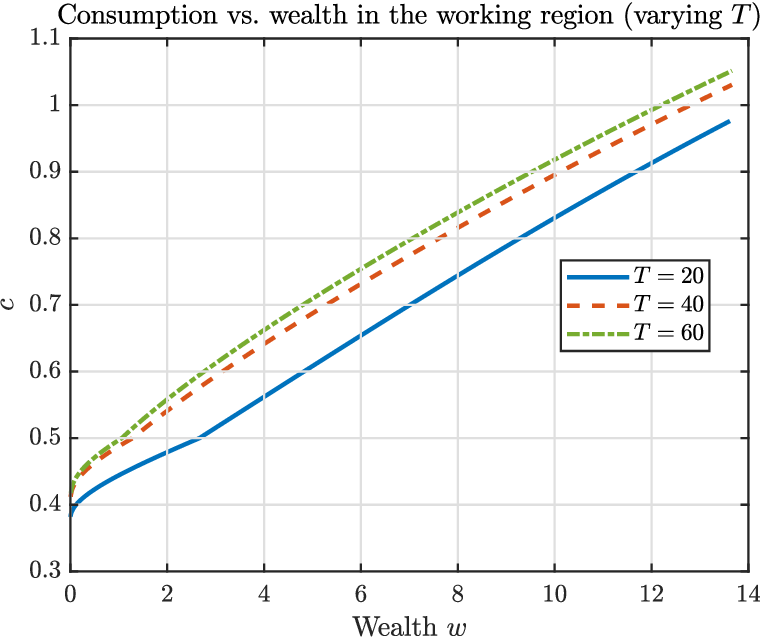}
	}
	\subfigure[Leisure as a function of wealth for different values of \(T\).]{
		\includegraphics[scale=0.4]{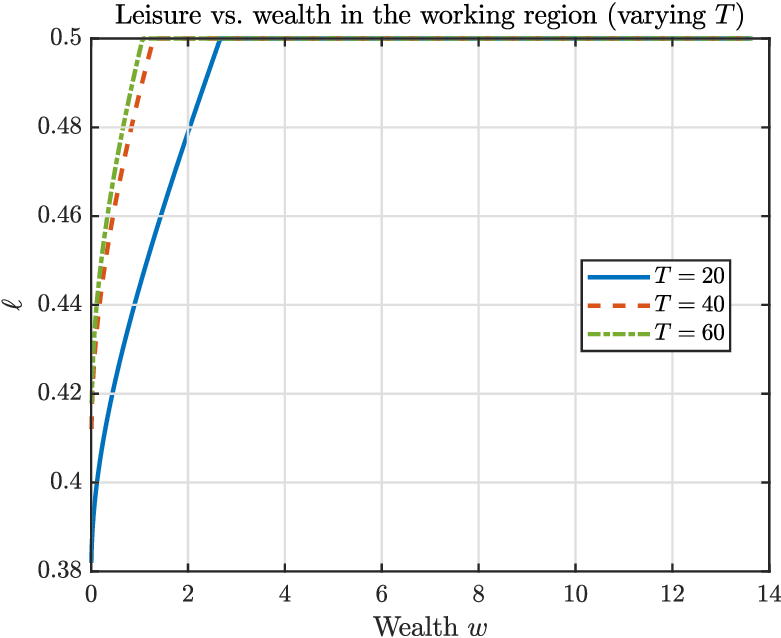}
	}
    \subfigure[Portfolio position as a function of wealth for different values of \(T\).]{
		\includegraphics[scale=0.4]{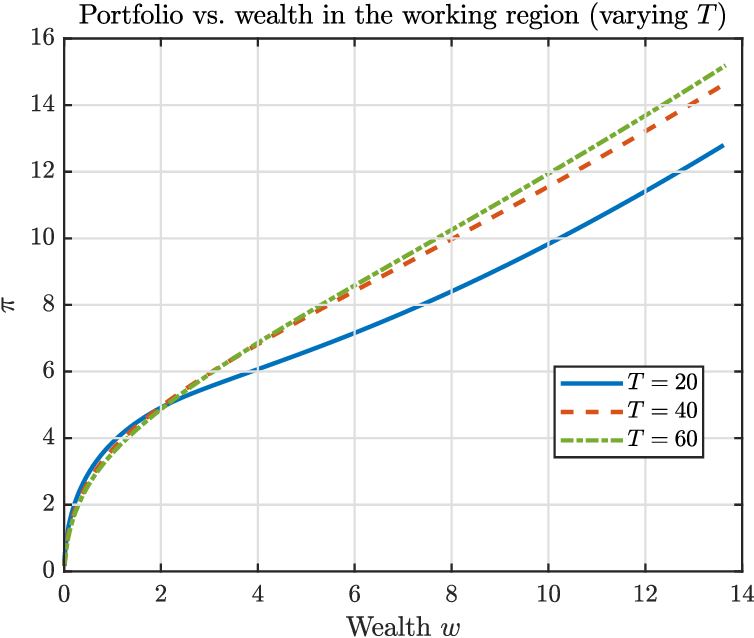}
	}
	\caption{Comparative statics with respect to the horizon \(T\) in the working region.}
    \label{fig:T-effect}
\end{figure}

\section{Concluding Remarks}

This paper analyzed a finite-horizon consumption--investment problem with endogenous labor supply and irreversible early retirement under a borrowing constraint. The agent chooses consumption, portfolio allocation, leisure, and the retirement time, subject to the requirement that wealth remain nonnegative.

{Using a dual martingale method, we transformed the primal problem into a zero-sum stopper--singular-controller game and characterized the dual object by a min--max parabolic variational inequality with obstacle and gradient constraints. The maximal strong solution is selected by the gradient-constrained structure and generates the binding boundary \(z_B(t)\), which is monotone increasing in time; the verification theorem identifies this solution with the dual game value; and the duality theorem recovers the optimal retirement, consumption, leisure, and portfolio decisions.}

The numerical results complement the theoretical analysis by illustrating how labor supply flexibility affects the free boundaries and the associated optimal policies. Our results provide a rigorous framework for studying the joint effects of labor supply flexibility, portfolio choice, and retirement timing under borrowing constraints. Extensions to more general preferences, stochastic labor income, and alternative retirement regimes remain interesting directions for future research.
\vspace{1cm}
\appendix 

\begin{center}
    {\LARGE {\bf Appendix}}
\end{center}

\section{{Proofs}}\label{sec:proofs}
{This appendix collects the proofs of the results stated in the main text.}

\subsection{{Proof of Lemma~\ref{Y n,e}}}\label{app:proof:Y-n-e}
The existence of the solution $Y_{n,\e}$ in $W^{1,2}_p(\O_T^n)$, $1<p<\infty$ is guaranteed by the Leray-Schauder fixed point theorem, and the embedding theorem \cite{Ladyzhenskaya} implies $Y_{n,\e}\in C^{\frac{1+\a}{2},1+\a}(\O_T^n)$ for some $\a\in(0,1)$.
Since $f\in C_{\rm loc}^{1+\a}(\mathbb{R})$ the right hand side of the equation is in $C^{\frac{\a}{2},\a}(\O_T^n)$.
Thus, we deduce by the Schauder theory that $Y_{n,\e}\in C^{1+\frac{\a}{2},2+\a}(\O_T^n)$.
Again, since the right hand side is in $C^{\frac{1+\a}{2},1+\a}(\O_T^n)$,
the Schauder theory ensures that $Y_{n,\e}\in C^{\frac{3+\a}{2},3+\a}(\Omega_T^n)$.

Let us establish the inequality $Y_{n,\e}\ge \psi$.
Observe that $\psi'(x)<0$ and $\displaystyle\lim_{\e\to 0^+} \varphi_\e(x)= 0$ for all $x\in[-n,n]$. 
Hence, for sufficiently small $\d>0$ satisfying $\psi'(x)<-2\d$ for all $x\in[-n,n]$,
there exists $\e_0\in (0,\d)$ such that
$\vert \varphi_\e \vert < \d$ in $[-n,n]$
for all $0<\e<\e_0$.
This implies that for all $0<\e<\e_0$,
\[\psi'(x)-\varphi_\e(x)\le -\d<-\e 
\quad \text{for all }\ x\in [-n,n].\]
This yields $\b_{u,\e}(\psi'-\varphi_\e)=0$ for $\e\in (0,\e_0)$ in $\O_T^n$.
Moreover, it can be seen by direct computation that
the function $e^{-x}(f(x)+\cL \psi(x))$ is monotone increasing in $\mathbb{R}$.
Therefore, for all $n\in\mathbb{N}$ satisfying $-n<\tilde{x}$, we have
\begin{align*}
    e^{n}\big(f(-n)+\cL \psi(-n)\big)
    &=\frac{\g_1}{1-\g_1}(L^\gam-\bar{L}^\gam)e^{\frac{n}{\g_1}}+\zeta(\bar{L}-L)
    \\
    &\le e^{-x}\big(f(x)+\cL \psi(x)\big)
\end{align*}
for all $x\in[-n,n]$.
Since $\frac{\g_1}{1-\g_1}(L^\gam-\bar{L}^\gam)<0$, we see that $f(-n)+{\cL}\psi(-n)$ is negative for sufficiently large $n\in\mathbb{N}$.
Consequently, 
\[f(x)+\cL \psi(x)
\ge e^{x+n}\{f(-n)+\cL \psi(-n)\}
\ge e^{2n}\{f(-n)+\cL \psi(-n)\}
\quad \text{for all }\ x\in [-n,n].\]
% Moreover, for sufficiently large $n\in\mathbb{N}$, we see that $u_0(-n)+\widetilde{\cL}\psi_0(-n)$ is negative and this implies
% \[e^x\{u_0(-n)+\widetilde{\cL} \psi_0(-n)\}\ge e^n\{u_0(-n)+\widetilde{\cL} \psi_0(-n)\} \quad \text{for all }\ x\in[-n,n].\]
Since it is prescribed that $\b_{l,\e}(0)=-e^{2n}\{f(-n)+\cL \psi(-n)\}$, we combine all the results to see
\[-\cL \psi-\b_{l,\e}(0)\le f
\quad \text{in }\ \O_T^n.\]
Therefore, $\pat_\tau \psi-\cL \psi-\b_{l,\e}(0)+\b_{u,\e}(\psi'-\varphi_\e)\le f$ in $\O_T^n$ and $\pat_x Y=\psi'$ and $Y=\psi$ on the lateral boundaries and the initial boundary respectively.
Thus, by the comparison principle \cite[Theorem II.1]{BARLES1999191},
it follows that $Y_{n,\e}\ge \psi$ in $\O_T^n$.

To prove $\pat_x Y_{n,\e}\le 0$, 
we differentiate the equation in \eqref{obs1:pen} to see that
$\pat_x Y_{n,\e}$ satisfies
\begin{equation}\label{dx Y(n,e)}
    \pat_\tau (\pat_x Y_{n,\e})-\cL(\pat_x Y_{n,\e})
    =f'+\b'_{l,\e}(\cdots)(\pat_x Y_{n,\e}-\psi')+\b'_{u,\e}(\pat_x Y_{n,\e}-\varphi_\e)\pat_x(\pat_x Y_{n,\e}-\varphi_\e)
\end{equation}
in $\O_T^n$.
Suppose that $\pat_x Y_{n,\e}$ attains its strict positive maximum at some point $(\tau_0,x_0) \in [0,T] \times [-n,n]$. Then, since $\pat_xY_{n,\e}=\psi'<0$ on the parabolic boundary of $\O_T^n$, $(\tau_0,x_0)$ is contained in $(0,T]\times(-n,n)$.
Therefore,
\[\pat_{\tau} (\partial_x Y_{n,\e})(\tau_0,x_0)\ge 0,
\quad 
\pat_{x} (\partial_x Y_{n,\e})(\tau_0,x_0)= 0,
\quad \text{and}\quad 
\pat_{xx} (\partial_x Y_{n,\e})(\tau_0,x_0)\le 0.\]
This implies $\pat_\tau (\pat_xY_{n,\e})(\tau_0,x_0) -\cL (\pat_xY_{n,\e})(\tau_0,x_0)\ge 0.$
Moreover, at the point $(\tau_0,x_0)$, the right hand side of \eqref{dx Y(n,e)} becomes
\begin{align*}
    f'(x_0)+\b'_{l,\e}(\cdots)&\{\pat_x Y_{n,\e}(\tau_0,x_0)-\psi'(x_0)\}
    +\b'_{u,\e}\big(\pat_x Y_{n,\e}(\tau_0,x_0)-\varphi_\e(x_0)\big)(\pat_{xx}Y_{n,\e}(\tau_0,x_0)-\varphi'_\e(x_0))
    \\
    &\le 
     f'(x_0)-\b'_{u,\e}\big(-\varphi_\e(x_0)\big)\varphi'_\e(x_0)
     \\
     &\le f'(x_0)+\frac{2}{\e}\varphi'_\e(x_0)
     =f'(x_0)-2\zeta\bar{L}e^{x_0}< 0,
\end{align*}
where the first inequality follows from $\pat_x Y_{n,\e}(\tau_0,x_0)>0$, $\psi'(x_0)<0$, $\b'_{l,\e}(\cdot)\le 0$ and $\b''_{u,\e}(\cdot)\le 0$, and the second inequality follows from $\pat_{xx} Y_{n,\e}(\tau_0,x_0)=0$, $\varphi_\e(\cdot)<0$ and $\b'_{u,\e}(s)=-\frac{2}{\e}$ for all $s\ge 0$.
Finally, the last strict inequality holds since $f'(x) \le \zeta\bar{L}e^x$ for all $x \in [-n,n]$.
This contradicts the fact that the left-hand side of \eqref{dx Y(n,e)} satisfies $\partial_\tau (\partial_x Y_{n,\varepsilon})(\tau_0,x_0) - \mathcal L (\partial_x Y_{n,\varepsilon})(\tau_0,x_0) \ge 0$.
Therefore, $\partial_x Y_{n,\varepsilon}$ cannot attain a positive maximum value in $\Omega_T^n$,
and we conclude that
$\partial_x Y_{n,\varepsilon} \le 0$ in $\Omega_T^n$.

To prove the time monotonicity of $Y_{n,\e}$, define $Y_{n,\e}^\d$ by
\[Y_{n,\e}^\d(\tau,x)=Y_{n,\e}(\tau+\d,x)
\quad \text{for all }\ (\tau,x)\in \O_{T-\d}^n:=(0,T-\d)\times(-n,n).\]
Then, $Y_{n,\e}^\d$ satisfies
\begin{equation}
        \begin{cases}
            \partial_\tau Y_{n,\e}^\d -\mathcal{L}Y_{n,\e}^\d
            =f+\b_{l,\e}(Y_{n,\e}^\d-\psi)+\b_{u,\e}(\pat_x Y_{n,\e}^\d-\varphi_\e)
            \quad \text{in }\ \Omega_{T-\d}^n,
            \\
            \pat_x Y_{n,\e}^\d(\tau,-n)= \psi'(-n)
            \quad \text{and}\quad 
            \pat_x Y_{n,\e}^\d(\tau,n)= \psi'(n)
            \quad \text{for \ all }\ \tau\in(0,T-\d),
            \\
            Y_{n,\e}^\d(0,x)=Y_{n,\e}(\d,x)\ge \psi(x) \quad \text{for all }\ x\in[-n,n],
        \end{cases}
    \end{equation}
    By the comparison principle \cite[Theorem II.1]{BARLES1999191}, we deduce that $Y_{n,\e}^\d\ge Y_{n,\e}$ in $\O_{T-\d}^n$. Since $\d>0$ is arbitrary, we conclude $\pat_\tau Y_{n,\e}\ge 0$ in $\O_T^n$.

To prove the last inequality,
we define $\overline{Y}$ by 
\begin{equation*}
\overline{Y}(x)=
    \begin{cases}
        \frac{\zeta\bar{L}}{r}e^x+\frac{2\bar{L}^\gam}{\min\{1,K_1\}}\frac{\g_1}{1-\g_1}e^{(1-\frac{1}{\g_1})x}+\frac{\zeta\g}{\b(\g_1-\g)}\left(\frac{\g_1-\g}{\zeta(1-\g_1)}\right)^{\frac{\g_1}{\g}}e^{(1-\frac{1}{\g})(\tilde{x})}
        \quad &\text{for }\ 0<\g<1,
        \\
        \frac{\zeta\bar{L}}{r}e^x + e^{\lambda_2 x}
        \quad &\text{for }\ \g>1,
    \end{cases}
\end{equation*}
where $\lambda_2 < 0$ is the negative root of the characteristic equation $\frac{\t^2}{2}\l^2 + (\b-r-\frac{\t^2}{2})\l - \b = 0$.
It can be seen by the comparison principle \cite[Theorem 17, p.53]{FriedmanParabolicPDE} that $Y_{n,\e}<\overline{Y}$ in $\O_T^n$.
Therefore, the growth condition of $Y_{n,\e}$ follows from the inequality $\psi\le Y_{n,\e}\le \overline{Y}$.

\subsection{{Proof of Lemma~\ref{Yn}}}\label{app:proof:Yn}
By the $W^{1,2}_p$-estimate for $1<p<\infty$ \cite[Theorem 7.35]{Lieberman}, we see that $Y_{n,\e}$ satisfies
    \begin{equation}\label{Y_{n,e}: W,p estimate}
        \Vert Y_{n,\e} \Vert_{W^{1,2}_p(\Omega_T^n)}
        \le C\Big(\Vert f+\b_{l,\e}(Y_{n,\e}-\psi)+\b_{u,\e}(\pat_x Y_{n,\e}-\varphi_\e)\Vert_{L^p(\Omega_T^n)}
        +\Vert \psi \Vert_{W^{1,2}_p(\O_T^n)}\Big)
    \end{equation}
    for all small $\e>0$.
    Moreover, from that $Y_{n,\e}\ge \psi$, $\pat_x Y_{n,\e}\le 0$ and the monotonicity of $\b_{l,\e}$ and $\b_{u,\e}$, we have \[0 \le \b_{l,\e}(Y_{n,\e}-\psi)\le \b_{l,\e}(0)
    \quad \text{and} \quad
    \b_{u,\e}(-\varphi_\e)\le \b_{u,\e}(\pat_x Y_{n,\e}-\varphi_\e)\le 0.\]
    Since $\b_{u,\e}(-\varphi_\e(x))=-2\zeta\bar{L}e^x-1$ for all $x\in[-n,n]$, we also see that $\b_{u,\e}(\pat_x Y_{n,\e}-\varphi_\e)$ is bounded below by $-2\zeta\bar{L}e^n-1$ in $\O_T^n$.
    Therefore, the estimate \eqref{Y_{n,e}: W,p estimate} reduces to
    \begin{equation*}
        \Vert Y_{n,\e} \Vert_{W^{1,2}_p(\Omega_T^n)}
        \le C\Big(\Vert f\Vert_{L^p(\Omega_T^n)}
        +\b_{l,\e}(0)+2\zeta\bar{L}e^n+1
        +\Vert \psi \Vert_{W^{1,2}_p(\O_T^n)}\Big).
    \end{equation*}
    Since the right hand side is independent of $\e$, there exists a subsequence $\{Y_{n,\e_j}\}_{j=1}^\infty$ that converges weakly in $W^{1,2}_p(\Omega_T^n)$ for all $1<p<\infty$, to a function say $Y_n$. 
    By the Sobolev embedding theorem, 
    $Y_{n,\e_j}\to Y_n$ 
    and $\pat_x Y_{n,\e_j}\to \pat_x Y_n$
    uniformly in $\overline{\Omega_T^n}$ as $j\to\infty$.
    Therefore, $Y_n\ge \psi$ and $\pat_x Y_n\le 0$ in $\overline{\Omega_T^n}$ and the boundary condition of \eqref{obs1:pen} carries over to \eqref{obs1:bdd}.

    To prove that $Y_n$ satisfies \eqref{obs1:bdd},
    let us fix a non-negative $\eta\in C_c^\infty(\{Y_n>\psi\})$. Then, there exists $\d>0$ such that
    $\min\limits_{\text{supp}(\eta)}(Y_n-\psi)>2\d$.
    Since $Y_{n,\e_j}$ converges uniformly to $Y_n$,
    we may choose $j_0\in \mathbb{N}$ large so that $\e_j<\d$ and $\vert Y_{n,\e_j}-Y_n \vert<\d$ for all $j\ge j_0$. 
    Then, $Y_{n,\e_j}-\psi>\d$ and this implies $\b_{l,\e_j}(Y_{n,\e_j}-\psi)\eta=0$ for all $j\ge j_0$.
    Moreover, since $\b_{u,\e_j}\le 0$,
    \[\int_{\Omega_T^n} (\partial_\tau Y_{n,\e_j} -\mathcal{L}Y_{n,\e_j})\eta\ dxd\tau 
    = 
    \int_{\Omega_T^n} f\eta+\b_{u,\e_j}(\pat_xY_{n,\e_j}-\varphi_{\e_j})\eta \ dxd\tau
    \le \int_{\Omega_T^n}f\eta \ dxd\tau
    \quad \text{in }\ \Omega_T^n.\]
    Take $j\to\infty$ and the weak convergence $Y_{n,\e_j}\rightharpoonup Y_n$ in $W^{1,2}_p(\Omega_T^n)$ implies
    \[\int_{\Omega_T^n}(\partial_\tau Y_n -\mathcal{L}Y_n)\eta \ dxd\tau
    \le \int_{\Omega_T^n} f \eta \ dxd\tau
    \quad \text{in }\ \Omega_T^n.\]
    Since $\eta$ is an arbitrary non-negative function in $C_c^\infty(\{Y_n>\psi\})$, we deduce that
    $\partial_\tau Y_n -\mathcal{L}Y_n \le f
    \quad \text{a.e. in }\ \{Y_n>\psi\}$.
    Let us fix $\eta\in C_c^\infty(\{\pat_x Y_n<0\})$.
    Then, there exists $\d>0$ such that
    $\max\limits_{\text{supp}(\eta)}\pat_x Y_n<-2\d$.
    Choose $j_0\in\mathbb{N}$ large so that $\e_j<\d$ and $-\d<\varphi_{\e_j}(\cdot)<0$ for all $j\ge j_0$.
    Then, we have $\pat_x Y_{n,\e_j}-\varphi_{\e_j}<-\d<-{\e_j}$.
    Therefore, $\b_{u,\e_j}(\pat_x Y_{n,\e_j}-\varphi_{\e_j})\eta=0$ for all $j\ge j_0$.
    Note $\b_{l,\e_j}(\cdot)\ge 0$ to see that
    \[\int_{\Omega_T^n} (\partial_\tau Y_{n,\e_j} -\mathcal{L}Y_{n,\e_j})\eta\ dxd\tau 
    = 
    \int_{\Omega_T^n} f\eta+\b_{l,\e_j}(Y_{n,\e_j}-\psi)\eta \ dxd\tau
    \ge \int_{\Omega_T^n}f\eta \ dxd\tau
    \quad \text{in }\ \Omega_T^n.\]
    Take $j\to\infty$ and the weak convergence $Y_{n,\e_j}\rightharpoonup Y_n$ in $W^{1,2}_p(\Omega_T^n)$ implies
    \[\int_{\Omega_T^n}(\partial_\tau Y_n -\mathcal{L}Y_n)\eta \ dxd\tau
    \ge \int_{\Omega_T^n} f \eta \ dxd\tau
    \quad \text{in }\ \Omega_T^n.\]
    Since $\eta$ is an arbitrary non-negative function in $C_c^\infty(\{\pat_xY_n<0\})$, we deduce that
    $\partial_\tau Y_n -\mathcal{L}Y_n \ge f
    \quad \text{a.e. in }\ \{\pat_xY_n<0\}$. 
    Combining the two results, we also deduce that
    $\pat_\tau Y_n -\cL Y_n =f
    \quad \text{a.e. in }\ \{Y_n>\psi\}\cap\{\pat_x Y_n<0\}$.
    This proves that $Y_n$ is a solution of \eqref{obs1:bdd}.

\subsection{{Proof of Lemma~\ref{Y}}}\label{app:proof:Y}
Since $Y_n \in W^{1,2}_p(\O_T^n)$, we have
\[
    \pat_\tau Y_n -\cL Y_n = f\mathbf{1}_{\{Y_n>\psi,\ \pat_x Y_n<0\}}+ \left(\pat_\tau Y_n -\cL Y_n\right)(\mathbf{1}_{\{Y_n=\psi\}}+\mathbf{1}_{\{\pat_x Y_n=0\}})
    \quad \text{a.e. in }\ \O_T^n.
\] 
    Fix any $R>0$ and consider $Y_n$ in the subregion 
    $\O_T^{2R}:=(0,T)\times(-2R,2R)$ for each 
    $n\in \mathbb{N}$ satisfying $n>2R$.
    In the set $\{\pat_x Y_n=0\}$, we have $\pat_\tau Y_n-\cL Y_n\le f$ and $\pat_x Y_n = \pat_{xx} Y_n = 0$ almost everywhere.
    This implies 
    \[\pat_\tau Y_n+\b Y_n=\pat_\tau Y_n-\cL Y_n\le f
    \quad \text{a.e. in }\ \{\pat_x Y_n=0\}.\]
    Moreover, taking the limit $\varepsilon \to 0^+$ in the inequality within Lemma~\ref{Y n,e}, we obtain
    $\partial_\tau Y_n \ge 0$ a.e. in $\Omega_T^{n}$.
    Together with the previous inequality and that $Y_n\ge \psi$, we deduce
    $\b\psi \le \pat_\tau Y_n +\b Y_n\le f$ a.e. in $\{\pat_x Y_n=0\}$
    and hence the norm $\Vert (\pat_\tau Y_n -\cL Y_n)\mathbf{1}_{\{\pat_x Y_n=0\}} \Vert_{L^p(\O_T^{2R})}$ is bounded by a constant depending only on $R$.
    In the set $\{Y_n=\psi\}$, we have $\pat_\tau Y_n -\cL Y_n=-\cL \psi$ and thus, $\Vert (\pat_\tau Y_n -\cL Y_n)\mathbf{1}_{\{Y_n=\psi\}} \Vert_{L^p(\O_T^{2R})}$ depends only on $R$.
    Therefore, we conclude that $\Vert\pat_\tau Y_n -\cL Y_n\Vert_{L^p(\O_T^{2R})}$ is bounded by a constant depending only on $R$.
    
    Applying the interior $W^{1,2}_p$-estimate \cite[Theorem 7.35]{Lieberman} to $Y_n$ for $1<p<\infty$ yields
    \begin{align*}
        \Vert Y_n \Vert_{W^{1,2}_p(\Omega_T^R)}
        &\le C\left( \Vert Y_n \Vert_{L^p(\Omega_T^{2R})}
        +\Vert \pat_\tau Y_n -\cL Y_n\Vert_{L^p(\Omega_T^{2R})}
        +\Vert \psi \Vert_{W^{1,2}_{p}(\O_T^{2R})} \right)
        \\
        &\le C\left( \Vert \overline{Y}+\psi \Vert_{L^p(\Omega_T^{2R})}
        +\Vert \pat_\tau Y_n -\cL Y_n\Vert_{L^p(\Omega_T^{2R})}
        +\Vert \psi \Vert_{W^{1,2}_{p}(\O_T^{2R})} \right),
    \end{align*}
    for some constant $C>0$.
    % where we have used the estimate $\psi\le Y_{n}\le \overline{Y}$ at the second inequality,
    % which is derived by taking $\e\to 0^+$ at the inequality in Lemma \ref{Y n,e}.
    Since the right hand side of the above estimate is bounded by a constant independent of $n$, there exists a subsequence $\{Y_{n_j}\}_{j=1}^\infty$
    that converges weakly in $W^{1,2}_p(\O_T^R)$ for any $1<p<\infty$.
    By a standard diagonal argument,
    we may construct a diagonal subsequence $\{Y_{n_{k,k}}\}_{k=1}^\infty$ that converges weakly to a globally defined limit in $\Omega_T$.
    Moreover, by applying arguments similar to those in Lemma \ref{Yn}, we can verify that this limit $Y$ is a solution to \eqref{obs1}.
    Furthermore, the embedding theorem implies $Y\in C(\overline{\O_T^R})$ for each $R>0$. This yields $Y\in C(\overline{\O_T})$.

\subsection{{Proof of Lemma~\ref{Y>psi}}}\label{app:proof:Y-psi}
Define $\cF_0(x):=f_0(x)+\widetilde{\cL} \psi_0(x)$.
    By direct computation, it can be checked that $\cF_0$ is strictly increasing. 
    In particular, 
    \begin{equation}\label{H'}
        \cF_0'(x)\ge \frac{\bar{L}^{\frac{\g_1-\g}{\g_1}}-L^{\frac{\g_1-\g}{\g_1}}}{1-\g_1}e^{-\frac{1}{\g_1}x} \quad \text{for all }\ x\in\mathbb{R}.    
    \end{equation}
    Observe that $\cF_0(-n)<0$ for sufficiently large $n>0$ and
    $\displaystyle\lim_{x\to\infty}\cF_0(x)=\zeta\bar{L}>0$.
    Therefore, there exists a unique point $x^*$ satisfying
    $\cF_0(x^*)=f_0(x^*)+\widetilde{\cL} \psi_0(x^*)=0$.
   In particular, direct evaluation verifies that $x^*<\tilde{x}$, which yields \eqref{x^*} (see \cite[Lemma A.1]{JeonOh2023}).
    
    Since $Y_0\in W^{1,2}_{p,{\rm loc}}(\O_T)$ for $1<p<\infty$,
    $\pat_\tau \psi_0(x) -\widetilde{\cL} \psi_0(x) = \pat_\tau Y_0(\tau,x) -\widetilde{\cL} Y_0(\tau,x) \ge f_0(x)$
    holds for almost every point $(\tau,x)\in \O_T$ satisfying $Y_0(\tau,x)=\psi_0(x)$.
    This implies the inclusion that $\{Y_0=\psi_0\}\subset \{(\tau,x)\in\O_T: x\le x^*\}$.
    From this result, we obtain that $Y_0>\psi_0$ in $(0,T)\times(x^*,\infty)$.

\subsection{{Proof of Lemma~\ref{contact_1}}}\label{app:proof:contact-1}
% For each $(\tau,x)\in(0,T)\times(-\infty,x^*)$ and $\e>0$, define a function $\Gamma_\e$ by
    % \[\Gamma_\e(x):=\e^3\left(x+\frac{1}{\e^2}\right)^2\mathbf{1}_{\{x\ge-\frac{1}{\e^2}\}}.
    % \]
    % It can be checked that $\Gamma_\e$ belongs to $W^{1,2}_{p,{\rm loc}}((0,T)\times(-\infty,x^*-1))$.
    % Thus, for all $(\tau,x)\in(0,T)\times(-\infty,x^*-1)$, we compute by
    % \begin{equation*}
    % \begin{aligned}
    %     \pat_\tau\Gamma_\e-\widetilde{\cL}\Gamma_\e 
    %     &=\e^3\left[-\t^2-2(\b-r+\frac{\t^2}{2})(x+\frac{1}{\e^2})+r(x+\frac{1}{\e^2})^2\right]\mathbf{1}_{\{x\ge-\frac{1}{\e^2}\}}
    %     \\
    %     &\ge \e^3\left[-\t^2-(2\b+\t^2)(x+\frac{1}{\e^2})\right]\mathbf{1}_{\{x\ge-\frac{1}{\e^2}\}}.
    % \end{aligned}
    % \end{equation*}
For each $x\in(-\infty,x^*-1]$, define
$\Pi_\e(x):=\psi_0(x)+\e^3\left(x+\frac{1}{\e^2}\right)^2\mathbf{1}_{\{x\ge-\frac{1}{\e^2}\}}$
and set $p_\e(x):=\partial_\tau\Pi_\e(x)-\widetilde{\cL}\Pi_\e(x)$.
Then, since $-\widetilde{\cL}\psi_0(x)\ge f_0$ for $x\le x^*$,
$\Pi_\e$ satisfies the following variational inequality:
\begin{equation*}
    \begin{cases}
        \pat_\tau \Pi_\e(x) -\widetilde{\cL} \Pi_\e(x) = p_\e(x)
        \quad \text{for }\ (\tau,x)\in(0,T)\times(-\infty,x^*-1) \ \text{ with }\ \Pi_\e(x)>\psi_0(x),
        \\
        \pat_\tau \Pi_\e(x) -\widetilde{\cL} \Pi_\e(x) \ge f_0(x)
        \quad \text{for }\ (\tau,x)\in(0,T)\times(-\infty,x^*-1) \ \text{ with }\ \Pi_\e(x)=\psi_0(x),
        \\
        \Pi_\e(x^*-1)=\psi_0(x^*-1)+\e^3(x^*-1+\frac{1}{\e^2})^2\ge Y_0(\tau,x^*-1)
        \quad \text{for \ all }\ \tau\in(0,T),
        \\
        \Pi_\e(x)=\psi_0(x)+\Pi_\e(x)\ge \psi_0(x)=Y_0(0,x)
        \quad \text{for \ all }\ x\in(-\infty,x^*-1].
    \end{cases}
\end{equation*}
Since $Y_0$ is bounded above (see Lemma~\ref{Y n,e}), choosing $\e>0$ small enough yields $\Pi_\e(x^*-1)> Y_0(\tau,x^*-1)$ for all $\tau\in(0,T)$.
Moreover, observe that $\Pi_\e(x)>\psi_0(x)$ holds for all $(\tau,x)\in(0,T)\times(-\infty,x^*-1)$ and for sufficiently small $\e>0$,
\begin{align*}
    \pat_\tau \Pi_\e(x) -\widetilde{\cL} \Pi_\e(x) -f_0(x)
    &\ge \frac{\g_1}{1-\g_1}(\bar{L}^{\frac{\g_1-\g}{\g_1}}-L^{\frac{\g_1-\g}{\g_1}})e^{-\frac{1}{\g_1}(x^*-1)}-\zeta(\bar{L}-L)
    \\
    &\quad -\e^3\left[\t^2+(2\b+\t^2)(x^*-1+\frac{1}{\e^2})\right]\mathbf{1}_{\{x\ge-\frac{1}{\e^2}\}}
    \\
    &=\left(e^{\frac{1}{\g_1}}-1\right)\zeta(\bar{L}-L)
    -\e^3\left[\t^2+(2\b+\t^2)(x^*-1+\frac{1}{\e^2})\right]\mathbf{1}_{\{x\ge-\frac{1}{\e^2}\}}>0.
\end{align*}
Therefore, we deduce that $p_\e(x)\ge f_0(x)$ in $(\tau,x)\in(0,T)\times(-\infty,x^*-1)$.
% Moreover, in the set $\{\Pi_\e=\psi_0\}$, we have $\pi_\e(x)\ge f_0(x)$.
% Therefore, $\pi_\e\ge f_0$ in $(0,T)\times(-\infty,x^*-1)$.
For each $(\tau,x)\in \O_T$, we denote by $h(\tau,x):=\pat_\tau Y_0(\tau,x)-\widetilde{\cL} Y_0(\tau,x)$.
    Then, $Y_0$ satisfies
    \begin{equation*}
    \begin{cases}
        \pat_\tau Y_0(\tau,x) -\widetilde{\cL} Y_0(\tau,x) = h(\tau,x)
        \quad \text{for }\ (\tau,x)\in(0,T)\times(-\infty,x^*-1) \ \text{ with }\ Y_0(\tau,x)>\psi_0(x),
        \\
        \pat_\tau Y_0(\tau,x) -\widetilde{\cL} Y_0(\tau,x) \ge f_0(x)
        \quad \text{for }\ (\tau,x)\in(0,T)\times(-\infty,x^*-1) \ \text{ with }\ Y_0(\tau,x)=\psi_0(x),
        \\
        Y_0(0,x)=\psi_0(x)
        \quad \text{for \ all }\ x\in(-\infty,x^*-1].
    \end{cases}
\end{equation*}
Since 
    $h=\pat_\tau Y_0-\widetilde{\cL} Y_0 = f_0\mathbf{1}_{\{\pat_\tau Y_0+Y_0<0\}}+(\pat_x Y_0-\widetilde{\cL} Y_0)\mathbf{1}_{\{\pat_x Y_0+Y_0 =0\}}\le f_0$
    in the set $\{Y_0>\psi_0\}$, it follows from the comparison principle for the variational inequalities \cite[Theorem 1.2]{YangYan2008} that $\Pi_\e\ge Y_0$ in $(0,T)\times(-\infty,x^*-1]$.
    Moreover, from that $\Pi_\e(\tau,x)=\psi_0(x)$ for all $(\tau,x)\in(0,T)\times (-\infty,-\frac{1}{\e^2}]$ and $Y_0\ge \psi_0$ in $\overline{\O_T}$, we conclude $Y_0(\tau,x)=\psi_0(x)$ for all $(\tau,x)\in [0,T]\times(-\infty,-\frac{1}{\e^2}]$.
    % Since $Y_0=e^{-x}Y$ and $\psi_0=e^{-x}J$, the result directly follows.

\subsection{{Proof of Lemma~\ref{contact_2}}}\label{app:proof:contact-2}
Fix any $\e>0$. 
    Then, from the definition of $x^*$ and the strict increasing property of $\cF_0$, there exists $\nu_\e>0$ such that $-\cF_0(x)=-f_0(x)-\widetilde{\cL}\psi_0(x)>\nu_\e$ for all $x< x^*-\e$. Choose $\d>0$ small so that $\nu_\e>\d(\t^2+(2\b+\t^2)\e)$.
    Since $Y_0(0,x)=\psi_0(x)$ for all $x\in\mathbb{R}$
    and $Y_0\in C(\overline{\Omega_T})$, 
    there exists a constant $c_\e>0$ such that 
    \[\psi_0(x)\le Y_0(\tau,x)<\psi_0(x)+\d\e^2
    \quad \text{for \ all }\ (\tau,x)\in[0,c_\e)\times(x^*-2\e,x^*+2\e).\]
    With such $\e>0$, $\d>0$ and $c_\e>0$, let us define a function $\Phi_\e \in W^{1,2}_{p,{\rm loc}}((0,c_\e)\times(-\infty,x^*-\e))$ by
    \begin{equation*}
        \Phi_\e(x)=\psi_0(x)+\d(x-x^*+2\e)^2\mathbf{1}_{\{x\ge x^*-2\e \}}.
    \end{equation*}
    Then for all $x\in(-\infty,x^*-\e)$, $\Phi_\e$ satisfies
    \begin{align*}
        \pat_\tau \Phi_\e(x)-\widetilde{\cL}\Phi_\e(x)
        &=\pat_\tau \psi_0(x) -\widetilde{\cL} \psi_0(x)
        \\
        &\quad +\d\left[-\t^2-2(\b-r
        +\frac{\t^2}{2})(x-x^*+2\e)+r(x-x^*+2\e)^2\right]
        \mathbf{1}_{\{x\ge x^*-2\e\}}
        \\
        & > f_0(x)+\nu_\e-\d\Big(\t^2+(2\b+\t^2)\e\Big)
        \mathbf{1}_{\{x\ge x^*-2\e\}}
        \\
        &>f_0(x).
    \end{align*}
    Moreover, $\Phi_\e(x^*-\e)=\psi_0(x^*-\e)+\d\e^2\ge Y_0(\tau,x^*-\e)$
    for all $\tau\in[0,c_\e]$ and $\Phi_\e(x)\ge \psi_0(x)=Y_0(0,x)$ for all $x\le x^*-\e$.
    
    Define $q_\e(x):=\pat_\tau \Phi_\e(x)-\widetilde{\cL}\Phi_\e(x)$ for all $x\in (-\infty,x^*-\e)$.
    Then, $\Phi_\e$ satisfies the following variational inequality:
    \begin{equation*}
        \begin{cases}
            \partial_\tau \Phi_\e(x) -\widetilde{\cL} \Phi_\e(x) = q_\e(x) 
            \quad \text{for }\ (\tau,x)\in(0,c_\e)\times(-\infty,x^*-\e)\ \text{ with }\ \Phi_\e(x)>\psi_0,
            \\
            \partial_\tau \Phi_\e(x) -\widetilde{\cL} \Phi_\e(x) \ge f_0(x) 
            \quad \text{for }\ (\tau,x)\in(0,c_\e)\times(-\infty,x^*-\e)\ \text{ with }\ \Phi_\e(x)=\psi_0,
            \\
            \Phi_\e(x^*-\e)\ge Y_0(\tau,x^*-\e)
            \quad \text{for \ all }\ \tau\in[0,c_\e],
            \\
            \Phi_\e(x)\ge \psi_0(x)=Y_0(0,x)
            \quad \text{for \ all }\ x\le x^*-\e.
        \end{cases}
    \end{equation*}
    Since $q_\e\ge f_0$,
    applying the comparison principle for the variational inequalities \cite[Theorem 1.2]{YangYan2008} yields $\Phi_\e \ge Y_0$ in $[0,c_\e]\times(-\infty,x^*-\e]$.
    Since $\Phi_\e(x)=\psi_0(x)$ for all $(\tau,x)\in[0,c_\e]\times (-\infty,x^*-2\e]$, we conclude $Y_0(\tau,x)=\psi_0(x)$ for all $(\tau,x)\in [0,c_\e]\times(-\infty,x^*-2\e]$.

\subsection{{Proof of Lemma~\ref{Y_tau, Y_x}}}\label{app:proof:Y-tau-Y-x}
To prove $\pat_\tau Y_0 \le \zeta\bar{L}$,
    we consider the following penalized problem analogous to \eqref{obs1:pen}:
    \begin{equation}\label{Y pen2}
        \begin{cases}
            \partial_\tau Y_{n,\e} -\mathcal{L}Y_{n,\e}
            =f_\e+\b_{l,\e}(Y_{n,\e}-\psi)+\b_{u,\e}(\pat_x Y_{n,\e}-\varphi_\e)
            \quad \text{in }\ \Omega_T^n,
            \\
            \partial_xY_{n,\e}(\tau,-n)= \psi'(-n)
            \quad \text{and}\quad 
            \partial_xY_{n,\e}(\tau,n)= \psi'(n)
            \quad \text{for \ all }\ \tau\in(0,T),
            \\
            Y_{n,\e}(0,x)=\psi(x)+\e \quad \text{for all }\ x\in[-n,n].
        \end{cases}
    \end{equation}
Here, we have introduced the smoothened function $f_{\e}$
which is defined by
$f_{\e}(x):=m_\e(f_2(x)-f_1(x))+f_1(x)$
for all $x\in \mathbb{R}$,
in order to obtain sufficient regularity of $Y_{n,\e}$,
and
$m_\e(\cdot)\in C^\infty(\mathbb{R})$ is a function which satisfies
\begin{equation}\label{mollifier}
    \begin{cases}
        m_\e(s)\to s^+ \quad \text{as }\ \e\to 0^+,
        \\
        m_\e(s)=0 \quad \text{if } \ s\le -\e,
        \quad
        m_\e(s)=s \quad \text{if } \ s\ge \e,
        \\
        s^+\le m_\e(s)\le (s+\e)^+
        \quad \forall \ s\in\mathbb{R},
        \\
        0\le m'(s)\le 1
        \quad \forall \ s\in\mathbb{R}.
    \end{cases}
\end{equation}
    By differentiating the equation in \eqref{Y pen2} with respect to $\tau$, we see that $\pat_\tau Y_{n,\e}$ satisfies
    \begin{equation}
        \begin{cases}
            \partial_\tau (\pat_\tau Y_{n,\e}) -{\cL}_\b (\pat_\tau Y_{n,\e})
            =0
            \quad \text{in }\ \Omega_T^n,
            \\
            \pat_{x\tau} Y_{n,\e}(\tau,-n)= 0
            \quad \text{and}\quad 
            \pat_{x\tau} Y_{n,\e}(\tau,n)= 0
            \quad \text{for \ all }\ \tau\in(0,T),
            \\
            \pat_\tau Y_{n,\e}(0,x)=f_{\e}(x)+{\cL} \psi(x)-\b\e \quad \text{for }\ x\in(-n,n),
        \end{cases}
    \end{equation}
    where ${\cL}_\b:={\cL} 
    +\b'_{u,\e}(\cdots)\pat_x 
    +\b'_{l,\e}(\cdots)$
    and the initial condition is obtained by taking $\tau\to 0^+$ at the equation in \eqref{Y pen2}, which is justified by the regularity result in \cite[Theorem 19, p.321]{FriedmanParabolicPDE}.
    
    Since $f_0+\widetilde{\cL}\psi_0=\cF_0$ is strictly increasing with $\displaystyle\lim_{x\to \infty}\cF_0(x)=\zeta\bar{L}$,
    we deduce that $f+\cL \psi = e^x (f_0+\widetilde{\cL}\psi_0)\le \zeta\bar{L}e^x$ for all $x\in\mathbb{R}$.
    To apply the comparison principle under the Neumann boundary conditions, we introduce a function 
    $W_n(x) := \zeta\bar{L}e^x + \e + C_n e^{\lambda_2 x}$, 
    where $\lambda_2 < 0$ is the negative root of the characteristic equation $\frac{\theta^2}{2}\lambda^2 + (\beta-r-\frac{\theta^2}{2})\lambda - \beta = 0$ (so that $\mathcal{L}(e^{\lambda_2 x}) = 0$), and $C_n := \frac{\zeta\bar{L}}{-\lambda_2}e^{(\lambda_2-1)n} > 0$.
    
    At the boundaries, since $\pat_{x\tau}Y_{n,\e}(\tau, \pm n) = 0$, we have
    \[ \pat_x W_n(-n) = \zeta\bar{L}e^{-n} + C_n \lambda_2 e^{-\lambda_2 n} = 0 \le \pat_{x\tau}Y_{n,\e}(\tau, -n), \]
    \[ \pat_x W_n(n) = \zeta\bar{L}e^{n} + C_n \lambda_2 e^{\lambda_2 n} = \zeta\bar{L}(e^n - e^{(2\lambda_2-1)n}) > 0 = \pat_{x\tau}Y_{n,\e}(\tau, n). \]
    Combined with $\b'_{l,\e}\le 0$ and $\b'_{u,\e}\le 0$, we deduce that
    \[ -{\cL}_\b W_n \ge -{\cL} W_n = -{\cL}(\zeta\bar{L}e^x) - {\cL}(\e) - C_n{\cL}(e^{\lambda_2 x}) = r\zeta\bar{L}e^x + \b\e > 0. \]
    Furthermore, $f_\e\le f+\e$ implies $\pat_\tau Y_{n,\e}(0,x) \le \zeta\bar{L}e^x + \e \le W_n(x)$.
    
    Thus, by the comparison principle \cite[Theorem 17, p.53]{FriedmanParabolicPDE}, we deduce that $\partial_\tau Y_{n,\varepsilon}\le W_n(x)$ in $\Omega^n_T$. 
    Taking the limit $\e\to 0^+$ yields $\pat_\tau Y_n \le \zeta\bar{L}e^x + C_n e^{\lambda_2 x}$.
    Since $\lambda_2 < 0$, we have $\lambda_2 - 1 < -1 < 0$. Thus, $C_n \to 0$ as $n \to \infty$. For any fixed $x \in \mathbb{R}$, taking $n\to \infty$ yields $\pat_\tau Y\le \zeta\bar{L}e^x$. Therefore, $e^{-x}\pat_\tau Y=\pat_\tau Y_0\le \z\bar{L}$.

\subsection{{Proof of Lemma~\ref{X^R parametrization}}}\label{app:proof:X-R-parametrization}
Suppose to the contrary that there exists some $\tau'\in(0,T)$ and $x_1,x_2\in\mathbb{R}$ 
with $x_1<x_2$ satisfying
\[Y_0(\tau',x_1)>\psi_0(x_1)
\quad \text{and} \quad
Y_0(\tau',x_2)=\psi_0(x_2).\]
Existence of $x_2\in \mathbb{R}$ is indeed guaranteed by Lemma~\ref{contact_1}.
Then, by continuity, there exists an open interval $I\subset \mathbb{R}$ containing $x_1$, such that $Y_0(\tau',x)> \psi_0(x)$ holds for all $x\in I$.
This allows us to define an interval containing $x_1$, where the strict inequality is preserved. Specifically, we define its endpoints by
\[x_l := \sup \{ x < x_1 : Y_0(\tau', x) = \psi_0(x) \}
\quad \text{and} \quad
x_u := \inf \{ x > x_1 : Y_0(\tau', x) = \psi_0(x) \}.\]
Again, Lemma~\ref{contact_1} guarantees that $x_l>-\infty$.
We then have $Y_0=\psi_0$ and $\pat_xY_0=\pat_x\psi_0$
at both points $(\tau',x_l)$ and $(\tau',x_u)$.
Moreover, by the monotonicity of $Y_0$ with respect to $\tau$, we see that $Y_0(\tau,x_l)=\psi_0(x_l)$ and $Y_0(\tau,x_u)=\psi_0(x_u)$ hold for all $0\le \tau\le \tau'$.
Consider the cylinder $Q:=(0,\tau')\times(x_l,x_u)$.
Again by continuity, we see that the set $\{Y_0>\psi_0\}\cap Q$ is nonempty and furthermore $Y_0=\psi_0$ and $\pat_x Y_0=\psi'_0$ on the parabolic boundary of $\{Y_0>\psi_0\}\cap Q$.
By differentiating the equation $\pat_\tau (Y_0-\psi_0)-\widetilde{\cL}(Y_0-\psi_0)=f_0+\widetilde{\cL}\psi_0$, which holds in $\{Y_0>\psi_0\}$,
we deduce that
\[\pat_\tau (\pat_xY_0-\psi'_0)-\widetilde{\cL}(\pat_xY_0-\psi'_0)=f'_0+\widetilde{\cL}\psi'_0=\cF'_0>0
\quad \text{in }\ \{Y_0>\psi_0\}\cap Q.\]
Thus, by the strong maximum principle, it follows that
$\pat_x Y_0 -\psi'_0>0$ for all points that do not lie on the parabolic boundary of $\{Y_0>\psi_0\}\cap Q$.
Consequently, $\pat_x Y_0(\tau',x)-\psi'_0(\tau',x)>0$ for all $x\in (x_l,x_u)$.
However, since $Y_0(\tau',x)-\psi_0(\tau',x)=0$ for $x=x_l$ and $x=x_u$, applying the mean value theorem yields a point $x_m\in(x_l,x_u)$ satisfying 
$\pat_x Y_0(\tau',x_m)-\psi'_0(x_m)=0$, which is a contradiction.

From \eqref{mnt1} combined with the result in Lemma~\ref{contact_1}, the parametrization $X^R(\tau)$ is well defined for all $\tau\in(0,T)$.
Moreover, with $\pat_\tau Y_0 \ge 0$ a.e. in $\O_T$, we can easily deduce the monotone decreasing property of $\cX^R$.

Let us prove the strict inequality $\partial_x Y_0 > \psi'_0$ at each point in $\{Y_0 > \psi_0\}$ that is sufficiently close to $\cX^R$.
Fix some $\tau_0\in (0,T)$.
From the previous result that $\pat_\tau Y_0\ge 0$ a.e in $\O_T$, we derive the continuity of $\pat_\tau Y_0$ at each free boundary points by \cite[Theorem 1.2]{Blanchet-et-al-2006}.
Therefore, by taking $x\to \cX^R(\tau_0)$ (from the right) on the equation $\pat_\tau (Y_0(\tau_0,x)-\psi_0(x))-\widetilde{\cL}(Y_0(\tau_0,x)-\psi_0(x))=f_0(x)+\widetilde{\cL} \psi_0(x)=\cF_0(x)$,
we deduce that
\[\displaystyle\lim_{x\to \cX^R(\tau_0)^+}-\frac{\t^2}{2}(\pat_{xx}Y_0(\tau_0,x)-\psi''_0(x))=\cF_0(\cX^R(\tau_0)).\]
Note from the result in Lemma~\ref{contact_1} that the free boundary point $(\tau_0,\cX^R(\tau_0))\in \pat\{Y_0>\psi_0\}$ is contained in the set $\{(\tau,x)\in\O_T:x\le x^*\}$.
In particular, $\cX^R(\tau_0)<x^*$ since otherwise, the monotonicity of $Y_0$ with respect to $\tau$ implies that for $R>x^*$, the cylinder $(0,\tau_0)\times(x^*,R)$ is contained in the region $\{Y_0>\psi_0\}$ with $Y_0=\psi_0$ and $\pat_x Y_0=\psi'_0$ on its left lateral boundary. 
Noting that $\pat_\tau Y_0$ satisfies $\pat_\tau (\pat_\tau Y_0)-\widetilde{\cL}(\pat_\tau Y_0)=0$ in the cylinder and that $\pat_x(\pat_\tau Y_0)=0$ on the left lateral boundary, we deduce by the parabolic Hopf lemma that there exists a point lying in the cylinder, at which $\pat_\tau Y_0=0$ holds.
Thus, by the strong maximum principle, $\pat_\tau Y_0\equiv 0$ in $(0,\tau_0)\times(x^*,R)$ and this
combined with the initial condition $Y_0(0,x)=\psi_0(x)$ and the continuity of $Y_0$ up to the initial boundary yields $Y_0\equiv \psi_0$ in $(0,\tau_0)\times(x^*,R)$.
This contradicts $(0,\tau_0)\times(x^*,R)\subset \{Y_0>\psi_0\}$.
Consequently, by setting $c_0:=-\cF_0(\cX^R(\tau_0))>-\cF_0(x^*)=0$, we conclude
\[\displaystyle\lim_{x\to \cX^R(\tau_0)^+}(\pat_{xx}Y_0(\tau_0,x)-\psi''_0(x))=\frac{2c_0}{\t^2}>0.\]
Choose $\d_0>0$ satisfying
\[\pat_{xx}Y_0(\tau_0,x)-\psi''_0(x)>\frac{c_0}{\t^2}
\quad \text{for all }\ \cX^R(\tau_0)<x<\cX^R(\tau_0)+\d_0.\]
Then, since $\pat_x Y_0(\tau_0,\cX^R(\tau_0))=\psi'_0(\tau_0)$,
integrating both sides with respect to the spatial variable yields
\[\int_{\cX^R(\tau_0)}^{x} \left(\pat_{xx}Y_0(\tau_0,y)-\psi''_0(y)\right)\ dy
=\Big[\pat_{x}Y_0(\tau_0,x)-\psi'_0(x)\Big]>0\]
for all $\cX^R(\tau_0)<x\le \cX^R(\tau_0)+\d_0$.
This yields our desired result.

\subsection{{Proof of Lemma~\ref{ineq 2}}}\label{app:proof:ineq-2}
In order to ensure that our local analysis near the free boundary $\cX^R$ strictly avoids the contact region of the gradient constraint, we first observe the continuity of $\pat_x Y_0 + Y_0$ in $\O_T$, obtained by the Sobolev embedding theorem. 
    Noting that $\pat_x Y_0 + Y_0 = \psi'_0 + \psi_0 = e^{-x}\psi' < 0$ on the bounded free boundary $\cX^R$, uniform continuity guarantees the existence of constants $c>0$ and $\d_0>0$ such that $\pat_x Y_0(\tau,x) + Y_0(\tau,x) < -c$ for all $\tau \in (0,T)$ and $\cX^R(\tau) < x < \cX^R(\tau) + \d_0$.
    
    Fix some $\k>0$ and $\tau_0\in [2\k,T]$.
    Since $\pat_\tau Y_0\ge 0$, the result in \cite{Blanchet-et-al-2006} implies the continuity of $\pat_\tau Y_0$ at the free boundary point $(\tau_0,\cX^R(\tau_0))$.
    Moreover, since $\pat_\tau Y_0(\tau_0,x)-\widetilde{\cL} Y_0(\tau_0,x) =f_0(x)$ for all $\cX^R(\tau_0)<x<\cX^R(\tau_0)+\d_0$,
    taking $x\to \cX^R(\tau_0)$ at the equation 
    $(\partial_\tau-\widetilde{\cL})(Y_0-\psi_0)=f_0+\widetilde{\cL} \psi_0=\cF_0$ implies
    \[\displaystyle\lim_{x\to \cX^R(\tau_0)}
    -\frac{\t^2}{2}\left(\pat_{xx}Y_0(\tau_0,x)-\psi_0''(x)\right)=\cF_0(\cX^R(\tau_0)).\]
    Since the function $\cF_0$ is strictly increasing with $\cF_0(x^*)=0$ and
    $\cX^R(\tau)<x^*$ for all $\tau\in(0,T]$,
    it follows that $\cF_0(\cX^R(\tau_0))$ is strictly negative.
    We define a positive constant $c_\k$ by
    $c_\k=-\frac{2}{\t^2}\cF_0(\cX^R(\k))$.
    Since $0<c_\k\le -\frac{2}{\t^2}\cF_0(\cX^R(\tau))$ for all $\tau\in[\k,T]$, we can choose a uniform constant $\d\in (0,\d_0)$ such that
    \[\pat_{xx}Y_0(\tau,x)-\psi_0''(x)>\frac{1}{2}c_\k
    \quad \text{for }\ \cX^R(\tau)<x<\cX^R(\tau)+\d\]
    holds for all $\tau\in[\k,T]$.
    Then, integrating both sides yields
    \begin{equation}\label{dx Y_0}
        \pat_x Y_0(\tau,x)-\psi_0'(x)=\int_{\cX^R(\tau)}^x \pat_{yy}Y_0(\tau,y)-\psi_0''(y)\ dy>\frac{1}{2}c_\k(x-\cX^R(\tau))  
    \end{equation}
    for all $\cX^R(\tau)<x<\cX^R(\tau)+\d$ and $\tau\in[\k,T]$.
    Noting that $\cX^R$ is monotone decreasing and uniformly continuous,
    we may choose $\eta>0$ less than $\k$ such that 
    $0\le \cX^R(\tau-\eta)-\cX^R(\tau)<\d$ and define a region by $S_\delta:=\{(\tau,x)\in \O_T:\tau_0-\eta<\tau<\tau_0,\ \cX^R(\tau)<x<\cX^R(\tau_0)+\d\}$.
    Then, by the choice of $\eta$ and $\d$, it is clear that $S_\d$ is contained in the non-contact set $\{Y_0>\psi_0\}\cap\{\pat_x Y_0+Y_0<0\}$.
    Define a function $V$ by
    \[V(\tau,x):=(\tau-\tau_0+\eta)\ \pat_\tau Y_0(\tau,x)
    -M\left[1-e^{-k(x-\cX^R(\tau_0))}\right],\]
    where $k>0$ and $M>0$ are sufficiently large constants.
    Then, since $\pat_\tau Y_0(\tau,\cX^R(\tau))=0$ for all $\tau$, which follows from the continuity of $\pat_\tau Y_0$, we see that $V\le 0$ on the left lateral boundary and the initial boundary of $S_\d$.
    Moreover, recall that $\pat_\tau Y_0\le \z\bar{L}$ in $\O_T$.
    By choosing $k>0$ and $M>0$ large enough to satisfy $\eta\z\bar{L}<M(1-e^{-k\d})$, we see that $V\le 0$ on the right lateral boundary of $S_\d$.
    Choose $k>0$ large enough to satisfy $\frac{\t^2}{2}k^2+(\b-r+\frac{\t^2}{2})k-r >0$.
    Then, since $\pat_\tau (\pat_\tau Y_0)-\widetilde{\cL}(\pat_\tau Y_0)=0$, for sufficiently large $M>0$, $V$ satisfies in $S_\d$ that
    \begin{align*}
        \pat_\tau V-\widetilde{\cL} V
        &=\pat_\tau Y_0
        -M\left[e^{-k(x-\cX^R(\tau_0))}\left(\tfrac{\t^2}{2}k^2-(\b-r+\tfrac{\t^2}{2})k-r\right)+r\right]
        \\
        &\le \z\bar{L}-M\left[e^{-k\d}\left(\tfrac{\t^2}{2}k^2-(\b-r+\tfrac{\t^2}{2})k-r\right)+r\right]
        \\
        &\le 0.
    \end{align*}
    Therefore, by applying the comparison principle, we deduce that $V\le 0$ in $\overline{S_\d}$.
    This further implies that $V(\tau_0,x)= \eta\, \pat_\tau Y_0(\tau_0,x)-M\left[1-e^{-k(x-\cX^R(\tau_0))}\right]\le 0$ and thus,
    \[\eta\,\pat_\tau Y_0(\tau_0,x)
    \le M\left[1-e^{-k(x-\cX^R(\tau_0))}\right]
    \le k M(x-\cX^R(\tau_0))\]
    for all $\cX^R(\tau_0)<x<\cX^R(\tau_0)+\d$.
    Combined with \eqref{dx Y_0}, we deduce that
    \[\pat_\tau Y_0(\tau_0,x)\le \frac{2k M}{c_\k\eta}\left(\pat_x Y_0(\tau_0,x)-\psi_0'(x)\right)
    \quad \text{for all }\ \cX^R(\tau_0)<x<\cX^R(\tau_0)+\d.\]
    Since $\tau_0\in [2\k,T]$ is arbitrary, our desired result follows with $C=\frac{2k M}{c_\k\eta}$ depending on $\k$ and $\d$.

\subsection{{Proof of Theorem~\ref{X^R:properties}}}\label{app:proof:X-R-properties}
To establish the local Lipschitz continuity of the free boundary $\cX^R(\tau)$,
    we shall construct a sequence of level curves that converges to $\cX^R$.
    Let us fix some small $\k>0$
and choose $N\in\mathbb{N}$ sufficiently large so that
\begin{equation}\label{X^R, N}
    \frac{1}{N}\le 
    \min_{\tau\in [\k,T]}\{Y_0(\tau,\cX^R(\tau)+\d)-\psi_0(\cX^R(\tau)+\d)\}.
\end{equation}
Here, $\d>0$ is a small constant from the previous lemma, introduced in order to ensure $\pat_x Y_0-\psi'_0>0$ near $(\tau,\cX^R(\tau))$ so that we can establish a sequence of level curves approximating to $\cX^R(\tau)$ without intersecting the set $\{\pat_x Y_0+Y_0=0\}$.
Utilizing $\pat_x Y_0(\tau,x)-\psi'_0(x)>0$ for all $x \in (\cX^R(\tau), \cX^R(\tau)+\d]$, the implicit function theorem uniquely defines a continuously differentiable curve $\cX^R_n$ on $(\k,T)$ for each $n\ge N$, which satisfies
\begin{equation}\label{X^R, n}
Y_0(\tau,\cX^R_n(\tau))-\psi_0(\cX^R_n(\tau))=\frac{1}{n}.
\end{equation}
Moreover, since $Y_0-\psi_0$ is strictly increasing in $x$, the sequence $\{\cX^R_n(\tau)\}_{n=N}^\infty$ is strictly decreasing in $n$.
Being bounded below by $\cX^R(\tau)$, this implies that for all $\tau\in[\k,T]$, $\cX^R_n(\tau)$ converges to $\cX^R(\tau)$ as $n\to\infty$.
Differentiating \eqref{X^R, n} with respect to $\tau$ yields
\begin{equation}\label{eq:fb_derivative}
\partial_\tau Y_0(\tau,\cX^R_n(\tau))
=
\big\{\psi_0'(\cX^R_n(\tau)) - \partial_x Y_0(\tau,\cX^R_n(\tau))\big\}
\left[\frac{d}{d\tau}\cX^R_n(\tau)\right].
\end{equation}
The inequality \eqref{X^R, N} combined with the strict positivity of $\pat_x Y_0-\psi_0'$ near the free boundary point $(\tau,\cX^R(\tau))$ (see \eqref{mnt3})
implies $\cX^R(\tau)< \cX_n^R(\tau)\le \cX^R(\tau)+\d$ for each $n\ge N$.
Therefore, applying the previous estimate \eqref{Y0:estimate} for all $n\ge N$ yields that
\begin{equation}\label{eq:fb_derivative 2}
    \left\vert \frac{d}{d\tau}{\cX_n^R}(\tau)\right\vert
    =
    \left|\frac{\partial_\tau Y_0(\tau,\cX^R_n(\tau))}{
    \psi_0'(\cX^R_n(\tau))-\partial_x Y_0(\tau,\cX^R_n(\tau))
    }\right|
    \le C
    \quad \text{for all }\ \tau\in [2\k,T],
\end{equation}
for some constant $C=C(\k,\d)>0$.
% This implies that
% the sequence $\{\frac{d}{d\tau}\cX^R_n(\tau)\}_{n=1}^\infty$ is uniformly bounded on $[2\k,T]$.
% Moreover, the strict positivity $\pat_x Y_0(\tau,x)-\psi'_0(x)> 0$ on $(\cX^R(\tau), \cX^R(\tau)+\d]$ and the condition \eqref{X^R, N}, together with Corollary \ref{X^R region}, imply that $\underline{x}\le \cX^R(\tau)\le \cX^R_n(\tau)\le x^*+\d$ for all $n\ge N$ and $\tau\in [0,T]$. This establishes the uniform boundedness of $\{\cX^R_n\}_{n=N}^\infty$ on $[2\k,T]$.
% Therefore, by the Arzel\'a-Ascoli theorem,
% we can extract a subsequence $\{\cX^R_{n_k}\}_{k=1}^\infty$ that converges uniformly on $[2\k,T]$.
% Substituting $\cX^R_{n_k}$ into \eqref{X^R, n} and letting $k\to\infty$, we see that
% $\cX^R_{n_k}(\tau)$ converges to $\cX^R(\tau)$ uniformly on $[2\k,T]$.

To obtain the local Lipschitz continuity of $\cX^R$, fix any closed interval $I\subset (0,T)$ and corresponding $\k>0$ such that $I\subset (2\k,T)$.
Choose $\tau_1,\tau_2\in I$ with $\tau_1<\tau_2$.
Applying the mean value theorem to $\cX^R_n$ and using \eqref{eq:fb_derivative 2}, we obtain
\[\left|
\frac{\cX^R_n(\tau_2)-\cX^R_n(\tau_1)}{\tau_2-\tau_1}
\right|
=
\left\vert\frac{d}{d\tau}\cX^R_n(\tau_3)\right\vert
\le C\]
for some $\tau_3\in(\tau_1,\tau_2)$ and a constant $C>0$ depending only on $I$.
Taking $n\to\infty$ and the convergence $\cX^R_n\to \cX^R$ yields the Lipschitz continuity of $\cX^R$ on $I$.

    (2) 
    % From $\pat_\tau Y_0\ge 0$, it is not hard verify the monotone decreasing property of $\cX^R$.
    % Furthermore, strict monotonicity follows from the standard argument, using the Hopf lemma.
    To prove the monotonicity of $\cX^R$, let us fix any $\tau_1, \tau_2 \in (0,T)$ with $\tau_1 < \tau_2$.
Let $x = \cX^R(\tau_2)$. By definition and the continuity of $Y_0$ and $\psi_0$, we have $Y_0(\tau_2, x) = \psi_0(x)$.
Since $Y_0 $ is increasing with respect to $\tau$ and $Y_0 \ge \psi_0$, it follows that
\[ \psi_0(x) \le Y_0(\tau_1, x) \le Y_0(\tau_2, x) = \psi_0(x). \]
Thus, $Y_0(\tau_1, x) = \psi_0(x)$, which implies $x$ belongs to the contact set at time $\tau_1$.
By the definition of the free boundary $\cX^R(\tau_1) = \sup\{x : Y_0(\tau_1, x) = \psi_0(x)\}$, we immediately deduce that
\[ \cX^R(\tau_1) \ge x = \cX^R(\tau_2). \]
This implies that $\cX^R$ is monotone decreasing.

To prove the strictly decreasing property of $\cX^R$, suppose by contradiction that there exist 
$\tau_1<\tau_2$ such that $\cX^R(\tau_1)=\cX^R(\tau_2)$. 
For $r>0$, define
$Q_r := (\tau_1,\tau_2)\times (\cX^R(\tau_1),\, \cX^R(\tau_1)+r)$,
and for $0<\delta<\tau_2-\tau_1$ define
$Y_0^\delta(\tau,x) := Y_0(\tau+\delta,x) - Y_0(\tau,x)$
on the cylinder
$Q_{r,\delta}:=(\tau_1,\tau_2-\delta)\times (\cX^R(\tau_1),\, \cX^R(\tau_1)+r)$.
Then $Y_0^\delta$ satisfies
$\partial_\tau Y_0^\delta - \cL Y_0^\delta = 0$ 
in $Q_{r,\delta}$.
At the left lateral boundary $\{\tau_1<\tau<\tau_2-\delta,\; x=\cX^R(\tau_1)\}$, we have $Y_0=\psi_0$, 
and hence $Y_0^\delta=0$ and $\partial_x Y_0^\delta=0$ there. 
Suppose that there exists no point in $Q_{r,\d}$ satisfying 
$Y_0^\d=0$.
Then, since $Y_0^\d$ attains its strict local minimum at the left lateral boundary of $Q_{r,\d}$ we deduce by the parabolic Hopf lemma that $\pat_x Y_0^\d>0$ on the same boundary, which is a contradiction.

Let us denote the interior minimum point by
$(\hat{\tau},\hat{x})\in Q_{r,\delta}$, satisfying $Y_0^\delta(\hat{\tau},\hat{x})=0$.
Consider the region
$\widetilde Q_r := (0,\hat{\tau})\times(\cX^R(\tau_1),\, \cX^R(\tau_1)+r)$,
and observe that $\widetilde Q_r\cap\{Y_0>\psi_0\}$ is a nonempty, open and connected set. 
Since $Y_0^\delta$ satisfies $\pat_\tau Y_0^\d -\cL Y_0^\d=0$ and attains its minimum $0$ at an interior point $(\hat{\tau},\hat{x})$ of $\widetilde Q_r\cap\{Y_0>\psi_0\}$, and $Y_0^\delta\ge0$ on the parabolic boundary of this set by the previous result that $\pat_\tau Y_0\ge 0$ in Lemma~\ref{Y_tau, Y_x}, the strong maximum principle yields
$Y_0^\delta \equiv 0$ in $\widetilde Q_r\cap\{Y_0>\psi_0\}$.
By Lemma~\ref{contact_2}, for every $x\in(\cX^R(\tau_1),\,\cX^R(\tau_1)+r)$, the vertical segment 
$(0,T)\times\{x\}$ intersects the set $\{Y_0=\psi_0\}$ at some free boundary point $(\tau',x)$.  
Since $Y_0^\delta\equiv 0$ in $\widetilde Q_r\cap\{Y_0>\psi_0\}$, we deduce that
\[
Y_0(\tau,x) = \psi_0(x)
\quad \text{for all }\ \tau\in(\tau',\hat{\tau})
\quad \text{and }\ x\in(\cX^R(\tau_1),\,\cX^R(\tau_1)+r).
\]
Thus $Y_0=\psi_0$ in $\widetilde Q_r\cap\{Y_0>\psi_0\}$, contradicting the definition of the set $\{Y_0>\psi_0\}$.
Therefore, the assumption $\cX^R(\tau_1)=\cX^R(\tau_2)$ must be false, and the free boundary $\cX^R$ is strictly decreasing.
 
    (3) Since $\cX^R$ is bounded and monotone decreasing, the limit $\displaystyle\lim_{\tau\to 0^+}\cX^R(\tau)$ exists.
    Combining Lemma~\ref{Y>psi} and Lemma~\ref{contact_2}, we then deduce that $\displaystyle\lim_{\tau\to 0^+}\cX^R(\tau)=x^*$.

\subsection{{Proof of Corollary~\ref{X^R:smooth}}}\label{app:proof:X-R-smooth}
% See the proof in \cite[Theorem 5.7]{ha2026finitehorizonoptimalconsumptioninvestment}
    % for details.
    Fix a free boundary point $(\tau,\cX^R(\tau))$ and consider its neighboring cylinder $Q_r:=(\tau-r^2,\tau+r^2)\times(\cX^R(\tau)-r,\cX^R(\tau)+r)$
    and denote by $I_r=(\tau-r,\tau+r)$.
    Choose $r>0$ small so that $Q_r\subset (0,T)\times(-\infty,x^*)$.
    Then, there exists a sequence of level curves 
    $\{\cX^R_n(\tau)\}_{n=1}^\infty$ that satisfies 
    $Y_0(\tau,\cX^R_n(\tau))=\psi_0(\cX^R(\tau))+1/n$ and $N\in\mathbb{N}$ such that $\cX^R_n(\tau)$ lies in $\{Y_0>\psi_0\}\cap Q_r$ for all $n\ge N$.
    Observe that $Y_0$ satisfies $\pat_\tau Y_0 -\cL Y_0 =f_0$ in $\{Y_0>\psi_0\}\cap Q_r$.
    In particular, since $f_0$ is smooth in $Q_r$, we obtain by the Schauder theory that $Y_0$ is also smooth in $\{Y_0>\psi_0\}\cap Q_r$.
    By differentiating the equation $\pat_\tau Y_0 -\cL Y_0 =f_0$ with respect to $\tau$ and $x$, we deduce that
    $\pat_\tau Y_0$ and $\pat_x Y_0-\psi_0'$ satisfies
    \begin{equation*}
        \begin{cases}
            \pat_\tau (\pat_\tau Y_0) -\widetilde{\cL} (\pat_\tau Y_0) =0
            \quad \text{in }\ \{Y_0>\psi_0\}\cap Q_r,
            \\
            \pat_\tau Y_0 =0
            \quad \text{on }\ \pat\{Y_0>\psi_0\},
        \end{cases}
    \end{equation*}
    \begin{equation*}
        \begin{cases}
            \pat_\tau (\pat_x Y_0-\psi_0') -\widetilde{\cL} (\pat_x Y_0-\psi_0') =\cF_0'
            \quad \text{in }\ \{Y_0>\psi_0\}\cap Q_r,
            \\
            \pat_x Y_0-\psi_0' =0
            \quad \text{on }\ \pat\{Y_0>\psi_0\},
        \end{cases}
    \end{equation*}
    respectively, where $\cF_0=f_0+\widetilde{\cL} \psi_0$ and each boundary conditions are derived from the continuity of $\pat_x Y_0$ in the entire domain $\O_T$ induced by the embedding theorem and the continuity result for $\pat_\tau Y_0$ up to the free boundary since $\pat_\tau Y_0 \ge 0$ (see \cite{Blanchet-et-al-2006}).
    Since the region $\{Y_0>\psi_0\}$ is a Lipschitz domain, we apply the boundary Harnack inequality \cite[Theorem 1.2]{TorresLatorre2024} to deduce that for some $\d\in(0,1)$,
    there exists a constant $C>0$ that depends on $\Vert \pat_\tau Y_0 \Vert_{L^\infty(Q_r)}$ and $\Vert \pat_x Y_0-\psi_0' \Vert_{L^\infty(Q_r)}$ such that
    \begin{align*}
        \left\Vert \frac{d}{d\tau}\cX^R_n(\tau) \right\Vert_{C^{\frac{\d}{2}}(\{Y_0>\psi_0\}\cap I_{r^2/4})}
    =\left\Vert \frac{\pat_\tau Y_0(\tau,\cX^R_n(\tau))}{\pat_x Y_0(\tau,\cX^R_n(\tau))-\psi'_0(\cX^R_n(\tau))} \right\Vert_{C^{\frac{\d}{2}}(\{Y_0>\psi_0\}\cap I_{r^2/4})}\le C.
    \end{align*}
    Then, there exists a subsequence $\{\frac{d}{d\tau}\cX^R_{n_j}(\tau)\}_{j=1}^\infty$
    that converges to a function in $C^{\frac{\d}{2}}(I_{r^2/4})$.
    This implies that there exists 
    $\displaystyle\lim_{j\to\infty}\left(\tfrac{d}{d\tau}\cX^R_{n_j}(\tau)\right)$
    which is of class $C^{\frac{\d}{2}}$.
    In particular, this also ensures that $\cX^R$ is differentiable and $\frac{d}{d\tau}\cX^R(\tau)$ coincides with $\displaystyle\lim_{j\to\infty}\{\tfrac{d}{d\tau}\cX^R_{n_j}(\tau)\}$ in $I_{r^2/4}$.
    % Since $\cX^R_{n_k}(\tau)$ converges uniformly to $\cX^R(\tau)$, we deduce that $\cX^R$ is differentiable and in particular,
    % $\frac{d}{d\tau}\cX^R(\tau)=\displaystyle\lim_{k\to\infty}\frac{d}{d\tau}\cX^R_{n_k}(\tau)=\cZ(\tau)$ in each compact subsets of $I_{r^2/4}$.
    Therefore, we deduce that $\cX^R$ is a $C^{1+\frac{\d}{2}}$-curve in $I_{r^2/4}$.
    
    As a result, since $\cX^R$ is a $C^{1+\frac{\d}{2}}$-curve, the region $\{Y_0>\psi_0\}$ turns out to be a $C^{1+\frac{\d}{2}}$-domain (see \cite[Definition 1.1]{Kukuljan2022}). 
    Since $\cX^R$ is away from $x=\tilde{x}$, $\cF_0$ is smooth near the free boundary.
    Thus, by applying the higher order boundary Harnack inequality \cite[Theorem 1.2]{Kukuljan2022} in the $C^{1+\frac{\d}{2}}$-domain $\{Y_0>\psi_0\}$ we deduce that the $C^{1+\frac{\d}{2}}$ norm of $\frac{d}{d\tau}\cX^R_n(\tau)$ is uniformly bounded on each compact subsets of $I_{r^2/16}$. We again extract a subsequence that converges in $C^{1+\frac{\d}{2}}$ to obtain the result that $\cX^R$ is in $C^{2+\frac{\d}{2}}$.
    By applying the higher order boundary Harnack inequality in $C^{k+\frac{\d}{2}}$-domain for $k=2,3,...$ inductively, we conclude that $\cX^R$ is smooth.

\subsection{{Proof of Lemma~\ref{contact:3}}}\label{app:proof:contact-3}
Note that $\phi$ and $g$ are monotone increasing and $\displaystyle\lim_{x\to\infty}g(x)=\zeta\bar{L}>0$.
Therefore, there exists $x_u\in\mathbb{R}$ such that
$g(x)\ge \zeta\bar{L}/2$ for all $x\ge x_u$.
Choose $\e>0$ small so that $\max\{x^*,x_u\} \le \frac{1}{\e^2}$, which yields
\[-\e^2<\phi(\frac{1}{\e^2})\le \phi(x)<0
\quad \text{and} \quad
\frac{\zeta\bar{L}}{2}\le g(\frac{1}{\e^2})\le g(x), 
\quad \text{for \ all }\ x\ge\frac{1}{\e^2}.\]
Define $\Lambda_\e$ by
\[\Lambda_\e(\tau,x)=\e(\tau-\e)\mathbf{1}_{\{\tau<\e\}}-\e^3(x-\frac{2}{\e^2})^2
\mathbf{1}_{\{x<\frac{2}{\e^2}\}},
\quad \text{for \ all }\ (\tau,x)\in[0,T]\times[\frac{1}{\e^2},\infty).\]
It can be checked that $\Lambda_\e\in W^{1,2}_{p,{\rm loc}}((0,T)\times(\frac{1}{\e^2},\infty))$ and hence, direct computation yields
\begin{align*}
    \pat_\tau \Lambda_\e(\tau,x) -\widetilde{\cL} \Lambda_\e(\tau,x)
    &= \e(1+rT)\mathbf{1}_{\{\tau<\e\}}+\e^3\left[ \t^2
    +2(\b-r+\frac{\t^2}{2})(x-\frac{2}{\e^2})
    -r(x-\frac{2}{\e^2})^2 \right]
    \mathbf{1}_{\{x<\frac{2}{\e^2}\}}
    \\
    &\le \e(1+rT)\mathbf{1}_{\{\tau<\e\}}
    +\e^3\left( \t^2
    +\frac{2r}{\e^2} \right)\mathbf{1}_{\{x<\frac{2}{\e^2}\}}
    \\
    &\le \frac{\zeta\bar{L}}{2}\le g(x)
\end{align*}
for all $(\tau,x)\in (\frac{1}{\e^2},\infty)$.
Moreover, 
$\Lambda_\e(\tau,\frac{1}{\e^2})\le -\frac{1}{\e}+\e T \le V_0(\frac{1}{\e^2})$
for all $\tau\in(0,T)$
and
$\Lambda_\e(0,x)\le -\e^2 \le \phi(x)$
for all $x\ge \frac{1}{\e^2}$.
Denote by $r_\e(\tau,x)=\pat_\tau\Lambda_\e(\tau,x)-\widetilde{\cL} \Lambda_\e(\tau,x)$ for all $(\tau,x)\in(0,T)\times(\frac{1}{\e^2},\infty)$. Then, $\Lambda_\e$ satisfies the following variational inequality:
\begin{equation*}
    \begin{cases}
        \pat_\tau \Lambda_\e(\tau,x) -\widetilde{\cL} \Lambda_\e(\tau,x) = r_\e(\tau,x)
        \quad \text{for }\ (\tau,x)\in(0,T)\times(\frac{1}{\e^2},\infty)\ \text{ with }\ \Lambda_\e(\tau,x)<0,
        \\
        \pat_\tau \Lambda_\e(\tau,x) -\widetilde{\cL} \Lambda_\e(\tau,x) \le g(x)
        \quad \text{for }\ (\tau,x)\in(0,T)\times(\frac{1}{\e^2},\infty)\ \text{ with }\ \Lambda_\e(\tau,x)=0,
        \\
        \Lambda_\e(\tau,\frac{1}{\e^2})\le V_0(\tau,\frac{1}{\e^2})
        \quad \text{for \ all }\ \tau\in(0,T),
        \\
        \Lambda_\e(0,x)\le V_0(0,\frac{1}{\e^2})
        \quad \text{for \ all }\ x\ge \frac{1}{\e^2}.
    \end{cases}
\end{equation*}
Since $r_\e\le g$ holds for each small $\e>0$, we apply the comparison principle \cite[Theorem 1.2]{YangYan2008} to see
$\Lambda_\e\le V_0$ in $(0,T)\times[\frac{1}{\e^2},\infty)$.
In particular, since $\Lambda_\e=0$ in $[\e,T]\times[\frac{2}{\e^2},\infty)$ and $V_0\le 0$ in $(0,T)\times[\frac{1}{\e^2},\infty)$, we deduce that 
$V_0=0$ in $[\e,T]\times[\frac{2}{\e^2},\infty)$.
Since $V_0=e^{-x}V$, the same conclusion holds for $V$.

\subsection{{Proof of Lemma~\ref{V,dY}}}\label{app:proof:V-dY}
For all $(\tau,x)\in \overline{\O_T}$, define
    \begin{equation*}
        \tilde{Y}(\tau,x) = \psi(x)\mathbf{1}_{\{x\le \cX^R(\tau)\}} + \left( \psi(\cX^R(\tau)) + \int_{\cX^R(\tau)}^x V(\tau,y)\ dy \right)\mathbf{1}_{\{x>\cX^R(\tau)\}}.
    \end{equation*}
    
    We first establish $Y \ge \tilde{Y}$ in $\cC_T$ by proving $\pat_x Y \ge V$. Since $Y \in C^{\frac{3+\a}{2},3+\a}(\{\pat_x Y<0\}\cap \cC_T)$ by Schauder theory, differentiating the equation yields that $\mathcal{P} := \pat_x Y - V$ satisfies $\pat_\tau\mathcal{P} - \cL\mathcal{P} \ge 0$ a.e. in $\{\partial_x Y<0\}\cap \cC_T$. On the parabolic boundary of this region (including $\cX^R(\tau)$, $\tau=0$, and $\pat\{\pat_x Y<0\}$), it is easy to check that $\mathcal{P} \ge 0$. Applying the comparison principle \cite[Theorem 1.1]{YangYan2008} yields $\mathcal{P} \ge 0$ in $\{\pat_x Y<0\}\cap\cC_T$. Since $\mathcal{P} = -V \ge 0$ is trivial in $\{\pat_x Y=0\}\cap\cC_T$, we have $\pat_x Y \ge V$ throughout $\cC_T$. Integrating this inequality from $\cX^R(\tau)$ to $x$ immediately gives $Y \ge \tilde{Y}$ in $\cC_T$.

    Next, we prove $\tilde{Y}\ge Y$ in the region $\cC' :=\{(\tau, x) \in \cC_T : \cX^R(\tau)<x<\cX^B(\tau)\}$. Applying the operator $\pat_\tau - \cL$ to $\tilde{Y}$ and using integration by parts and Leibniz's rule along with $\pat_\tau V - \cL V = f'$, we obtain that 
    \begin{align*}
        (\pat_\tau - \cL) \tilde{Y}(\tau, x) 
        &= \int_{\cX^R(\tau)}^x \pat_\tau V(\tau,y)\, dy - \tfrac{\t^2}{2} \pat_x V(\tau,x) - \left(\b-r-\tfrac{\t^2}{2}\right) V(\tau,x) + \b \tilde{Y}(\tau,x) 
        \\
        &= \int_{\cX^R(\tau)}^x \left( \pat_\tau V - \cL V \right) dy - \tfrac{\t^2}{2} \pat_x V(\tau, \cX^R(\tau)) - \left(\b-r-\tfrac{\t^2}{2}\right) V(\tau, \cX^R(\tau)) + \b \psi(\cX^R(\tau)) 
        \\
        &= \int_{\cX^R(\tau)}^x f'(y)\, dy - \tfrac{\t^2}{2} \pat_x V(\tau, \cX^R(\tau)) - \left(\b-r-\tfrac{\t^2}{2}\right) \psi'(\cX^R(\tau)) + \b \psi(\cX^R(\tau)) 
        \\
        &= f(x) - f(\cX^R(\tau)) - \tfrac{\t^2}{2} \pat_x V(\tau, \cX^R(\tau)) - \left(\b-r-\tfrac{\t^2}{2}\right) \psi'(\cX^R(\tau)) + \b \psi(\cX^R(\tau)).
    \end{align*}
    Using the continuity of $\pat_\tau Y$, taking the limit of $\pat_\tau Y - \cL Y = f$ as $x \downarrow \cX^R(\tau)$  and substituting it into the equation for $\tilde{Y}$ cancels the lower-order terms, yielding $\pat_\tau \tilde{Y}- \cL \tilde{Y} = f(x) + h(\tau)$ in $\cC'$, where $h(\tau) := \frac{\t^2}{2} (\pat_{xx} Y - \pat_x V)(\tau, \cX^R(\tau))$. Since $\pat_x Y \ge V$ for $x > \cX^R(\tau)$ and $\pat_x Y = V$ at $x = \cX^R(\tau)$, the difference $\pat_x Y - V$ is increasing at the free boundary, implying $h(\tau) \ge 0$. Thus, $Z := \tilde{Y} - Y$ satisfies $\pat_\tau Z - \cL Z \ge h(\tau) \ge 0$ in $\cC'$.

    Now, since the domain $\cC'$ is unbounded, we employ a standard barrier argument (cf. \cite[Lemma 2.3]{YangYan2008}). Given the exponential growth bound $|Z| \le Ce^{d|x|}$ (see Lemma~\ref{Y n,e}), we introduce the barrier $\Phi_{M}(\tau,x) = Ce^{dM(K\tau-1)+2dx}$, where $K>0$ is chosen large enough to ensure $(\partial_\tau -\cL)\Phi_{M} \ge 0$. Let $\widehat{Z} = Z + \Phi_{M}$. For a fixed step $\k<1/K$ and an arbitrary point $(\tau_0,x_0)\in \cC'$ with $0<\tau_0\le\k$, we choose $M>x_0$.
    Suppose that $\widehat{Z}$ attains its strict minimum on the closure of the domain $\cC' \cap \{0 \le \tau \le \k,\ x \le M\}$.
    On the free boundary $\cX^B$, $\partial_x \widehat{Z} = \pat_x \Phi_M > 0$ (since $\pat_x Y \ge V = 0$), prevents the minimum from occurring there due to the continuity of $\partial_x \widehat{Z}$.
    Since it is prescribed that $\widehat{Z}\ge 0$ on the remaining boundaries, the strict negative minimum of $\widehat{Z}$ is attained at the interior of $\cC'$, which is a contradiction (see \cite[Lemma 2.1]{YangYan2008}).
    Therefore, $\widehat{Z}(\tau_0,x_0) \ge 0$ and letting $M \to \infty$ forces $\Phi_{M}$ to vanish, yielding $Z(\tau,x) \ge 0$ for all $(\tau,x)\in \cC'$ with $\tau \in [0, \k]$. Proceeding inductively over intervals of size $\k$ extends $\tilde{Y} \ge Y$ globally in $\cC'$.
    Combining this with $Y \ge \tilde{Y}$, we conclude $\tilde{Y} = Y$, and therefore $\pat_x Y = V$ in $\cC'$. Finally, in the remaining region $\cC_T \setminus \cC'$, $V = 0$ and $V\le \pat_xY\le 0$ forces $V=\pat_x Y = 0$, completing the proof.

\subsection{{Proof of Lemma~\ref{Vd:dtau,dx}}}\label{app:proof:Vd-dtau-dx}
First, we establish the increasing properties of $V_0$ on the parabolic boundary of $\cC_T$. Recall that $\pat_x Y = V$ from the previous lemma, and that $V_0 - \phi = e^{-x}(\pat_x Y - \psi') > 0$ on the right neighborhood of $\cX^R$ (see \eqref{mnt3}). Since $V_0 = \phi$ on $\cX^R$, it directly follows that $\pat_x V_0 \ge \phi' \ge 0$ on the left lateral boundary of $\cC_T$.
    
    Next, differentiating the identity $V(\tau,\cX^R(\tau)) = \psi'(\cX^R(\tau))$ with respect to $\tau$ yields
    \[
        \pat_\tau V = -\Big(\pat_x V - \psi''\Big)\frac{d\cX^R}{d\tau} \quad \text{on }\ \cX^R \ \text{ from the right}.
    \]
    Since $\cX^R$ is strictly decreasing and $\pat_x V - \psi''= \pat_x(\pat_x Y-\psi') \ge 0$, multiplying this by $e^{-x}$ implies that $\pat_\tau V_0 \ge 0$ on $\cX^R$. Furthermore, taking the limit as $\tau \to 0^+$ in the equation $\pat_\tau V_0 - \widetilde{\cL}V_0 = g$ provides the initial condition $\pat_\tau V_0(0,x) = \cF_0(x) + \cF_0'(x) > 0$ for $x \ge \cX^R(0) = x^*$.
    
    With these boundary behaviors, we now consider $V_0(\tau, x+\d)$ for some sufficiently small $\d > 0$. Previous facts show that $V_0$ is increasing in $x$ on the left lateral boundary, and $V_0=\phi$ is increasing on the initial boundary of $\cC_T$. Combined with $g' > 0$, we can apply the comparison principle \cite[Theorem 1.2]{YangYan2008} to deduce that $V_0(\tau, x+\d) \ge V_0(\tau, x)$ for all $(\tau,x)\in \cC_T$, which implies $V_0$ is monotone increasing in $x$.
    Similarly, from that $\pat_\tau V_0 \ge 0$ on both the lateral and initial boundaries of $\cC_T$ as described above, the same comparison argument yields that $V_0$ is monotone increasing in $\tau$.
    Finally, combining these monotonicity properties with the boundary condition $V_0 = \phi$ on the parabolic boundary of $\cC_T$ directly yields the conclusion that $V_0 \ge \phi$ globally in $\cC_T$.

\subsection{{Proof of Lemma~\ref{V_0:dtau,dx}}}\label{app:proof:V-0-dtau-dx}
To obtain the estimate \eqref{ineq(3)}, we differentiate the equation and apply the comparison principle to the derivatives of solution $V^0_{n,\e}$ of \eqref{obsV:pen}. Since $g$ is not differentiable, smoothing $g$ using the function $m_\e$ in \eqref{mollifier} justifies this differentiation and the same result carries over to that of $g$.
    We omit this smoothing procedure for simplicity.
    Moreover, since $V^0_{n,\e}\le \r$ and $V^0_{n,\e}$ inherits the increasing properties of $V_0$ at the boundary, the comparison principle yields $\pat_\tau V^0_{n,\e}\ge 0$ and $\pat_x V^0_{n,\e}\ge 0$ in $\cC_T^n$.
    
    Fix a small $\k\in(0,\k_0)$ (with $\k_0$ to be determined) and an evaluation point $(\tau_0,x_0)\in \{V_0<0\}$ such that $\tau_0\in [2\k,T]$. Choose $n\ge 2x_0$. On $\cC_T^n$, we introduce the auxiliary functions $\Upsilon(\tau;\tau_0)=-(\tau-\tau_0)\mathbf{1}_{\{\tau\le \tau_0\}}$ and $\Phi(\tau,x;x_0)=e^{k\tau}(e^{\frac{2}{\g}\vert x-x_0 \vert}-\frac{2}{\g}\vert x-x_0 \vert-1)$, where $k=2(\frac{2}{\g^2}+\frac{1}{\g})\t^2+\frac{4}{\g}(\b+r)-r+2$. Since $-\b'_{2,\e}\ge 0$, direct computation gives that for the operator $\widetilde{\cL}_\b:=\widetilde{\cL}+\b'_{2,\e}(\cdots)$,
    \begin{equation}\label{aux:ineq}
        \begin{aligned}
            \pat_\tau \Phi - \widetilde{\mathcal{L}}_{\b} \Phi
        \ge 
        \pat_\tau \Phi - \widetilde{\mathcal{L}} \Phi
        & = e^{k \tau} \Big[
        (k + r)\big(  e^{\tfrac{2}{\g}|x-x_0|} - \tfrac{2}{\g}|x-x_0| - 1 \big)
        - \tfrac{2\t^2}{\g^2}
         e^{\tfrac{2}{\g}|x-x_0|} 
        \\
        & \quad 
        - \tfrac{2}{\g}(\b - r + \tfrac{\t^2}{2}) \operatorname{sign}(x-x_0)     
        \big(  e^{\frac{2}{\g}|x-x_0|} - 1 \big)
        \Big]
        \\
        &\ge e^{k\tau}\Big[
        \tfrac{k + r}{2}(  e^{\tfrac{2}{\g}|x-x_0|} - \tfrac{4}{\g}|x-x_0| - 2) 
        \\
        &\quad +\Big\{ \tfrac{k+r}{2}-\tfrac{2}{\g}(\b+r)-(\tfrac{2}{\g^2}+\tfrac{1}{\g})\t^2 \Big\}  e^{\tfrac{2}{\g}|x-x_0|}\Big]
        \\
        &\ge e^{k\tau}\Big[
        -(k+r)\mathbf{1}_{\{|x-x_0|\le \g\}}
        + e^{\tfrac{2}{\g}|x-x_0|}\Big],
        \end{aligned}
    \end{equation}
    and $\pat_\tau \Upsilon - \widetilde{\cL}_\b \Upsilon \ge -1$.
    Observe 
    \begin{equation}\label{f'd}
        g'(x)=\tfrac{L^\gam}{\g_1}e^{-\tfrac{1}{\g_1}x}\mathbf{1}_{\{x\le \tilde{x}\}}
    +\tfrac{\zeta(1-\g)}{\g(\g_1-\g)}\left(\tfrac{\g_1-\g}{\zeta(1-\g_1)}\right)^{\tfrac{\g_1}{\g}}e^{-\tfrac{1}{\g}x}\mathbf{1}_{\{x> \tilde{x}\}}
    \end{equation}
    and that each $x$-coordinate of an element in $\cC_T^n$ is bounded below by a constant 
    $x_m\le \min\limits_{[0,T]}\cX^R(\tau)=\cX^R(T)$.
    Let us temporarily replace the positive coefficients in \eqref{f'd} with  $c_1$ and $c_2$, and denote by
    $g'(x)=c_1e^{-\frac{1}{\g_1}x}\mathbf{1}_{\{x\le \tilde{x}\}}
    +c_2e^{-\frac{1}{\g}x}\mathbf{1}_{\{x> \tilde{x}\}}$.
    Then, by choosing $c_3=c_1e^{\frac{1}{\g}x_m-\frac{1}{\g_1}\tilde{x}}$, it follows for all $x\in (x_m,\tilde{x})$ that
    \[c_1e^{-\frac{1}{\g_1}x}\ge c_1e^{-\frac{1}{\g_1}\tilde{x}}=c_3e^{-\frac{1}{\g}x_m}\ge c_3e^{-\frac{1}{\g}x}.\]
    Thus, by choosing $c=\min\{1,c_2,c_3\}$, we conclude
    $g'(x)\ge ce^{-\frac{1}{\g}x}$ for all $x\ge x_m$.
    
    For sufficiently large $M>\sup_{[\k,T]}\vert\frac{d}{d\tau}\cX^R\vert$, we define $C= e^{1-\frac{1}{\g}x_m}MT$, $C_1=\max\{1,\zeta\bar{L}(k+r)e^{1+2\vert k \vert T}\}C$, and $C_2=cC\ge 4$. With these constants, we define the function $\Psi$ as
    \[\Psi(\tau,x;\tau_0,x_0)=\tau_0\pat_\tau V^0_{n,\e}(\tau,x)-C_1e^{\frac{1}{\g}x_0}\pat_x V^0_{n,\e}(\tau,x) -C_2\zeta\bar{L}\{e^{\vert k\vert T}\Phi(\tau,x;x_0)+\Upsilon(\tau;\tau_0)\}.\]
    Then since $c<1$, $\Psi$ satisfies for all $(\tau,x)\in\cC_T^n$ that
    \begin{align*}
        \pat_\tau \Psi(\tau,x) -\widetilde{\cL}_\b\Psi(\tau,x)
        &\le -C_1e^{\frac{1}{\g}x_0}g'(x)-C_2\zeta\bar{L}
        (\pat_\tau -\widetilde{\cL})\{e^{\vert k\vert T}\Phi(\tau,x;x_0)+\Upsilon(\tau;\tau_0)\}
        \\
        &\le -C(k+r)\zeta\bar{L}e^{1+2\vert k \vert T}e^{\frac{1}{\g}x_0}g'(x)
        \\
        &\quad-C_2\zeta\bar{L}e^{\vert k\vert T+k\tau}\Big[-(k+r)\mathbf{1}_{\{|x-x_0|\le \g\}}
        + e^{\frac{2}{\g_1}|x-x_0|}\Big]
          +C_2\zeta\bar{L}
        \\
        &\le -cC(k+r)\zeta\bar{L}e^{1+2\vert k\vert T+\frac{x_0-x}{\g}}
        +cC(k+r)\zeta\bar{L}e^{\vert k\vert T+k\tau}\mathbf{1}_{\{|x-x_0|\le \g\}}\le 0.
    \end{align*}
    % for all $(\tau,x)\in \cC_T^n$.
    % Differentiating the equation for $V^0_{n,\e}$ with respect to $x$ yields $\pat_\tau(\pat_x V^0_{n,\e}) - \widetilde{\cL}_\b(\pat_x V^0_{n,\e}) = g' > 0$. Combining this with \eqref{aux:ineq} and $C_1\le C_1$, we deduce $\pat_\tau \Psi -\widetilde{\cL}_\b\Psi \le 0$ in $\cC_T^n$.
    We next verify that $\Psi \le 0$ on the parabolic boundary of the truncated domain $\cC_{\k,T}^n = \cC_T^n \cap \{\tau > \k\}$.
    First, on the left free boundary $x=\cX^R(\tau)$, differentiating $V^0_{n,\e}(\tau,\cX^R(\tau))=\phi(\cX^R(\tau))$ with respect to $\tau$ yields $\pat_\tau V^0_{n,\e} = (\phi' - \pat_x V^0_{n,\e})\frac{d}{d\tau}\cX^R$. Since $\phi' > 0$, $\pat_x V^0_{n,\e} \ge 0$, and $\frac{d}{d\tau}\cX^R < 0$, we have
    \[\tau_0\pat_\tau V^0_{n,\e} \le -\tau_0\pat_x V^0_{n,\e} \frac{d}{d\tau}\cX^R \le MTe^{1-\frac{1}{\g}x_m+\frac{1}{\g}x_0}\pat_xV^0_{n,\e} \le C_1e^{\frac{1}{\g}x_0}\pat_x V^0_{n,\e},\]
    which implies $\Psi \le 0$ as $\Phi, \Upsilon \ge 0$. 
    Second, on the right boundary $x=n$, choosing $n$ sufficiently large guarantees $\Psi(\tau,n) \le 0$ because $\tau_0 \pat_\tau V^0_{n,\e} \le 2\zeta\bar{L}\tau_0$ and the term $e^{\frac{2}{\g}|n-x_0|}$ in $\Phi$ dominates. 
    Third, on the initial boundary $\tau=\k$, the regularity result \cite[Theorem 19, p.321]{FriedmanParabolicPDE} implies $\pat_\tau V^0_{n,\e}(0,x) = \widetilde{\cL}\phi+g \le \zeta\bar{L}$. By continuity, we can choose $\k_0>0$ small enough such that $\pat_\tau V^0_{n,\e}(\k,x)\le 2\zeta\bar{L}$ for all $\k\in(0,\k_0)$. Since $\Upsilon(\k;\tau_0) = \tau_0-\k$, $\tau_0\in [2\k,T]$ and by choosing $M>0$ large so that $C_2 \ge 4$, we deduce 
    $$\begin{aligned}
\tau_0\pat_\tau V^0_{n,\e}(\k,x)-C_2\zeta\bar{L}\Upsilon(\k;\tau_0)
&\le \z\bar{L}\big[ 2\tau_0 + C_2(\k-\tau_0) \big] \\
&\le \z\bar{L}\big[ 2(2-C_2)\k + C_2\k \big] = \z\bar{L}(4-C_2)\k \le 0.
\end{aligned}$$
    % \begin{align*}
    %     \tau_0\pat_\tau V^0_{n,\e}(\k,x)-C_2\zeta\bar{L}\Upsilon(\k;\tau_0)
    %     &\le 2\zeta\bar{L}\tau_0+C_2\zeta\bar{L}(\k-\tau_0)
    %     \\ 
    %     &\le \z\bar{L}\Big[(2-C_2)\tau_0+C_2\k\Big]
    %     \\
    %     &\le \z\bar{L}\Big[(2-C_2)2\k+C_2\k\Big]
    %     \le 0
    % \end{align*}
    This yields
    $\tau_0\pat_\tau V^0_{n,\e}(\k,x)-C_2\zeta\bar{L}\Upsilon(\k;\tau_0) \le 0$, and hence $\Psi(\k,x) \le 0$.
    
    By the comparison principle \cite[Theorem 1.1]{YangYan2008}, $\Psi(\tau,x;\tau_0,x_0)\le 0$ in $\cC_{\k,T}^n$. Passing to the limits $\e\to 0^+$ and $n\to\infty$, the weak convergence imply that the inequality for $V_0$ holds almost everywhere. Since $V_0\in C^{1+\frac{\a}{2},2+\a}(\{V_0<0\})$ for all $\a\in(0,1)$ by the Schauder theory, the inequality holds pointwise at $(\tau,x) = (\tau_0,x_0)$, eliminating the auxiliary functions (since $\Phi(\tau_0,x_0;x_0)=0$ and $\Upsilon(\tau_0;\tau_0)=0$).
    This implies
    $\tau_0\pat_\tau V_0(\tau_0,x_0)-C_1e^{\frac{1}{\g}x_0}\pat_x V_0(\tau_0,x_0) \le 0$,
    which completes the proof.

\subsection{{Proof of Theorem~\ref{X^W:properties}}}\label{app:proof:X-W-properties}
(1) 
    % The local Lipschitz property of $\cX^B$ can be established by using the previous estimate in Lemma~\ref{V_0:dtau,dx} and proceeding as in Theorem~\ref{X^R:properties}.
    % In this case, the strict positivity of $\pat_x V_0$ in the set $\{V_0<0\}$ is guaranteed by the strong maximum principle.
    To prove the local Lipschitz continuity of $\cX^B$, let us fix a small constant $\k\in(0,\k_0)$ and
    set $N\in\mathbb{N}$ large so that
    \begin{equation}\label{Vd,N}
        -\frac{1}{N}>\max\limits_{\tau\in[\k,T]} V_0(\tau,\cX^R(\tau))=\max\limits_{\tau\in[\k,T]} \phi(\cX^R(\tau)). 
    \end{equation}
   Observe that $V_0$ satisfies $\pat_\tau V_0-\widetilde{\cL} V_0=g$ in the set $\{V_0<0\}$. To handle the non-differentiability of $g$ at $x = \tilde{x}$, we define $W := \pat_x V_0$. By differentiating the equation with respect to $x$, we obtain$$\pat_\tau W-\widetilde{\cL} W=g'>0 \quad \text{in }\ \{V_0<0\} \setminus \{x = \tilde{x}\}.$$Since $\pat_x V_0 \ge 0$ in $\cC_T$ (see Lemma~\ref{Vd:dtau,dx}), the strong maximum principle yields that $W > 0$ strictly away from $\tilde{x}$. Suppose for contradiction that $W$ attains a minimum of $0$ at an interior point $(\tau, \tilde{x})$. Applying the parabolic Hopf lemma from the adjacent subdomains $\{x < \tilde{x}\}$ and $\{x > \tilde{x}\}$ induces strictly negative and positive limits for the outward normal derivatives, respectively. This implies that the left and right spatial limits of $\pat_{xx} V_0$ at $\tilde{x}$ are strictly negative and positive, respectively. This contradicts the continuity of $\pat_{xx} V_0$ guaranteed by standard parabolic Schauder estimates. Thus, we conclude that $\pat_x V_0 > 0$ everywhere in the set $\{V_0<0\}$.
    Consequently, for each $\tau\in [\k,T]$ and $n\ge N$,
    we deduce by the inverse function theorem that there exists a unique point $\cX^B_n(\tau)$ such that 
    \begin{equation}\label{Vd.lv curve}
        V_0(\tau,\cX^B_n(\tau))=-\frac{1}{n}.
    \end{equation}
    It can be seen from $\pat_x V_0 \ge 0$ in $\cC_T$ that $\cX^B_n(\tau)$ is monotone with respect to $n\in \mathbb{N}$ and thus, $\displaystyle\lim_{n\to \infty}\cX^B_n(\tau)=\cX^B(\tau)$ for each $\tau\in [\k,T]$.
    Moreover, from the strict positivity of $\pat_x V_0$ and the implicit function theorem, we deduce that $\cX^B_n$ forms a continuously differentiable curve on $(\k,T)$.
    We differentiate the above equality \eqref{Vd.lv curve} with respect to $\tau$ to get
    \begin{equation*}
        \pat_\tau V_0(\tau,\cX^B_n(\tau))
        +\pat_x V_0(\tau,\cX^B_n(\tau))
        \left[\frac{d}{d\tau}{\cX^B_n}(\tau)\right]=0
    \quad \text{for all }\ \tau\in (\k,T).    
    \end{equation*}
    Since $\pat_x V_0(\tau,\cX^B_n(\tau))>0$ holds for all $\tau\in [\k,T]$ and $n\ge N$, utilizing the estimate \eqref{ineq(3)} further yields
    \[\left\vert\frac{d}{d\tau}{\cX^B_n}(\tau)\right\vert
    =\left\vert\frac{\pat_\tau V_0(\tau,\cX^B_n(\tau))}{\pat_x V_0(\tau,\cX^B_n(\tau))}\right\vert
    \le C\frac{1}{\tau}\exp\!\left({\frac{1}{\g}\cX^B_n(\tau)}\right)
    \quad \text{for all }\ \tau\in [2\k,T].\]
    Note that $\cX^B_n(\tau)$ is bounded above by $\cX^B(2\k)$.
    Therefore, we have
    \begin{equation}\label{X^W:derivative}
        \left\vert \frac{d}{d\tau}\cX^B_n(\tau)  \right\vert
        \le \frac{C}{2\k}\exp\!\left({\frac{1}{\g}\cX^B(2\k)}\right)
        \quad \text{for all }\ \tau\in [2\k,T].  
    \end{equation}
    % Consequently, these uniform boundedness and the Arzela-Ascoli theorem yield a uniformly convergent subsequence say $\{\cX^B_{n_k}(\tau)\}_{k=1}^\infty$.
    % We substitute $\{\cX^B_n(\tau)\}$ by its uniformly convergent subsequence at the identity \eqref{Vd.lv curve} and take $k\to\infty$ to see that $\cX^B_n$ converges to $\cX^B$ uniformly on $[2\k,T]$ up to a subsequence.
    
    To prove the Lipschitz continuity of $\cX^B$ on $[2\k,T]$, choose $\tau_1,\tau_2\in [2\k,T]$.
    Then it follows from the mean value theorem and the inequality \eqref{X^W:derivative} that
    \[\left\vert\frac{\cX^B_n(\tau_1)-\cX^B_n(\tau_2)}{\tau_1-\tau_2}\right\vert
    \le \frac{C}{2\k}\exp\!\left({\frac{1}{\g}\cX^B(2\k)}\right).\]
    By using the convergence $\cX^B_n\to \cX^B$ as $n\to\infty$, we see that $\cX^B$ is Lipschitz continuous on $[2\k,T]$.
    Moreover, since $\k\in (0,\k_0)$ is arbitrary, we conclude that $\cX^B$ is locally Lipschitz continuous on $(0,T)$.
    
    (2) 
    Monotonicity of $\cX^B$ follows from the estimate $\pat_\tau V_0 \ge 0$ in Lemma~\ref{Vd:dtau,dx}.
    The strict monotonicity of $\cX^B$ can be obtained by following the same procedure as in proving the strict monotonicity of $\cX^R$. 
    
    (3) Suppose to the contrary that $\cX^B(\tau)$ does not diverge to $\infty$ as $\tau\to 0^+$.
    Then, there exists a sequence $\{\tau_n\}_{n=1}^\infty$ and a constant $C>0$ such that
    \[\displaystyle\lim_{n\to\infty}\tau_n= 0^+
    \quad \text{and} \quad
    \cX^B(\tau_n)\le C\quad \text{for all }\ n\in \mathbb{N}.\]
    Since $\cX^B$ is bounded from below, there exists a convergent subsequence $\{\cX^B(\tau_{n_k})\}_{k=1}^\infty$.
    From that $V_0(\tau_{n_k},\cX^B(\tau_{n_k}))=0$ for all $k\in\mathbb{N}$, $\displaystyle\lim_{k\to\infty}\tau_{n_k}=0$ and $V_0\in C(\overline{\cC_T})$, it follows that
    $V_0(0,\displaystyle\lim_{k\to\infty}\cX^B(\tau_{n_k}))=0$.
    This contradicts the initial condition $V_0(0,\cdot)=\phi(\cdot)<0$.

\subsection{{Proof of Theorem~\ref{thm:maximal-strong-solution}}}\label{app:proof:thm-maximal-strong-solution}
Let $Y$ be the function obtained in Lemma~\ref{Y} and
    $\widehat{Y}$ be another solution of \eqref{obs1}.
    Suppose the set $\mathcal{N}:=\{\widehat{Y}>Y\}$ is nonempty.
    and $\widehat{Y}-Y$ attains its strictly positive maximum $M := \max_{{\cN}}(\widehat{Y}-Y) > 0$ at some point $(\tau',x') \in {\cN}$.
    Since $\widehat{Y}-Y = 0$ on the boundary $\cP\cN$, the maximum point $(\tau',x')$ cannot lie on the boundary and thus must lie in $\cN$.
    % Since $\cN = \cN' \cup (\cN\setminus\cN')$, 
    We divide the proof into two main cases based on the value of $\pat_x Y$ and $\pat_x \widehat{Y}$ at the maximum point $(\tau',x')$.
    
    % \medskip
    % \noindent
    % \textbf{Case 1.} $(\tau',x') \in \cN'$.
    % In this case, by Lemma~2.1 in \cite{YangYan2008}, a strict contradiction occurs with the inequality $\pat_\tau (\widehat{Y}-Y)-\mathcal{L}(\widehat{Y}-Y)\le 0$.  
    
    % \medskip
    % \noindent
    % \textbf{Case 2.} $(\tau',x') \in \cN\setminus\cN'$.
    % In this set, we observe that $\pat_x \widehat{Y} \le \pat_x Y \le 0$. We further divide this case into two subcases depending on the value of $\pat_x Y(\tau',x')$.
    
    \medskip
    \noindent
    \textbf{Case 1.} $\pat_x Y(\tau',x')=\pat_x \widehat{Y}(\tau',x') < 0$.
    By the continuity of the spatial derivatives, there exists a sufficiently small cylinder $C \subset \cN$ centered at $(\tau',x')$ such that
    \[
        \widehat{Y}-Y > 0, \quad \pat_x Y < 0, \quad \text{and} \quad \pat_x \widehat{Y} < 0 \quad \text{in } C.
    \]
    In this region, spatial derivatives of both functions lie strictly below the gradient constraint, which implies that
    \[
        \pat_\tau (\widehat{Y}-Y) - \cL(\widehat{Y}-Y) \le 0 \quad \text{almost everywhere in } C.
    \]
    However, since $\widehat{Y}-Y$ attains its strict maximum value $M$ in the interior of $C$, the strong maximum principle implies that $\widehat{Y}-Y \equiv M$ throughout $C$. Substituting this constant $M$ into the operator yields
    \[
        \pat_\tau (\widehat{Y}-Y) - \cL(\widehat{Y}-Y) = \beta M > 0 \quad \text{in } C,
    \]
    which contradicts the previous inequality.
    
    \medskip
    \noindent
    \textbf{Case 2.} $\pat_x Y(\tau',x')= \pat_x \widehat{Y}(\tau',x') = 0$.
    In this case, $\pat_x Y(\tau',x') = 0$ implies that $x' \ge \cX^B(\tau')$. 
    For all $x$ in the interval $[\cX^B(\tau'), x']$, we know that $\pat_x \widehat{Y} \le 0$ and $\pat_x Y = 0$, yielding $\pat_x(\widehat{Y}-Y)(\tau',x) \le 0$. This means $\widehat{Y}-Y$ is decreasing in the $x$-direction. Consequently,
    \[
        (\widehat{Y}-Y)(\tau',x) = M \quad \text{and} \quad \pat_x \widehat{Y}(\tau',x) = 0 \quad \text{for all } x \in [\cX^B(\tau'), x'].
    \]
    Now, let $C \subset \cN$ be a small cylinder centered at $(\tau',\cX^B(\tau'))$, such that $C \subset \{Y>\psi\} \cap \{\widehat{Y}>\psi\}$. In the left sub-region $C \cap \{x < \cX^B(\tau)\}$, the function $\widehat{Y}-Y$ satisfies
    \[
        \pat_\tau (\widehat{Y}-Y) - \cL (\widehat{Y}-Y) \le 0.
    \]
    Notice that $\widehat{Y}-Y$ attains its maximum $M$ at the point $(\tau',\cX^B(\tau'))$, which lies exactly on the right lateral boundary of this sub-region. Because the free boundary $\cX^B$ is monotone decreasing, the interior cylinder condition is satisfied. Therefore, applying the parabolic Hopf lemma \cite[Theorem 4.2]{Caffarelli} yields
    \[
        \pat_x (\widehat{Y}-Y)(\tau',\cX^B(\tau')) > 0,
    \]
    which clearly contradicts the established fact that $\pat_x (\widehat{Y}-Y)(\tau',\cX^B(\tau')) = 0$.

Consequently, the maximum of $\widehat{Y}-Y$ on $\cN$ is less than or equal to zero, which implies that the set $\cN$ is empty, i.e., $Y \ge \widehat{Y}$ in $\O_T$.
This shows that $Y$ is the maximal strong solution among all strong solutions satisfying \eqref{obs1}. 
If the gradient-constrained region of $\widehat{Y}$ also takes the form $\{(\tau,x): x \ge \widehat{\cX}^B(\tau)\}$ for some monotone decreasing free boundary curve $\widehat{\cX}^B$, we can symmetrically deduce that $\widehat{Y} \ge Y$ in $\O_T$ by applying the same argument to the set $\{Y>\widehat{Y}\}$. 
Therefore, $Y = \widehat{Y}$, establishing the uniqueness of the strong solution under the prescribed condition on the free boundary.

\subsection{{Proof of Lemma~\ref{lem:strict-convexity-Q}}}\label{app:proof:lem-strict-convexity-Q}
Using the relation $Q(t,z)=Y(T-t,\ln z)$ with $\tau=T-t$ and $x=\ln z$, direct computation yields
\[\pat_{zz}Q(t,z)=e^{-x}\frac{\pat}{\pat x}\left(e^{-x}\pat_x Y(\tau,x)\right)=e^{-x}\pat_x V_0(\tau,x).\]
By the same argument as in the proof of Theorem~\ref{X^W:properties} (i),
% In the region $\mathcal{C}_T\setminus\{V_0=0\}$, let $W := \partial_x V_0 \ge 0$. For $x \neq \tilde{x}$, differentiating the equation with respect to $x$ yields $(\partial_\tau-\mathcal{L})W = g' > 0$. The strong maximum principle then ensures $W > 0$ strictly away from $\tilde{x}$. If $W$ attains a minimum of $0$ at the interior point $(\tau,\tilde{x})$, applying Hopf's Lemma from the adjacent subdomains ($x < \tilde{x}$ and $x > \tilde{x}$) induces strictly negative and positive limits for $\partial_{xx} V_0$ at $\tilde{x}$, respectively. This contradicts the continuity of $\partial_{xx} V_0$ guaranteed by standard parabolic Schauder estimates. 
we deduce that, $\partial_x V_0 > 0$ everywhere in $\mathcal{C}_T\setminus \{V_0=0\}$.
This implies $\partial_{zz}Q > 0$ in $\cC_T\setminus \{V_0=0\}$.

On the other hand, in $\O_T\setminus \cC_T$, we have $\pat_x Y(\tau,x) =\psi'(x)$, which yields
\[
    \pat_{zz}Q(t,z)=e^{-x}\frac{\pat}{\pat x}\big(e^{-x}\psi'(x)\big)=e^{-x}\phi'>0.
\]
Combining these results, we deduce that $\pat_{zz}Q > 0$ everywhere in $\{\pat_z Q<0\}\setminus \pat\{Q>J_R\}$. Because the parametrization of free boundary $\pat\{Q>J_R\}$ is $z=e^{\cX^R(T-t)}$, which is smooth, it has Lebesgue measure zero. Consequently, $\pat_{zz}Q>0$ a.e. in $\{\pat_z Q <0\}$. This almost everywhere strict positivity of the second derivative guarantees the strict convexity of $Q$ with respect to $z$ in $\{\pat_z Q <0\}$.

\subsection{{Proof of Theorem~\ref{thm:min-max}}}\label{app:proof:thm-min-max}
The proof follows the standard verification argument for singular control with discretionary stopping. Since the present problem has the same structural features as in \citet{JeonKimYang2026}, we only provide a sketch and refer the reader to \citet[Theorem 6]{JeonKimYang2026} for the full technical details.

We divide the argument into three steps.

\medskip
\noindent
\textbf{Step 1. A lower bound for fixed \(D\).}
Fix \(D\in\mathcal{NI}(t,T)\), and let \(\tau_t^D(z)\) be the retirement hitting time defined in \eqref{eq:tauD-def}. Consider the process
$$
\mathcal O_s^{D}
:=
\int_t^s e^{-\beta(u-t)}\tilde u\bigl(\mathcal Z_u^{t,z,D}\bigr)\,du
+
e^{-\beta(s-t)}\mathcal Q\bigl(s,\mathcal Z_s^{t,z,D}\bigr),
\qquad s\in[t,\tau_t^D(z)].
$$
Applying It\^o's formula (see \citet{ChiarollaHaussmann1994}) to \(e^{-\beta(s-t)}\mathcal Q(s,\mathcal Z_s^{t,z,D})\), and using
$$
d\mathcal Z_s^{t,z,D}
=
(\beta-r)\mathcal Z_s^{t,z,D}\,ds
-\theta \mathcal Z_s^{t,z,D}\,dB_s
+
\Xi_s^{t,z}\,dD_s,
$$
we obtain
\begin{align*}
d\mathcal O_s^{D}
={}&
e^{-\beta(s-t)}
\bigl(
\partial_t\mathcal Q+\mathscr L\mathcal Q+\tilde u
\bigr)\bigl(s,\mathcal Z_s^{t,z,D}\bigr)\,ds 
+e^{-\beta(s-t)}
\partial_z\mathcal Q\bigl(s,\mathcal Z_s^{t,z,D}\bigr)\Xi_s^{t,z}\,dD_s^c
\\
&\quad
+e^{-\beta(s-t)}
\Big(
\mathcal Q\bigl(s,\mathcal Z_s^{t,z,D}\bigr)
-
\mathcal Q\bigl(s,\mathcal Z_{s-}^{t,z,D}\bigr)
\Big) 
-e^{-\beta(s-t)}
\theta \mathcal Z_s^{t,z,D}
\partial_z\mathcal Q\bigl(s,\mathcal Z_s^{t,z,D}\bigr)\,dB_s,
\end{align*}
where \(D^c\) denotes the continuous part of \(D\). Since \(\mathcal Q\) satisfies the variational inequality in Problem~\ref{eq:main3}, we have
$$
\partial_t\mathcal Q+\mathscr L\mathcal Q+\tilde u\ge 0
\quad\text{whenever } \partial_z\mathcal Q=0,
$$
and
$$
\partial_t\mathcal Q+\mathscr L\mathcal Q+\tilde u=0
\quad\text{whenever } \partial_z\mathcal Q<0,\ \mathcal Q>J_R.
$$
Moreover, since \(\partial_z\mathcal Q\le 0\) and \(D\) is non-increasing, the control term is nonnegative, while the jump term is also nonnegative by the monotonicity of \(\mathcal Q\) in \(z\). After localization and taking expectations, the local martingale term drops out, yielding
$$
\mathcal Q(t,z)
\le
\mathbb E_t\left[
\int_t^{\tau_t^D(z)} e^{-\beta(s-t)}\tilde u\bigl(\mathcal Z_s^{t,z,D}\bigr)\,ds
+
e^{-\beta(\tau_t^D(z)-t)}
\mathcal Q\bigl(\tau_t^D(z),\mathcal Z_{\tau_t^D(z)-}^{t,z,D}\bigr)
\right].
$$
By the definition of \(\tau_t^D(z)\), the terminal point lies in the retirement region, and hence
$$
\mathcal Q\bigl(\tau_t^D(z),\mathcal Z_{\tau_t^D(z)-}^{t,z,D}\bigr)
=
J_R\bigl(\mathcal Z_{\tau_t^D(z)-}^{t,z,D}\bigr).
$$
Therefore,
$$
\mathcal Q(t,z)\le \mathcal J\bigl(t,z;D,\tau_t^D(z)\bigr).
$$

\medskip
\noindent
\textbf{Step 2. An upper bound for fixed \(\tau\) under the barrier strategy.}
Fix \(\tau\in\mathcal S(t,T)\), and consider the barrier strategy \(D^B(t,z)\). By construction, the associated controlled process \(\mathcal Z^{t,z,D^B}\) is kept below the upper free boundary \(z_B\). Define
$$
\mathcal O_s^{B}
:=
\int_t^s e^{-\beta(u-t)}\tilde u\bigl(\mathcal Z_u^{t,z,D^B}\bigr)\,du
+
e^{-\beta(s-t)}\mathcal Q\bigl(s,\mathcal Z_s^{t,z,D^B}\bigr),
\qquad s\in[t,\tau].
$$
Applying It\^o's formula as above, we obtain the same decomposition. Now, however, the barrier strategy acts only when \(\mathcal Z^{t,z,D^B}\) hits the binding boundary, where \(\partial_z\mathcal Q=0\). Therefore, the control term vanishes. Furthermore, away from the retirement region, the variational inequality implies
$$
\partial_t\mathcal Q+\mathscr L\mathcal Q+\tilde u\le 0
$$
along the controlled path. After localization and taking expectations, we conclude that
$$
\mathcal J\bigl(t,z;D^B(t,z),\tau\bigr)\le \mathcal Q(t,z).
$$
The required integrability and limiting arguments are handled exactly as in \citet{JeonKimYang2026}.

\medskip
\noindent
\textbf{Step 3. Equality for the candidate pair.}
Finally, choose the pair \(\bigl(D^B(t,z),\tau_R(t,z)\bigr)\). In this case,
$$
\mathcal Z_s^{t,z,D^B}< z_B(s)
\quad\text{for } s<\tau_R(t,z),
$$
and the process is stopped when it first hits the retirement boundary. Along this path,
$$
\partial_t\mathcal Q+\mathscr L\mathcal Q+\tilde u=0
\quad\text{in the working region,}
$$
the singular control acts only on the binding boundary where \(\partial_z\mathcal Q=0\), and the terminal condition at \(\tau_R(t,z)\) satisfies
$$
\mathcal Q\bigl(\tau_R(t,z),\mathcal Z_{\tau_R(t,z)-}^{t,z,D^B}\bigr)
=
J_R\bigl(\mathcal Z_{\tau_R(t,z)-}^{t,z,D^B}\bigr).
$$
Hence all inequality terms in Steps 1 and 2 become equalities, and we obtain
$$
\mathcal Q(t,z)
=
\mathcal J\bigl(t,z;D^B(t,z),\tau_R(t,z)\bigr).
$$

Combining Steps 1--3 yields, for every \(D\in\mathcal{NI}(t,T)\) and \(\tau\in\mathcal S(t,T)\),
$$
\mathcal J\bigl(t,z;D^B(t,z),\tau\bigr)
\le
\mathcal J\bigl(t,z;D^B(t,z),\tau_R(t,z)\bigr)
\le
\mathcal J\bigl(t,z;D,\tau_t^D(z)\bigr).
$$
Therefore,
$$
\overline J(t,z)
=
\mathcal J\bigl(t,z;D^B(t,z),\tau_R(t,z)\bigr)
=
\mathcal Q(t,z).
$$
Since \(\underline J(t,z)\le \overline J(t,z)\) always holds, we conclude that the game admits a value and that
$$
J(t,z)=\underline J(t,z)=\overline J(t,z)=\mathcal Q(t,z).
$$
This completes the proof.

\subsection{{Proof of Theorem~\ref{thm:main}}}\label{app:proof:thm-main}
By Theorem~\ref{thm:min-max}, we already know that
$$
J(t,\xi)=\mathcal Q(t,\xi),
\qquad (t,\xi)\in\mathcal D_T.
$$
Moreover, by Theorem~\ref{thm:Q-structure}, for each fixed \(t\in[0,T)\), the map
$$
\xi\longmapsto J(t,\xi)=\mathcal Q(t,\xi)
$$
is strictly convex on \((0,z_B(t))\). Since \(\partial_z\mathcal Q(t,\xi)=0\) for \(\xi\ge z_B(t)\), we have
$$
\lim_{\xi\uparrow z_B(t)}\partial_\xi J(t,\xi)
=
\partial_z\mathcal Q\bigl(t,z_B(t)\bigr)
=
0.
$$
On the other hand, in the retirement region one has \(J(t,\xi)=J_R(\xi)\), and therefore
$$
\lim_{\xi\downarrow 0}\partial_\xi J(t,\xi)
=
\lim_{\xi\downarrow 0}J_R'(\xi)
=
-\infty.
$$
Hence, for any \(w\ge 0\), there exists a unique \(\xi^*=\xi^*(t,w)\in(0,z_B(t)]\) such that
$$
w=-\partial_\xi J(t,\xi^*)=-\partial_z\mathcal Q(t,\xi^*).
$$

Next, define \(D^*\), \(\mathcal Z^*\), and \(\tau^*\) as in the statement of the theorem. By differentiating the dual value representation along the optimal barrier strategy, one obtains, exactly as in \citet[Lemma 14]{JeonKimYang2026}, the budget identity
\begin{align}
-H_t\,\partial_z J(t,\xi)
=
\mathbb E_t\Bigg[
\int_t^{\tau_R(t,\xi)}
H_sD_s^B(t,\xi)
\Big(
\hat c(\mathcal Z_s^{t,\xi,D^B})
+\zeta \hat l(\mathcal Z_s^{t,\xi,D^B})
-\zeta\bar L
\Big)\,ds
\notag\\
\qquad\qquad\qquad\qquad
+
H_{\tau_R(t,\xi)}
D_{\tau_R(t,\xi)-}^B(t,\xi)
\bigl(-J_R'(\mathcal Z_{\tau_R(t,\xi)-}^{t,\xi,D^B})\bigr)
\Bigg].
\label{eq:budget-identity}
\end{align}
Applying \eqref{eq:budget-identity} at \(\xi=\xi^*\), and using the definitions of \(D^*\), \(\mathcal Z^*\), and \(\tau^*\), yields
\begin{align}
w
=
-\partial_zJ(t,\xi^*)
=
\mathbb E_t\Bigg[
\int_t^{\tau^*}
\frac{H_s}{H_t}D_s^*
\bigl(c_s^*+\zeta l_s^*-\zeta\bar L\bigr)\,ds
+
\frac{H_{\tau^*}}{H_t}
D_{\tau^*-}^*
W_{\tau^*}^*
\Bigg].
\label{eq:budget-equality}
\end{align}
Since
$$
\mathcal Z_s^*
=
\Xi_s^{t,\xi^*}D_s^*
=
\xi^* e^{\beta(s-t)}\frac{H_s}{H_t}D_s^*,
$$
we have
$$
\xi^*\frac{H_s}{H_t}D_s^*
=
e^{-\beta(s-t)}\mathcal Z_s^*,
\qquad
\xi^*\frac{H_{\tau^*}}{H_t}D_{\tau^*-}^*
=
e^{-\beta(\tau^*-t)}\mathcal Z_{\tau^*-}^*.
$$
Multiplying \eqref{eq:budget-equality} by \(\xi^*\), we obtain
\begin{align}
\xi^*w
=
\mathbb E_t\Bigg[
\int_t^{\tau^*}
e^{-\beta(s-t)}
\mathcal Z_s^*
\bigl(c_s^*+\zeta l_s^*-\zeta\bar L\bigr)\,ds
+
e^{-\beta(\tau^*-t)}
\mathcal Z_{\tau^*-}^*
W_{\tau^*}^*
\Bigg].
\label{eq:budget-equality-2}
\end{align}

Now, by the definition of \(\tilde u\) and the optimality of \((\hat c,\hat l)\) in the static maximization problem,
$$
\tilde u(z)
=
u(\hat c(z),\hat l(z))
-
z\bigl(\hat c(z)+\zeta\hat l(z)-\zeta\bar L\bigr).
$$
Hence,
$$
\tilde u(\mathcal Z_s^*)
=
u(c_s^*,l_s^*)
-
\mathcal Z_s^*
\bigl(c_s^*+\zeta l_s^*-\zeta\bar L\bigr).
$$
Similarly, by the duality relation between \(V_R\) and \(J_R\),
$$
J_R(z)=V_R(-J_R'(z))+zJ_R'(z),
$$
or equivalently,
$$
J_R(\mathcal Z_{\tau^*-}^*)
=
V_R(W_{\tau^*}^*)
-
\mathcal Z_{\tau^*-}^*W_{\tau^*}^*.
$$

Combining these identities with Theorem~\ref{thm:min-max}, which gives
$$
J(t,\xi^*)=\mathcal Q(t,\xi^*)
=
\mathcal J(t,\xi^*;D^*,\tau^*),
$$
we deduce
\begin{align*}
J(t,\xi^*)+\xi^*w
&=
\mathbb E_t\left[
\int_t^{\tau^*}e^{-\beta(s-t)}u(c_s^*,l_s^*)\,ds
+
e^{-\beta(\tau^*-t)}V_R(W_{\tau^*}^*)
\right].
\end{align*}
Since the continuation value \(V_R(W_{\tau^*}^*)\) is attained by the post-retirement Merton strategy,
$$
c_s^*=K_1W_s^*,
\qquad
l_s^*=\bar L,
\qquad
\pi_s^*=\frac{\theta}{\sigma\gamma_1}W_s^*,
\qquad s\ge \tau^*,
$$
it follows that
\begin{align*}
J(t,\xi^*)+\xi^*w
=
\mathbb E_t\left[
\int_t^{\tau^*}e^{-\beta(s-t)}u(c_s^*,l_s^*)\,ds
+
\int_{\tau^*}^{\infty}e^{-\beta(s-t)}u(c_s^*,\bar L)\,ds
\right].
\end{align*}

Finally, since \(J(t,\xi^*)+\xi^*w\ge \inf_{\xi>0}(J(t,\xi)+\xi w)\), weak duality in \eqref{eq:weak-duality} yields
\begin{align*}
V(t,w)
&\ge
\mathbb E_t\left[
\int_t^{\tau^*}e^{-\beta(s-t)}u(c_s^*,l_s^*)\,ds
+
\int_{\tau^*}^{\infty}e^{-\beta(s-t)}u(c_s^*,\bar L)\,ds
\right]
\\
&=
J(t,\xi^*)+\xi^*w
\ge
\inf_{\xi>0}\bigl(J(t,\xi)+\xi w\bigr)
\ge
V(t,w).
\end{align*}
Hence all inequalities are equalities, and therefore
$$
V(t,w)=\inf_{\xi>0}\bigl(J(t,\xi)+\xi w\bigr).
$$
This proves part (a) and shows that \((c^*,l^*,\pi^*,\tau^*)\) is optimal.

It remains to identify the wealth and portfolio processes before retirement. By the envelope relation,
$$
W_s^*=-\partial_z\mathcal Q(s,\mathcal Z_s^*),
\qquad s\in[t,\tau^*).
$$
Applying the generalized It\^o formula to \(-\partial_z\mathcal Q(s,\mathcal Z_s^*)\) and comparing the resulting dynamics with
$$
dW_s^*
=
\bigl(rW_s^*+(\mu-r)\pi_s^*-c_s^*+\zeta(\bar L-l_s^*)\bigr)\,ds
+\sigma \pi_s^*\,dB_s,
\qquad s\in[t,\tau^*),
$$
we obtain
$$
\pi_s^*=\frac{\theta}{\sigma}\mathcal Z_s^*\,\partial_{zz}\mathcal Q(s,\mathcal Z_s^*),
\qquad s\in[t,\tau^*).
$$
This completes the proof.

\subsection{{Proof of Corollary~\ref{cor:primal-retirement-threshold}}}\label{app:proof:cor-primal-retirement-threshold}
By Theorem~\ref{thm:main},
\[
W_s^*=-\partial_z\mathcal Q(s,\mathcal Z_s^*),
\qquad s\in[t,\tau^*),
\]
and
\[
\tau^*
=
\inf\{s\in[t,T)\mid \mathcal Z_s^*\le z_R(s)\}\wedge T.
\]
Now fix \(s\in[t,T)\). By Theorem~\ref{thm:Q-structure}, for each fixed \(s\), the map
\[
z\longmapsto -\partial_z\mathcal Q(s,z)
\]
is continuous and strictly decreasing on \((0,z_B(s))\). Therefore,
\[
\mathcal Z_s^*\le z_R(s)
\quad\Longleftrightarrow\quad
-\partial_z\mathcal Q(s,\mathcal Z_s^*)\ge -\partial_z\mathcal Q\bigl(s,z_R(s)\bigr).
\]
Using the definitions of \(W_s^*\) and \(\mathcal W_R(s)\), this is equivalent to
\[
\mathcal Z_s^*\le z_R(s)
\quad\Longleftrightarrow\quad
W_s^*\ge \mathcal W_R(s).
\]
Substituting this equivalence into the definition of \(\tau^*\) yields \eqref{eq:tau-star-primal}. The descriptions of \(\mathcal{RR}_{\rm primal}\) and \(\mathcal{WR}_{\rm primal}\) follow immediately from the monotone correspondence between \(z\in(0,z_B(t))\) and \(w\in(0,\mathcal W_R(t))\).

\section{Function spaces}\label{sec:appendix-function-spaces}

For the reader's convenience, we collect here the definitions of the function spaces used in the main text.

\begin{dfn}[Function spaces on a parabolic domain]\label{dfn:function-spaces}
    Let \(I\subset \mathbb{R}\) be an interval and let \(T>0\). Define the parabolic cylinder by
    \[
    I_T:=[0,T)\times I,
    \qquad (t,z)\in I_T.
    \]
    \begin{enumerate}
    \item For \(m,n \in \mathbb{N}\), the space \(C^{m,n}(I_T)\) is defined by
    \[
    C^{m,n}(I_T)
    :=
    \left\{
    Q: I_T \to \mathbb{R}
    \;\middle|\;
    \partial_t^k \partial_z^\ell Q \in C(I_T)
    \text{ for all } 1 \le k \le m,\ 1 \le \ell \le n
    \right\},
    \]
    consisting of functions that are \(m\)-times continuously differentiable in time and \(n\)-times continuously differentiable in space.

    \item For each \(n \in \mathbb{N}_0\) and \(\alpha \in (0,1)\), the parabolic H\"older space \(C^{\frac{n+\alpha}{2},\, n+\alpha}(I_T)\) consists of functions \(Q \in C^{\lfloor n/2 \rfloor, n}(I_T)\) whose parabolic H\"older norm is finite:
    \[
    \|Q\|_{C^{\frac{n+\alpha}{2},\, n+\alpha}(I_T)}
    :=
    \sum_{0 \le 2j+k \le n} \sup_{I_T} |\partial_t^j \partial_z^k Q|
    + [Q]_{n+\alpha, I_T},
    \]
    where the seminorm \([Q]_{n+\alpha, I_T}\) is defined through the H\"older continuity of the highest-order derivatives with respect to the parabolic metric
    \[
    d\bigl((z_1,t_1),(z_2,t_2)\bigr)
    :=
    \bigl(|z_1-z_2|^2 + |t_1-t_2|\bigr)^{1/2}.
    \]

    \item For $1 \le p \le \infty$ and $n \in \mathbb{N}$, the parabolic Sobolev space $W_p^{n,2n}(I_T)$ is defined by$$W_p^{n,2n}(I_T) := \left\{ Q \in L^p(I_T) : \partial_t^j \partial_z^k Q \in L^p(I_T) \text{ for all } 0 \le 2j+k \le 2n \right\},$$equipped with the norm
    \[\|Q\|_{W_p^{n,2n}(I_T)} := \left( \sum_{0 \le 2j+k \le 2n} \|\partial_t^j \partial_z^k Q\|_{L^p(I_T)}^p \right)^{1/p}.\]
    Similarly, the local parabolic Sobolev space $W^{n,2n}_{p,\mathrm{loc}}(I_T)$ consists of functions $Q$ such that$$Q \in W_p^{n,2n}(K) \qquad \text{for every compact subset } K \subset \mathrm{int}(I_T).$$

    \item For an interval \(I \subset \mathbb{R}\), \(k \in \mathbb{N}_0\), and \(\alpha \in (0,1)\), we denote by \(C^{k+\alpha}(I)\) the H\"older space of all functions \(f \in C^k(I)\) such that
    \[
    \|f\|_{C^{k+\alpha}(I)}
    :=
    \sum_{j=0}^k \sup_{z \in I} |f^{(j)}(z)|
    +
    \sup_{\substack{z_1,z_2\in I\\ z_1\neq z_2}}
    \frac{|f^{(k)}(z_1)-f^{(k)}(z_2)|}{|z_1-z_2|^\alpha}
    <\infty.
    \]
    In particular, for \(k=0\) and \(\alpha=1\), the space \(C^{0,1}(I)\) corresponds to the set of Lipschitz continuous functions on \(I\). 
    Similarly, the local H\"older space \(C^{k+\alpha}_{\mathrm{loc}}(I)\) (and \(C^{0,1}_{\mathrm{loc}}(I)\)) consists of functions \(f\) such that
    \[
    f \in C^{k+\alpha}(K) \quad (\text{respectively, } f \in C^{0,1}(K))
    \qquad
    \text{for every compact subset } K \subset \mathrm{int}(I).
    \]

    \item Finally, the term \textit{smoothness} refers to infinite differentiability (i.e., belonging to the class $C^\infty$).
    \end{enumerate}
\end{dfn}


\begin{thebibliography}{}

\bibitem[Barles, 1999]{BARLES1999191}
Barles, G. (1999).
\newblock Nonlinear neumann boundary conditions for quasilinear degenerate
  elliptic equations and applications.
\newblock {\em J. Differential Equations}, 154(1):191--224.


\bibitem[Blanchet et~al., 2006]{Blanchet-et-al-2006}
Blanchet, A., Dolbeault, J., and Monneau, R. (2006).
\newblock {On the continuity of the time derivative of the solution to the
  parabolic obstacle problem with variable coefficients}.
\newblock {\em J. Math. Pures Appl.}, 85(3):371--414.

\bibitem[Bodie et~al., 1992]{BodieMertonSamuelson1992}
Bodie, Z., Merton, R.~C., and Samuelson, W.~F. (1992).
\newblock Labor supply flexibility and portfolio choice in a life cycle model.
\newblock {\em J. Econom. Dynam. Control}, 16(3--4):427--449.

\bibitem[Bodie et~al., 2004]{BodieDetempleOtrubaWalter2004}
Bodie, Z., Detemple, J.~B., Otruba, S., and Walter, S. (2004).
\newblock Optimal consumption-portfolio choices and retirement planning.
\newblock {\em J. Econom. Dynam. Control}, 28(6):1115--1148.

\bibitem[Bovo and De~Angelis, 2025]{BOVO2025104555}
Bovo, A. and De~Angelis, T. (2025).
\newblock On the saddle point of a zero-sum stopper vs. singular-controller game.
\newblock {\em Stochastic Process. Appl.}, 182:104555.

\bibitem[Bovo and De~Angelis, 2026]{BovoDeAngelis2026}
Bovo, A. and De~Angelis, T. (2026).
\newblock Finite-time horizon, stopper vs. singular-controller games on the half-line.
\newblock {\em Math. Oper. Res.}, Articles in Advance.

\bibitem[Bovo et~al., 2025]{BDI2022}
Bovo, A., De~Angelis, T., and Issoglio, E. (2025).
\newblock Variational inequalities on unbounded domains for zero-sum singular
  controller vs. stopper games.
\newblock {\em Math. Oper. Res.}, 50(1):277--312.

\bibitem[Bovo et~al., 2024]{BDP24}
Bovo, A., De~Angelis, T., and Palczewski, J. (2024).
\newblock Zero-sum stopper vs. singular-controller games with constrained control directions.
\newblock {\em SIAM J. Control Optim.}, 62(4):2203--2228.

\bibitem[Bovo et~al., 2025]{Bovo2024}
Bovo, A., De~Angelis, T., and Palczewski, J. (2025).
\newblock Stopper vs. singular controller games with degenerate diffusions.
\newblock {\em Appl. Math. Optim.}, 91:3.

\bibitem[Caffarelli et~al., 2013]{Caffarelli}
Caffarelli, L., Li, Y., and Nirenberg, L. (2013).
\newblock Some remarks on singular solutions of nonlinear elliptic equations
  iii: Viscosity solutions including parabolic operators.
\newblock {\em Comm. Pure Appl. Math.}, 66.

\bibitem[Chen et al., 2022]{CJW22}
Chen, K., Jeon, J., and Wong, H. (2022).
\newblock Optimal retirement problem under partial information.
\newblock {\em Math. Oper. Res.}, 47(3):1802--1832.

\bibitem[Chiarolla and Haussmann, 1994]{ChiarollaHaussmann1994}
Chiarolla, M.~B. and Haussmann, U.~G. (1994).
\newblock The optimal control of the cheap monotone follower.
\newblock {\em Stochastics Stochastics Rep.}, 49(1--2):99--128.

\bibitem[Choi et~al., 2008]{ChoiShimShin2008}
Choi, K.~J., Shim, G., and Shin, Y.~H. (2008).
\newblock Optimal portfolio, consumption-leisure and retirement choice problem with {CES} utility.
\newblock {\em Math. Finance}, 18(3):445--472.

\bibitem[Choi et~al., 2024]{ChoiKwakLim2024}
Choi, K.~J., Kwak, M., and Lim, B.~H. (2024).
\newblock A dynamic model of sustainable work-life balance.
\newblock Working paper, available at SSRN 4863880.

\bibitem[Crandall et~al., 2000]{Crandall2000}
Crandall, M., Kocan, M., and Świech, A. (2000).
\newblock L-p-theory for fully nonlinear uniformly parabolic equations.
\newblock {\em Comm. Partial Differential Equations}, 25:1997--2053.

\bibitem[Friedman, 1975]{F2}
Friedman, A. (1975).
\newblock {Parabolic variational inequalities in one space dimension and
  smoothness of the free boundary}.
\newblock {\em J. Funct. Anal.}, 18(2):151--176.

\bibitem[Friedman, 1983]{FriedmanParabolicPDE}
Friedman, A. (1983).
\newblock {\em Partial Differential Equations of Parabolic Type}.
\newblock Robert E. Krieger Publishing Company, Malabar, Florida.
\newblock Reprint of the 1964 original edition.

\bibitem[Ha et al., 2026]{ha2026finitehorizonoptimalconsumptioninvestment}
Ha, G. G., Jeon, J., and Ok, J. (2026).
\newblock Finite-Horizon Optimal Consumption and Investment with Time-Varying Job-Switching Costs.
\newblock {\em arXiv preprint arXiv:2603.08050}.

\bibitem[Jeon and Oh, 2023]{JeonOh2023}
Jeon, J. and Oh, J. (2023).
\newblock Labor supply flexibility and portfolio selection with early
  retirement option.
\newblock {\em Appl. Math. Optim.}, 88(3):88.

\bibitem[Jeon et al., 2023]{JKP23}
Jeon, J., Kwak, M., and Park, K. (2023).
\newblock Horizon effect on optimal retirement decision.
\newblock {\em Quant. Finance}, 23(1):123--148.

\bibitem[Jeon et al., 2026]{JeonKimYang2026}
Jeon, J., Kim, T., and Yang, Z. (2026).
\newblock The finite-horizon retirement problem with borrowing constraint: A
  zero-sum stopper vs. singular-controller game.
\newblock {\em Math. Oper. Res.}, forthcoming.

\bibitem[Jeon et al., 2026]{JKY_int}
Jeon, J., Kim, T., and Yang, Z. (2026).
\newblock Integral equation characterization for the finite-horizon retirement problem with borrowing constraint.
\newblock Working paper.

\bibitem[Kukuljan, 2022]{Kukuljan2022}
Kukuljan, T. (2022).
\newblock Higher order parabolic boundary harnack inequality in $c^1$ and
  $c^{k, \alpha}$ domains.
\newblock {\em Discrete Contin. Dyn. Syst.}, 42(6):2667--2698.

\bibitem[Ladyzhenskaya et al., 1968]{Ladyzhenskaya}
Ladyzhenskaya, O.~A., Solonnikov, V.~A., and Ural'tseva, N.~N. (1968).
\newblock {\em Linear and Quasi-linear Equations of Parabolic Type}.
\newblock American Mathematical Society, Providence, RI.

\bibitem[Lieberman, 1996]{Lieberman}
Lieberman, G. (1996).
\newblock {\em Second Order Parabolic Differential Equations}.
\newblock World Scientific, Singapore.

\bibitem[Park and Wong, 2023]{PW23}
Park, K. and Wong, H. (2023).
\newblock Robust retirement with return ambiguity: Optimal G-stopping time in dual space.
\newblock {\em SIAM J. Control Optim.}, 61(3):1009--1037.

\bibitem[Park et~al., 2023]{PWY23}
Park, K., Wong, H., and Yan, T. (2023).
\newblock Robust retirement and life insurance with inflation risk and model ambiguity.
\newblock {\em Insurance Math. Econom.}, 110:1--30.



\bibitem[Torres-Latorre, 2024]{TorresLatorre2024}
Torres-Latorre, C. (2024).
\newblock Parabolic boundary harnack inequalities with right-hand side.
\newblock {\em Arch. Ration. Mech. Anal.}, 248(5):73.

\bibitem[Tso, 1985]{TSO}
Tso, K. (1985).
\newblock {On an Aleksandrov-Bakel’man type maximum principle for
  second-order parabolic equations}.
\newblock {\em Comm. Partial Differential Equations}, 10(5):543--553.

\bibitem[Yang and Koo, 2018]{YK}
Yang, Z. and Koo, H. (2018).
\newblock Optimal consumption and portfolio selection with early retirement option.
\newblock {\em Math. Oper. Res.}, 43(4):1378--1404.

\bibitem[Yang and Yan, 2008]{YangYan2008}
Yang, Z. and Yan, H. (2008).
\newblock A comparison principle of a system of variational inequalities in
  unbounded set.
\newblock {\em J. South China Normal Univ. Nat. Sci. Ed.}, 2008.

\bibitem[Yang et~al., 2021]{YKS21}
Yang, Z., Koo, H., and Shin, Y. (2021).
\newblock Optimal retirement in a general market environment.
\newblock {\em Appl. Math. Optim.}, 21:1083--1130.

\bibitem[Zhu, 1992]{zhu1992dynamic}
Zhu, H. (1992).
\newblock {\em Dynamic Programming and Variational Inequalities in Singular
  Stochastic Control}.
\newblock PhD thesis, Brown University.
\newblock ProQuest Dissertations \& Theses Global.

\end{thebibliography}
\end{document}